\newtheorem{theorem}{Theorem}[section]
\newtheorem{lemma}[theorem]{Lemma}
\newtheorem{thm}[theorem]{}
\newtheorem{proposition}[theorem]{Proposition}
\newtheorem{corollary}[theorem]{Corollary}
\newtheorem{remark}[theorem]{Remark}
\numberwithin{equation}{section}
\newcommand{\bF}{{\textsf{F}}}
\newcommand{\bG}{{\textsf{G}}}
\newcommand{\bH}{{\textsf{H}}}
\newcommand{\bT}{{\textsf{T}}}
\newcommand{\bS}{{\mathsf{S}}}
\newcommand{\bP}{{\mathsf{P}}}
\newcommand{\I}{{\mathbb{I}}}
\newcommand{\Aa}{{\A^{\overline{\bH}}_{\underline{\bH}}\,(\omega)}}
\newcommand{\A}{\mathbb{A}}
\newcommand{\B}{\mathbb{B}}
\newcommand{\C}{\mathbb{C}}
\newcommand{\M}{\mathbb{M}}
\newcommand{\X}{\mathbb{X}}
\newcommand{\Y}{\mathbb{Y}}
\newcommand{\V}{\mathbb{V}}
\newcommand{\xr}{\xrightarrow}
\newcommand{\ul}{\underline}
\newcommand{\ol}{\overline}
\newcommand{\wt}{\widetilde}
\newcommand{\Ab}{{(\A_{\underline{\bH}})^{\bG}}}
\newcommand{\Ac}{{(\A^{\ol{\bH}})_\bT}}
\newcommand{\xra}{\xrightarrow}
\newcommand{\ve}{\varepsilon}
\newcounter{zlist}
\newenvironment{zlist}{\begin{list}{\rm(\arabic{zlist})}{
\usecounter{zlist}\leftmargin2.5em\labelwidth2em\labelsep0.5em
\topsep0.6ex\itemsep0.3ex plus0.2ex minus0.3ex
\parsep0.3ex plus0.2ex minus0.1ex}}{\end{list}}
\newcounter{blist}
\newenvironment{blist}{\begin{list}{{\rm(\alph{blist})}}{
\usecounter{blist}\leftmargin2.5em\labelwidth2em\labelsep0.5em
\topsep0.6ex \itemsep0.3ex plus0.2ex minus0.3ex
\parsep0.3ex plus0.2ex minus0.1ex}}{\end{list}}
\newcounter{rlist}
\newcommand{\ot}{\otimes}
\newcommand{\oxi}{{\overline{\xi}}}
\newcommand{\ochi}{{\overline{\chi}}}
\newcommand{\oom}{\overline{\omega}}
\def\tw{{\rm \textsf{tw}}}
\begin{document}

\title[Fundamental Theorem]{The Fundamental Theorem for \\ weak braided bimonads}

\author[Mesablishvili]{Bachuki Mesablishvili}
\address{A. Razmadze Mathematical Institute of I. Javakhishvili Tbilisi
State University, 6, Tamarashvili Str.,  Tbilisi 0177, Republic of Georgia}
\email{bachi@rmi.ge}
\author[Wisbauer]{Robert Wisbauer}
\address{Department of Mathematics of HHU, 40225 D\"usseldorf, Germany}
\email{wisbauer@math.uni-duesseldorf.de}
\keywords{(Co)monads, entwinings, weak bimonads, weak Hopf monads}
\subjclass[2010]{18A40, 18C15, 18C20, 16T10, 16T15}
%\date{\today}
%\thanks{}

\begin{abstract}
The theories of (Hopf) bialgebras and  weak (Hopf) bialgebras have been introduced for
vector space categories over fields and make heavily use of the tensor product.
As first generalisations, these notions were formulated for monoidal categories,
with braidings if needed. The present authors developed a theory of bimonads and
Hopf monads $H$ on arbitrary categories $\A$, employing distributive laws,
allowing for a general form of the Fundamental Theorem for Hopf algebras.
For $\tau$-bimonads $H$,  properties of braided (Hopf) bialgebras were captured by
requiring a Yang-Baxter operator $\tau:HH\to HH$. The purpose of this paper
is to extend the features of weak (Hopf) bialgebras to this general setting
including an appropriate form of the Fundamental Theorem.
This subsumes the theory of braided Hopf algebras (based on weak Yang-Baxter operators) as considered by Alonso \'Alvarez and others.
\end{abstract}

\maketitle

\tableofcontents

\section{Introduction}
There are various generalisations of the notions of (weak) bialgebras
and Hopf algebras in the literature,
mainly for (braided) monoidal categories, and we refer to B\"ohm \cite{Bo-HA},
the introductions to Alonso \'Alvarez e.a. \cite{AVR},   B\"ohm e.a. \cite{BLS}, and \cite[Remarks 36.18]{BW-Cor}
for more information about these.

Bimonads and Hopf monads on {\em arbitrary} categories were introduced in \cite{MW-Bim}
and the purpose of the present paper is to develop  a {\em weak version} of these notion, that is,
the initial conditions on the behaviour  of the involved distributive laws
towards unit and counit are replaced by weaker conditions.

Recall that for a bialgebra  $(H,m,e,\delta,\ve)$ over a commutative ring $k$,
there is a commutative diagram ($\ot_k=\ot$)
$$\xymatrix{
 \M \ar[rr]^{-\ot H} \ar[dr]_{\phi_H} &&  {\M_H^H} \ar[dl]^{U^{\wt H}} &  \\
 & {\M_H}&, } \hspace{-4mm}
  {\scriptstyle \xymatrix{
 M\ar@{|->}[r] \ar@{|->}[dr] &
          (M\ot H, M\ot m ,M\ot \delta  ) \ar@{|->}[d]  , \\
&(M\ot H ,M\ot m ) \, ,} }
$$
where $\M$ is the category of $k$-modules,
 $ \M_H$ the category of right $H$-modules, and
$\M^H_H$ denotes the category of mixed bimodules; the latter can also be
considered as $(\M_H)^{\wt H}$, that is, the category of $\wt H$-comodules over
$\M_H$ where $\wt H$ is the lifting of the comonad $-\ot H$ to $\M_H $.
$H$ is a Hopf algebra provided the functor $ -\ot H$ is an equivalence of categories
(Fundamental Theorem of Hopf modules).

Concentrating on the essential parts of this setting, we consider, for any category $\A$,
 the  diagram
\begin{equation}\label{dia-1a}
\xymatrix{
 \A \ar[rr]^{K} \ar[dr]_{\phi_\bT} &&  ({\A_\bT})^\bG \ar[dl]^{U^{\bG}} &
   \\
 & {\A_\bT},    }
\end{equation}
where $\bT$ is some monad on $\A$, $\bG$ is some comonad on the category $\A_\bT$  of $\bT$-modules,
$\phi_\bT$ and $U^\bG$ denote the respective free and forgetful functors,
and $K$ is any functor making the diagram commutative.

Having such a diagram, one may ask when the functor $K$ allows for a right adjoint
$\ol K$.
This creates a monad $\bP$ on $\A$, a monad morphism $\iota:P\to T$,
the free functor $\phi_\bP:\A \to \A_\bP$, and a functor
$K_\bP:(\A_\bT)^{\bG} \to \A_\bP$, the Eilenberg-Moore comparison functor for the monad $P$.

 If $\A$ is Cauchy complete and $\bP$ is a separable Frobenius monad, then the
 change-of-base-functor $\iota_!: \A_\bP\to \A_\bT$ exists (see Proposition \ref{sep.frobenius}).
  As a consequence, $K_\bP$ has a left adjoint $L_\bP$ (Proposition \ref{left.adjoint}) leading to the commutative diagram
\begin{equation}\label{diagram.2a}
\xymatrix@C=.6in @R=.2in {\A \ar[r]_{\phi_\bP}\ar@/^1.8pc/@{->}[rr]^{K}\ar[rdd]_{\phi_\bT}  &
\A_{\bP}  \ar@{..>}[r]_{L_\bP} \ar@{..>}[dd]_{\iota_!}&
(\A_\bT)^{\bG}  \ar[ldd]^{U^{\bG}}\\ &&\\  &  \A_\bT  &. }
\end{equation}
 So asking for a fundamental theorem leads to the question for properties
of $\phi_\bP$ and $K_\bP$.
Similar constructions apply when the functor $K$ has a left adjoint.
\smallskip

After assembling preliminaries in Section \ref{Prel} and properties of
separable Frobenius monads in Section \ref{Frob}, the theory just sketched is outlined in Section \ref{Com}.

Section \ref{Entw} deals with the application of this to
endofunctors $H$ on a category $\A$ endowed with a monad $\ul H$ %$\ul H=(H,m,e)$
as well as a comonad structure $\ol H$,
%$\ol H=(H,\delta,\ve)$,
and a weak  monad-comonad entwining $\omega:  HH\to H H$.
Exploiting ideas from  \cite{W-reg} and B\"ohm \cite{Bo},
these data allow for the definition of a comonad $\bG$ on $\A_H$
as well as a monad $\bT$ on $\A^H$ (Propositions \ref{Bohm}, \ref{Bohm1}).
Hereby, the category $\A_{\ul \bH}^{\ol \bH}(\omega)$ of mixed $H$-bimodules is isomorphic to the
categories $(\A_H)^\bG$ and $(\A^H)_\bT$ (see Theorems \ref{comparison}, \ref{comparison1}).
If $\omega$ is a {\em compatible} entwining (i.e. $\delta \cdot m= Hm \cdot \omega H\cdot H\delta$,
Section \ref{w-entw}), there is a functor
$$K_{\omega}:\A \to \A^{\ol \bH}_{\ul \bH}(\omega),\quad a \; \mapsto \; (H(a),m_a, \delta_a),$$
leading to commutativity of the diagram corresponding to (\ref{dia-1a}). Conditions for the existence
of a right and a left adjoint functor for $K_\omega$ are investigated
(Pro\-positions \ref{pair}, \ref{pair1}).   These problems were considered in \cite{MW-Bim}
for proper compatible entwinings $\omega$.

In Section \ref{t-bim}, we
define {\em weak $\tau$-bimonads}, also called {\em weak braided bimonads} (Definition \ref{def-bialg}),
based on a {\em weak Yang-Baxter operator}
$\tau:HH\to HH$ (Section \ref{weak-YB}). This type of operator
was introduced by Alonso \'Alvarez e.a.  in %\cite{AVR},
\cite{AV-JA}  for monoidal categories and is here adapted to the more general setting.
%Moreover, their definition of weak braided Hopf algebras gives the blueprint
%for our definition of a {\em weak braided Hopf monad}.
%In case such a $\tau$ satisfies $\delta\cdot m=mm\cdot H\tau H\cdot \delta\delta$,
The conditions on weak braided bimonads
 induce a weak monad-comonad entwining $\omega:HH\to HH$ as well as a weak
comonad-monad entwining $\oom:HH\to HH$
and allow to refine the results in Section \ref{Entw}:
the natural transformation $\oxi:=H\ve\cdot \oom\cdot He: H\to H$  is idempotent
 and its splitting yields a  separable Frobenius monad $H^\oxi$
(see Proposition \ref{sep.frobenius2}).
 Then, if $K_\omega$ has  a right adjoint functor $\ol K$,
the induced monad $P$ is just $H^\oxi$
 and the  diagram corresponding to  (\ref{diagram.2a}) can be completed.

The {\em weak bialgebras} over a commutative ring $k$ as considered
by B\"ohm e.a. in \cite{BCJ}  are weak braided bimonads in our sense
($\tau$ the ordinary twist, $\oxi=\ve_s$) and for this case some of our results
are shown there, including the Frobenius and separability property of
$H^\oxi (=H_s)$  (\cite[Proposition 4.4]{BCJ}).

Eventually, in Section \ref{t-Hopf}, {\em weak braided Hopf monads} are defined as
weak braided bimonads $\bH$ with a {\em weak antipode} (Definition \ref{def-S}).
 In monoidal categories, these correspond to the weak braided Hopf algebras
considered  in \cite{AVR}, \cite{AVR-ent}.

We show  that for a braided bimonad
in Cauchy complete categories the existence of an antipode
is equivalent to the functor $K_\omega$ having a left adjoint and a monadic right
adjoint and this leads to an equivalence between the
categories of $H^\oxi$-modules and $\A^{ol \bH}_{\ul\bH}(\omega)$ (Fundamental Theorem \ref{main}).

Examples for our weak braided bimonads and
weak braided Hopf monads are the {\em weak braided  Hopf
algebras}  in strict monoidal Cauchy complete categories considered by
Alonso \'Alvarez et al. in \cite{AV-JA,AVR, AVR-ent},
the {\em weak bimonoids} and {\em weak Hopf monoids} in braided monoidal categories
as defined by Pastro et. al. in \cite{PaSt} and also showing up in \cite[Sections 3,4]{BLS}.
These all subsume the braided Hopf algebras considered, e.g., by Takeuchi \cite{Tak}
and Schauenburg \cite{Sch} and, of course,  the
weak Hopf algebras in vector space categories introduced by B\"ohm et al. in \cite{BNS}.
Moreover, we generalise  the {\em bimonads} and {\em $\tau$-Hopf monads} defined on arbitrary categories in \cite{MW-Bim}
and these include, for example, bimonoids in duoidal categories
(e.g. \cite{MW-Gal-Gen}).

 {\em Opmonoidal monads $\bT=(T,m,e)$} on strict  monoidal categories $(\V,\ot, \I)$
were also called {\em bimonads} by Brugui\`eres et al. in \cite {BrVi} and these were
generalised to {\em weak bimonads} in monoidal categories by B\"ohm et al. in
\cite{BLS}.
As pointed out in \cite[Section 5]{MW}, the bimonads from \cite {BrVi} yield a special case of an entwining of
the monad $T$ with the comonad $-\ot T(\I)$, where $T(\I)$
has a coalgebra structure derived from the opmonoidality of $\bT$.
To transfer the structures from  \cite{BLS} to arbitrary categories one has to
consider {\em weak entwinings} between monads and comonads.
It is planned to elaborate details for this in a subsequent article.

%The techniques studied here are general enough to apply also to bialgebroids in
%skew-monoidal categories as investigated by Szlachan'anyi in \cite{Szl-Skew}.

\section{Preliminaries}\label{Prel}

\begin{thm} \label{mon-comon}{\bf Monads and comonads.}  \em Recall that a {\em monad} $\bT$ on a category $\A$ (or
shortly an $\A$-monad $\bT$) is a triple $(T,m,e)$ where $T:\A\to \A$ is a functor with natural
transformations $m:TT\to T$, $e:1\to T$ satisfying the usual associativity and unitality conditions. A $\bT${\em -module}
is an object $a\in \A$ with a morphism $h:T(a)\to a$ subject to associativity and unitality
conditions. The (Eilenberg-Moore) category of $\bT$-modules is denoted by $\A_\bT$ and there is an adjunction
$$e_\bT, \ve_\bT:\phi_\bT \dashv U_\bT: \A_\bT \to \A,$$
with $\phi_\bT:\A\to \A_\bT$ and
$U_\bT:\A_\bT\to \A$  given by the respective assignments
$$ a \mapsto (T(a),m_a) \,\,\, \text{and}\,\,\,(a, h) \mapsto a,$$
while $e_\bT=e$ and $(\ve_\bT)_{(a,\,h)}=h$ for each $(a,h)\in \A_\bT$.

If $\bT=(RF, R \ve F,\eta)$ is the monad generated on $\A$ by an adjoint pair
$ \eta , \ve \colon F \dashv R \colon \B \to \A$, then there is the \emph{comparison functor}
$K_{\bT} : \B \to \A_{\bT}$ which
assigns to each object $ b \in \B$ the $\bT$-algebra $ (R(b), R(\ve_b)),$ and to each morphism $ f : b  \to b'$
the morphism $R(f): R(b) \to R(b'),$ satisfying $U_{\bT} K_{\bT} = R$ and $K_{\bT} F = \phi_{\bT}$. This
situation is illustrated by the diagram
$$
\xymatrix@!=2.4pc{ \B \ar@<.6ex> [dr]^R \ar[rr]^{K_\bT} && \A_\bT \ar@<-.6ex> [dl]_{U_\bT}\\
& \A \ar@<.6ex> [ul]^F \ar@<-.6ex> [ru]_{\phi_\bT}& .}
$$

The functor $R$ is called \emph{monadic} (resp. \emph{premonadic}) if the comparison functor $K_{\bT}$ is an equivalence
of categories (resp. full and faithful).

\begin{theorem} \label{Beck}\emph{(Beck, \cite{Be})} Let $ \eta, \epsilon \colon F \dashv R \colon \B \to \A$
be an adjunction, and $\bT=(RF, R\ve F, \eta)$ the corresponding monad on $\A$. %Then:

\begin{zlist}
\item  The comparison functor $K_{\bT} : \B \to \A_{\bT}$ has a left adjoint $L_{\bT} : \A_{\bT} \to \B$ if and
only if for each $(a, h) \in \A_{\bT}$, the pair of morphisms $(F(h), \ve_{F(a)})$ has a coequaliser in $\B$.
\item  $R$ is monadic if and only if it is conservative and for  $(a, h) \in \A_\bT$, the pair
of morphisms $(F(h), \ve_{F(a)})$ has a co\-equaliser and this coequaliser is preserved by $R$.
\end{zlist}
\end{theorem}

Dually, a {\em comonad} $\bG$ on $\A$ (or shortly an $\A$-comonad $\bG$) is a triple $(G,\delta,\ve)$ where $G:\A\to \A$ is a functor with natural
transformations $\delta:G\to GG$, $\ve:G\to 1$, and $\bG$-{\em comodules} are objects $a\in \A$ with morphisms
$\theta:a\to G(a)$. Both notions are subject to coassociativity and counitality conditions. The (Eilenberg-Moore)
category of $\bG$-comodules is denoted by $\A^\bG$ and there is a cofree functor
$$\phi^\bG:\A\to \A^\bG,\; a\mapsto (G(a),\delta_a),$$ which is right adjoint to the forgetful functor
$$U^\bG:\A^\bG\to \A, \; (a,\theta) \mapsto a.$$

If $\eta, \ve: F \dashv R \colon \A \to \B$ is an adjoint pair and
$\bG=(FR, F \eta R,\ve)$ is the comonad on $\A$
associated to $(R, F)$, then one has the comparison functor
$$K^\bG: \B \to \A^\bG, \, \, b \to (F(b), F(\eta_b))$$ for which $U^\bG \cdot K^\bG= F$ and $K^\bG \cdot R = \phi^\bG$.
One says  that the functor $F$ is \emph{precomonadic} if $K^\bG$ is full and faithful, and it is \emph{comonadic} if
$K^\bG$ is an equivalence of categories.
\end{thm}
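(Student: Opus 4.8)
The plan is to produce the left adjoint $L_\bT$ of $K_\bT$ explicitly, as a coequaliser built out of the adjunction $F\dashv R$, and then --- for the monadicity part --- to verify that the unit and counit of $L_\bT\dashv K_\bT$ become invertible precisely under the stated hypotheses. Throughout I would lean on the canonical presentation of a $\bT$-module $(a,h)$ as a coequaliser of free modules,
$$
\phi_\bT(T(a))\ \overset{T(h)}{\underset{m_a}{\rightrightarrows}}\ \phi_\bT(a)\ \xrightarrow{\ (\ve_\bT)_{(a,h)}\,=\,h\ }\ (a,h),
$$
which is moreover $U_\bT$-split (downstairs the splitting is $e_a$ and $e_{T(a)}$), and on the observation that the pair $(F(h),\ve_{F(a)})$ is reflexive, with common section $F(e_a)$.

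For statement (1), I would apply the contravariant functor $\A_\bT(-,K_\bT(b))$ to the presentation above; it carries coequalisers to equalisers, and since $\A_\bT(\phi_\bT(x),K_\bT(b))\cong\A(x,U_\bT K_\bT(b))=\A(x,R(b))\cong\B(F(x),b)$ naturally in $x$ and $b$ (by $\phi_\bT\dashv U_\bT$, the identity $U_\bT K_\bT=R$, and $F\dashv R$), one obtains a natural isomorphism
$$
\A_\bT\big((a,h),K_\bT(b)\big)\ \cong\ \mathrm{Eq}\Big(\B(F(a),b)\ \overset{-\,\cdot\, F(h)}{\underset{-\,\cdot\,\ve_{F(a)}}{\rightrightarrows}}\ \B(FT(a),b)\Big).
$$
The one non-formal point is to identify the transpose of $m_a\colon\phi_\bT(T(a))\to\phi_\bT(a)$ with the counit component $\ve_{F(a)}\colon FRF(a)\to F(a)$ (recall $RF=T$ and $m=R\ve F$); this is a short diagram chase with the triangle identities, using that the maps being compared are morphisms of $\bT$-modules. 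Granting it, the equaliser above is exactly $\B\big(\mathrm{coeq}(F(h),\ve_{F(a)}),\,b\big)$ whenever that coequaliser exists in $\B$; hence, if all such coequalisers exist, setting $L_\bT(a,h):=\mathrm{coeq}(F(h),\ve_{F(a)})$ defines a left adjoint to $K_\bT$. Conversely, if $L_\bT$ exists then the displayed isomorphism reads $\B\big(L_\bT(a,h),b\big)\cong\mathrm{Eq}\big(\B(F(a),b)\rightrightarrows\B(FT(a),b)\big)$ naturally in $b$, so $L_\bT(a,h)$ represents that equaliser, i.e.\ is the coequaliser $\mathrm{coeq}(F(h),\ve_{F(a)})$, which therefore exists.

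For statement (2), the easy direction is immediate: if $K_\bT$ is an equivalence then $R=U_\bT K_\bT$ is, up to that equivalence, the monadic forgetful functor $U_\bT$, which is conservative and creates --- hence preserves --- the coequalisers of the pairs $(F(h),\ve_{F(a)})$. Conversely, suppose $R$ is conservative and the coequalisers $\mathrm{coeq}(F(h),\ve_{F(a)})$ exist and are preserved by $R$; by (1) we have $L_\bT\dashv K_\bT$, and it remains to check that the unit $\eta'_{(a,h)}\colon(a,h)\to K_\bT L_\bT(a,h)$ and the counit $\ve'_b\colon L_\bT K_\bT(b)\to b$ are isomorphisms. Since $U_\bT$ is conservative it suffices to invert $U_\bT(\eta'_{(a,h)})$; but $U_\bT K_\bT L_\bT(a,h)=R\,L_\bT(a,h)=R\,\mathrm{coeq}(F(h),\ve_{F(a)})\cong\mathrm{coeq}(T(h),m_a)$, using that $R$ preserves the coequaliser together with $R\ve_{F(a)}=m_a$, while $\mathrm{coeq}(T(h),m_a)=a$ via the split coequaliser $TT(a)\rightrightarrows T(a)\xrightarrow{h}a$ --- and the resulting comparison map is precisely $U_\bT(\eta'_{(a,h)})$. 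For the counit one runs the dual argument with $R$ (now conservative) in place of $U_\bT$: taking $(a,h)=K_\bT(b)=(R(b),R\ve_b)$ one gets $R\,L_\bT K_\bT(b)=R\,\mathrm{coeq}(F(R\ve_b),\ve_{FR(b)})\cong\mathrm{coeq}(T(R\ve_b),m_{R(b)})=R(b)$, because $R\ve_b\colon T(R(b))\to R(b)$ is itself a $\bT$-module structure and so sits in the canonical split coequaliser, the comparison map being $R(\ve'_b)$. Hence $\eta'$ and $\ve'$ are pointwise invertible, $L_\bT\dashv K_\bT$ is an adjoint equivalence, and $R$ is monadic.

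I expect the main obstacle to be bookkeeping rather than any single idea: first, in (1), tracking how the Eilenberg-Moore presentation of $(a,h)$ transposes across the two adjunctions, so that the second leg of the pair is correctly recognised as $\ve_{F(a)}$; and second, in (2), verifying that the isomorphisms $R\,L_\bT(a,h)\cong a$ and $R\,L_\bT K_\bT(b)\cong R(b)$ one writes down are genuinely the images under $U_\bT$ (respectively $R$) of the unit (respectively counit) --- that is, that the comparison maps delivered by the two split coequalisers really coincide with $U_\bT(\eta'_{(a,h)})$ and $R(\ve'_b)$, not merely with abstract isomorphisms of objects. That compatibility is exactly where the hypotheses ``$R$ conservative'' and ``$R$ preserves these coequalisers'' are used, and it is the delicate point.
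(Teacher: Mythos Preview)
The paper does not give its own proof of this statement: Theorem~\ref{Beck} is recorded in the preliminaries as a classical result of Beck, with only a citation to \cite{Be} and no argument supplied. There is therefore nothing in the paper to compare your attempt against.

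On its own merits, your proposal is the standard proof and is correct. The representability argument for~(1), reducing $\A_\bT((a,h),K_\bT(b))$ to the equaliser of $\B(F(a),b)\rightrightarrows\B(FT(a),b)$ via the split presentation of $(a,h)$ and the adjunctions, is exactly how one constructs $L_\bT$; the identification of the transpose of $m_a$ with $\ve_{F(a)}$ is immediate from $m=R\ve F$ and the triangle identity. For~(2), your plan to check invertibility of the unit and counit of $L_\bT\dashv K_\bT$ by applying the conservative functors $U_\bT$ and $R$ and invoking the preserved split coequalisers is the usual route; the bookkeeping you flag (that the comparison maps produced by the split coequalisers really are $U_\bT(\eta')$ and $R(\ve')$) is genuine but routine. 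One small point in the forward direction of~(2): the pairs $(F(h),\ve_{F(a)})$ live in $\B$, so to deduce their coequalisers exist there you should transport the split coequaliser $(T(h),m_a)\to(a,h)$ in $\A_\bT$ back along the equivalence $K_\bT$, rather than appeal directly to creation by $U_\bT$.
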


\begin{thm} \label{Cauchy}{\bf Cauchy completeness.}  \em A morphism $e:A \to A$ in $\A$
is \emph{idempotent} if $e^2 = e$ and $\A$ is said to be \emph{Cauchy complete} if for every idempotent
$e:a \to a$, there exists an object $\overline{a}$ and morphisms $p : a \to \overline{a}$ and
$i:\overline{a} \to a$ such that $ip = e$ and $pi = 1_{\overline{a}}$. In this case, $(\overline{a}, i)$
is the equaliser of $e$ and $1_a$ and $(\ol a, p)$ is the coequaliser of $e$ and $1_a$. Hence any category
admitting either equalisers or coequalizers is Cauchy complete.

\begin{proposition} \label{idem}Let $\bG$ be a comonad on $\A$. If $\A$  is Cauchy complete,
then so is $\A^\bG$. Moreover, the forgetful functor $U^\bG: \A\!^\bG \to \A$
preserves and creates splitting of idempotents. Explicitly, if $e: (a, \theta) \to (a, \theta)$
is an idempotent morphism in $\A\!^\bG$ and if $a \xr{p} \overline{a} \xr{i} a$ is a splitting of $e$ in $\A$,
then $(\overline{a}, G(p) \cdot \theta \cdot i)$ is a $\bG$-comodule in such a way that $p$ and $i$ become
morphisms of $\bG$-comodules. Similarly, if $\bT$ is a monad on $\A$, then the forgetful functor
$U_\bT: \A_\bT \to \A$ preserves and creates splitting of idempotents.
\end{proposition}
\begin{proof} The result follows from the fact that the forgetful functor
 $U^\bG: \A^\bG \to \A$
preserves and creates coequalisers, while the functor $U_\bT: \A_\bT \to \A$ preserves
 and creates equalisers.
\end{proof}
\end{thm}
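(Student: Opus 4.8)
The plan is to prove the explicit comodule statement first by a direct hands-on verification, and then to read off Cauchy completeness of $\A^\bG$ and the comonad-to-monad dualisation as formal consequences. So suppose $e\colon (a,\theta)\to(a,\theta)$ is an idempotent in $\A^\bG$ and $a\xr{p}\ol a\xr{i}a$ is a splitting of the underlying idempotent $e\colon a\to a$ in $\A$, so $i\cdot p=e$ and $p\cdot i=1_{\ol a}$. The first and main step is to check that $\ol\theta:=G(p)\cdot\theta\cdot i\colon\ol a\to G(\ol a)$ is a $\bG$-comodule structure. Counitality is immediate from naturality of $\ve$ and the counit law for $\theta$, since $\ve_{\ol a}\cdot\ol\theta=p\cdot\ve_a\cdot\theta\cdot i=p\cdot i=1_{\ol a}$. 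For coassociativity one expands $G(\ol\theta)\cdot\ol\theta=GG(p)\cdot G(\theta)\cdot\big(G(i)\cdot G(p)\cdot\theta\big)\cdot i$, and here the hypothesis that $e$ is a morphism of comodules enters crucially: $G(i)\cdot G(p)\cdot\theta=G(e)\cdot\theta=\theta\cdot e=\theta\cdot i\cdot p$, which together with $p\cdot i=1_{\ol a}$ rewrites the expression as $GG(p)\cdot G(\theta)\cdot\theta\cdot i$; then coassociativity of $\theta$ and naturality of $\delta$ turn it into $\delta_{\ol a}\cdot G(p)\cdot\theta\cdot i=\delta_{\ol a}\cdot\ol\theta$, as required.

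The second step is to verify that $p$ and $i$ become comodule morphisms. Using $p\cdot e=p\cdot i\cdot p=p$, $e\cdot i=i\cdot p\cdot i=i$, and again that $e$ respects $\theta$, one computes $\ol\theta\cdot p=G(p)\cdot\theta\cdot i\cdot p=G(p)\cdot\theta\cdot e=G(p)\cdot G(e)\cdot\theta=G(p\cdot e)\cdot\theta=G(p)\cdot\theta$ and $G(i)\cdot\ol\theta=G(i\cdot p)\cdot\theta\cdot i=G(e)\cdot\theta\cdot i=\theta\cdot e\cdot i=\theta\cdot i$, so $p\colon(a,\theta)\to(\ol a,\ol\theta)$ and $i\colon(\ol a,\ol\theta)\to(a,\theta)$ are comodule morphisms. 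Since $U^\bG$ is faithful, the relations $i\cdot p=e$ and $p\cdot i=1_{\ol a}$ persist in $\A^\bG$, so $(\ol a,\ol\theta)$ with $p,i$ is a splitting of $e$ in $\A^\bG$ lying over the given one; moreover, as $p$ is a (split) epimorphism the structure $\ol\theta$ is the only one making $p$ a comodule morphism, which gives the uniqueness clause of ``creates''. That $U^\bG$ \emph{preserves} splittings of idempotents is automatic, since any functor preserves the identities $i\cdot p=e$, $p\cdot i=1$.

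The remaining assertions follow formally. If $\A$ is Cauchy complete and $e$ is any idempotent in $\A^\bG$, then $U^\bG(e)$ splits in $\A$, and by the above this splitting lifts to $\A^\bG$; hence $\A^\bG$ is Cauchy complete. The statement for a monad $\bT$ is proved by the dual argument: given an idempotent $e\colon(a,h)\to(a,h)$ in $\A_\bT$ and a splitting $a\xr{p}\ol a\xr{i}a$ of $e$ in $\A$, one equips $\ol a$ with $\ol h:=p\cdot h\cdot T(i)\colon T(\ol a)\to\ol a$ and checks, mutatis mutandis (using that $e$ is a module morphism and $p\cdot i=1_{\ol a}$), that this is a $\bT$-module structure for which $p$ and $i$ are module morphisms, so $U_\bT$ creates (and trivially preserves) splittings of idempotents. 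Alternatively, everything is subsumed by the slicker statement indicated in the paper: $U^\bG$ creates colimits, in particular coequalisers, and dually $U_\bT$ creates equalisers; since by the discussion of Cauchy completeness a splitting of an idempotent is simultaneously an equaliser and a coequaliser of that idempotent with the identity (and such (co)equalisers are absolute), creation of (co)equalisers yields creation of these splittings. I do not expect a genuine obstacle anywhere; the only place needing care is organising the coassociativity check for $\ol\theta$ so that the hypothesis ``$e$ is a comodule morphism'' is actually used, since without it $\ol\theta$ need not be coassociative.
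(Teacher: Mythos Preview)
Your proof is correct. The paper's own proof is a one-line appeal to the fact that $U^\bG$ creates coequalisers (dually, $U_\bT$ creates equalisers), together with the observation already made in \ref{Cauchy} that a splitting of an idempotent is an absolute (co)equaliser of that idempotent with the identity; you mention this slicker route at the end, but your main argument is instead a direct element-free verification that $\ol\theta=G(p)\cdot\theta\cdot i$ is counital and coassociative and that $p,i$ are comodule morphisms. Both are perfectly fine: the paper's argument is shorter and places the result in the general context of creation of (co)limits, while your hands-on computation has the advantage of actually exhibiting the lifted structure in the explicit form stated in the proposition and of making transparent exactly where the hypothesis ``$e$ is a comodule morphism'' is used (namely in the step $G(e)\cdot\theta=\theta\cdot e$ needed for coassociativity and for $p,i$ to be comodule maps).
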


\begin{thm} \label{contractible}{\bf Split (co)equalisers.}  \em Recall
(e.g. from \cite{Mc})  that a diagram
\begin{equation}\label{contractible.d}
\xymatrix{a  \ar@{->}@<0.5ex>[r]^-{\partial_0}
\ar@{->}@<-0.5ex>[r]_{\partial_1}& a' \ar[r]_p& x }
\end{equation} with $p \partial_0=p\partial_1$ is said to be \emph{split} by a pair of morphisms
$i:x \to a'$ and $s: a' \to a$ if $pi=1_x, \,\partial_0s=1_{a'}$ and $\partial_1 s=ip$.

A pair of morphisms  $(\partial_0,\partial_1:a \rightrightarrows a')$ in  $\A$ is called \emph{split} if
there exists a morphism  $ s:a' \to a$ with $ \partial_0 s=1$ and $\partial_1 s \partial_0=\partial_1 s \partial_1$.
In this case, $\partial_1 s: a' \to a'$ is an idempotent, and if we assume  $\A$ to be Cauchy complete
and  if $a' \xr{p}x \xr{i} a'$ is a splitting of the idempotent $\partial_1 s$, then the diagram
$$
\xymatrix{a  \ar@{->}@<0.5ex>[rr]^-{\partial_0}
\ar@{->}@<-0.5ex>[rr]_{\partial_1}&& a' \ar@/_1.6pc/ [ll]_{s} \ar[r]_p& x
 \ar@/_1.6pc/ [l]_{i}}
$$ is a split coequaliser. Conversely, if the
above diagram is a split coequaliser, then $s$ makes the pair $(\partial_0,\partial_1:a
\rightrightarrows a')$ split. Thus, when $\A$ is Cauchy complete, a pair $(\partial_0,\partial_1:a
\rightrightarrows a')$ is part of a split coequaliser diagram if and only if it is split. Note
that split (co)equalisers are \emph{absolute}, i.e., they are preserved by any functor.

If $F: \A \to \B$ is a functor, $(\partial_0,\partial_1:a \rightrightarrows a')$ is called
\emph{$F$-split} if the pair of morphisms $(F(\partial_0), F(\partial_1) :F(a)
\rightrightarrows F(a'))$ in $\B$ is split.

The dual notions are those of \emph{cosplit pairs} and $F$-\emph{cosplit pairs}.
\end{thm}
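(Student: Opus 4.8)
The plan is to dispatch the three claims inside the statement by direct equational manipulation, with Cauchy completeness entering only to produce the splitting of the idempotent $\partial_1 s$. Throughout, write composition right‑to‑left, so $\partial_1 s$ denotes $a'\xr{s}a\xr{\partial_1}a'$.

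\emph{Step 1: $e:=\partial_1 s$ is idempotent.} Given $s:a'\to a$ with $\partial_0 s=1_{a'}$ and $\partial_1 s\partial_0=\partial_1 s\partial_1$, I would compute
$$e^2=\partial_1 s\partial_1 s=(\partial_1 s\partial_1)\,s=(\partial_1 s\partial_0)\,s=\partial_1(s\partial_0 s)=\partial_1(s\cdot 1_{a'})=\partial_1 s=e,$$
using the split‑pair identity in the third equality and $\partial_0 s=1_{a'}$ thereafter.

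\emph{Step 2: the displayed diagram is a split coequaliser.} Assume $\A$ Cauchy complete and let $a'\xr{p}x\xr{i}a'$ split the idempotent $e=\partial_1 s$, so $ip=\partial_1 s$ and $pi=1_x$. The three relations in the definition of ``split by $(i,s)$'' then hold at once: $pi=1_x$ and $\partial_1 s=ip$ by construction, and $\partial_0 s=1_{a'}$ by hypothesis on $s$. One still has to check that $p$ is a fork for $(\partial_0,\partial_1)$; from $pi=1_x$ and $ip=\partial_1 s$ we get $p=p(ip)=p\partial_1 s$, whence
$$p\partial_0=(p\partial_1 s)\partial_0=p(\partial_1 s\partial_0)=p(\partial_1 s\partial_1)=(p\partial_1 s)\partial_1=p\partial_1 .$$
A fork equipped with such $(i,s)$ is automatically a coequaliser: if $g\partial_0=g\partial_1$ then $h:=gi$ satisfies $hp=gip=g\partial_1 s=g\partial_0 s=g$, and any $h'$ with $h'p=g$ obeys $h'=h'pi=gi=h$. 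Hence the diagram is a split coequaliser.

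\emph{Step 3: converse, equivalence, and absoluteness.} Conversely, if the displayed diagram is a split coequaliser split by $(i,s)$, then $\partial_0 s=1_{a'}$ is part of the data and $\partial_1 s\partial_0=ip\partial_0=ip\partial_1=\partial_1 s\partial_1$ by the fork equation $p\partial_0=p\partial_1$, so $s$ makes $(\partial_0,\partial_1)$ split; combining with Step 2 yields the stated equivalence over a Cauchy complete $\A$. Absoluteness is then formal: the defining equations $pi=1_x$, $\partial_0 s=1_{a'}$, $\partial_1 s=ip$, $p\partial_0=p\partial_1$ are identities among composites of the given arrows, so any functor $F$ sends the split coequaliser to the fork $(F\partial_0,F\partial_1,Fp)$ split by $(Fi,Fs)$, which by the universal‑property argument of Step 2 is again a coequaliser; the ``cosplit''/split‑equaliser assertions follow by dualising. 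The only point requiring care — hardly an obstacle — is to remember that the fork condition $p\partial_0=p\partial_1$ must be verified separately, since it is not one of the three ``split by $(i,s)$'' relations, and that Cauchy completeness is invoked precisely to obtain the splitting $(p,i)$ of $\partial_1 s$ in Step 2.
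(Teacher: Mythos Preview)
Your proof is correct; each step is a clean equational check, and your care in verifying the fork condition $p\partial_0=p\partial_1$ separately is exactly right. The paper itself offers no proof here at all: the passage is a recall of standard material (it is typeset in emphasised text as a definition/remark rather than as a proved theorem), so your argument simply supplies the routine verification that the paper takes for granted.
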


Given a monad $\bT$ (resp. comonad $\bG$) on $\A$ and a category $\X$, one may consider the functor
category $[\X, \A]$ and the induced monad $[\X, \bT]$ (resp. comonad $[\X, \bG]$) thereon, whose
functor part sends a functor $F: \X \to \A$ to the composite $TF: \X \to \A$ (resp. $GF: \X \to \A$). Symmetrically, one
has the induced monad $[\bT, \X]$ (resp. comonad $[\bG, \X]$) on $[\A, \X]$, whose functor-part sends
$F: \A \to \X$ to $FT: \A \to \X$ (resp. $FG: \A \to \X$).

\begin{thm} \label{bimodule.funct.}{\bf (Bi)module functors.} \em Given a monad $\bT=(T, m,e)$ on $\A$ and
a category $\X$, a \emph{left} $\bT$-\emph{module with domain} $\X$ is an object of the Eilenberg-Moore
category $[\X,\A]_{[\X, \bT]}$ of the monad $[\X, \bT]$. Thus, a left $\bT$-module with domain $\X$ is
a functor $M : \X \to \A$ together with a natural transformation $\varrho : TM \to M$, called the
\emph{action} (or the $\bT$-\emph{action}) on $M$, such that $\varrho \cdot eM=1$ and $\varrho \cdot T\varrho=\varrho \cdot mM$.
A morphism of left $\bT$-modules with domain $\X$ is a natural transformation in $[\X,\A]$ that commutes
with the $\bT$-actions.

Similarly, for a category $\Y$, the category of \emph{right} $\bT$-\emph{modules with codomain}
$\Y$  is defined as the Eilenberg-Moore category $[\A, \Y]_{[\bT, \Y]}$ of the monad $[\bT, \Y]$.

Let $\bS$ be another monad on $\A$. A $(\bT, \bS)$-
\emph{bimodule} is a functor $N: \A \to \A$ equipped with two natural transformations $\varrho_l: TN \to N$
and $\varrho_r: N S \to N$ such that $(N, \varrho_l) \in [\X,\A]_{[\X, \bT]}$, $(N, \varrho_r)\in [\A, \Y]_{[\bT, \Y]}$
and $\varrho_r \cdot \varrho_l S=\varrho_l \cdot T\varrho_r.$ A morphism of $(\bT,\bS)$-bimodules is a morphism
of left $\bT$-modules which is simultaneously a morphism of right $\bS$-modules. We write $[\X,\A]_{[\bS, \bT]}$
for the corresponding category.
\end{thm}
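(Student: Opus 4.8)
The block labelled \emph{(Bi)module functors} is definitional rather than assertional: it fixes the notions of a left $\bT$-module with domain $\X$, a right $\bT$-module with codomain $\Y$, and a $(\bT,\bS)$-bimodule, in each case as an object of the Eilenberg--Moore category of the induced monad $[\X,\bT]$, $[\bT,\Y]$, or a combination thereof on the relevant functor category. Strictly speaking there is nothing substantive to prove here; the only point that merits a sentence is the passage marked ``Thus,'' where the abstract Eilenberg--Moore description is translated into the explicit natural-transformation identities. My plan is therefore simply to record that unwinding.

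First I would spell out the induced monad $[\X,\bT]$ on $[\X,\A]$: by construction its functor part sends $M\colon\X\to\A$ to the composite $TM$, its multiplication at $M$ is $mM$, and its unit at $M$ is $eM$. By the very definition of an Eilenberg--Moore algebra, an object of $[\X,\A]_{[\X,\bT]}$ is then a functor $M$ equipped with a structure morphism $\varrho\colon TM\to M$ in $[\X,\A]$ satisfying the unit law $\varrho\cdot eM=1_M$ and the associativity law $\varrho\cdot T\varrho=\varrho\cdot mM$. These are verbatim the two identities displayed in the statement, so the two descriptions of a left $\bT$-module coincide; the codomain case $[\A,\Y]_{[\bT,\Y]}$ is obtained symmetrically, with $T$ whiskered on the opposite side of the functor.

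For the $(\bT,\bS)$-bimodule case I would only check that the displayed compatibility $\varrho_r\cdot\varrho_l S=\varrho_l\cdot T\varrho_r$ is precisely the condition that the left $\bT$-action and the right $\bS$-action commute: both composites are natural transformations $TNS\to N$, and the equation asserts that the two ways of reducing $TNS$ to $N$ agree. Since every ingredient is either a definition or an immediate rewriting of the Eilenberg--Moore axioms for the induced (mon)adic structures, there is no genuine obstacle; the only care required is the bookkeeping of the side on which $T$ and $S$ are whiskered.
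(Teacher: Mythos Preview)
Your reading is correct: this block is purely definitional in the paper and carries no proof, so your observation that only the routine unwinding of the Eilenberg--Moore axioms for the induced monads $[\X,\bT]$ and $[\bT,\Y]$ is needed matches exactly what the paper (implicitly) does. There is nothing to add.
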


\begin{thm} \label{canonical module}{\bf Canonical modules.} \em Let $\bT=(T, m,e)$ be an arbitrary monad on $\A$.
Using the associativity and unitality of the multiplication $m$,  we find that for any functor $M:\X \to \A$, the pair
$(TM, mM)$ is a left $\bT$-module. Moreover, if $\nu : M \to M'$ is a natural transformation, then $T\nu: TM \to TM'$
is a morphism of left $\bT$-modules.

Symmetrically, for any functor $N:\A \to \Y$, the pair
$(NT, Nm)$ is a right $\bT$-module, and if $\nu : N \to N'$ is a natural transformation, then $\nu T: NT \to NT'$
is a morphism of right $\bT$-modules. In particular, $(T,m)$ can be regarded as a right as well
as a left $\bT$-module; again by the associativity of $m$, $(T,m,m)$ is a $(\bT,\bT)$-bimodule. Moreover, if
$\bS$ is another monad on $\A$ and $\iota: \bS \to \bT$ is a monad morphism, then
\begin{itemize}
  \item [(i)] $(T, ST \xr{\iota T}TT \xr {m} T)$ is a
left $\bS$-module;
  \item  [(ii)]$(T, TS \xr{T\iota}TT \xr {m} T)$ is a right $\bS$-module;
  \item [(iii)] $(T, TT \xr{m} T, TS \xr{T\iota}TT \xr {m} T)$ is a $(\bT,\bS)$-bimodule;
  \item [(iv)] $(T, ST \xr{\iota T}TT \xr {m} T, TT \xr{m} T)$ is an $(\bS,\bT)$-bimodule;
  \item [(v)] $(T, ST \xr{\iota T}TT \xr {m} T, TS \xr{T\iota}TT \xr {m} T)$ is an $(\bS,\bS)$-bimodule;
  \item [(vi)] $(\phi_\bT,  \ve_\bT \phi_\bT \cdot \phi_\bT \iota)$ is right $\bS$-module;
  \item [(vii)] $(U_\bT,   U_\bT\ve_\bT \cdot  \iota U_\bT )$ is left $\bS$-module.
\end{itemize}
\end{thm}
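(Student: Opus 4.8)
The statement in question, Theorem~\ref{canonical module}, is not really a theorem to be proved but a bundle of seven structural observations, each asserting that the functor $T$ (or $\phi_\bT$, or $U_\bT$) carries a canonical (bi)module structure built from the multiplication $m$ and the monad morphism $\iota\colon\bS\to\bT$. The plan is therefore to verify each item by a routine diagram chase, exploiting the associativity and unitality axioms for $\bT$ together with the two defining properties of a monad morphism, namely $\iota\cdot m_\bS = m_\bT\cdot(\iota\iota)$ (compatibility with multiplication) and $\iota\cdot e_\bS = e_\bT$ (compatibility with units). I would first record these two identities explicitly, since every item below reduces to them.

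First I would dispatch the left $\bS$-module claim~(i): the action $ST\xrightarrow{\iota T}TT\xrightarrow{m}T$ satisfies the unit law because $m\cdot\iota T\cdot e_\bS T = m\cdot e_\bT T = 1_T$, using monad-morphism unitality and then the left unit law of $\bT$; and it satisfies the associativity law because both ways around the pentagon collapse, after inserting $\iota\cdot m_\bS = m\cdot\iota\iota$, to $m\cdot mT\cdot\iota\iota T$, which equals $m\cdot Tm\cdot\iota\iota T$ by associativity of $m$ and naturality of $\iota$. Item~(ii) is the mirror-image computation on the right, and item~(v) is just the combination of (i) and (ii) together with the middle-interchange condition $\varrho_r\cdot\varrho_l S = \varrho_l\cdot T\varrho_r$, which for these particular actions is again a consequence of the associativity of $m$ and the naturality of $\iota$. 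Items~(iii) and~(iv) are hybrids: one side is the genuine $\bT$-action $m$ on $(T,m)$ from the first part of the statement, the other side is the $\bS$-action pulled back along $\iota$; the bimodule compatibility is verified exactly as in~(v).

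For items~(vi) and~(vii) I would pass to the Eilenberg--Moore adjunction $\phi_\bT\dashv U_\bT$ recalled in Theorem~\ref{mon-comon}, so that $\bT = (U_\bT\phi_\bT, U_\bT\ve_\bT\phi_\bT, e_\bT)$. The functor category $[\A,\A_\bT]$ carries the induced monad $[\bT,\A_\bT]$-structure, er, rather one uses the induced $\bS$-module structure on $[\A,\A_\bT]$; concretely, $(\phi_\bT,\,\ve_\bT\phi_\bT\cdot\phi_\bT\iota)$ is the assertion that $\phi_\bT\colon\A\to\A_\bT$ is a right $\bS$-module object in $[\A,\A_\bT]$ with action $\phi_\bT S\xrightarrow{\phi_\bT\iota}\phi_\bT T\xrightarrow{\ve_\bT\phi_\bT}\phi_\bT$, and this is just a reformulation of item~(ii) after applying $\phi_\bT$ and using the triangle identities; symmetrically~(vii) reformulates~(i) via $U_\bT$. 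I do not anticipate any genuine obstacle here: the whole statement is bookkeeping, and the only point requiring care is keeping the naturality squares for $\iota$ and the interchange of whiskering straight, so that one does not accidentally use a compatibility that has not been assumed. Accordingly I would present the proof of~(i) in full as the model computation and then say that~(ii)--(vii) follow by the same arguments together with, in~(vi)--(vii), the Eilenberg--Moore triangle identities.
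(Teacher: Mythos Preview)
Your verification is correct. Note, however, that the paper gives no proof at all for this statement: it is presented (via the \texttt{\textbackslash em} body) as a block of routine observations, with the only justification being the phrase ``using the associativity and unitality of the multiplication $m$''. Your proposal is therefore not so much a different proof as a spelled-out elaboration of what the authors regard as immediate; the arguments you sketch for (i)--(v) are exactly the intended ones, and your reduction of (vi) and (vii) to (ii) and (i) via faithfulness of $U_\bT$ is the natural way to handle those items.

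One small point of presentation: in your discussion of (vi) you momentarily muddle which induced monad is acting (``the induced monad $[\bT,\A_\bT]$-structure, er, rather\ldots''). For the write-up, simply say that a right $\bS$-module structure on $\phi_\bT$ in $[\A,\A_\bT]$ amounts, after applying the faithful $U_\bT$, to a right $\bS$-module structure on $U_\bT\phi_\bT=T$ in $[\A,\A]$, and the latter is precisely the structure verified in (ii); the component maps $\ve_\bT\phi_\bT\cdot\phi_\bT\iota$ are already $\bT$-algebra morphisms because $\ve_\bT$ is. That cleans up the only hesitant passage.
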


\begin{thm}\label{tensor}{\bf Tensor product of module functors.} \em
Let $\bT$ be a monad  on $\A$ and  $\X$ and $\Y$
arbitrary categories. If $(N,\rho)$ is a left $\bT$-module and $(M, \varrho)$ a right $\bT$-module, then
their \emph{tensor product} (\emph{over} $\bT$) is defined as the object part of the coequaliser
\begin{equation}\label{tensor1}
{\xymatrix @C=.5in
{M T N  \ar@{->}@<0.5ex>[r]^{\varrho N} \ar@ {->}@<-0.5ex> [r]_{M \rho}& M N
\ar[r]^-{\text{can}_\bT^{M,N}}&  M \! \otimes_\bT \!N}}
\end{equation} in $[\X, \Y]$, provided that such a coequaliser exists. We often abbreviate $\text{can}_\bT^{M,N}$
to $\text{can}^{M,N}$, or even to $\text{can}$.
\end{thm}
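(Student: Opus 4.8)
Strictly speaking this is a definition, so the only things to establish are that the prescription is well posed and that it is functorial; the plan is to record these two routine points together with the one structural feature of the fork (\ref{tensor1}) that the subsequent sections rely on. First I would observe that $\varrho N$ and $M\rho$ really do form a parallel pair in the functor category $[\X,\Y]$: since $M\colon\A\to\Y$ is a right $\bT$-module, $\varrho\colon MT\to M$ is a natural transformation, and whiskering it on the right by $N\colon\X\to\A$ produces $\varrho N\colon MTN\to MN$; dually, $M\rho\colon MTN\to MN$ is the left action $\rho\colon TN\to N$ whiskered by $M$. Hence, whenever this pair admits a coequaliser in $[\X,\Y]$, the object part $M\otimes_\bT N$ and the morphism $\text{can}_\bT^{M,N}$ automatically live in $[\X,\Y]$, which is all well-posedness requires. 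Functoriality is then immediate from the universal property: for a morphism of right $\bT$-modules $f\colon(M,\varrho)\to(M',\varrho')$ and a morphism of left $\bT$-modules $g\colon(N,\rho)\to(N',\rho')$, the composite $\text{can}_\bT^{M',N'}\cdot fg$ coequalises $(\varrho N,M\rho)$, hence factors uniquely through $\text{can}_\bT^{M,N}$; this defines $f\otimes_\bT g$, and uniqueness forces compatibility with composites and identities.

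The feature I would emphasise is that the pair in (\ref{tensor1}) is \emph{reflexive}, with common section $MeN\colon MN\to MTN$: one has $\varrho N\cdot MeN=(\varrho\cdot Me)N=1_{MN}$ by right-unitality of the action $\varrho$, and $M\rho\cdot MeN=M(\rho\cdot eN)=1_{MN}$ by left-unitality of $\rho$. Consequently the coequaliser called for in the definition exists --- and is computed pointwise --- as soon as $\Y$, equivalently $[\X,\Y]$, has reflexive coequalisers. Moreover, when $M$ or $N$ is a free (canonical) module as in \ref{canonical module}, the fork (\ref{tensor1}) is in fact a \emph{split} coequaliser in the sense of \ref{contractible}, the splitting being supplied by the unit $e$ (the usual extra degeneracy); such a coequaliser is absolute, so it exists in any category, is preserved by every functor, and $\text{can}$ is then a split epimorphism. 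In the Cauchy-complete setting of \ref{Cauchy} the same remark covers the cases in which the pair becomes split after applying a suitable forgetful functor.

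Since the statement is definitional, there is no genuine obstacle. The only point that really requires attention --- and the reason for the proviso ``provided that such a coequaliser exists'' --- is that an arbitrary functor category $[\X,\Y]$ need not have the coequaliser in question; in the body of the paper this is handled precisely along the two lines above, by assuming the relevant (reflexive) coequalisers in $\Y$ or by arranging the defining pair to be split after a forgetful functor so that \ref{contractible} and Cauchy completeness produce an absolute coequaliser. I would accordingly organise every later use of $\otimes_\bT$ around these two situations.
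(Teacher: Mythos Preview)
Your proposal is correct: the paper treats this item purely as a definition and offers no proof at all, so there is nothing to compare against. Your supplementary observations (that $\varrho N$ and $M\rho$ form a reflexive pair with common section $MeN$, the induced functoriality, and the split case for canonical modules) are all correct and are indeed the mechanisms the paper relies on implicitly in later sections, though the paper does not spell them out here.
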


\begin{proposition} \label{action} Let $\bS, \bT$ be monads on a category $\A$. If the modules
$(M, \varrho)\in [\A , \X]_{[\bT,\X]}$ and  $(N, \rho_l, \rho_r) \in [\A,\A]_{[\bS, \bT]}$ are such that the
tensor product  $M \! \otimes_\bT \!N$ exists, then there
is a natural transformation
$$\zeta:(M  \otimes_\bT N)S \to M \! \otimes_\bT \!N $$
making  $M \! \otimes_\bT \!N$ into a right
$\bS$-module in such a way that the diagram
$$
\xymatrix@C=.5in
{(M T N, M T \rho_r)  \ar@{->}@<0.5ex>[r]^{\;\varrho N} \ar@ {->}@<-0.5ex>
[r]_{\;M \rho_l}& (M N, M \rho_r)
\ar[r]^-{\text{can}}& (M \! \otimes_\bT \!N, \zeta)} $$
is a coequaliser in $[\A, \X]_{[\bS, \X]}$.
The action $\zeta$ is uniquely determined by the property
$$\zeta \cdot \text{can}_\bT^{M,N} S=\text{can}_\bT^{M,N} \cdot M \rho_r.$$
\end{proposition}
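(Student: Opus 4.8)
The plan is to construct the action $\zeta$ directly from the universal property of the coequaliser $\text{can}_\bT^{M,N}$, and then verify the module axioms and the claimed coequaliser statement diagram-by-diagram. First I would observe that the right $\bS$-action on $M\otimes_\bT N$ should be "induced from the right $\bS$-action $\rho_r$ on $N$"; concretely, consider the composite $\text{can}_\bT^{M,N}\cdot M\rho_r : MNS\to M\otimes_\bT N$. To factor this through $\text{can}_\bT^{M,N}S : MNS\to (M\otimes_\bT N)S$, I must check that $\text{can}_\bT^{M,N}S$ is again a coequaliser of the pair $(\varrho NS,\, M\rho_l S)$ — but this is immediate since $-\,S$ is a functor $[\X,\Y]\to[\A,\Y]$ (precomposition with $S$), hence preserves coequalisers. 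So I then need only check that $\text{can}_\bT^{M,N}\cdot M\rho_r$ coequalises $(\varrho NS, M\rho_l S)$. The equaliser-of-the-two-legs computation uses: (a) the compatibility $\varrho_r\cdot\varrho_l S=\varrho_l\cdot T\varrho_r$ of the bimodule $N$, which lets the left $\bT$-action $\rho_l$ slide past the right $\bS$-action $\rho_r$ up to the natural transformation $\varrho$ on $M$ and functoriality; and (b) the coequaliser identity $\text{can}\cdot\varrho N=\text{can}\cdot M\rho_l$. Pushing $M\rho_r$ through on both legs and using (a) reduces the two composites to a common expression premultiplied by $\text{can}\cdot\varrho N=\text{can}\cdot M\rho_l$, so they agree. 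This yields the unique $\zeta$ with $\zeta\cdot\text{can}_\bT^{M,N}S=\text{can}_\bT^{M,N}\cdot M\rho_r$, which is exactly the asserted defining property.

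Next I would verify that $(M\otimes_\bT N,\zeta)$ is a right $\bS$-module, i.e. the unit law $\zeta\cdot(M\otimes_\bT N)e_\bS=1$ and the associativity law $\zeta\cdot\zeta S=\zeta\cdot(M\otimes_\bT N)m_\bS$. Both are checked by precomposing with the epimorphism-like family $\text{can}_\bT^{M,N}$ (suitably whiskered by $S$ or $SS$), which is legitimate because $\text{can}$ is a coequaliser and hence, after applying the coequaliser-preserving functors $-\,S$ and $-\,SS$, still enjoys the relevant cancellation property. On the $\text{can}$-level, the unit law reduces via the defining property of $\zeta$ to $\text{can}\cdot M\rho_r\cdot MNe_\bS=\text{can}$, which is the unit axiom $\rho_r\cdot Ne_\bS=1$ for the right $\bS$-module $N$ composed with $M$; similarly the associativity law for $\zeta$ reduces to $\text{can}\cdot M\rho_r\cdot M\rho_r S=\text{can}\cdot M\rho_r\cdot MNm_\bS$, which follows from the associativity axiom $\rho_r\cdot\rho_r S=\rho_r\cdot Nm_\bS$ of $N$ by applying $M(-)$ and postcomposing with $\text{can}$. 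One must also confirm $\zeta$ is natural, which is automatic from the construction as a factorisation of natural transformations.

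Finally I would establish that the displayed two-row diagram is a coequaliser in $[\A,\X]_{[\bS,\X]}$ — not merely in $[\X,\Y]$. The objects there are the canonical right $\bS$-modules: $(MN,M\rho_r)$ by Observation \ref{canonical module} applied with the functor $M$ on the left (so $M\rho_r$ makes $MN$ a right $\bS$-module since $N$ is one), and likewise $(MTN, MT\rho_r)$; the two parallel arrows $\varrho N$ and $M\rho_l$ are morphisms of right $\bS$-modules precisely because of the bimodule compatibility $\varrho_r\cdot\varrho_l S=\varrho_l\cdot T\varrho_r$ for $N$ (for $\varrho N$ this is just naturality, i.e. $M\rho_r$ versus $MT\rho_r$, while for $M\rho_l$ it uses the slide identity). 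And $\text{can}$ is a morphism of right $\bS$-modules by the very defining property of $\zeta$. It then remains to see that this is a coequaliser in the module category; the cleanest route is that the forgetful functor $[\A,\X]_{[\bS,\X]}\to[\A,\X]$ creates those colimits that are preserved by $[\bS,\X]$ (equivalently, by $-\,S$), and $-\,S$ preserves the given coequaliser since it is a functor; so the $[\X,\Y]$-coequaliser $\text{can}$ lifts uniquely to a coequaliser in the module category, and that lift carries exactly the action $\zeta$ we built. The main obstacle is the bookkeeping in the slide step (a): correctly interpreting where the natural transformation $\varrho:TM\to M$ (here really $\varrho$ acting "on the left of $N$", written $\varrho N$) interacts with $\rho_r$ acting "on the right of $N$", and making sure all whiskerings by $S$, $T$, $M$, $N$ are placed on the correct side — everything else is a routine diagram chase using the universal property of $\text{can}$.
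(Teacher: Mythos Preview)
Your proposal is correct and follows essentially the same route as the paper: the paper observes that the parallel pair already lives in $[\A,\X]_{[\bS,\X]}$ (for exactly the reasons you give --- naturality for $\varrho N$, the bimodule compatibility for $M\rho_l$), that $[\bS,\X]$ preserves colimits, and then cites Linton's creation-of-colimits result \cite[Proposition 2.3]{L} to conclude. Your explicit construction of $\zeta$ and verification of the module axioms is correct but redundant once you invoke (as you do in your final paragraph) the fact that the forgetful functor creates colimits preserved by $[\bS,\X]$ --- that single step already produces both the action $\zeta$ and the module axioms for free; a minor notational slip is that the ambient functor category is $[\A,\X]$, not $[\X,\Y]$.
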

\begin{proof} Note first that both $(M T N, M T \rho_r)$ and $(M N, M \rho_r)$ are objects
of the category $[\A, \X]_{[\bS, \X]}$, and that both $\varrho N$ and $M \rho_l$ can be
seen as morphisms in $[\A, \X]_{[\bS, \X]}$ (the former by naturality of composition, the latter because
$(N, \rho_l, \rho_r) \in [\A,\A]_{[\bS, \bT]}$). Now, since the functor $$[\bS, \X]: [\A, \X] \to [\A, \X]$$
preserves all colimits (hence coequalisers), the result follows from \cite[Proposition 2.3]{L}.
\end{proof}

\begin{thm}\label{comodules}{\bf Comodule functors.} \em Similarly as for monads, one defines  comodule functors  over a comonad $\bG$ on a category $\A$. A
{\em left $\bG$-comodule functor} is a pair $(F, \theta)$,
where $F: \X \to \A$ is a functor  and $\theta : F \to GF$ is a natural transformation inducing commutativity of the diagrams
$$\xymatrix{F \ar@{=}[dr] \ar[r]^-{\theta}& GF \ar[d]^-{\ve F}\\ & F,} \qquad \xymatrix{
F \ar[r]^-{\theta} \ar[d]_-{\theta}& GF \ar[d]^-{\delta F}\\
GF \ar[r]_-{G \theta}& GGF.}
$$
 Symmetrically,  {\em right $\bG$-comodule functors} $\A\to \Y$ are defined.
\end{thm}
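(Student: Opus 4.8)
The plan is to realise both notions as Eilenberg--Moore comodule categories of comonads living on functor categories, dualising verbatim the treatment of (bi)module functors in \ref{bimodule.funct.}; then the two displayed diagrams are literally the comodule axioms and nothing beyond routine whiskering has to be checked.

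First I would promote $\bG=(G,\delta,\ve)$ to a comonad $[\X,\bG]$ on the functor category $[\X,\A]$: on objects $F\colon\X\to\A$ it acts by $F\mapsto GF$, its comultiplication is the whiskered transformation $\delta F\colon GF\to GGF$, and its counit is $\ve F\colon GF\to F$. Coassociativity and counitality for $[\X,\bG]$ are inherited from those of $\bG$ by whiskering on the right with $F$ and invoking the interchange law for natural transformations; this is the exact dual of the (implicit) fact, used in \ref{bimodule.funct.}, that $[\X,\bT]$ is a monad. By definition a \emph{left $\bG$-comodule functor with domain $\X$} is then an object of the Eilenberg--Moore category $[\X,\A]^{[\X,\bG]}$. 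Unwinding that definition via the (dualised) description in \ref{mon-comon}, such an object is precisely a pair $(F,\theta)$ with $\theta\colon F\to GF$ a natural transformation in $[\X,\A]$ subject to $\ve F\cdot\theta=1_F$ and $\delta F\cdot\theta=G\theta\cdot\theta$, i.e.\ to commutativity of the two displayed diagrams; morphisms are the natural transformations commuting with the coactions, and the forgetful/cofree pair between $[\X,\A]^{[\X,\bG]}$ and $[\X,\A]$ supplies the expected adjunction.

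For the ``symmetrically defined'' clause I would run the mirror construction on the other slot: $[\bG,\Y]$ is the comonad on $[\A,\Y]$ sending $F\colon\A\to\Y$ to $FG$, with comultiplication $F\delta$ and counit $F\ve$, and a \emph{right $\bG$-comodule functor with codomain $\Y$} is by definition an object of $[\A,\Y]^{[\bG,\Y]}$, that is, a pair $(F,\theta)$, $\theta\colon F\to FG$, with $F\ve\cdot\theta=1$ and $F\delta\cdot\theta=\theta G\cdot\theta$. Pairing the two slots, one likewise obtains bicomodule functors over a pair of comonads, dual to the bimodule functors of \ref{bimodule.funct.}.

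I expect no genuine obstacle, the statement being purely formal: it is the termwise dual of the monad/module material recorded in \ref{mon-comon}--\ref{bimodule.funct.}. The only point needing a little care is the bookkeeping behind whiskering --- for instance that $\delta F$ is natural in $F$, and that $G\theta\cdot\theta$ is the correctly associated composite $F\xr{\theta}GF\xr{G\theta}GGF$ --- which is settled once and for all by the middle-four interchange law; everything else is a transcription of statements already in place in the Preliminaries.
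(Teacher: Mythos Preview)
Your elaboration is correct, but note that the paper treats this item as a pure definition (indeed the environment is ``thm'' with \verb|\em|, used throughout the paper for definitions and remarks rather than for results requiring proof), so there is no proof in the paper to compare against. Your realisation of the notions as Eilenberg--Moore categories $[\X,\A]^{[\X,\bG]}$ and $[\A,\Y]^{[\bG,\Y]}$ is exactly the intended duality with \ref{bimodule.funct.}, and your remarks on whiskering and interchange are the only content there is to check.
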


\begin{thm}\label{cotensor}{\bf Cotensor product of comodule functors.} \em
Let $\bG$ be a comonad  on $\A$ and  $\X$, $\Y$ arbitrary categories.
If $(C,\theta)$ is a right $\bG$-comodule functor $\A\to \X$
and $(D, \vartheta)$ is a left $\bG$-comodule functor $\Y\to \A$, then their \emph{cotensor product} (\emph{over} $\bG$) is defined as
the object part of the equaliser
\begin{equation}\label{cotensor1}
\xymatrix{C \!\ot^{\bG}\! D\ar[rr]^-{\text{can}_\bG^{C,D}} && CD  \ar@{->}@<0.5ex>[r]^{\theta D} \ar@ {->}@<-0.5ex> [r]_{C \vartheta}& CGD
}\end{equation}
in $[\Y, \X]$, provided that such an equaliser exists. We often abbreviate $\text{can}_\bG^{C,D}$
to $\text{can}^{C,D}$, or even to $\text{can}$.
\end{thm}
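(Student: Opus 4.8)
The final statement is a definition rather than an assertion with logical content, namely the formal dual of the tensor product of module functors (Theorem \ref{tensor}): one reverses the two structure morphisms, replaces the $\bG$-actions of a monad by the $\bG$-coactions of a comonad, and the coequaliser by an equaliser. The plan is therefore to \emph{justify} the definition by checking that the diagram (\ref{cotensor1}) is well posed, i.e. that $\theta D$ and $C\vartheta$ genuinely form a parallel pair of natural transformations in $[\Y,\X]$, so that their equaliser (when it exists) is meaningful and legitimately names a functor $C\ot^\bG D\colon\Y\to\X$ together with the arrow $\text{can}_\bG^{C,D}$.

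First I would pin down the types. By Theorem \ref{comodules}, a right $\bG$-comodule functor $(C,\theta)\colon\A\to\X$ consists of $C\colon\A\to\X$ and $\theta\colon C\to CG$ with $C\ve\cdot\theta=1_C$, while a left $\bG$-comodule functor $(D,\vartheta)\colon\Y\to\A$ consists of $D\colon\Y\to\A$ and $\vartheta\colon D\to GD$ with $\ve D\cdot\vartheta=1_D$. Composing functors, $CD$ and $CGD$ are objects of $[\Y,\X]$, and whiskering the coactions yields $\theta D\colon CD\to CGD$ and $C\vartheta\colon CD\to CGD$. Hence the two transformations have a common source $CD$ and a common target $CGD$, so they constitute a parallel pair and the equaliser in (\ref{cotensor1}) makes sense; its vertex is by definition $C\ot^\bG D$ and the equalising morphism $\text{can}_\bG^{C,D}$ is automatically monic.

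Finally, to mirror the structure of Theorem \ref{tensor}, I would record that this pair is coreflexive, with the whiskered counit $C\ve D\colon CGD\to CD$ as common retraction: $C\ve D\cdot\theta D=(C\ve\cdot\theta)D=1_{CD}$ and $C\ve D\cdot C\vartheta=C(\ve D\cdot\vartheta)=1_{CD}$, each step using one of the two counit axioms above. This is exactly dual to the fact that the pair $(\varrho N, M\rho)$ defining $M\ot_\bT N$ is reflexive via the whiskered unit $MeN$. Since the definition is conditional (``provided that such an equaliser exists''), there is no existence statement to prove; the only point requiring care --- essentially the whole of the verification --- is the bookkeeping of sources and targets of the whiskered coactions, after which the rest is forced by duality with Theorem \ref{tensor}.
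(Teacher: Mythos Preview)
Your proposal is correct: the statement is a definition, not a theorem, and the paper accordingly gives no proof at all---it simply states the equaliser diagram (\ref{cotensor1}) as the formal dual of (\ref{tensor1}). Your type-checking and the observation that the pair $(\theta D, C\vartheta)$ is coreflexive via $C\ve D$ are valid supplementary remarks, but the paper itself records none of this and moves on immediately.
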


%\begin{thm}\label{Dubuc}{\bf Adjoint Triangles.} \em

Now recall the dual of \cite[Lemma 21.1.5]{S}) and Dubuc's
Adjoint Triangle Theorem  \cite{D}.

\begin{thm}\label{adj-monad}{\bf Monads and adjunctions.} \em
For categories $\A$, $\B$ and $\C$, consider the adjunctions
 $\eta, \ve : F \dashv R: \A \to \C$ and $\eta', \ve' : F' \dashv R': \B \to \C$,
 with corresponding monads $\bT$ and $\bT'$, and let $$ \xymatrix{\A \ar[rd]_{R} \ar[r]^{K}& \B \ar[d]^{R'}\\ &  \C}$$
be a commutative diagram with suitable functor $K$. Write $\gamma_K$ for the composite
$$F' \xr{F'\eta}F'RF=F'R'KF \xr{\ve' KF} KF.$$
Then $t_K=R' \gamma_K$ is a monad morphism $\bT' \to \bT$ (see \cite{D}).
\end{thm}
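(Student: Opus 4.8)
The plan is to verify directly that $t_K = R'\gamma_K \colon R'F' \to R'KF = RF$ satisfies the two defining conditions of a monad morphism $\bT' \to \bT$, namely $t_K \cdot \eta' = \eta$ and $t_K \cdot R'\ve'F' = R\ve F \cdot t_K RF \cdot R'F' t_K$ (the second being the Godement product $t_K \star t_K$ followed by the multiplication of $\bT$). Throughout one uses the strict commutativity $R = R'K$, so that $RF = R'KF$ and $R\ve F = R'(K\ve F)$ hold literally, together with the observation that $\gamma_K = \ve'KF \cdot F'\eta$ is exactly the $(F'\dashv R')$-transpose of the unit $\eta \colon 1_\C \to RF = R'KF$.

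For the unit condition I would start from $t_K \cdot \eta' = R'\ve'KF \cdot R'F'\eta \cdot \eta'$; naturality of $\eta' \colon 1_\C \to R'F'$ at the components of $\eta$ rewrites $R'F'\eta \cdot \eta'$ as $\eta'R'KF \cdot \eta$, and then the triangle identity $R'\ve' \cdot \eta'R' = 1_{R'}$, whiskered by $KF$, collapses $R'\ve'KF \cdot \eta'R'KF$ to the identity, leaving $\eta$. This step is essentially immediate.

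The multiplicativity condition is the only part that needs real work. Since $t_K = R'(\gamma_K)$, both sides of the required equation are obtained by applying $R'$ to natural transformations $F'R'F' \Rightarrow KF$, so it suffices to prove the corresponding identity $\gamma_K \cdot \ve'F' = K\ve F \cdot \gamma_K R'KF \cdot F'R'\gamma_K$ in $[\C,\B]$. I would expand each occurrence of $\gamma_K$ as $\ve'KF \cdot F'\eta$ and then reduce the right-hand side in three moves: (i) naturality of $\ve'$ at $K\ve F$, using $R'K = R$, pulls $K\ve F$ past the outer $\ve'$ and produces a factor $F'R\ve F$; (ii) the triangle identity $R\ve F \cdot \eta RF = 1_{RF}$ for $F \dashv R$ then cancels the resulting middle pair of arrows; (iii) naturality of $\ve'$ — applied at $F'\eta$, together with the \emph{counit absorption} identity $\ve' \cdot \ve'F'R' = \ve' \cdot F'R'\ve'$, which is itself naturality of $\ve'$ at $\ve'$ — identifies what remains with the left-hand side. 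The one genuine obstacle here is bookkeeping: keeping the various whiskerings of $\eta,\ve,\eta',\ve'$ by $F,F',K$ straight throughout the chase; the algebra itself is forced at every step.

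Finally, it is worth remarking — since it explains why the computation is bound to succeed — that the pair $(K,\gamma_K)$ is a morphism of adjunctions from $(F\dashv R)$ to $(F'\dashv R')$ obtained by taking the mate of $\mathrm{id}\colon R'K \Rightarrow R$, and that the passage from an adjunction to its induced monad is functorial on such morphisms; thus one may alternatively just invoke Dubuc's adjoint triangle theorem \cite{D} (equivalently the dual of \cite[Lemma 21.1.5]{S}) for the assertion that $t_K$ is a monad morphism.
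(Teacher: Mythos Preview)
Your proposal is correct. The paper does not give its own proof of this statement: it records it as background and simply cites Dubuc \cite{D}, precisely the alternative you mention in your final paragraph; your direct verification of the unit and multiplication axioms via mates and the triangle identities is a sound unpacking of that citation.
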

\begin{theorem}\label{Dubuc1} In the situation considered in \ref{adj-monad},
suppose that $R$ is a premonadic functor.
Then $K$ has a left adjoint $\ul{K}$ if and only if the following
coequaliser (used as the definition of $\ul{K}$) exists in $[\B, \A]$,
$$\xymatrix@!=2.6pc{FR'F'R' \ar[rrr]^{FR'\ve'} \ar[drr]|{Ft_KR'=FR'\gamma_{_K} R'}&& & FR' \ar[r]^-q & \ul{K} \\
           & & FR'KFR'=FRFR'  \ar[ru]|{\ve FR'} && . } $$
\end{theorem}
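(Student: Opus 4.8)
The plan is to deduce Theorem~\ref{Dubuc1} from Dubuc's Adjoint Triangle Theorem by carefully analysing the relevant data. Recall the setup: we have $\eta,\ve:F\dashv R:\A\to\C$ with monad $\bT=RF$ (which is premonadic, so $K_\bT:\A\to\C_\bT$ is full and faithful), $\eta',\ve':F'\dashv R':\B\to\C$ with monad $\bT'$, and a commutative triangle $R=R'K$. Dubuc's theorem (in its adjoint-triangle form) says that when $R$ is premonadic, a left adjoint $\ul K$ of $K$ exists precisely when, for every $b\in\B$, a certain coequaliser exists in $\A$; passing to the functor level (i.e. taking $b=F'R'(-)$ together with the canonical presentation of $\mathrm{id}_\B$ as a coequaliser of free $\bT'$-modules) packages all these pointwise coequalisers into one coequaliser in $[\B,\A]$. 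So the first step is to recall the precise pointwise statement and then reconstruct the parallel pair of natural transformations it produces.

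Next I would identify the two arrows. The object $FR'$ is the ``naive'' candidate for $\ul K$ built from the free-forgetful data on the $\B$-side composed with $F$. The first arrow $FR'\ve':FR'F'R'\to FR'$ is just $F$ applied to the counit component $\ve':F'R'\to 1_\B$ whiskered appropriately — it comes from the bar-resolution of the identity functor on $\B$ by the comonad $F'R'$. The second arrow must encode the $\bT$-action, and this is where $\gamma_K$ (equivalently the monad morphism $t_K:\bT'\to\bT$ from \ref{adj-monad}) enters: using $R=R'K$ one rewrites $FR'KFR'=FRFR'$, and then $\ve FR':FRFR'\to FR'$ is the counit of $F\dashv R$ whiskered with $FR'$. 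The triangle in the displayed diagram, labelled $Ft_KR'=FR'\gamma_KR'$, is exactly the comparison natural transformation $FR'F'R'\to FR'KFR'$; that it has these two descriptions is immediate from the definition $t_K=R'\gamma_K$ and $\gamma_K=\ve'KF\cdot F'\eta$ in \ref{adj-monad}. Composing the triangle leg with $\ve FR'$ gives the second arrow of the parallel pair, so the asserted coequaliser diagram is well-defined.

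Then the core argument is to show this coequaliser, when it exists, computes the value of the putative left adjoint and to verify the adjunction. The cleanest route is: (1) for each $b\in\B$ the pair $(F'\ve'_b,\ve'_{F'R'(b)}):F'R'F'R'(b)\rightrightarrows F'R'(b)$ is $R'$-split (its $R'$-image is the canonical split coequaliser presenting $R'(b)$ as a $\bT'$-algebra quotient of a free one, per \ref{contractible}), hence Dubuc's theorem applies with $R$ premonadic to give that $K$ has a left adjoint iff the coequaliser of $(F(R'\ve'_b),\ve_{FR'(b)})$ — equivalently of $(FR'\ve'_b,\;\ve FR'\cdot FR'\gamma_K R'\text{ at }b)$ — exists in $\A$ for all $b$; (2) naturality lets one assemble these into the single coequaliser in $[\B,\A]$; (3) the universal property of the coequaliser, together with premonadicity of $R$, forces the hom-set bijection $\A(\ul K(b),a)\cong\B(b,K(a))$, natural in $a$ and $b$, which is the adjunction. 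The main obstacle I expect is step (3): one must show that maps out of the coequaliser $\ul K(b)$ correspond to maps $b\to K(a)$, and this uses that $K$ is full and faithful \emph{on the image of} the premonadic $R$ in a slightly subtle way — precisely, that a morphism $F R'(b)\to a$ equalising the pair corresponds under $F\dashv R$ to a $\bT$-algebra morphism out of the free algebra $(RFR'(b),\dots)$ that factors through $R'(b)\cong$ the $\bT'$-algebra, and premonadicity is what converts this $\bT$-algebra data back into a genuine morphism in $\B$. Carefully tracking which split coequalisers are absolute (hence preserved by $F$, $R$, $K$) is the technical heart; everything else is diagram-chasing with the triangle identities.
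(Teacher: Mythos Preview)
The paper does not give a proof of this theorem; it is stated as a direct recollection of Dubuc's Adjoint Triangle Theorem \cite{D} (together with the dual of \cite[Lemma 21.1.5]{S}), and the text immediately passes to describing the unit and counit of the resulting adjunction. So there is no ``paper's proof'' to compare against beyond the citation, and your plan to unfold Dubuc's argument is already more than the paper provides. Your identification of the two legs of the parallel pair and of the equality $Ft_KR'=FR'\gamma_KR'$ is correct.

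There is, however, a genuine gap in step (3), and it stems from what is almost certainly a typo in the paper's hypothesis: the premonadic functor must be $R'$, not $R$. Two pieces of internal evidence confirm this. First, the dual Theorem~\ref{Dubuc2} assumes $F'$ precomonadic; dualising gives $R'$ premonadic here. Second, the paper's own application in Section~\ref{base} invokes Theorem~\ref{Dubuc1} by observing that $U_\bS$ (playing the role of $R'$) is monadic. Your argument in (3) says premonadicity ``converts this $\bT$-algebra data back into a genuine morphism in $\B$'', but premonadicity of $R$ only gives a fully faithful embedding $\A\hookrightarrow\C_\bT$ and says nothing about $\B$. What the proof actually needs is $\B\hookrightarrow\C_{\bT'}$: a map $g:FR'(b)\to a$ coequalising the pair corresponds under $F\dashv R$ to a map $g^\flat:R'(b)\to R(a)=R'K(a)$, and the coequalising condition translates (via $t_K$ and naturality of $\ve'$) precisely into $g^\flat$ being a $\bT'$-algebra morphism $(R'(b),R'\ve'_b)\to(R'K(a),R'\ve'_{K(a)})$; premonadicity of $R'$ then identifies such morphisms with $\B(b,K(a))$. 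With that correction your outline is Dubuc's proof.
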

When this is the case, the unit $\ul{\eta}$ of the adjunction $\ul{K} \dashv K$ is the unique natural
transformation $1 \to K \ul{K}$ yielding commutativity of the diagram
$$ \xymatrix @R=.4in{F'R'\ar[d]_-{\gamma_{_K}\! R'} \ar[r]^-{\ve'}& 1  \ar[d]^{\ul{\eta}}\\
KFR' \ar[r]_{Kq} &  K\ul{K},}$$
 while the counit $\ul{\ve}$  is the unique natural
transformation $\ul{K} K\to 1 $ making the diagram
$$ \xymatrix @R=.4in{FR=FR'K \ar[rd]_-{\ve} \ar[r]^-{qK}& \ul{K} K \ar[d]^{\ul{\ve}}\\ &  1}$$
commute. Precomposing the image of the last square under $R'$ with $\eta R'$ and using the fact that $R'\gamma_{_K}$ is a monad morphism $\bT' \to \bT$,
we get the   commutative diagram
$$ \xymatrix @R=.4in{R' \ar[rd]_{\eta R'}\ar[r]^-{\eta' R'}&R'F'R'\ar[d]^-{R'\gamma_{_K}\! R'} \ar[rr]^-{R'\ve'}&& R'  \ar[d]^{R'\ul{\eta}}\\
&RFR'=R'KFR' \ar[rr]_{R'Kq=Rq} & & R'K\ul{K}.}$$
Since $R'\ve' \cdot \eta' R'=1$ by one of the triangular identities for
$F' \dashv R'$,  one gets
\begin{equation}\label{unit}
R'\ul{\eta}=Rq \cdot \eta R' .
\end{equation}

\begin{thm}\label{adj-comon}{\bf Comonads and adjunctions.} \em
Again let $\eta, \ve : F \dashv R: \A \to \B$ and $\eta', \ve' : F' \dashv R': \A \to \C$
be adjunctions
and now consider the corresponding comonads $\bG$ and $\bG'$. Let
$$\xymatrix{\B\ar[r]^K \ar[dr]_{F} & \C \ar[d]^{F'} \\
 & \A } $$
be a commutative diagram
%$K: \B \to \C$ be a functor such that $F=F' K:\B \to \A$
 and
define $$\gamma_K: KR \xr{\eta' KR}R'F'KR=R'FR \xr{R'\varepsilon}R'.$$ Then $t_K=F'\gamma_K$ is a comonad morphism
$\bG \to \bG'$.
\end{thm}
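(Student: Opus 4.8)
The plan is to verify that $t_K=F'\gamma_K$ satisfies the two axioms of a morphism of comonads $\bG=(FR,F\eta R,\ve)\to\bG'=(F'R',F'\eta'R',\ve')$; alternatively one may observe that the statement is the formal dual of \ref{adj-monad}---passing to the opposite categories turns each adjunction $F\dashv R$ into $R^{op}\dashv F^{op}$, turns comonads into monads, and turns the commutative triangle $F=F'K$ into one of the shape occurring in \ref{adj-monad}---so that nothing genuinely new is required. I nonetheless sketch the direct check.

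\textbf{Counit compatibility.} To prove $\ve'\cdot t_K=\ve$, substitute $\gamma_K=R'\ve\cdot\eta'KR$, giving $\ve'\cdot t_K=\ve'\cdot F'R'\ve\cdot F'\eta'KR$. Naturality of $\ve'\colon F'R'\to 1_\A$ applied to $\ve\colon FR\to 1_\A$ rewrites the first two factors as $\ve\cdot\ve'FR$; since $F=F'K$, the remaining composite $\ve'FR\cdot F'\eta'KR$ equals $(\ve'F'\cdot F'\eta')KR=1_{FR}$ by a triangular identity for $F'\dashv R'$. Hence $\ve'\cdot t_K=\ve$.

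\textbf{Comultiplication compatibility.} One must show $F'\eta'R'\cdot t_K=(G't_K)\cdot(t_KG)\cdot(F\eta R)$, with $G=FR$ and $G'=F'R'$. Since $F=F'K$, both sides are $F'$ applied to a natural transformation $KR\to R'F'R'$: the left-hand side is $F'(\eta'R'\cdot\gamma_K)$, and unwinding the whiskerings the right-hand side is $F'\big((R'F'\gamma_K)\cdot(\gamma_KFR)\cdot(K\eta R)\big)$. Thus it suffices to show $\eta'R'\cdot\gamma_K=(R'F'\gamma_K)\cdot(\gamma_KFR)\cdot(K\eta R)$. Substituting $\gamma_K=R'\ve\cdot\eta'KR$ throughout and applying naturality of $\eta'\colon 1_\C\to R'F'$ (twice on each side, together with the rewriting $\eta'KRFR\cdot K\eta R=R'F\eta R\cdot\eta'KR$) reduces both sides to $(R'F'R'\ve)\cdot(R'F'\eta'KR)\cdot(\eta'KR)$; on the right-hand side one also uses that the block $R'\ve FR\cdot R'F\eta R=R'(\ve F\cdot F\eta)R$ collapses to the identity by the triangular identity $\ve F\cdot F\eta=1_F$. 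The two expressions agree, which proves the axiom.

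I expect the only real obstacle to lie in this last paragraph---keeping the whiskerings straight and picking the correct instances of naturality of $\eta'$ and of the two triangular identities---which is bookkeeping rather than mathematics; the duality with \ref{adj-monad} avoids it entirely.
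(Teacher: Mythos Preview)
Your proposal is correct. The paper does not give an explicit proof of this statement at all: it is presented as the evident dual of \ref{adj-monad} (the monad case, itself only justified by a reference to Dubuc), and the reader is expected to supply the dualisation. You identify this duality correctly and then go further by writing out the direct verification of the two comonad-morphism axioms; the computations you sketch are accurate, including the use of naturality of $\eta'$ and the two triangular identities. So your argument is more detailed than the paper's own treatment, not less.
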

\begin{theorem}\label{Dubuc2} In the situation described in \ref{adj-comon}, suppose that $F'$ is precomonadic.
Then $K$ has a right adjoint $\ol{K}$ if and only if the following equaliser exists in $[\C, \B]$,
$$\xymatrix@!=2.6pc{
\ol{K} \ar[r]^{\iota} & RF' \ar[rrr]^{RF'\eta'} \ar[dr]|{\eta RF'}&& & RF'R'F'\\
           & & RFRF'=RF'KRF' \ar[rru]|{Rt_{_K}F'=RF'\gamma_{_K} F'} & & . } $$
\end{theorem}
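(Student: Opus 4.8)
The plan is to argue directly: reduce the existence of $\ol K$ to a representability statement and use the precomonadicity of $F'$ to present the relevant hom-functor as an equaliser. (This is Dubuc's adjoint triangle theorem \cite{D} in dual form. One should not try to deduce it from \ref{Dubuc1}: there the premonadic functor is the \emph{composite} $R=R'K$, whereas here the precomonadic $F'$ is a \emph{factor} of $F=F'K$, so the two are companion statements rather than formal duals.)

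Since $F'$ is precomonadic, the comparison functor $K^{\bG'}\colon\C\to\A^{\bG'}$ for the comonad $\bG'=F'R'$ is full and faithful, so for $b\in\B$, $c\in\C$ there is a bijection, natural in $b$ and $c$,
$$\mathrm{Hom}_\C(K(b),c)\ \cong\ \mathrm{Hom}_{\A^{\bG'}}\!\big(K^{\bG'}K(b),\,K^{\bG'}(c)\big).$$
Here $K^{\bG'}(c)=(F'(c),F'(\eta'_c))$, and $F'K=F$ gives $K^{\bG'}K(b)=(F(b),F'(\eta'_{K(b)}))$; moreover the triangle identity $\ve_{F(b)}\cdot F(\eta_b)=1$ together with naturality of $\eta'$ forces
$$F'(\eta'_{K(b)})=(t_K)_{F(b)}\cdot F(\eta_b),$$
where $t_K=F'\gamma_K\colon\bG\to\bG'$ is the comonad morphism of \ref{adj-comon} (equivalently, this says the square relating $K$, $K^{\bG}$, $K^{\bG'}$ and the functor $\A^{\bG}\to\A^{\bG'}$ induced by $t_K$ commutes). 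This is the only step at which $t_K$ really enters.

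Recall next that a morphism $(a,\theta)\to(a',\theta')$ of $\bG'$-comodules is a morphism $f\colon a\to a'$ of $\A$ with $\theta'\cdot f=F'R'(f)\cdot\theta$, so the hom-set above is the equaliser in $\mathrm{Set}$ of $g\mapsto\theta'\cdot g$ and $g\mapsto F'R'(g)\cdot\theta$ from $\mathrm{Hom}_\A(F(b),F'(c))$ to $\mathrm{Hom}_\A(F(b),F'R'F'(c))$. Transporting along the natural bijections $\mathrm{Hom}_\A(F(b),x)\cong\mathrm{Hom}_\B(b,R(x))$ — which turn post-composition with $\psi$ into post-composition with $R\psi$ — one gets that $\mathrm{Hom}_\C(K(b),c)$ is naturally the equaliser of two maps $\mathrm{Hom}_\B(b,RF'(c))\rightrightarrows\mathrm{Hom}_\B(b,RF'R'F'(c))$, the first of which is post-composition with $(RF'\eta')_c$. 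Identifying the second is the main (and essentially only) obstacle: the transported map sends $h=R(g)\cdot\eta_b$ to $RF'R'(g)\cdot RF'(\eta'_{K(b)})\cdot\eta_b$, and a short chase — naturality of $\eta$ at $R(g)$, naturality of $t_K$ at $g$, the identity $F'(\eta'_{K(b)})=(t_K)_{F(b)}\cdot F(\eta_b)$, and naturality of $\eta$ at $\eta_b$ — rewrites this as $R((t_K)_{F'(c)})\cdot\eta_{RF'(c)}\cdot h$; so the second map is post-composition with $(Rt_KF'\cdot\eta RF')_c$, exactly the second parallel morphism in the statement.

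Finally, for each $c$ the functor $\mathrm{Hom}_\C(K(-),c)$ on $\B^{\mathrm{op}}$ is thus exhibited as the (pointwise) equaliser of the two representables $\mathrm{Hom}_\B(-,RF'(c))\rightrightarrows\mathrm{Hom}_\B(-,RF'R'F'(c))$. An equaliser of two representables is representable precisely when the equaliser of the representing objects exists in $\B$, and it is then represented by that equaliser together with its monomorphism into $RF'(c)$. Hence $K$ has a right adjoint if and only if the equaliser of $RF'(c)\rightrightarrows RF'R'F'(c)$ exists in $\B$ for every $c$ — equivalently, the displayed equaliser exists in $[\C,\B]$ (and is then computed pointwise) — in which case $\ol K(c)$, with its map $\iota_c$, is precisely that equaliser. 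Naturality in $c$ promotes this to a functor $\ol K\colon\C\to\B$ and a natural transformation $\iota$, and unwinding the Yoneda isomorphism gives the stated descriptions of the unit and counit of $K\dashv\ol K$; the converse direction runs the same chain of bijections backwards.
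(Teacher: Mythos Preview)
The paper does not supply a proof of this theorem; it is stated as the dual form of Dubuc's Adjoint Triangle Theorem with a reference to \cite{D}, followed only by a description of the resulting unit and counit. Your argument is therefore not competing with anything in the paper, and it is correct: the chain \emph{precomonadicity of $F'$} $\Rightarrow$ \emph{full faithfulness of $K^{\bG'}$} $\Rightarrow$ \emph{hom-sets in $\C$ as equalisers of hom-sets in $\A^{\bG'}$} $\Rightarrow$ \emph{transport along $F\dashv R$} $\Rightarrow$ \emph{representability} is exactly the standard route to Dubuc's theorem, and your identification of the second parallel arrow via the naturality chase is clean and accurate. The verification that $F'(\eta'_{K(b)})=(t_K)_{F(b)}\cdot F(\eta_b)$ and the subsequent rewriting of the transported map both check out.

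One small caveat on your parenthetical remark: you say Theorems~\ref{Dubuc1} and~\ref{Dubuc2} are companion statements rather than formal duals, on the grounds that in \ref{Dubuc1} the premonadic functor is the composite $R=R'K$. If you look at how \ref{Dubuc1} is actually applied later (e.g.\ in Section~\ref{base}, where the cited hypothesis is that $U_\bS$---playing the role of $R'$---is monadic), you will see that the intended hypothesis is almost certainly that $R'$, not $R$, is premonadic; with that reading the two theorems are genuine formal duals under passage to opposite categories. This does not affect your proof, but it means your caution about deducing one from the other is unnecessary.
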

When this equaliser exists,  the unit $\ol{\eta}$ of the adjunction $K \dashv \ol{K}$ is the unique natural
transformation $1 \to \ol{K}K$ yielding commutativity of the diagram
$$\xymatrix @R=.5in{1 \ar[rd]_-{\eta} \ar[r]^-{\ol{\eta}}& \ol{K} K
\ar[d]^{\iota K}\\ &RF=RF'K  , }$$ while the counit $\ol{\ve}$ of the adjunction $K \dashv \ol{K}$ is the unique natural
transformation $K\ol{K} \to 1 $ making the diagram
$$ \xymatrix @R=.4in{K\ol{K}\ar[d]_-{\ol{\ve}} \ar[r]^-{K\iota}& KRF'  \ar[d]^{\gamma_{_K}\! F'}\\
1 \ar[r]_{\eta'} &  R'F'}$$
commute. Moreover, one has
\begin{equation}\label{counit.0}
F'\ol{\ve}=\ve F' \cdot F\iota .
\end{equation}
%\end{thm}

\begin{thm} \label{base}{\bf The restriction-  and change-of-base functors.}  \em Any morphism
$$\iota: \bS=(S, m^\bS, e^\bS) \to  \bT=(T, m^\bT, e^\bT)$$ of monads on $\A$ (that is, a natural transformation
$\iota : S \to T$ such that $\iota \cdot e^\bS=e^\bT$ and $\iota \cdot m^\bS=m^\bT \cdot (\iota \iota)$) gives
rise (see \cite{Be}) to the functor $$\iota^*: \A_{\bT} \to \A_\bS,\quad (a, h) \mapsto (a, h \cdot \iota_a),$$
called the \emph{restriction-of-base functor}. It is clear that $\iota^*$ makes the diagram
$$ \xymatrix{\A_\bT \ar[rd]_{U_\bT} \ar[r]^{\iota^*}& \A_\bS \ar[d]^{U_\bS}\\ &  \A}$$ commute.
Since the forgetful functor $U_\bS:\A_\bS \to \A$ is clearly monadic, it follows from the %results of Section \ref{Dubuc}
Theorem \ref{Dubuc1}
that $\iota^*$ has a left adjoint $\iota_! :\A_\bS \to \A_{\bT}$ if and only if the pair of natural transformations
\begin{equation}\label{base.0}
\xymatrix@!=2.9pc{\phi_\bT S U_\bS=\phi_\bT U_\bS \phi_\bS U_\bS \ar@{->}@<0.7ex>[rr]^-{\phi_\bT U_\bS \,\ve_{_\bS} }
\ar@{->}@<-0.7ex>[rr]_-{\varrho U_\bS}&& \phi_\bT U_\bS \ar@{-->}[r]^q & \iota_!\,,}
\end{equation} where  $\varrho:\phi_\bT S \to \phi_\bT$ is the composite $\phi_\bT S \xr{\phi_\bT\iota} \phi_\bT T
=\phi_\bT U_\bT \phi_\bT \xr{\ve_\bT \phi_\bT} \phi_\bT$, has a coequaliser $q:\phi_\bT U_\bS \to \iota_!$ in $[\A_\bS, \A_\bT]$.
This $\iota_!: \A_\bS \to \A_\bT$, when it exists, is called the \emph{change-of-base functor}. Recalling
that for any $\bS$-algebra $(a,g)$, $U_\bS((\ve_{_\bS})_{(a,g)})=g$ and $(\ve_{_\bT} \phi_\bT U_\bS)_{(a,g)}={m^\bT}\!\!_a$,
one finds that $\iota_!$ sends an $\bS$-algebra $(a,g)$ to the object $\iota_!(a,g)$ in the coequaliser diagram in $\A_{\bT}$,
\begin{equation}\label{ext.under}
\xymatrix{
\phi_\bT S(a) \ar[rr]^{\phi_\bT (g)} \ar[dr]_{\phi_\bT (\iota_a)} & &
\phi_\bT(a) \ar[r]^{q_{(a,g)}}&\iota_!(a,g)\\
&\phi_\bT T(a) \ar[ru]_{{m^\bT}\!\!_a} & &.}
\end{equation}
\end{thm}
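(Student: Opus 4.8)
The plan is to recognise the situation of \ref{base} as an instance of Dubuc's Adjoint Triangle Theorem (\ref{adj-monad}--\ref{Dubuc1}) and then to unwind the abstract coequaliser there into the concrete forms (\ref{base.0}) and (\ref{ext.under}). Concretely, I would fit the data into the pattern of \ref{adj-monad} by taking $\A_\bT$ for the ``$\A$'' there, $\A_\bS$ for the ``$\B$'', the base category $\A$ for the ``$\C$'', and $K=\iota^*$, $R=U_\bT$, $R'=U_\bS$, $F=\phi_\bT$, $F'=\phi_\bS$. The triangle $U_\bS\,\iota^*=U_\bT$ is immediate from the definition of $\iota^*$; the monads generated on $\A$ by $\phi_\bT\dashv U_\bT$ and $\phi_\bS\dashv U_\bS$ are $\bT$ and $\bS$; and $U_\bT$ is monadic by Theorem~\ref{Beck}, hence premonadic, so the hypothesis of Theorem~\ref{Dubuc1} is met.

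The main point is then a bookkeeping computation. First I would evaluate the natural transformation $\gamma_{\iota^*}\colon\phi_\bS\to\iota^*\phi_\bT$ of \ref{adj-monad}: at $a\in\A$ it is the composite $S(a)\xr{S(e^\bT_a)}ST(a)\xr{m^\bT_a\cdot\iota_{T(a)}}T(a)$. Applying $U_\bS$ and using naturality of $\iota\colon S\to T$ at the morphism $e^\bT_a$ rewrites this as $m^\bT_a\cdot T(e^\bT_a)\cdot\iota_a$, which by the unit law $m^\bT\cdot Te^\bT=1$ equals $\iota_a$; hence the induced monad morphism $t_{\iota^*}=U_\bS\gamma_{\iota^*}\colon\bS\to\bT$ is exactly $\iota$. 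Feeding this into the coequaliser diagram of Theorem~\ref{Dubuc1} and using $U_\bS\phi_\bS=S$, the two parallel arrows $FR'\ve'$ and $\ve FR'\cdot FR'\gamma_{\iota^*}R'$ become precisely $\phi_\bT U_\bS\,\ve_\bS$ and $\varrho U_\bS$ of (\ref{base.0}), with $\varrho=\ve_\bT\phi_\bT\cdot\phi_\bT\iota$. Thus Theorem~\ref{Dubuc1} yields: $\iota^*$ has a left adjoint $\iota_!$ if and only if this pair has a coequaliser $q\colon\phi_\bT U_\bS\to\iota_!$ in $[\A_\bS,\A_\bT]$.

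Finally, for the explicit description of $\iota_!$ I would evaluate (\ref{base.0}) at an arbitrary $(a,g)\in\A_\bS$. Using $U_\bS((\ve_\bS)_{(a,g)})=g$ and $(\ve_\bT\phi_\bT)_a=m^\bT_a$, the resulting fork in $\A_\bT$ is exactly (\ref{ext.under}); since coequalisers in the functor category $[\A_\bS,\A_\bT]$ are formed objectwise, the coequaliser (\ref{base.0}) exists iff every fork (\ref{ext.under}) has a coequaliser in $\A_\bT$, and in that case $\iota_!(a,g)$ is the indicated coequaliser (naturality of $q$ and functoriality of $\iota_!$ coming for free from the functor-category coequaliser). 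I expect the only real obstacle to be this translation step: there is no conceptual difficulty, but the whiskered natural transformations appearing in Theorem~\ref{Dubuc1} must be matched carefully against the concrete ones in (\ref{base.0})--(\ref{ext.under}), the crux being the identification $t_{\iota^*}=\iota$ via the triangular and unit identities.
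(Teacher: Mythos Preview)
Your proposal is correct and is exactly the approach the paper takes: the statement \ref{base} is itself essentially just an unwinding of Theorem~\ref{Dubuc1} in the special case $K=\iota^*$, $R=U_\bT$, $R'=U_\bS$, and the paper provides no further argument beyond the phrases ``it is clear'' and ``it follows from Theorem~\ref{Dubuc1}''. Your identification $t_{\iota^*}=\iota$ and the subsequent matching of the abstract coequaliser with (\ref{base.0}) and (\ref{ext.under}) are precisely the bookkeeping the paper leaves implicit; if anything, you are more careful than the text, which invokes monadicity of $U_\bS$ where the stated hypothesis of Theorem~\ref{Dubuc1} asks for premonadicity of $R=U_\bT$ (both hold, of course).
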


\begin{remark} \label{rightiso.3} \em (1) Since $(\phi_\bT,  \ve_\bT \phi_\bT \cdot \phi_\bT \iota)$ is a right $\bS$-module by
Section \ref{canonical module}(vi), the diagram
$$
\xymatrix{\phi_\bT SS  \ar@{->}@<0.5ex>[rr]^-{\phi_\bT m^\bS} \ar@{->}@<-0.5ex>[rr]_{\ve_\bT \phi_\bT S \cdot \,\phi_\bT \iota S}&&
\phi_\bT S   \ar[r]^-{\phi_\bT \iota}& \phi^\bT T\ar[r]^-{\ve_\bT \phi_\bT} & \phi_\bT }
$$ is a coequaliser in $[\A,\A_\bT]$.
Observing that the pair $\xymatrix{\phi_\bT SS  \ar@{->}@<0.5ex>[rr]^-{\phi_\bT m^\bS} \ar@{->}@<-0.5ex>
[rr]_{\ve_\bT \phi_\bT S \cdot\,\phi_\bT \iota S}&&\phi_\bT S}$ is just the pair
 $\xymatrix @C=.6in{\phi_\bT S U_\bS \phi_\bS \ar@{->}@<0.5ex>[r]^-{\phi_\bT U_\bS \,\ve_{_\bS} \phi_\bS}\ar@{->}@<-0.5ex>[r]_-{\varrho U_\bS \phi_\bS}& \phi_\bT U_\bS \phi_\bS }$,
it follows that $q\phi_\bS$ is %the composite
$\phi_\bT S \xr{\phi_\bT \iota} \phi_\bT T \xr{\ve_\bT \phi_\bT}\phi_\bT$.

(2) It is easy to see that $U_\bS \,\ve_{_\bS}\iota^* \phi_\bT$ is the composite $S T \xr{\iota T}TT \xr{m^\bT} T$
and since
\begin{itemize}
  \item $U_\bS \iota^* \phi_\bT=U_\bT\phi_\bT=T$,
  \item $U_\bT \ve_\bT \phi_\bT=m^\bT$,
  \item $U_\bT \varrho =TS \xr{T \iota}  TT \xr{m^\bT} T,$
\end{itemize} the pair $$(U_\bT\phi_\bT U_\bS \,\ve_{_\bS}\iota^* \phi_\bT, U_\bT\varrho U_\bS \iota^* \phi_\bT)$$ is just the pair
$$(Tm^\bT \cdot T \iota T, m^\bT T\cdot T \iota T).$$

Thus, if the coequaliser diagram (\ref{base.0}) exists and if its image under the functor
$[\A_\bS,U_\bT]:[\A_\bS,\A_\bT]\to [\A_\bS,\A] $ is again a coequaliser,
we get a coequaliser
\[\xymatrix {T S T\ar@{->}@<0.5ex>[rr]^-{Tm^\bT \cdot T \iota T}
\ar@{->}@<-0.5ex>[rr]_-{m^\bT T\cdot T \iota T}& & TT \ar[rr]^{U_\bT q\iota^* \phi_\bT} && U_\bT\iota_! \iota^* \phi_\bT}.\]
But since $(T, m^\bT \cdot  T \iota )$ is a right $\bS$-module, while $(T, m^\bT \cdot  \iota T)$ is a left
$\bS$-module (see Section \ref{canonical module}), one concludes that $U_\bT\iota_! \iota^* \phi_\bT =T\!\otimes_\bS \!T $
and that $U_\bT q\iota^*\phi_\bT=\text{can}_\bS^{T,T}$.
\end{remark}

\begin{proposition} \label{rightiso.2} If $\iota: \bS\to  \bT$ is a morphism of $\A$-monads such that
the change-of-base functor $\iota_!: \A_{\bS} \to \A_\bT$ exists, then the diagram
$$\xymatrix{\A \ar[dr]_{\phi_\bT} \ar[r]^-{\phi_\bS}& \A_\bS \ar[d]^-{\iota_!}\\ & \A_\bT}$$ commutes
(up to isomorphism).
\end{proposition}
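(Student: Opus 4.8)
The plan is to verify the required isomorphism $\iota_! \, \phi_\bS \cong \phi_\bT$ by comparing what both functors do objectwise, using the explicit coequaliser description of $\iota_!$ from \eqref{ext.under}. Fix an object $a \in \A$. Then $\phi_\bS(a) = (S(a), m^\bS_a)$, and $\iota_!(S(a), m^\bS_a)$ is, by \eqref{ext.under} with $(a,g)$ replaced by $(S(a), m^\bS_a)$, the object part of the coequaliser in $\A_\bT$ of the pair
$$\phi_\bT S(S(a)) \;\rightrightarrows\; \phi_\bT S(a)$$
given by $\phi_\bT(m^\bS_a)$ and ${m^\bT}_{S(a)} \cdot \phi_\bT(\iota_{S(a)})$. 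The first observation is that this pair is split: the natural transformation $\phi_\bT(e^\bS S)$ provides a splitting, since $m^\bS \cdot e^\bS S = 1$ (unitality of $\bS$), and the other triangle identity for a split fork follows from the compatibility $\iota \cdot e^\bS = e^\bT$ together with unitality of $\bT$. Hence the coequaliser is absolute and in particular computed by the forgetful functor $U_\bT$; moreover any functor preserves it.

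First I would spell out the split-fork identities carefully. With $s := \phi_\bT(e^\bS_{S(a)}) : \phi_\bT S(a) \to \phi_\bT S(S(a))$, one needs $\partial_0 s = 1$ (this is $\phi_\bT(m^\bS_a \cdot e^\bS_{S(a)}) = \phi_\bT(1) = 1$), and $\partial_1 s = (\text{section of the coequaliser map}) \cdot (\text{coequaliser map})$. The natural candidate for the coequaliser is $q_a : \phi_\bT S(a) \to \phi_\bT(a)$ equal to the composite $\phi_\bT S(a) \xrightarrow{\phi_\bT \iota_a} \phi_\bT T(a) \xrightarrow{\ve_\bT \phi_\bT(a)} \phi_\bT(a)$ — this is exactly the map that Remark \ref{rightiso.3}(1) identifies as $q\phi_\bS$. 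So the claim reduces to checking that
$$\phi_\bT S(a) \xrightarrow{\;q_a\;} \phi_\bT(a), \qquad i_a := \phi_\bT(e^\bS_a) : \phi_\bT(a) \to \phi_\bT S(a)$$
together with $s$ form a split coequaliser: $q_a \, i_a = 1$ (from $\iota \cdot e^\bS = e^\bT$ and the triangle identity $\ve_\bT \phi_\bT \cdot \phi_\bT e^\bT = 1$), and $\partial_1 s = i_a \, q_a$ (a short diagram chase using naturality of $\iota$, associativity of $m^\bT$, and the identity $\ve_\bT \phi_\bT \cdot \phi_\bT m^\bT = \ve_\bT \phi_\bT \cdot \ve_\bT \phi_\bT T$ coming from $U_\bT \ve_\bT \phi_\bT = m^\bT$). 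Once this is done, uniqueness of coequalisers gives $\iota_!(S(a), m^\bS_a) \cong \phi_\bT(a)$ in $\A_\bT$.

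Then I would promote this objectwise isomorphism to a natural isomorphism of functors $\A \to \A_\bT$. Naturality in $a$ is automatic: both $q_a$ and $i_a$ are components of natural transformations ($q\phi_\bS$ and $\phi_\bT e^\bS$ respectively, the latter a morphism of right $\bS$-modules by Section \ref{canonical module}), so the induced comparison isomorphism between the two coequaliser objects is natural by the usual uniqueness argument. Finally, $\iota_! \, \phi_\bS$ sends $a$ to $\iota_!(\phi_\bS(a)) = \iota_!(S(a), m^\bS_a)$, which we have just identified with $\phi_\bT(a)$, so the triangle commutes up to this natural isomorphism.

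The main obstacle is purely bookkeeping: verifying the identity $\partial_1 s = i_a \, q_a$, i.e.
$$\bigl({m^\bT}_{S(a)} \cdot \phi_\bT(\iota_{S(a)})\bigr) \cdot \phi_\bT(e^\bS_{S(a)}) \;=\; \phi_\bT(e^\bS_a) \cdot \ve_\bT \phi_\bT(a) \cdot \phi_\bT(\iota_a),$$
which unwinds, after applying $U_\bT$ and using $U_\bT \ve_\bT \phi_\bT = m^\bT$, to a commuting square of natural transformations built from $\iota$, $e^\bS$, $e^\bT$ and $m^\bT$; here one must use naturality of $\iota$ to slide $e^\bS_{S(a)}$ past $\iota$, then $\iota \cdot e^\bS = e^\bT$, and finally a unit law for $m^\bT$. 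There is no conceptual difficulty, but one has to be attentive because $\iota$ is applied to the object $S(a)$ rather than $a$. An alternative, slicker route — which I would mention as a remark rather than pursue — is to invoke Remark \ref{rightiso.3}(1) directly: it already shows $q\phi_\bS = \ve_\bT \phi_\bT \cdot \phi_\bT \iota$, and combined with the coequaliser \eqref{ext.under} this essentially says $\iota_! \phi_\bS$ is the coequaliser of a split pair whose value is $\phi_\bT$, bypassing most of the hand computation.
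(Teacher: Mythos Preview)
Your approach is correct in spirit but takes a much longer route than the paper, and contains a concrete bookkeeping slip.

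The paper's proof is a one-liner: since $U_\bS \cdot \iota^* = U_\bT$ and $\phi_\bS \dashv U_\bS$, $\iota_! \dashv \iota^*$, the composite $\iota_! \phi_\bS$ is left adjoint to $U_\bS \iota^* = U_\bT$; but $\phi_\bT$ is also left adjoint to $U_\bT$, so $\iota_! \phi_\bS \cong \phi_\bT$ by uniqueness of left adjoints. No coequaliser computation is needed. Your explicit approach via the split coequaliser is perfectly legitimate and is essentially the content of Remark~\ref{rightiso.3}(1), which you rightly cite as a shortcut at the end; what it buys is an explicit formula for the comparison isomorphism (namely $q\phi_\bS = \ve_\bT\phi_\bT \cdot \phi_\bT\iota$), whereas the adjoint argument only gives existence.

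However, your choice of splitting morphism is wrong. With $s = \phi_\bT(e^\bS_{S(a)})$ one computes, using $\iota \cdot e^\bS = e^\bT$ and the unit law for $m^\bT$, that $\partial_1 s = 1$, \emph{not} $i_a q_a$; the latter is $T(e^\bS_a)\cdot m^\bT_a \cdot T(\iota_a)$, which is not the identity (take $\bS=\bT$, $\iota=1$ to see it is the idempotent $Te \cdot m$). The correct section is $s = \phi_\bT(S(e^\bS_a))$, i.e.\ $\phi_\bT Se^\bS$ rather than $\phi_\bT e^\bS S$: then $\partial_0 s = \phi_\bT(m^\bS \cdot Se^\bS) = 1$, and $\partial_1 s$ computes, via naturality of $\iota$ and of $m^\bT$, as $T(e^\bS_a)\cdot m^\bT_a \cdot T(\iota_a) = i_a q_a$. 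This is exactly the standard splitting for the bar-type coequaliser of a right $\bS$-module, as used implicitly in Remark~\ref{rightiso.3}(1). With this correction your argument goes through.
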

\begin{proof} Since $U_\bS \cdot \iota^*=U_\bT$ (see Section \ref{base}) and  $\phi_\bS$ (resp.
$\iota_!$) is left adjoint to $U_\bS$ (resp. $\iota^*$), the result follows by %the
 uniqueness of
left adjoints.
\end{proof}

\section{Separable Frobenius monads}\label{Frob}

The crucial role of separable Frobenius functors (e.g. \cite{St}) in the theory of weak bimonads
was pointed out by Szlach\'anyi in \cite{Szl-Adj} and such functors
are used by B\"ohm et al. in \cite{BLS} as an integral part of their definition of
weak bimonads on monoidal categories. In this section we show that in our approach separable Frobenius monads $\bS$
are of interest since they imply the existence of the change-of-base functor for monad morphisms
$S\to T$.

\begin{thm}\label{frob-mon}{\bf Definition.} \em An $\A$-monad $\bS=(S,m^\bS,e^\bS)$ is called
\emph{Frobenius} if $S$ has an $\A$-comonad structure $\bG=(S, \delta^\bS, \ve^\bS)$ with commutative
 diagram
\begin{equation}\label{sep-mon-d}
\xymatrix @C=.3in  @R=.16in {SS \ar[rd]^{m^\bS} \ar[rr]^{\delta^\bS S} \ar[dd]_{S \delta^\bS} && SSS \ar[dd]^{S m^\bS}\\
&S \ar[rd]^{\delta^\bS} &\\
SSS \ar[rr]_{m^\bS S}&& SS .}
\end{equation}

Dually, a comonad $\bG=(S, \delta^\bS, \ve^\bS)$ is called \emph{Frobenius}
if there exists a monad structure $\bS=(S,m^\bS,e^\bS)$ such that Diagram (\ref{sep-mon-d}) commutes.
$\bS$ (resp. $\bG$) is called \emph{Frobenius separable} if, moreover, $m^\bS \cdot \delta^\bS=1$.
\end{thm}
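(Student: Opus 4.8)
The final item, Definition \ref{frob-mon}, is a \emph{definition}, not a proposition or theorem, so strictly speaking there is no assertion to prove: it simply fixes the terminology \emph{Frobenius} and \emph{separable Frobenius} for a monad $\bS$ (and dually for a comonad $\bG$). Accordingly, my ``plan'' is not to produce a proof but to record the consistency checks and immediate readings that a careful reader will want to carry out before the notion is used in later sections.

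First I would unpack the defining square (\ref{sep-mon-d}) into the two Frobenius identities
$$\delta^\bS \cdot m^\bS = S m^\bS \cdot \delta^\bS S = m^\bS S \cdot S \delta^\bS,$$
and observe that together they say precisely that $\delta^\bS : S \to SS$ is a morphism of $(\bS,\bS)$-bimodules, where $S$ and $SS$ carry the canonical bimodule structures from Section \ref{canonical module}. Next I would confirm that this compatibility is independent of, and hence automatically consistent with, the separate monad axioms on $(S, m^\bS, e^\bS)$ and the comonad axioms on $(S, \delta^\bS, \ve^\bS)$: the three axiom families involve disjoint equations, so no hidden obstruction can arise, and the notion is non-vacuous, being realised for instance by the monad $H^{\oxi}$ constructed later in Proposition \ref{sep.frobenius2}.

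Finally, for the \emph{separable} case I would note that the extra clause $m^\bS \cdot \delta^\bS = 1$ is again an independent equation whose only role is to make $\delta^\bS$ a section of $m^\bS$; combined with (\ref{sep-mon-d}) it yields the familiar split-mono/split-epi behaviour of $\delta^\bS$ and $m^\bS$ that is exploited repeatedly in the sequel. Since none of these steps is an implication to be established but merely an inspection of the stated data, there is no genuine obstacle here; the only ``difficulty'' is the non-mathematical one of recognising that a definition carries no proof obligation.
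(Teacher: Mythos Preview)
Your reading is correct: Definition~\ref{frob-mon} is purely a definition and the paper attaches no proof to it, so there is nothing to compare beyond your accurate observation that no proof obligation exists. Your additional remarks (recasting (\ref{sep-mon-d}) as bimodule-morphism compatibility of $\delta^\bS$, noting independence of the axiom families, and pointing to $H^{\oxi}$ as a witness) are sound but go beyond what the paper itself provides here.
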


\begin{proposition} \label{sep.frobenius} Let $\bS$ be a Frobenius  separable monad on a Cauchy complete category $\A$.
Then for any morphism  $\iota: \bS \to \bT$ of monads, the change-of-base functor $\iota_!: \A_\bS \to \A_\bT$
exists.
\end{proposition}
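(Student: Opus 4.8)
The plan is to apply Beck's criterion (Theorem~\ref{Beck}(1)) together with the reformulation from Section~\ref{base}: the change-of-base functor $\iota_!$ exists precisely when the coequaliser (\ref{base.0}) exists in $[\A_\bS,\A_\bT]$, equivalently --- since the relevant pair is just $(\phi_\bT U_\bS\,\ve_\bS\phi_\bS,\ \varrho U_\bS\phi_\bS)$ evaluated at arbitrary $\bS$-algebras, and it suffices to coequalise the pair displayed in (\ref{ext.under}) for each $(a,g)\in\A_\bS$ --- when for every $\bS$-module $(a,g)$ the pair
$$\xymatrix{\phi_\bT S(a) \ar@<0.5ex>[rr]^-{\phi_\bT(g)} \ar@<-0.5ex>[rr]_-{{m^\bT}_a\cdot\phi_\bT(\iota_a)} && \phi_\bT(a)}$$
admits a coequaliser in $\A_\bT$. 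Since $\A$ is Cauchy complete, by Theorem~\ref{idem} the category $\A_\bT$ is Cauchy complete as well, so it has splittings of idempotents; hence by the discussion in Section~\ref{contractible} it suffices to exhibit a morphism $s:\phi_\bT(a)\to\phi_\bT S(a)$ making this pair \emph{split}, i.e. $\phi_\bT(g)\cdot s=1$ and $({m^\bT}_a\cdot\phi_\bT(\iota_a))\cdot s\cdot\phi_\bT(g)=({m^\bT}_a\cdot\phi_\bT(\iota_a))\cdot s\cdot({m^\bT}_a\cdot\phi_\bT(\iota_a))$; then the split coequaliser is absolute, so in particular it is genuinely a coequaliser in $\A_\bT$.

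The natural candidate for the splitting comes from the Frobenius comultiplication on $S$. First I would observe that Frobenius separability gives a natural transformation that behaves like a "section of the multiplication": set $\delta^\bS:S\to SS$, so that $m^\bS\cdot\delta^\bS=1$ and the compatibility square (\ref{sep-mon-d}) holds. The plan is to define, for $(a,g)\in\A_\bS$, the morphism $s:\phi_\bT(a)\to\phi_\bT S(a)$ in $\A_\bT$ as the $\bT$-module morphism corresponding (under freeness of $\phi_\bT$, i.e. via $\A(a,Sa)\cong\A_\bT(\phi_\bT a,\phi_\bT Sa)$) to the composite $a\xr{\delta^\bS_a}SSa\xr{S e^\bS\!? }\cdots$ --- more precisely, one wants $s$ to be built from $\delta^\bS_a$ followed by whatever is needed to land in $T S(a)$ inside $\phi_\bT$; concretely $s = {m^\bT}_{Sa}\cdot\phi_\bT(\iota_{Sa}\cdot\delta^\bS_a)$ or a close variant. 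Then $\phi_\bT(g)\cdot s=1$ should follow from $m^\bS\cdot\delta^\bS=1$ together with the $\bS$-action axioms for $g$ and naturality, while the split-pair identity $\partial_1 s\partial_0=\partial_1 s\partial_1$ should reduce, after unwinding $\partial_0=\phi_\bT(g)$ and $\partial_1={m^\bT}_a\cdot\phi_\bT(\iota_a)$, to the Frobenius square (\ref{sep-mon-d}) transported along the monad morphism $\iota$ (so that $\iota$ intertwines $m^\bS$ with $m^\bT$ and carries $\delta^\bS$-compatibility into the $\bT$-side), again using associativity of $m^\bT$ and naturality.

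The main obstacle I expect is pinning down the exact formula for $s$ and verifying the two splitting identities without error: several candidates type-check, and one must choose the one for which \emph{both} $\phi_\bT(g)\cdot s=1$ and the absolute-coequaliser identity hold, using precisely the interplay of (i) $m^\bS\cdot\delta^\bS=1$, (ii) the Frobenius square (\ref{sep-mon-d}), (iii) $\iota$ being a monad morphism, and (iv) the module axiom $g\cdot m^\bS_a = g\cdot Sg$. A secondary subtlety is checking that $s$ really is a morphism of $\bT$-modules (not just of underlying objects) --- but this is automatic since $s$ is defined as the $\phi_\bT$-image/mate of an $\A$-morphism, using that $\phi_\bT$ is full and faithful onto free modules in the appropriate sense, or more directly that $s$ is written as ${m^\bT}\cdot\phi_\bT(-)$ which is manifestly $\bT$-linear. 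Once the split coequaliser is established for each $(a,g)$, naturality in $(a,g)$ (hence existence of $\iota_!$ as a functor) follows from the uniqueness of coequalisers, exactly as in the argument of Theorem~\ref{Beck}(1) / Section~\ref{base}, completing the proof.
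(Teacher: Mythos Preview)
Your overall strategy is exactly the paper's: show that the pair in (\ref{base.0}) is split, using the Frobenius separable structure on $\bS$, and then invoke Cauchy completeness of $\A_\bT$ (Proposition~\ref{idem}) to obtain the coequaliser. So the plan is sound. The execution, however, has two concrete problems.

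First, your candidate formula for $s$ does not typecheck: $\delta^\bS_a$ is a map $S(a)\to SS(a)$, not one out of $a$, so neither ``$a\xr{\delta^\bS_a}SSa$'' nor the displayed $m^\bT_{Sa}\cdot\phi_\bT(\iota_{Sa}\cdot\delta^\bS_a)$ makes sense as written. There is no $\ol\bH_\bS$-comodule structure on an arbitrary $\bS$-module $(a,g)$, so any use of $\delta^\bS$ must first pass through $S$ via the unit $e^\bS$.

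Second, and more substantively, you aim to retract $\phi_\bT(g)$, but the Frobenius structure naturally retracts the \emph{other} leg of the pair. The paper's splitting is
\[
\pi\;=\;\varrho S U_\bS\cdot\phi_\bT\delta^\bS U_\bS\cdot\phi_\bT e^\bS U_\bS,
\]
and one checks $\varrho U_\bS\cdot\pi=1$ (not $\phi_\bT U_\bS\ve_\bS\cdot\pi=1$): this uses precisely that $(\phi_\bT,\varrho)$ is a right $\bS$-module, so that $\varrho\cdot\varrho S=\varrho\cdot\phi_\bT m^\bS$, together with separability $m^\bS\cdot\delta^\bS=1$ and the unit law $\varrho\cdot\phi_\bT e^\bS=1$. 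The second split-pair identity $\phi_\bT U_\bS\ve_\bS\cdot\pi\cdot\varrho U_\bS=\phi_\bT U_\bS\ve_\bS\cdot\pi\cdot\phi_\bT U_\bS\ve_\bS$ then reduces to the Frobenius square (\ref{sep-mon-d}). With your assignment of $\partial_0=\phi_\bT(g)$ there is no evident way to use $m^\bS\cdot\delta^\bS=1$ to produce a retraction, since $g$ is not encoded by $\varrho$. Swapping the roles of the two parallel arrows fixes both issues at once and leads directly to the paper's argument.
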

\begin{proof} We claim that, under our assumptions,  (\ref{base.0}) is a split pair, a splitting morphism being
the composite
$$\pi:\xymatrix @C=.3in{\phi_\bT U_\bS \ar[r]^{\phi_\bT e^\bS U_\bS}& \phi_\bT S U_\bS \ar[r]^-{\phi_\bT \delta^\bS U_\bS}
&\phi_\bT SS U_\bS \ar[r]^-{\varrho SU_\bS} &\phi_\bT S U_\bS\,.} $$ Indeed, that $\varrho \cdot \pi=1$ follows from the
commutativity of the diagram
$$\xymatrix @C=.6in @R=.5in{\phi_\bT U_\bS \ar@/_3pc/[rrrd]_-{1}\ar[r]^{\phi_\bT e^\bS U_\bS}& \phi_\bT S U_\bS \ar[r]^-{\phi_\bT \delta^\bS U_\bS}
\ar@{=}[rd]&\phi_\bT SS U_\bS \ar[d]^{\phi_\bT m^\bS U_\bS}\ar[r]^-{\varrho SU_\bS} &\phi_\bT S U_\bS \ar[d]^-{\varrho U_\bS}\\
&&\phi_\bT S U_\bS \ar[r]^-{\varrho U_\bS}& \phi_\bT U_\bS\,.} $$
Here the square and the curved region commute since
$(\phi_\bT, \varrho)$ is a right $\bS$-module by Section \ref{canonical module}(vii), while the triangle commutes by separability of the monad $\bS$.

Next, to show that $\phi_\bT U_\bS \ve_\bS \cdot \pi \cdot \varrho U_\bS=\phi_\bT U_\bS \ve_\bS \cdot\pi \cdot \phi_\bT U_\bS \ve_\bS$,
consider the diagram
$$\xymatrix @R=.4in @C=.4in{
\phi_\bT SU_\bS \ar[dd]_-{\phi_\bT \delta^\bS  U_\bS}\ar[rd]_(.6){\phi_\bT S e^\bS
U_\bS}\ar[r]^-{\varrho U_\bS} &\phi_\bT U_\bS \ar[r]^-{\phi_\bT e^\bS U_\bS} &\phi_\bT
SU_\bS \ar[r]^-{\phi_\bT \delta^\bS U_\bS} & \phi_\bT
SSU_\bS \ar[r]^-{\varrho SU_\bS}& \phi_\bT SU_\bS   \ar[d]^-{\phi_\bT U_\bS \ve_\bS}\\
&\phi_\bT SSU_\bS \ar[d]_-{\phi_\bT \delta^\bS S U_\bS} \ar[ru]_{\varrho SU_\bS}
\ar[r]_-{\phi_\bT S\delta^\bS U_\bS}&\phi_\bT SSSU_\bS \ar[r]_-{\phi_\bT m^\bS SU_\bS}
\ar[ru]_{\varrho S SU_\bS}&\phi_\bT SSU_\bS \ar[ru]_{\varrho SU_\bS} &\phi_\bT U_\bS \\
\phi_\bT SSU_\bS \ar[r]_{\phi_\bT SS e^\bS U_\bS}&\phi_\bT SSSU_\bS \ar@/_2pc/[rru]_-{\phi_\bT Sm^\bS U_\bS}&&& }
$$
in which the curved region commutes since $\bS$ is assumed to be Frobenius, the right-hand parallelogram commutes since
$\varrho$ is a morphism of right $\bS$-modules, while the
other regions commute by naturality of composition. Thus the whole diagram is commutative, implying
-- since  $m^\bS \cdot S e^\bS=1$ -- that $$\phi_\bT U_\bS \ve_\bS \cdot \pi \cdot \varrho U_\bS=\phi_\bT U_\bS \ve_\bS
\cdot \varrho S U_\bS \cdot  \phi_\bT \delta^\bS  U_\bS.$$ In a similar manner one proves that
$$\phi_\bT U_\bS \ve_\bS \cdot\pi \cdot \phi_\bT U_\bS \ve_\bS=\phi_\bT U_\bS \ve_\bS \cdot \varrho S U_\bS \cdot  \phi_\bT
\delta^\bS  U_\bS .$$ So $\phi_\bT U_\bS \ve_\bS \cdot\pi \cdot \varrho U_\bS=\phi_\bT U_\bS \ve_\bS
\cdot\pi \cdot \phi_\bT U_\bS \ve_\bS.$ Therefore, the pair (\ref{base.0}) splits by the morphism $\pi$. Since $\A$ is assumed
to be Cauchy complete, $\A_\bT$ (and hence also the functor category $[\A_\bS, \A_\bT]$) is also Cauchy complete (see Proposition
\ref{idem}). It then follows that the pair (\ref{base.0}) admits a (split) coequaliser. Thus the extension-of-base functor
$\iota_!: \A_\bS \to \A_\bT$ exists.
\end{proof}

Dually, we have:

\begin{proposition} \label{sep.frobenius1}Let $\bF$ is a Frobenius  separable comonad on a Cauchy complete category $\A$.
Then for any morphism  $f:\bF \to \bG$ of comonads, the change-of-cobase functor $f_!: \A^\bG \to \A^\bF$
exists.
\end{proposition}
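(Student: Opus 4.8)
The plan is to dualise the proof of Proposition~\ref{sep.frobenius} essentially verbatim; the statement is its formal dual, and no genuinely new argument is needed. First I would record the dual of the set‑up in Section~\ref{base}: a morphism of comonads $f:\bF\to\bG$ on $\A$ induces the restriction‑of‑cobase functor $f^{*}:\A^{\bF}\to\A^{\bG}$, sending an $\bF$‑comodule $(a,\theta)$ to the $\bG$‑comodule $(a,\,f_{a}\cdot\theta)$, and this functor satisfies $U^{\bG}\cdot f^{*}=U^{\bF}$. Since the forgetful functor $U^{\bG}:\A^{\bG}\to\A$ is comonadic, it is in particular precomonadic, so the dual of Dubuc's Adjoint Triangle Theorem, Theorem~\ref{Dubuc2}, applies: $f^{*}$ has a right adjoint $f_{!}:\A^{\bG}\to\A^{\bF}$ — this right adjoint being, by definition, the change‑of‑cobase functor — if and only if the canonical parallel pair of natural transformations $\phi^{\bF}U^{\bG}\rightrightarrows\phi^{\bF}U^{\bG}\phi^{\bG}U^{\bG}$ in $[\A^{\bG},\A^{\bF}]$ dual to the pair~(\ref{base.0}) admits an equaliser.

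Next I would prove that this pair is cosplit, by dualising the construction of the splitting morphism $\pi$ in the proof of Proposition~\ref{sep.frobenius}. The cosplitting is the natural transformation assembled from the structure maps supplied by the Frobenius separable (co)monad structure — its comultiplication and its separability section — together with $f$ and the (co)unit of the relevant adjunction, all whiskered appropriately. One then verifies the two equations required of a cosplit pair: one of them follows from (co)unitality together with the separability identity $m\cdot\delta=1$, just as the triangle in the proof of~\ref{sep.frobenius} does; the other is the dual of the large commuting diagram there, whose decisive region is the Frobenius compatibility square~(\ref{sep-mon-d}) together with the fact that the pertinent (co)module‑structure map is a morphism of (co)modules.

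I expect this last verification — keeping track of the whiskerings, of the variance of the induced functor, and of the directions of the canonical (co)module structures — to be the only place where care is genuinely needed; conceptually it is routine once the monad‑side proof is turned inside out. Finally I would conclude exactly as in Proposition~\ref{sep.frobenius}: because $\A$ is Cauchy complete, so is $\A^{\bF}$ by Proposition~\ref{idem}, and hence so is the functor category $[\A^{\bG},\A^{\bF}]$; therefore the cosplit pair is part of a split equaliser diagram (the dual of the description in~\ref{contractible}), which in particular is an equaliser. Thus the equaliser demanded by Theorem~\ref{Dubuc2} exists, and the change‑of‑cobase functor $f_{!}:\A^{\bG}\to\A^{\bF}$ exists, as claimed.
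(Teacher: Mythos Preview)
Your proposal is correct and is precisely the approach the paper takes: the paper simply records Proposition~\ref{sep.frobenius1} with the phrase ``Dually, we have'' immediately after proving Proposition~\ref{sep.frobenius}, leaving the explicit dualisation to the reader. Your outline of that dualisation (restriction-of-cobase $f^{*}:\A^{\bF}\to\A^{\bG}$, application of Theorem~\ref{Dubuc2} in place of Theorem~\ref{Dubuc1}, cosplitting via the Frobenius separable data, and Cauchy completeness of $[\A^{\bG},\A^{\bF}]$) is exactly what is intended.
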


\section{Comparison functors}\label{Com}

Given a comonad $\bG$ on $\A$ and a category $\B$, one has the induced comonads $[\B, \bG]$ on $[\B, \A]$
and $[\bG, \B]$ on $[\A, \B]$.

\begin{thm}\label{com.adj.func.}{\bf Comodules and adjoint functors.} \em Consider a comonad
$\bG=(G,\delta,\varepsilon)$ on $\A$ and an adjunction $\eta, \sigma :F \dashv R:\B \to \A$.

There exist bijective correspondences (see \cite{G}) between:
\begin{itemize}
  \item functors $K : \B \to \A^\bG$ with $U^GK=F$;
 % commutative diagrams
 %$$\xymatrix{ \B  \ar[r]^-{K}\ar[rd]_{F} & \A^\bG \ar[d]^{U^\bG}\\ & \A ;}$$
 \item left $\bG$-comodule structures $\theta :F \to GF$ on $F$ (i.e., % morphisms $\theta$ such that
 $(F,\theta)\in[\B, \A]^{[\B, \bG]}$);
 \item comonad morphisms from the comonad generated by %the adjunction
       $F \dashv R$ to the comonad $\bG$;
 \item right $\bG$-comodule structures $\vartheta :R \to RG$ on $R$ (i.e., % morphisms $\vartheta$ such that
 $(R,\vartheta)\in[\A, \B]^{[\bG, \B]}$).
\end{itemize}
\medskip

 These bijections are constructed as follows. If $U^\bG K=F$, then  $K(b)=(F(b), \theta_{b})$
for some morphism $\alpha_{b}: F(b) \to GF(b)$, and the collection $\{\theta_b,\, b \in\B\}$
constitutes a natural transformation $\theta:F \to GF$
making $F$ a left $\bG$-comodule.

Conversely, if $(F,\theta)\in [\B, \A]^{[\B, \bG]}$,
then $K: \B \to \A^\bG$ is defined by $K(b)=(F(b), \theta_b)$.

Next, for any left $\bG$-comodule structure $\theta :F \to GF$, the composite
$$t_K:FR \xr {\theta R} GFR \xr{G \sigma }  G $$
is a morphism from the comonad generated by %the adjunction
$F \dashv R$ to the comonad
$\bG$. Then the corresponding right $\bG$-comodule structure $\vartheta :R \to RG$ on $R$ is %the composite
$R \xr{ \eta R}RFR \xr{R t_K}RG .$

Conversely, for $(R, \vartheta) \in [\A,\B]^{[\bG, \B]}$,
 the corresponding comonad morphism $t_K:FR \to \bG$ is the composite
$$FR \xr {F \vartheta} FRG \xr {\sigma G} G,$$ while the corresponding left
$\bG$-comodule structure $\theta:F \to GF$ on $F$ is the composite $F \xr{F\eta} FRF \xr{t_K F}GF.$
\end{thm}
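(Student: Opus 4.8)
The plan is to derive all four correspondences from a single construction, the mate (transpose) bijections attached to the adjunction $F \dashv R$, and then to observe that these bijections carry the relevant substructures into one another.

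\emph{First}, I would record the purely formal mate bijections. Using the triangle identities $\sigma F \cdot F\eta = 1_F$ and $R\sigma \cdot \eta R = 1_R$, one checks that
$$\theta \;\longmapsto\; G\sigma \cdot \theta R, \qquad t \;\longmapsto\; tF \cdot F\eta$$
are mutually inverse bijections between natural transformations $\theta\colon F \to GF$ and natural transformations $t\colon FR \to G$, and that
$$t \;\longmapsto\; Rt \cdot \eta R, \qquad \vartheta \;\longmapsto\; \sigma G \cdot F\vartheta$$
are mutually inverse bijections between natural transformations $t\colon FR \to G$ and natural transformations $\vartheta\colon R \to RG$. Each verification is a one-line whiskering computation; for instance $G\sigma F \cdot \theta RF \cdot F\eta = G\sigma F \cdot GF\eta \cdot \theta = \theta$, by interchange (naturality of $\theta$) and a triangle identity. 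These are exactly the composites appearing in the statement, with $t = t_K$.

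\emph{Second}, I would dispose of the first two items, which amounts to unwinding definitions: a functor $K\colon \B \to \A^\bG$ with $U^\bG K = F$ assigns to each $b$ a $\bG$-comodule $(F(b), \theta_b)$ and is forced to send $g\colon b \to b'$ to $F(g)$; so asking that $K$ be well defined as a functor into $\A^\bG$ is the same as asking that $\theta = \{\theta_b\}$ be natural and that the comodule axioms hold componentwise, i.e. that $(F, \theta) \in [\B,\A]^{[\B,\bG]}$. \emph{Third}, I would show that the two mate bijections of the first step restrict appropriately. For the first bijection: if $(F,\theta)$ is a left $\bG$-comodule then $t := G\sigma \cdot \theta R$ satisfies the counit law $\varepsilon \cdot t = \sigma$ (immediate from naturality of $\varepsilon$ and $\varepsilon F \cdot \theta = 1_F$) and the comultiplication law $\delta \cdot t = Gt \cdot tFR \cdot F\eta R$ (a diagram chase combining coassociativity of $\theta$, naturality and coassociativity of $\delta$, and a triangle identity); conversely a comonad morphism $t\colon (FR, F\eta R, \sigma) \to \bG$ yields a left $\bG$-comodule structure $tF \cdot F\eta$ on $F$ by the dual reasoning, and these are inverse by the first step. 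The same argument applied to the second mate bijection gives the bijection between comonad morphisms $(FR, F\eta R, \sigma) \to \bG$ and right $\bG$-comodule structures $(R,\vartheta) \in [\A,\B]^{[\bG,\B]}$. Composing the three bijections produces all the stated correspondences, and tracing the composites recovers the explicit descriptions in the statement.

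\emph{The main obstacle} is the comultiplication half of the third step: verifying that, under the mate correspondence, the coassociativity axiom of a $\bG$-comodule is equivalent to the comultiplication square of a comonad morphism. This is a $2$-categorical diagram chase that must simultaneously invoke both triangle identities together with the coassociativity of $\delta$; everything else is either definitional (the identification of the first two items) or a formal consequence of the triangle identities (the mate bijections and the counit compatibilities), and the right-comodule case is strictly dual to the left-comodule case.
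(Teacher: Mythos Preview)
Your sketch is correct and follows the standard mate-correspondence argument; there is no genuine gap. Note, however, that the paper does not supply its own proof of this statement: it is presented as a recall, with the bijections merely described and a citation to \cite{G} for details. So there is nothing to compare against beyond observing that your explicit formulas for $t_K$, $\theta$, and $\vartheta$ agree with the ones the paper writes down, which they do.
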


\begin{theorem} \label{fun.th.} Let $\bG=(G,\delta,\ve)$ be a comonad on $\A$
and $\eta, \sigma :F \dashv R:\B \to \A$ an adjunction. For a functor $K : \B \to \A^\bG$ with $U^\bG K=F$,
the following are equivalent:
\begin{itemize}
  \item [(a)] $K$ is an equivalence of categories;
  \item [(b)] $F$ is comonadic and the %induced
     comonad morphism ${t_K: FR \to \bG}$   is an isomorphism;
  \item [(c)] $F$ is comonadic and the composite $$\gamma_K : KR \xr{\eta^\bG KR} \phi^\bG U^\bG KR=\phi^\bG
  FR \xr{\phi^\bG \sigma} \phi^\bG$$ is an isomorphism.
\end{itemize}
\end{theorem}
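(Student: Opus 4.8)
The plan is to establish the chain $(a)\Rightarrow(b)\Rightarrow(c)\Rightarrow(a)$, with the passage back from $(c)$ to $(a)$ (or the direct argument that $(b)$ already forces $K$ to be an equivalence) being the step that needs the most care. First I would observe that, by Section~\ref{com.adj.func.}, the hypothesis $U^\bG K=F$ already equips $F$ with a canonical left $\bG$-comodule structure $\theta:F\to GF$, namely $K(b)=(F(b),\theta_b)$, and that the comonad morphism $t_K:FR\to\bG$ from the comonad $\bG'=(FR,F\eta R,\sigma)$ generated by $F\dashv R$ is the composite $FR\xr{\theta R}GFR\xr{G\sigma}G$. The comparison functor $K^{\bG'}:\B\to\A^{\bG'}$ for this generated comonad is, by definition, $b\mapsto(F(b),F(\eta_b))$, and post-composing with the functor $\A^{\bG'}\to\A^\bG$ induced by the comonad morphism $t_K$ recovers $K$; this is the factorisation I would exploit throughout.

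For $(a)\Rightarrow(b)$: if $K$ is an equivalence, then $F=U^\bG K$ is the composite of an equivalence with the comonadic forgetful functor $U^\bG$, hence $F$ is comonadic; equivalently $K^{\bG'}$ is an equivalence, so the comonad-morphism-induced functor $\A^{\bG'}\to\A^\bG$ is an equivalence, and by the uniqueness/conservativity properties of Eilenberg--Moore categories this forces $t_K$ to be an isomorphism of comonads. For $(b)\Rightarrow(c)$: assuming $F$ comonadic and $t_K$ iso, I would compute $\gamma_K$ explicitly. Unravelling the definition, $\gamma_K=\phi^\bG\sigma\cdot\eta^\bG KR$, and its component at $a$ lands in $\phi^\bG(a)=(G(a),\delta_a)$; using the description of $\theta$ and the triangular identities for $F\dashv R$, one sees that $\gamma_K$ factors as the $\bG$-comodule map induced by $t_K$ precomposed with the unit-like data of the comonadic comparison $K^{\bG'}$. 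Since $F$ comonadic makes $K^{\bG'}$ an equivalence (so the relevant natural transformation for the generated comonad is invertible) and $t_K$ is assumed invertible, $\gamma_K$ is a composite of isomorphisms, hence an isomorphism.

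For $(c)\Rightarrow(a)$: this is the main obstacle, and the cleanest route is to recognise $\gamma_K:KR\to\phi^\bG$ as (a variant of) the counit-type comparison between $K$ and the induced functor on Eilenberg--Moore categories. Concretely, $F$ comonadic gives an equivalence $\B\simeq\A^{\bG'}$, under which $K$ corresponds to the functor $(-)_{t_K}:\A^{\bG'}\to\A^\bG$ induced by the comonad morphism $t_K:\bG'\to\bG$; one knows this induced functor is an equivalence iff $t_K$ is an isomorphism. So it suffices to show $\gamma_K$ iso forces $t_K$ iso. I would do this by evaluating $\gamma_K$ on cofree objects: on $\phi^{\bG'}$-type objects the functor $(-)_{t_K}$ and the map $\gamma_K$ are computed directly from $t_K$, and invertibility of $\gamma_K$ there yields invertibility of $t_K$ componentwise (using that $U^\bG$ is conservative to transport the conclusion). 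Once $t_K$ is an isomorphism, $(-)_{t_K}$ is an equivalence, hence so is $K$, closing the loop. Throughout, the recurring technical points — that $U^\bG$ and $U^{\bG'}$ are conservative and create the relevant (co)limits, and that comparison functors for generated comonads are equivalences precisely under comonadicity — are exactly the facts recalled in Section~\ref{mon-comon}, so no new machinery is needed; the work is in chasing the defining diagrams for $\theta$, $t_K$ and $\gamma_K$ and matching them against the induced-functor description.
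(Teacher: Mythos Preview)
Your overall strategy is sound and would go through, but you are working much harder than necessary, and the paper's proof is dramatically shorter because it exploits a single identity you overlook: $U^\bG\gamma_K = t_K$. Indeed, unwinding the definition of $\gamma_K$, at each $a\in\A$ the underlying morphism is $G\sigma_a\cdot\theta_{R(a)}$, which is exactly $(t_K)_a$ as defined in Section~\ref{com.adj.func.}; this is precisely the content of the observation from Section~\ref{adj-comon} (Dubuc). Since $U^\bG$ reflects isomorphisms, $\gamma_K$ is invertible if and only if $t_K$ is, and this gives $(b)\Leftrightarrow(c)$ in one line. Your arguments for $(b)\Rightarrow(c)$ (``$\gamma_K$ factors as the $\bG$-comodule map induced by $t_K$ precomposed with the unit-like data\ldots'') and for $(c)\Rightarrow(a)$ (evaluating on cofree objects to extract invertibility of $t_K$) are circuitous substitutes for this one-line observation; in particular, your appeal to conservativity of $U^\bG$ in the $(c)\Rightarrow(a)$ step is pointing in the wrong direction, since functors preserve isomorphisms automatically---conservativity is only needed for the converse $(b)\Rightarrow(c)$.

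For $(a)\Leftrightarrow(b)$ the paper simply cites \cite[Theorem~4.4]{M}, whereas you reprove it via the factorisation $K = (-)_{t_K}\circ K^{\bG'}$ through $\A^{\bG'}$. Your argument here is correct and self-contained, which is a mild advantage; but note that the step ``the induced functor $\A^{\bG'}\to\A^\bG$ is an equivalence forces $t_K$ to be an isomorphism'' still requires a short justification (e.g.\ evaluate the counit of this equivalence at cofree objects, or again use $U^\bG$-conservativity on the analogous $\gamma$), so you have not really avoided the core computation---you have just relocated it.
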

\begin{proof} (a) and (b) are equivalent by \cite[Theorem 4.4]{M};   (b) and (c) are equivalent
since $U^\bG \gamma_K =t_K$ by \cite{D} and $U^\bG$ reflects isomorphisms.
\end{proof}

\begin{thm} \label{rght.adj}{\bf Right adjoint of $K$.} \em
Now fix a functor $K : \B \to \A^\bG$ with commutative diagram
\begin{equation}\label{triangle}
\xymatrix{ \B  \ar[r]^-{K}\ar[rd]_{F} & \A\!^\bG \ar[d]^{U^\bG}\\
& \A .}
\end{equation} Then $\gamma_K$ is the composite
$KR \xr{\eta^\bG KR} \phi^\bG U^\bG KR=\phi^\bG FR \xr{\phi^\bG \ve} \phi^\bG$ and using
the fact from Section \ref{adj-comon} that $U^\bG \gamma_K $ is just the comonad morphism $t_K : FR \to G$
induced by the triangle, an easy calculation shows that
$$\beta U^\bG=RU^\bG\gamma_K U^\bG \cdot \eta R U^\bG,$$
where $\beta : R \to RG$ is the right $\bG$-module structure on $R$ corresponding to
the triangle (\ref{triangle}). Thus, when the right adjoint $\ol{K}$ of $K$ exists, it
is determined by the equaliser diagram
\begin{equation}\label{rght.adj.sp}\xymatrix{
\ol{K} \ar[r]^{\iota\quad} & R U^\bG
\ar@{->}@<0.5ex>[rr]^-{RU^\bG \eta^\bG} \ar@ {->}@<-0.5ex> [rr]_-{\beta
U^\bG }&& RGU^\bG=RU^\bG \phi^\bG U^\bG.}\end{equation}

\noindent It is easy to see that for any $(a, \theta)\in \A^\bG$, the $(a, \theta)$-component of
(\ref{rght.adj.sp}) is the equaliser diagram
\begin{equation}\label{rght.adj.com}\xymatrix{\ol{K}(a, \theta)\ar[r]^-{\iota_{(a,\theta)}} &R(a) \ar@{->}@<0.5ex>[r]^-{R(\theta)} \ar@
{->}@<-0.5ex> [r]_-{\beta_a} & RG(a)\,.}\end{equation}

It is easy to verify, using (\ref{counit.0}) that
if $\ol{\sigma}$ is the counit of the adjunction $K\dashv \ol{K}$,
then for any $(a, \theta)\in \A\!^\bG$, one has
\begin{equation}\label{counit}U^\bG(\ol{\sigma}_{(a, \,\theta)})=\sigma_{a} \cdot F(\iota_{(a,\,\theta)})\,.\end{equation}
\end{thm}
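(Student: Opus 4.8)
The plan is to recognise the whole statement as a direct instance of Dubuc's \emph{Adjoint Triangle Theorem} \ref{Dubuc2}, after which only bookkeeping remains. Working in the frame of \ref{adj-comon}, I would take the ``primed'' adjunction to be the Eilenberg--Moore adjunction $U^\bG \dashv \phi^\bG$, i.e.\ put $F' := U^\bG$ and $R' := \phi^\bG$; then the comonad on $\A$ generated by $F' \dashv R'$ is precisely $\bG$. Keeping $\eta,\sigma : F \dashv R$ as the given adjunction and $K$ as the given functor, the commutativity $U^\bG K = F$ of the triangle (\ref{triangle}) is exactly the hypothesis $F'K = F$ of \ref{adj-comon}; the induced comonad morphism $t_K : FR \to G$ is the one attached to the triangle, it coincides with the composite $G\sigma \cdot \theta_F R$ of \ref{com.adj.func.} (where $\theta_F : F \to GF$ is the left $\bG$-comodule structure on $F$ determined by $K$), and $U^\bG \gamma_K = t_K$. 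Finally the forgetful functor $U^\bG$ is comonadic --- its comparison functor is the identity of $\A^\bG$ --- hence in particular precomonadic, so \ref{Dubuc2} applies.

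The ``easy calculation'' is then just the whiskering identity
$$R U^\bG\gamma_K U^\bG \cdot \eta R U^\bG \;=\; R(U^\bG\gamma_K)\,U^\bG \cdot \eta R U^\bG \;=\; R t_K U^\bG \cdot \eta R U^\bG \;=\; (R t_K \cdot \eta R)\,U^\bG \;=\; \beta U^\bG ,$$
the last step because $\beta = R t_K \cdot \eta R$ is, by \ref{com.adj.func.}, the right $\bG$-comodule structure on $R$ induced by the triangle. This identifies the lower leg of the equaliser of \ref{Dubuc2} with $\beta U^\bG$, so that equaliser is exactly (\ref{rght.adj.sp}); by \ref{Dubuc2}, when $\ol{K}$ exists it is this equaliser, with $\iota$ its comparison map into $R U^\bG$. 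For the object-wise form I would evaluate (\ref{rght.adj.sp}) at $(a,\theta) \in \A^\bG$: the functors $R U^\bG$ and $R G U^\bG$ send $(a,\theta)$ to $R(a)$ and $RG(a)$; the lower leg $\beta U^\bG$ becomes $\beta_a$; and the upper leg $R U^\bG\eta^\bG$ becomes $R(U^\bG(\eta^\bG_{(a,\theta)})) = R(\theta)$, since the component at $(a,\theta)$ of the unit of $U^\bG \dashv \phi^\bG$ is the coaction $\theta : a \to G(a)$ itself, viewed as a comodule morphism $(a,\theta) \to \phi^\bG(a)$. This is (\ref{rght.adj.com}). For the counit, specialising (\ref{counit.0}) of \ref{Dubuc2} to the present instance gives $U^\bG\ol{\sigma} = \sigma U^\bG \cdot F\iota$, and its $(a,\theta)$-component is $U^\bG(\ol{\sigma}_{(a,\theta)}) = \sigma_a \cdot F(\iota_{(a,\theta)})$, which is (\ref{counit}).

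The main (and essentially only) point needing care is the identification of the $t_K$, $\gamma_K$ and $\beta$ produced by the abstract adjoint-triangle machinery with the ones built in \ref{com.adj.func.} from the comodule structure $\theta_F$ on $F$, together with keeping the two displayed descriptions of the final leg of $\gamma_K$ consistent (written once as $\phi^\bG\sigma$ and once as $\phi^\bG\ve$, both denoting the counit of $F \dashv R$). Once that is settled, everything reduces to naturality and the triangle identities for $F \dashv R$ and $U^\bG \dashv \phi^\bG$. If one preferred to avoid \ref{Dubuc2}, the object-wise equaliser (\ref{rght.adj.com}) can also be extracted directly: a morphism $K(b) \to (a,\theta)$ in $\A^\bG$ is a morphism $f : F(b) \to a$ compatible with the coactions; transposing along $F \dashv R$, its mate $\bar f : b \to R(a)$ satisfies $R(\theta)\cdot\bar f = \beta_a\cdot\bar f$ exactly when $f$ is coaction-compatible, and the Yoneda lemma then forces $\ol{K}(a,\theta)$ to be the equaliser of the pair $R(\theta),\beta_a : R(a) \to RG(a)$.
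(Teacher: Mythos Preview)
Your proposal is correct and follows exactly the approach the paper intends: the statement is nothing but the specialisation of Theorem~\ref{Dubuc2} to the triangle $U^\bG K = F$, with $F' = U^\bG$, $R' = \phi^\bG$, and the identifications $t_K = U^\bG\gamma_K$ and $\beta = Rt_K \cdot \eta R$ supplied by Section~\ref{com.adj.func.}. You have in fact spelled out more detail than the paper does (which leaves everything at the level of ``easy calculation'' and ``easy to verify''), and your observation about the clash between $\sigma$ and $\ve$ as names for the counit of $F \dashv R$ is a genuine notational slip in the text.
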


Suppose now that $\ol{K}$ exists, write $\bP$ for the monad on $\B$ generated by the adjunction
$K \dashv \ol{K}$, and consider the corresponding comparison functor $K_\bP :\A\!^\bG \to \B_\bP$.
Then $K_\bP(a, \theta)=(\ol{K}(a, \theta), \ol{K}(\ol{\sigma}_{(a, \,\theta)})$ for any
$(a, \theta) \in \A\!^\bG$. Moreover, $ K_\bP K=\phi_\bP \,\,\, \text{and}\,\,\, U_\bP K_\bP=\ol{K}$.
The situation may be pictured as
\begin{equation}\label{Diag}
\xymatrix@C=.6in {\B \ar@/_1pc/@{->}[rr]_{\phi_\bP}\ar@/_3pc/@{->}[rrrr]_{K}\ar[rrddd]_{F} & &
\B_{\bP}  \ar[ll]_{U_{\bP}}& & %\ar@/_1pc/@{-->}[rr]_(0.35){L_\bP}
\A^{\bG} \ar@/_2pc/@{->}[llll]_{\ol{K}}  \ar[llddd]^{U^{\bG}} \ar[ll]_{K_\bP}\\\\
&&&&\\
 & &  \A & &}
\end{equation}

In order to proceed, we need the following (see \cite[Lemma 21.2.7]{S}).
\begin{proposition}\label{S}
Let $\eta, \sigma :F \dashv R : \A \to \C $ and
$\eta ' , \sigma' :F' \dashv R' : \B \to \C $ be adjunctions with corresponding
monads $\bT$ and $\bT\,'$, respectively,
and let
$$ \xymatrix{\A  \ar[r]^{K}& \B \\
 &  \C \ar[lu]^{F}\ar[u]_{F'}}
$$
be a commutative diagram of categories and functors.% with $KF=F'$.
Then the composite
$$\xymatrix{T=RF \ar[r]^-{\eta' RF}& R'F'RF=R'KFRF \ar[r]^-{R'K
\sigma F}& R'KF=R'F'=T'}$$ is a monad morphism $\bT \to \bT\,'$.
\end{proposition}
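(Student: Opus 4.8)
The proposition is \cite[Lemma 21.2.7]{S}; it is a purely formal consequence of the triangle identities, so here is the plan of the verification. The strategy is to factor the displayed composite through a single natural transformation between the two right adjoints and then check the unit and multiplication axioms for a monad morphism one at a time.

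First I would introduce the natural transformation
$$\psi : R \xr{\eta' R} R'F'R=R'KFR \xr{R'K\sigma} R'K,$$
which is nothing but the mate of the identity $2$-cell $KF=F'$ under the adjunctions $F\dashv R$ and $F'\dashv R'$. Using the commutativity of the triangle ($KF=F'$) one sees at once that the transformation in the statement is exactly $t_K=\psi F : T=RF \to R'F'=T'$. The two properties of $\psi$ that will be needed are the \emph{unit identity} $\psi F\cdot \eta=\eta'$ and the \emph{mate identity} $\sigma' K\cdot F'\psi=K\sigma$. Each is obtained by expanding the definition of $\psi$, pushing the outermost (co)unit past the relevant morphism by naturality, and then collapsing an occurrence of $\sigma F\cdot F\eta=1_F$ (respectively $\sigma' F'\cdot F'\eta'=1_{F'}$) by a triangle identity for the appropriate adjunction; these are short computations with no content beyond the triangle identities.

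Granting these, the rest is routine. For the unit axiom, $t_K\cdot \eta=\psi F\cdot \eta=\eta'$ is exactly the first identity. For the multiplication axiom, recall $m^\bT=R\sigma F$ and $m^{\bT'}=R'\sigma' F'$; writing the horizontal composite $t_Kt_K$ as $T't_K\cdot t_KT$ and applying naturality of $\psi:R\to R'K$ to the morphism $\sigma F$ (a morphism in $\A$), the required equality $t_K\cdot m^\bT=m^{\bT'}\cdot t_Kt_K$ reduces, after cancelling a common factor and applying $R'$, to $K\sigma F=\sigma' F'\cdot F'\psi F$, which is precisely the mate identity $\sigma' K\cdot F'\psi=K\sigma$ whiskered on the right by $F$ (using $KF=F'$). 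Hence $t_K$ is a monad morphism $\bT\to \bT'$.

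The only point that demands care is the bookkeeping of the several whiskerings and horizontal composites, and making sure each naturality square is applied to the correct morphism; there is no genuine obstacle, since the whole argument rests on the two triangle identities together with the interchange law. (One could instead recognise the statement as the mate construction and deduce it by opposite-category duality from the Dubuc-type result in \ref{adj-monad}, but the direct route above is shorter and self-contained.)
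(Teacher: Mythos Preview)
Your proposal is correct. The paper itself gives no proof at all: it simply cites \cite[Lemma 21.2.7]{S} and states the result, whereas you both cite the same source and supply a clean direct verification via the mate $\psi:R\to R'K$. Your two auxiliary identities $\psi F\cdot\eta=\eta'$ and $\sigma'K\cdot F'\psi=K\sigma$ are exactly what is needed, and the reductions you describe for the unit and multiplication axioms go through as stated (the multiplication axiom indeed reduces, after one naturality of $\psi$ at $\sigma F$, to $R'$ applied to the mate identity whiskered by $F$). So there is nothing to compare: you have filled in a proof that the paper leaves to the reference.
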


Suppose again that $\ol{K}$ exists and consider the natural transformation $\iota: P \to RF$, where
$\iota_b=\iota_{K(b)}$ for all $b\in \B$.

\begin{proposition}\label{monad.mor.} $\iota: P \to RF$ is a monad morphism from the monad $\bP$ to the
monad generated by the adjunction $F \dashv R$.
\end{proposition}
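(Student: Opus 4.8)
The plan is to recognise $\iota$ as precisely the monad morphism produced by Proposition \ref{S}. Recall that $\bP$ is the monad generated by $K\dashv\ol{K}$, so its underlying functor is $P=\ol{K}K$, its unit is $\ol{\eta}$ and its multiplication is $\ol{K}\,\ol{\sigma}K$, while the monad generated by $F\dashv R$ has underlying functor $RF$, unit $\eta$ and multiplication $R\sigma F$, where $\sigma\colon FR\to 1$ denotes the counit of $F\dashv R$ (as in (\ref{counit})). The commutative triangle (\ref{triangle}) reads $U^\bG K=F$, with $K\colon\B\to\A^\bG$ and $F\colon\B\to\A$ both left adjoints out of $\B$; so Proposition \ref{S} applies, with its base category taken to be $\B$, its first left adjoint being $K$ (right adjoint $\ol{K}$), its second left adjoint being $F$ (right adjoint $R$), and its connecting functor being $U^\bG$. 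It therefore produces a monad morphism from $\bP$ to the monad generated by $F\dashv R$, given by the composite
$$\ol{K}K \;\xr{\;\eta\,\ol{K}K\;}\; RF\ol{K}K = RU^\bG K\ol{K}K \;\xr{\;RU^\bG\ol{\sigma}K\;}\; RU^\bG K = RF .$$

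It remains to check that this composite is $\iota$. Writing $\iota$ also for the natural transformation $\ol{K}\to RU^\bG$ of (\ref{rght.adj.sp}), the transformation appearing in the statement is the whiskered one $\iota K\colon P=\ol{K}K\to RU^\bG K=RF$. Equation (\ref{counit}) says $U^\bG\ol{\sigma}=\sigma U^\bG\cdot F\iota$, so whiskering with $K$ on the right gives $RU^\bG\ol{\sigma}K=R\sigma F\cdot RF(\iota K)$. Substituting, the composite above equals $R\sigma F\cdot RF(\iota K)\cdot\eta\,\ol{K}K$, which by naturality of $\eta\colon 1\to RF$ applied to $\iota K$ equals $R\sigma F\cdot\eta RF\cdot\iota K$, and this is just $\iota K$ by one of the triangular identities for $F\dashv R$. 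Hence $\iota$ coincides with the monad morphism delivered by Proposition \ref{S}, which is exactly the assertion.

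The only real care is in matching up which functor plays which role in Proposition \ref{S} and in carrying out the whiskerings correctly; beyond that there is no essential obstacle. One could of course bypass Proposition \ref{S} and verify the two defining conditions of a monad morphism by hand: the unit condition $\iota\cdot\ol{\eta}=\eta$ is immediate, being the commutative square characterising $\ol{\eta}$ recorded just after Theorem \ref{Dubuc2} (the one with edges $\eta$, $\ol{\eta}$ and $\iota K$), while multiplicativity $\iota\cdot\ol{K}\,\ol{\sigma}K=R\sigma F\cdot(\iota\iota)$ would be extracted from the equaliser (\ref{rght.adj.sp}) defining $\ol{K}$ together with (\ref{counit}). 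The route through Proposition \ref{S} is the cleaner one and is the one I would write up.
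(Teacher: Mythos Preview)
Your proof is correct and follows essentially the same route as the paper: both apply Proposition~\ref{S} to the triangle $U^\bG K = F$ to obtain the monad morphism as the composite $\ol{K}K \xr{\eta\,\ol{K}K} RF\ol{K}K \xr{RU^\bG\ol\sigma K} RF$, and then use (\ref{counit}) together with naturality of $\eta$ and a triangular identity to identify this with $\iota K$. The only cosmetic difference is that the paper carries out the last verification componentwise while you do it in natural-transformation form.
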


\begin{proof} Applying Proposition \ref{S} to the diagram
$$\xymatrix@R=.2in{ \A\!^{\bG} \ar[rr]^{U^{\bG}} \ar[ddr]^{\ol{K}}&&
\A \ar[ddl]_{R}\\\\
&\B \ar@/^/@<+1.5ex>[uul]^{K} \ar@/_/@<-1.5ex>[uur]_{F}&}$$
in which $U^{\bG} K=F$, gives that the natural transformation
$$P=\ol{K}K \xr{\eta \ol{K}K} RF \ol{K}K=RU^{\bG}K \ol{K}K
\xr{R U^{\bG} \ol{\sigma}K} R U^{\bG}K=RF$$
is a monad morphism from the monad $\bP$ to the
monad generated by the adjunction $F \dashv R$.
Since for any $(a, \theta)\in \A\!^\bG$,
$U^\bG(\ol{\sigma}_{(a, \,\theta)})=\sigma_{a} \cdot F(\iota_{(a,\,\theta)})$ by (\ref{counit}),
it follows that, for each $b \in \B$, the $b$-component of the above natural transformation is the composite
$$P(b) \xr{\eta_{P(b)}} UFP(b) \xr {UF(\iota_{b})}UFUF(b) \xr {U \sigma_{F(b)}}UF(b),$$
which is easily verified to be just the morphism $\iota_b: P(b) \to UF(b).$ This completes
the proof.
\end{proof}

We are mainly interested in the case where the functor $F$ is monadic. So, our standard situation of interest,
and our standard notation, will henceforth be as follows. We consider a monad
 $\bT=(T,m^\bT,e^\bT)$ on $\A$, a comonad
$\bG$ on $\A_\bT$ and an adjunction $\ol{\eta }\,, \ol{\sigma}:K \dashv \ol{K} : (\A_\bT)^\bG \to \A$,
where $K:\A \to (\A_\bT)^\bG$ is a functor with $U_\bG K=\phi_\bT.$
Write $\bP=(P,m^\bP,e^\bP)$ for the monad on $\A$ generated
by the adjunction $K \dashv \ol{K}$ and write $\iota: P \to T$ for the induced morphism of monads. This
is pictured   in the   diagram
\begin{equation}\label{Diag1}
\xymatrix@C=.6in {\A \ar@/_1pc/@{->}[rr]_{\phi_\bP}\ar@/_3pc/@{->}[rrrr]_{K}\ar[rrddd]_{\phi_\bT} & &
\A_{\bP}  \ar[ll]_{U_{\bP}}\ar@/_1pc/@{-->}[rr]_(0.35){L_\bP}& &
(\A_\bT)^{\bG} \ar@/_2pc/@{->}[llll]_{\ol{K}}  \ar[llddd]^{U^{\bG}} \ar[ll]_{K_\bP}\\\\
&&&&\\
 & &  \A_\bT & &,}
\end{equation}
in which $K_\bP:(\A_\bT)^{\bG} \to \A_\bP$ is the Eilenberg-Moore comparison functor for the monad $\bP$,
and thus $ K_\bP K=\phi_\bP \,\,\, \text{and}\,\,\, U_\bP K_\bP=\ol{K}$.

\begin{proposition}\label{left.adjoint} In the situation above, the functor $K_\bP:(\A_\bT)^{\bG} \to \A_{\bP}$
admits a left adjoint $L_\bP:\A_{\bP} \to (\A_\bT)^{\bG}$ if and only if the restriction-of-base functor
$\iota^*: \A_\bT \to \A_\bP$ admits a left adjoint, i.e., the change-of-base functor $\iota_! : \A_\bP \to \A_\bT$ exists.
Moreover, when this is the case, $\iota_!=U_\bG L_\bP$.
\end{proposition}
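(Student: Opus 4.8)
The plan is to reduce the existence of a left adjoint of $K_\bP$ to the existence of $\iota_!$ by comparing two families of coequaliser diagrams through the comonadic forgetful functor $U^\bG$.

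First I would invoke Theorem \ref{Beck}(1), applied to the adjunction $K \dashv \ol K$ (whose generated monad is $\bP$ and whose comparison functor is $K_\bP$): the functor $K_\bP$ admits a left adjoint $L_\bP$ if and only if, for every $\bP$-module $(a,h)\in \A_\bP$, the pair
$$\xymatrix{K(P(a)) \ar@{->}@<0.5ex>[r]^-{K(h)} \ar@{->}@<-0.5ex>[r]_-{\ol\sigma_{K(a)}} & K(a)}$$
has a coequaliser in $(\A_\bT)^\bG$; here $\ol\sigma$ is the counit of $K\dashv\ol K$, so that $\ol\sigma_{K(a)}:K\ol K K(a)=K(P(a))\to K(a)$. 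The key step is to apply $U^\bG$ to this pair. Since $U^\bG K=\phi_\bT$ we get $U^\bG(K(h))=\phi_\bT(h)$, while formula (\ref{counit}) yields $U^\bG(\ol\sigma_{K(a)})=(\ve_\bT)_{\phi_\bT(a)}\cdot\phi_\bT(\iota_a)$, which is exactly the component $\varrho_a$ of the canonical right $\bP$-action $\varrho=\ve_\bT\phi_\bT\cdot\phi_\bT\iota:\phi_\bT P\to\phi_\bT$ (see Section \ref{canonical module}(vi) and Section \ref{base}). Thus $U^\bG$ carries the displayed pair precisely to the pair $\phi_\bT(h),\varrho_a:\phi_\bT P(a)\rightrightarrows\phi_\bT(a)$ whose coequaliser in $\A_\bT$, when it exists, is by diagram (\ref{ext.under}) the object $\iota_!(a,h)$.

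Now I would use that $U^\bG$ preserves and creates coequalisers (the fact invoked in the proof of Proposition \ref{idem}): consequently the pair $K(h),\ol\sigma_{K(a)}$ has a coequaliser in $(\A_\bT)^\bG$ if and only if $\phi_\bT(h),\varrho_a$ has one in $\A_\bT$, and in that case $U^\bG$ sends the former coequaliser to the latter. Quantifying over all $(a,h)\in\A_\bP$, and recalling that coequalisers in the functor category $[\A_\bP,\A_\bT]$ are formed pointwise, the criterion of Theorem \ref{Beck}(1) becomes exactly the criterion recorded in Section \ref{base} for the coequaliser (\ref{base.0}) to exist, i.e.\ for the change-of-base functor $\iota_!:\A_\bP\to\A_\bT$ to exist. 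This proves the equivalence. For the last assertion: when these conditions hold, $L_\bP(a,h)$ is by the construction in Theorem \ref{Beck} the coequaliser of $K(h),\ol\sigma_{K(a)}$ in $(\A_\bT)^\bG$, so applying the coequaliser-preserving $U^\bG$ and using the identification above, $U^\bG L_\bP(a,h)$ is the coequaliser of $\phi_\bT(h),\varrho_a$ in $\A_\bT$, namely $\iota_!(a,h)$; these identifications are natural in $(a,h)$, so $U^\bG L_\bP\cong\iota_!$ (with equality upon choosing the coequalisers compatibly).

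I expect the only real work to be bookkeeping: correctly identifying that the relevant counit is $\ol\sigma$ of $K\dashv\ol K$ and verifying via (\ref{counit}) that $U^\bG(\ol\sigma_{K(a)})$ is the canonical action $\varrho_a$, and then matching the objectwise coequaliser condition of Beck's theorem with the functor-category coequaliser condition of Section \ref{base}. Once $U^\bG$ is known to preserve and create coequalisers, no essential difficulty remains.
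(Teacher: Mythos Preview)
Your proposal is correct and follows essentially the same approach as the paper: both invoke Beck's criterion (Theorem~\ref{Beck}(1)) for the adjunction $K\dashv\ol K$, use formula~(\ref{counit}) to identify $U^\bG(\ol\sigma_{K(a)})$ with $m^\bT_a\cdot\phi_\bT(\iota_a)=\varrho_a$, and then appeal to the fact that $U^\bG$ preserves and creates coequalisers to match the Beck condition for $L_\bP$ with the coequaliser condition defining $\iota_!$. The only cosmetic difference is that you phrase the change-of-base criterion via the functor-category coequaliser~(\ref{base.0}) and then pass to components, whereas the paper works directly with the objectwise diagram~(\ref{ext.under}).
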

\begin{proof} % Since \begin{itemize}
 According to Section \ref{base}, $\iota_! : \A_\bP \to \A_\bT$ exists if and only if
  for each $(a, g) \in \A_{\bP}$, the pair of morphisms $(\phi_\bT(g), m^\bT_a \cdot \phi_\bT(\iota_a))$
  has a coequaliser in $\A_\bT$, while by Proposition \ref{Beck}(1), $L_\bP:\A_{\bP} \to (\A_\bT)^{\bG}$ exists
  if and only if the pair of morphisms $(K(g), \ol{\sigma}_{K(a)})$
  has a coequaliser in $(\A_\bT)^{\bG}$.

 Since the functor $U^\bG : (\A_\bT)^{\bG} \to \A_\bT$ preserves and creates coequalisers,
%\end{itemize}
it suffices to show that the image of the pair $(K(g), \ol{\sigma}_{K(a)})$ under $U^\bG$ is
just the pair $(\phi_\bT(g), m^\bT_a \cdot \phi_\bT(\iota_a))$. That $U^\bG K(g)=\phi_\bT(g)$ follows from
the equality $U^\bG K=\phi_\bT$. Next, by (\ref{counit}), $U^\bG(\ol{\sigma}_{K(a)})=
(\ve_\bT)_{U^\bG K(a)} \cdot \phi_\bT (\iota_{K(a)})$. But since $(\ve_\bT)_{U^\bG K(a)}=
(\ve_\bT)_{\phi_\bT(a)}=m^\bT_a$, we see that $U^\bG(\ol{\sigma}_{K(a)})=m^\bT_a \cdot \phi_\bT(\iota_a)$.
Hence $$U^\bG(K(g),\ol{\sigma}_{K(a)})=(\phi_\bT(g), m^\bT_a \cdot \phi_\bT(\iota_a))$$ and thus the result follows.
\end{proof}

Now assume that the change-of-base functor $\iota_! : \A_\bP \to \A_\bT$ exists, that is,
 $K_\bP:(\A_\bT)^{\bG} \to \A_{\bP}$ admits a left adjoint $L_\bP:\A_{\bP}
\to (\A_\bT)^{\bG}$. Thus, for any $(a,g) \in \A_\bP$, $\iota_!(a,g)$ is given by be the coequaliser
$$
\xymatrix{TP(a) \ar@/^2pc/@{->}[rr]^{T(g)}
\ar[r]_-{T(\iota_a)} & TT(a) \ar[r]_-{m_a} & T(a) \ar[r]^{q_{(a,\,g)}}& \iota_!(a,g)}.$$

Since $\iota_!=U_\bG L_\bP$ by Proposition \ref{left.adjoint} and $\iota_! \cdot \phi_\bP=\phi_\bT$ by
Proposition \ref{rightiso.2}, both triangles in the diagram
\begin{equation}\label{diagram.2}
\xymatrix@C=.6in  @R=.2in{\A \ar[r]_{\phi_\bP}\ar@/^1.8pc/@{->}[rr]^{K}\ar[rdd]_{\phi_\bT}  &
\A_{\bP}  \ar[r]_{L_\bP} \ar[dd]_{\iota_!}&
(\A_\bT)^{\bG}  \ar[ldd]^{U^{\bG}}\\ &&\\  &  \A_\bT  &}
\end{equation} commute. Write $\bG_1$ (respectively $\bG_2$) for the $\A_\bT$-comonad generated by the adjunction $\phi_\bT \dashv U_\bT$
(respectively $\iota_! \dashv \iota^*$), and consider the related comonad morphism $t_{\phi_\bP}:\bG_1 \to \bG_2$
(respectively $t_{L_\bP}:\bG_2 \to \bG$) corresponding to the left (respectively right) triangle in the above diagram (see
Sections \ref{adj-comon} and \ref{com.adj.func.}). Since $U_\bP K_\bP=\ol{K}$ and  $\phi_\bP$ (respectively $L_\bP$) has a right adjoint
$U_\bP$ (respectively $K_\bP$), it follows -- by  uniqueness of right adjoints -- that $L_\bP \cdot \phi_\bP=K.$ Thus we may
apply \cite[Proposition 1.21]{MW} to obtain the equality
\begin{equation}\label{composition}
t_K=t_{L_{_{\bP}}} \cdot t_{\phi_{_\bP}}.
\end{equation}

Recall from Section \ref{base} that $\iota_!$ can be obtained as the coequaliser of  Diagram (\ref{base.0}).

\begin{proposition}\label{can.epi} If $K_\bP:(\A_\bT)^{\bG} \to \A_{\bP}$ admits a left adjoint, then \,
 $t_{\phi_{_\bP}}=q\iota^*$.
\end{proposition}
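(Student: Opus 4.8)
The plan is to identify both sides of the claimed equality as natural transformations $\bG_1 \to \bG_2$ of $\A_\bT$-comonads and to show they agree componentwise on objects $(a,h)\in\A_\bT$. Recall that $\bG_1=\phi_\bT U_\bT$ is the comonad generated by $\phi_\bT\dashv U_\bT$, while $\bG_2$ is generated by $\iota_!\dashv\iota^*$, so that on $(a,h)$ we have $\bG_1(a,h)=\phi_\bT(a)$ and $\bG_2(a,h)=\iota_!\iota^*(a,h)=\iota_!(a,h\cdot\iota_a)$. The comonad morphism $t_{\phi_\bP}$ is, by Section~\ref{adj-comon}, the composite $\phi_\bT U_\bT \xr{} \iota_!\iota^*\phi_\bT U_\bT=\iota_!\phi_\bP U_\bT\xr{}\iota_!\iota^*$ built from the unit of $\iota_!\dashv\iota^*$ and the counit $\ve_\bP$ (more precisely, $t_{\phi_\bP}=\iota_!\widehat\eta U_\bT$-style, assembled from the data in Section~\ref{com.adj.func.}), since the left triangle in Diagram~(\ref{diagram.2}) reads $\iota_!\phi_\bP=\phi_\bT$. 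On the other side, $q\iota^*$ makes sense because $q:\phi_\bT U_\bP\to\iota_!$ is the coequaliser natural transformation of Diagram~(\ref{base.0}), so precomposing with $\iota^*:\A_\bT\to\A_\bP$ gives a natural transformation $\phi_\bT U_\bP\iota^*=\phi_\bT U_\bT\to\iota_!\iota^*$, i.e. $\bG_1\to\bG_2$, using $U_\bP\iota^*=U_\bT$ from Section~\ref{base}.

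Next I would unwind the $(a,h)$-component of each side using the explicit coequaliser description of $\iota_!$ in Diagram~(\ref{ext.under}) applied to the $\bP$-algebra $\iota^*(a,h)=(a,h\cdot\iota_a)$. The right-hand side $q\iota^*$ evaluated at $(a,h)$ is, by Remark~\ref{rightiso.3}(1), exactly the component $(q\phi_\bP)_a$ pattern — namely the canonical map $\phi_\bT(a)\xr{\phi_\bT\iota_a}\phi_\bT T(a)\xr{(\ve_\bT)_{\phi_\bT}}\phi_\bT(a)$ — wait, more carefully: Remark~\ref{rightiso.3}(1) identifies $q\phi_\bP$, but here we need $q\iota^*$ at an arbitrary algebra, whose structure map is $h\cdot\iota_a$; the component is the coequaliser map $q_{(a,\,h\cdot\iota_a)}:\phi_\bT(a)\to\iota_!(a,h\cdot\iota_a)$ from Diagram~(\ref{ext.under}). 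Meanwhile the left-hand side $t_{\phi_\bP}$ at $(a,h)$ should be computed from the recipe in Section~\ref{com.adj.func.}: the left $\bG_2$-comodule structure on $\phi_\bT=\iota_!\phi_\bP$ is $\iota_!\phi_\bP\xr{\iota_!\phi_\bP\,\ol\eta_{\bP}} \iota_!\phi_\bP U_\bP\phi_\bP\ (=\iota_!\iota^*\phi_\bT)$, wait—rather one uses $\iota_!(\text{unit of }\iota_!\dashv\iota^*)\phi_\bP$ composed appropriately; and the comodule-to-comonad-morphism passage of Section~\ref{com.adj.func.} converts this into $t_{\phi_\bP}$. The key identity to verify is that the coequaliser map $q_{(a,\,h\cdot\iota_a)}$ coincides with this composite, which follows because the coequaliser defining $\iota_!$ on the \emph{free} $\bP$-algebra $\phi_\bP(a)$ is a \emph{split} one (with $T(e^\bP_a)$ as splitting) and is therefore absolute, so that $q_{\phi_\bP(a)}=\phi_\bT(a)\to\iota_!\phi_\bP(a)=\phi_\bT(a)$ is an isomorphism realizing $\iota_!\phi_\bP\cong\phi_\bT$, and naturality of $q$ together with the universal property then forces the general component to be the asserted composite.

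The cleanest route, which I would actually follow, is to invoke \cite[Proposition~1.21]{MW} together with Equation~(\ref{composition}) $t_K=t_{L_\bP}\cdot t_{\phi_\bP}$ only as a sanity check, and instead argue directly: by the uniqueness clause in Theorem~\ref{Dubuc1} (for the adjoint-triangle situation $\phi_\bT=\iota_!\iota^*$, with $U_\bT$ premonadic since $U_\bT$ is monadic) together with the formula~(\ref{unit}) for the unit of the relevant adjunction, the comonad morphism $t_{\phi_\bP}$ is uniquely characterised, and one checks that $q\iota^*$ satisfies the same characterising equations. Concretely, one must verify (i) that $q\iota^*$ is a morphism of $\A_\bT$-comonads $\bG_1\to\bG_2$ — this uses that $q$ is a coequaliser of $\bS$-module maps compatible with the comultiplications, exactly as in Remark~\ref{rightiso.3}(1) — and (ii) that $U^{\bG_2}(q\iota^*)$, i.e. $q\iota^*$ itself viewed in $[\A_\bT,\A_\bT]$, equals the $\A_\bT$-functor morphism underlying $t_{\phi_\bP}$, which is the map $\phi_\bT U_\bT\to\iota_!\iota^*$ sending $(a,h)$ to $q_{(a,h\cdot\iota_a)}$. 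Both reduce to the componentwise computation sketched above. I expect the main obstacle to be purely bookkeeping: carefully matching the direction and source/target conventions of $t_{\phi_\bP}$ from Section~\ref{adj-comon} with the coequaliser $q$ from Section~\ref{base}, and confirming that ``$q$ evaluated at the restricted algebra $(a,h\cdot\iota_a)$'' is literally the definition of $t_{\phi_\bP}$ once both are written out — there is no hard analytic content, only the risk of a variance slip between the unit-of-$\iota_!\dashv\iota^*$ description and the counit-of-$K\dashv\ol K$ description that feeds into $t_{\phi_\bP}$.
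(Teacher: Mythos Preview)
Your overall strategy---identify both sides as natural transformations $\bG_1\to\bG_2$ and verify agreement on each $(a,h)\in\A_\bT$---is the paper's strategy too, and the componentwise computation you sketch is the right one. However, your proposal never settles on a concrete formula for $t_{\phi_\bP}$ from which to compute, and the references you cite are slightly off: the ``uniqueness clause'' you want is not Theorem~\ref{Dubuc1} (that one produces \emph{left} adjoints), and the triangle is $\iota_!\phi_\bP=\phi_\bT$, not ``$\phi_\bT=\iota_!\iota^*$''. The paper proceeds more directly: from Section~\ref{adj-comon} one has $t_{\phi_\bP}=\iota_!\gamma$ with
\[
\gamma:\phi_\bP U_\bT \xr{\ul\eta\,\phi_\bP U_\bT} \iota^*\iota_!\phi_\bP U_\bT=\iota^*\phi_\bT U_\bT \xr{\iota^*\ve_\bT} \iota^*,
\]
and the substantive work is to compute the unit $\ul\eta$ of $\iota_!\dashv\iota^*$ explicitly. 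One applies the description of the unit from Theorem~\ref{Dubuc1} to the triangle $U_\bP\cdot\iota^*=U_\bT$ to obtain $U_\bP\ul\eta = U_\bT q\cdot e^\bT U_\bP$, and then, using Remark~\ref{rightiso.3}(1) for $q\phi_\bP$, deduces $U_\bP\ul\eta\phi_\bP=\iota$. This gives $\gamma_{(a,h)}=h\cdot\iota_a:P(a)\to a$, and finally $\iota_!(\gamma_{(a,h)})$ is identified with $q_{\iota^*(a,h)}$ by the universal property of the coequaliser defining $\iota_!\phi_\bP(a)$, precomposed with the splitting $T(e^\bP_a)$ (the step you correctly anticipated).

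So there is no missing idea in your plan, but the paper's execution is cleaner: instead of verifying that $q\iota^*$ satisfies an abstract characterisation of $t_{\phi_\bP}$, it simply computes $t_{\phi_\bP}$ from the formula $t_{\phi_\bP}=\iota_!\gamma$ and lands on $q\iota^*$. Your detour through ``checking $q\iota^*$ is a comonad morphism'' is unnecessary---once the components match, you are done, since $t_{\phi_\bP}$ is already known to be a comonad morphism.
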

\begin{proof} Applying the results of Section \ref{adj-comon} to the left triangle in  Diagram (\ref{diagram.2}) gives that
$t_{\phi_\bP}=\iota_! \gamma$, where $\gamma$ is the composite
$$\phi_\bP U_\bT \xr{\ul{\eta} \,\phi_\bP U_\bT} \iota^*\iota_!\phi_\bP U_\bT=\iota^* \phi_\bT U_\bT\xr{\iota^* \ve_{_\bT}}\iota^*.$$
Here $\ul{\eta}$ is the unit of the adjunction $\iota_! \dashv \iota^*$, which (as a result of Theorem \ref{Dubuc2} applied to
the commutative diagram $U_\bP \cdot \iota^*=U_\bT$) is a unique natural transformation making the diagram
$$
\xymatrix @R=.4in @C=.4in{\phi_\bP U_\bP \ar[rr]^-{\ve_{_\bP}} \ar[d]_{\gamma' U_\bP}  && 1 \ar[d]^{\ul{\eta}}\\
\iota^* \phi_\bT U_\bP \ar[rr]_-{\iota^* q} &&\iota^*\iota_!}
$$  commute. Here $\gamma'$ is the composite
$$\phi_\bP \xr{\phi_\bP \, \eta_\bT} \phi_\bP U_\bT \phi_\bT= \phi_\bP U_\bP \iota^*\phi_\bT \xr{\ve_\bP \,\iota^*\phi_\bT} \iota^*\phi_\bT.$$
Since $U_\bP \gamma'=\iota$ and $\iota \cdot e^\bP=e^\bT$, the diagram
$$
\xymatrix @R=.6in @C=.5in{U_\bP \ar[r]^-{e^\bP U_\bP} \ar[rd]_{e^\bT U_\bP}& PU_\bP=U_\bP\phi_\bP U_\bP \ar[rr]^-{ U_\bP\,\ve_{_\bP}} \ar[d]^{\iota U_\bP=U_\bP\gamma' U_\bP}  && U_\bP \ar[d]^{U_\bP\ul{\eta}}\\
&T U_\bP=U_\bP\iota^* \phi_\bT U_\bP \ar[rr]_-{U_\bT q=U_\bP\iota^* q} && U_\bP\iota^*\iota_!}
$$  commute. Since $U_\bP\ve\!_{_\bP}\cdot e^\bP U_\bP=1$ by one of the triangular identities
for the adjunction $\phi_\bP \dashv U_\bP$, it follows that $U_\bP\ul{\eta}$ is the composite
$$U_\bP \xr{e^\bT U_\bP} TU_\bP=U_\bP\iota^* \phi_\bT U_\bP \xr{U_\bP\,\iota\!^* q} U_\bP\iota^*\iota_!.$$
In particular, $U_\bP \ul{\eta}\phi_\bP$ is the composite
$$P \xr{e^\bT P} TP=U_\bP\iota^* \phi_\bT U_\bP \phi_\bP \xr{U_\bP \,\iota\!^* q\,\phi_\bP} U_\bP\iota^*\iota_! \phi_\bP=U_\bP\iota^* \phi_\bT=U_\bT\phi_\bT.$$

\noindent Since, by Remark \ref{rightiso.3}(1),  $q \phi_\bP$ is the composite
$$\phi_\bT P \xr{\phi_\bT \iota} \phi_\bT T \xr{\ve_\bT \phi_\bT}\phi_\bT,$$ $U_\bP \ul{\eta}\phi_\bP$ is the composite
$$P \xr{e^\bT P}TP \xr{T\iota} TT \xr{m^\bT} T.$$
Since, by functoriality of composition,
$T\iota \cdot e^\bT P=e^\bT T \cdot\iota$ and since $m^\bT \cdot e^\bT=1$, one concludes that
$U_\bP\ul{\eta}{\phi_\bP}=\iota.$

Now since for any $(a,h)\in \A_\bT$, $(\ve_{_\bT})_{(a,\,h)}=h$,
one concludes that $\gamma_{(a,h)}$ is the composite $P(a) \xr{\iota_a} T(a) \xr{h} a.$ Now, since by Remark \ref{rightiso.3},
$$
\xymatrix{TPP(a)  \ar@{->}@<0.5ex>[rr]^-{T(m^\bP_a)}
\ar@{->}@<-0.5ex>[rr]_{m^\bT_a \cdot \,T(\iota_{P(a)})}&& TP(a)  \ar[r]^-{T(\iota_a)}& TT(a)
\ar[r]^-{m^\bT_a} & T(a)}
$$ is the coequaliser defining $\iota_!(P(a), m^\bP_a)=\iota_!(\phi_\bP(a))$,
it follows that $\iota_! (\gamma_{(a,\,h)})=(t_{\phi_\bP})_{(a,\,h)}$ is the unique morphism making the diagram
$$\xymatrix @C=.4in @R=.2in {TP(a) \ar[r]^-{T(\iota_a)} \ar[d]_{T(\iota_a)}& TT(a) \ar[r]^-{m^\bT_a}& T(a) \ar[dd]^{\iota_! (\gamma_{(a,h)})}\\
TT(a) \ar[d]_{T(h)} &&\\
T(a) \ar[rr]_-{q_{\iota\!^*(a,h)}}&&\iota_!(\iota^*(a,h))}
$$  commute. Since $\iota_a \cdot e^\bP_a=e^\bT_a$ and  $m^\bT_a \cdot T(e^\bT_a)=1=h \cdot T(e^\bT_a)$,
it follows from the commutativity of the above diagram that
$(t_{\phi_\bP})_{(a,\,h)}=\iota_! (\gamma_{(a,h)})=q_{\iota\!^*(a,\,h)}$, as claimed.
\end{proof}

\section{Weak entwinings}\label{Entw}

Let $H$ be an endofunctor on any category $\A$, admitting both a monad $\ul{\bH}=(H, m, e)$  and a comonad
$\ol{\bH}=(H, \delta,\ve)$ structure, and define
\begin{equation}\label{fusion}
\begin{array}[t]{c}
\sigma  :HH \xr{\delta H} HHH\xr{Hm} HH ,\\[+1mm]
\ol{\sigma} :HH \xr{H\delta } HHH\xr{mH} HH .
\end{array}
\end{equation}

%\begin{equation} \label{fusion}
%\sigma: HH\xra{\delta H} HHH \xra{Hm} HH.
%\end{equation}
The class $\mathbf{Nat}(H,H)$ of all natural transformations from $H$ to itself
allows for the structure of a monoid by defining the (convolution) product of any two
$\varphi, \varphi'\in \mathbf{Nat}(H,H)$ as the composite
$\varphi * \varphi'=m\cdot \varphi  \varphi'\cdot \delta$.
  The identity for this product is $e \cdot \ve :H \to H$.
\smallskip

Recall that weak entwinings of tensor functors were defined by Caenepeel and De Groot
in \cite{CDG} and a more general theory was formulated by B\"ohm (e.g. \cite[Example 5.2]{Bo}).

\begin{thm} \label{w-entw}{\bf Weak monad comonad entwinings.} \em
For a natural transformation $\omega :HH \to HH$,
  define the natural transformations
$$\begin{array}{rl}
\xi:& H\xr{e H} HH \xr{\;\omega\;} HH \xr{\ve H} H,\\
%\oxi:& H\xr{He} HH \xr{\oom} HH \xr{H\ve} H,\\
\kappa : & HH\xra{e HH} HHH \xra{\omega H} HHH \xra{Hm} HH .
%\tau: & HH \xra{H\delta} HHH \xra{\omega H} HHH \xra{\ve HH} HH, \\
\end{array}
$$

$(\ul H,\ol H, \omega)$ is called a {\em weak entwining}
(from the monad  $\ul \bH$ to the comonad $\ol \bH$) provided
 \begin{equation}\label{ent}%\label{ent-id}
\begin{array}[t]{lrl}\hspace{-7mm}
{\rm (i)} &  \omega\cdot mH = Hm\cdot \omega H\cdot H\omega,&
 \delta H\cdot \omega = H\omega \cdot \omega H \cdot H\delta, \\[+1mm]
\hspace{-7mm} {\rm (ii)} &
  \omega\cdot eH =H\xi\cdot \delta   ,& \ve H \cdot \omega = H\xi\cdot \delta,
\end{array}
\end{equation}
and is said to be {\em compatible} if
 \begin{equation}\label{compatible}
\delta \cdot m= Hm\cdot \omega H\cdot H\delta.
\end{equation}

It is easily checked that
\begin{equation} \label{c-diagrams}
\begin{array}[t]{cl}
 \kappa\cdot He = \omega\cdot eH,\;\, \ve H \cdot \kappa = m\cdot \xi H, &
  \mbox{always hold,} \\[+1mm]
% the equalities in (\ref{ent}) imply
%\begin{equation}\label{ent-equal}
 \xi*\xi=\xi,\quad  \kappa\cdot \kappa=\kappa, \quad
% \tau\cdot \tau=\tau .
  \kappa\cdot \omega = \omega, & \mbox{follow by  (\ref{ent})(i)}, \\[+1mm]
%\end{equation}
%and (\ref{compatible})    implies
%\begin{equation}\label{k-d}
\kappa\cdot \delta=\delta,\quad \kappa\cdot \sigma =\sigma, \quad \xi*1=1, &
  \mbox{follow by  (\ref{compatible})}.
\end{array}
 \end{equation}
\end{thm}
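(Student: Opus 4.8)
The statement to prove is the collection of identities in \eqref{c-diagrams}, organized into three groups: those that hold always, those following from \eqref{ent}(i), and those following from the compatibility condition \eqref{compatible}. The proof is a matter of diagram chasing using only the monad axioms for $\ul{\bH}$, the comonad axioms for $\ol{\bH}$, naturality, and the hypothesized identities; the work is entirely in bookkeeping the string diagrams (or pasting diagrams) correctly.

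\textbf{Plan.} First I would dispose of the ``always hold'' line. The identity $\kappa\cdot He=\omega\cdot eH$ is immediate from the definition of $\kappa$: precomposing $Hm\cdot\omega H\cdot eHH$ with $He$ and using $m\cdot He=1$ (right unitality of the monad) collapses $Hm\cdot \omega H \cdot (HHe\text{-shift})$ down to $\omega\cdot eH$ by naturality of $e$ with respect to $\omega$. For $\ve H\cdot\kappa=m\cdot\xi H$: expand $\kappa=Hm\cdot\omega H\cdot eHH$, apply $\ve H$ on the left, push $\ve H$ through $Hm$ (it does not interact, different factor — here one must be careful about which tensor slot, so strictly one writes $\ve HH\cdot\omega H=(\ve H\cdot\omega)H$ after using naturality of $\ve$ to move it past $Hm$), then invoke the \emph{right} equation of \eqref{ent}(ii), $\ve H\cdot\omega=H\xi\cdot\delta$, and finally counitality $\ve H\cdot\delta$ or $H\ve\cdot\delta=1$ to finish; the end result rearranges to $m\cdot\xi H$. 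These are the easy warm-ups.

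\textbf{Group from \eqref{ent}(i).} For $\xi*\xi=\xi$: write out $\xi*\xi=m\cdot(\xi\xi)\cdot\delta$, substitute the definition $\xi=\ve H\cdot\omega\cdot eH$ in both copies, and use the hexagon-type identity $\delta H\cdot\omega=H\omega\cdot\omega H\cdot H\delta$ together with coassociativity and $m\cdot Hm$-associativity to fold the two $\omega$'s into a single one; then counitality of $\delta$ against the $\ve H$'s reduces everything to $\xi$. For $\kappa\cdot\kappa=\kappa$: expand both copies of $\kappa$ and use the \emph{multiplicativity} equation $\omega\cdot mH=Hm\cdot\omega H\cdot H\omega$ to merge the adjacent $\omega H\cdots \omega H$ pattern, then clean up with monad associativity $m\cdot mH=m\cdot Hm$. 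For $\kappa\cdot\omega=\omega$: this is the closest to the definition — insert $\kappa=Hm\cdot\omega H\cdot eHH$, apply to $\omega$, and recognize $\omega H\cdot eHH\cdot\omega=\omega H\cdot H\omega\cdot eHH$ (naturality of $e$), then $Hm\cdot\omega H\cdot H\omega$ is exactly the left side of the multiplicativity law, so it equals $\omega\cdot mH$, and $mH\cdot eHH=1$ finishes it.

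\textbf{Group from \eqref{compatible}.} For $\kappa\cdot\delta=\delta$: feed $\delta:H\to HH$ into $\kappa=Hm\cdot\omega H\cdot eHH$. After naturality moves, $\omega H\cdot eHH\cdot\delta = \omega H\cdot H\delta\cdot eH$-type rearrangement lets me pull in the shape $Hm\cdot\omega H\cdot H\delta$, which by \eqref{compatible} equals $\delta\cdot m$; combined with $m\cdot eH=1$ this gives $\delta$. The identity $\kappa\cdot\sigma=\sigma$ is analogous, using $\sigma=Hm\cdot\delta H$ from \eqref{fusion} and again \eqref{compatible} to absorb the extra $\omega$; here the main care is tracking three $H$-slots and applying associativity/coassociativity in the right order. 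Finally $\xi*1=1$: unfold $\xi*1=m\cdot(\xi H)\cdot\delta$ with $\xi=\ve H\cdot\omega\cdot eH$; the composite becomes $m\cdot(\ve H\cdot\omega\cdot eH)H\cdot\delta$, and using \eqref{compatible} in the form that rewrites $Hm\cdot\omega H\cdot H\delta=\delta\cdot m$ (after suitably suspending an $H$), together with $H\ve\cdot\delta=1$ and the unit laws, collapses to the identity.

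\textbf{Main obstacle.} None of these is deep; the only real danger is notational — keeping straight, in the ``slot-free'' Godement/whiskering notation the paper uses, \emph{which} tensor factor each natural transformation acts on when composites like $Hm\cdot\omega H$, $\ve H$, $\delta H$ overlap. I expect $\kappa\cdot\sigma=\sigma$ and $\xi*\xi=\xi$ to be the most laborious, since each needs one application of a hexagon or multiplicativity axiom \emph{plus} a counit/coassociativity cleanup, so the order of moves matters. The cleanest write-up is a sequence of commuting diagrams (or string-diagram pictures), one per identity, each citing exactly one of \eqref{ent}(i), \eqref{ent}(ii), or \eqref{compatible} and otherwise only naturality and the (co)monad axioms; I would present it that way rather than as inline composite manipulations, and in fact since the paper already says ``It is easily checked,'' a terse indication of which axiom drives each line suffices.
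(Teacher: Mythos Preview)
Your overall strategy is correct and is, in fact, the only possible one: the paper itself offers no argument beyond ``it is easily checked,'' so direct verification via naturality and the (co)monad axioms, citing the appropriate entwining axiom for each group, is exactly what is required. Your sketches for $\kappa\cdot\omega=\omega$, $\kappa\cdot\kappa=\kappa$, $\kappa\cdot\delta=\delta$, $\kappa\cdot\sigma=\sigma$, and $\xi*1=1$ are all on the right track.

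There is one genuine slip. For $\ve H\cdot\kappa=m\cdot\xi H$ you propose to invoke the second equation of \eqref{ent}(ii). This is wrong on two counts. First, the identity is listed in the ``always hold'' line, meaning it must follow from the definitions of $\xi$ and $\kappa$ and naturality \emph{alone}, without any of the entwining axioms; your route would misclassify it. Second, your substitution does not type-check: after reaching $m\cdot(\ve H\cdot\omega)H\cdot eHH$, replacing $\ve H\cdot\omega$ by $H\xi\cdot\delta$ yields $m\cdot H\xi H\cdot\delta H\cdot eHH$, but $\delta H$ and $eHH$ are parallel arrows $HH\to HHH$ and cannot be composed. (The printed second equation of \eqref{ent}(ii) is in any case ill-typed; the intended companion to $\omega\cdot eH=H\xi\cdot\delta$ is $\ve H\cdot\omega=m\cdot H\xi$.) The correct, purely definitional argument is one line: naturality of $\ve$ gives $\ve H\cdot Hm=m\cdot\ve HH$, whence
\[
\ve H\cdot\kappa \;=\; \ve H\cdot Hm\cdot\omega H\cdot eHH \;=\; m\cdot\ve HH\cdot\omega H\cdot eHH \;=\; m\cdot(\ve H\cdot\omega\cdot eH)H \;=\; m\cdot\xi H.
\]
Once you have this and $\kappa\cdot He=\omega\cdot eH$ in hand, note that $\xi*\xi=\xi$ becomes a one-liner as well: $\xi*\xi = m\cdot\xi H\cdot H\xi\cdot\delta = \ve H\cdot\kappa\cdot(H\xi\cdot\delta) = \ve H\cdot\kappa\cdot\omega\cdot eH = \ve H\cdot\omega\cdot eH = \xi$, using \eqref{ent}(ii) for the third equality and $\kappa\cdot\omega=\omega$ for the fourth. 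This is cleaner than expanding both copies of $\xi$ directly as you propose.
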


%To insert:  $\ol{\omega}$ is a comonad-monad entwining (e.g. \cite[7.1]{BBW})
% with commutative diagrams ....

\begin{thm}\label{mix-bim}{\bf Mixed bimodules.} \em
We write $\A_{\,\ul\bH}^{\ol\bH}(\omega)$ for the category of mixed $H$-bimodules,
whose objects are triples $(a, h, \theta)$,
where $(a, h)\in \A_{\ul\bH}$, $(a, \theta) \in \A^{\ol\bH}$ with commutative diagram
$$
\xymatrix{ H(a) \ar[r]^{h} \ar[d]_{H(\theta)} & a \ar[r]^{\theta} & H(a)\\
HH(a) \ar[rr]^{\omega_a} && HH(a) , \ar[u]_{H(h)} }
$$
and whose morphisms  are those in $\A$ which are $\ul\bH$-module as well as $\ol\bH$-comodule morphisms.
%In this case, there is a functor
%\begin{equation}\label{comp}
%K_\omega:\A \to \A_{\,\ul{\bH}}^{\ol{\bH}}(\omega), \;
%a \; \longmapsto \; (H(a),m_a, \delta_a).
%\end{equation}
\end{thm}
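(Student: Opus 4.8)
The final statement is a definition, so the only thing that genuinely requires verification is that the prescribed data assemble into a category; I would organise this conceptually rather than by hand. The plan is to exhibit $\A_{\,\ul\bH}^{\ol\bH}(\omega)$ as a full subcategory of the pullback of the two Eilenberg--Moore categories over the common base $\A$, thereby reducing everything to the already-established categorical structure of $\A_{\ul\bH}$ and $\A^{\ol\bH}$.

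First I would form the pullback
$$\A_{\ul\bH}\times_\A \A^{\ol\bH}$$
of the forgetful functors $U_{\ul\bH}\colon \A_{\ul\bH}\to\A$ and $U^{\ol\bH}\colon\A^{\ol\bH}\to\A$. Since both forgetful functors are faithful and send a morphism to its underlying morphism in $\A$, an object of this pullback is precisely a pair consisting of an $\ul\bH$-module $(a,h)$ and an $\ol\bH$-comodule $(a,\theta)$ sharing the same underlying object $a$, that is, exactly a triple $(a,h,\theta)$ of the kind appearing in the definition but without the compatibility square. A morphism $(a,h,\theta)\to(a',h',\theta')$ in the pullback is a single morphism $f\colon a\to a'$ of $\A$ that is simultaneously a morphism of $\ul\bH$-modules and of $\ol\bH$-comodules, which is exactly the notion of morphism prescribed here. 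Thus the pullback is already a category whose objects and morphisms agree with those of $\A_{\,\ul\bH}^{\ol\bH}(\omega)$ apart from the extra object-level constraint.

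Next I would observe that the commutative square, namely $\theta\cdot h = H(h)\cdot\omega_a\cdot H(\theta)$, is a condition on objects alone and imposes nothing on morphisms. Consequently $\A_{\,\ul\bH}^{\ol\bH}(\omega)$ is by construction the full subcategory of $\A_{\ul\bH}\times_\A\A^{\ol\bH}$ spanned by those triples $(a,h,\theta)$ satisfying this square. A full subcategory of a category is again a category, with identities and composition inherited from the ambient one, so this immediately yields that $\A_{\,\ul\bH}^{\ol\bH}(\omega)$ is well defined; the residual points, that $1_a$ is both a module and a comodule morphism and that the composite of two such morphisms is again one, are the standard closure facts for $\A_{\ul\bH}$ and $\A^{\ol\bH}$ and need no separate argument.

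There is no genuine computational obstacle here, the content being organisational. The one point worth stating explicitly, and the closest thing to a subtlety, is that the compatibility diagram constrains objects only, so that the passage to the subcategory is \emph{full}; this is what guarantees that no condition need be checked for morphisms and that the evident forgetful functors onto $\A_{\ul\bH}$, onto $\A^{\ol\bH}$, and onto $\A$ all remain well defined on $\A_{\,\ul\bH}^{\ol\bH}(\omega)$.
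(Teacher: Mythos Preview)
Your proposal is correct, but note that the paper treats this item purely as a definition and offers no proof whatsoever; the categorical well-definedness is left implicit as routine. Your argument via the pullback $\A_{\ul\bH}\times_\A\A^{\ol\bH}$ and passage to a full subcategory is a clean and conceptually pleasant way to make that routineness explicit, and it has the mild advantage of immediately yielding the forgetful functors to $\A_{\ul\bH}$, $\A^{\ol\bH}$, and $\A$ without further work, but it is more than the paper itself supplies.
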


The following is a particular case of \cite[Proposition 5.7]{Bo}.

\begin{proposition} \label{Bohm}
Let $\bH=(\ul{\bH},\ol{\bH},\omega)$ be a    %compatible
weak entwining on $\A$. Then the composite
$$\Gamma: HU_{\ul{\bH}} \xr{eHU_{\ul{\bH}}} HHU_{\ul{\bH}}
 \xr{\omega U_{\ul{\bH}}} HHU_{\ul{\bH}}=HU_{\ul{\bH}}\phi_{\ul{\bH}}U_{\ul{\bH}}
 \xr{H U_{\ul{\bH}} \,\ve_{\ul{\bH}}} HU_{\ul{\bH}}$$ is an idempotent, and if
$$ \xymatrix{ HU_{\ul{\bH}} \ar@{->>}[rd]_{p}\ar[rr]^{\Gamma} && HU_{\ul{\bH}}\\
& G\; \ar@{>->}[ru]_{i} &}$$ is its splitting, then there is a comonad
$\bG=(\widetilde{G}, \widetilde{\delta},\widetilde{\ve})$ on $\A_{\ul{\bH}}$, whose functor part %$\widetilde{G}$
takes an arbitrary $(a,h)\in\A_{\ul{\bH}}$ to
$$(G(a,h),\, p_{(a,h)} \cdot H(h) \cdot \omega_a \cdot H(i_{(a,h}): HG(a,h) \to G(a,h)),$$
and whose comultiplication $\widetilde{\delta}$ and counit $\widetilde{\ve}$ evaluated at
%an $\ul{\bH}$-module
$(a,h)$ are the composites, respectively,
$$G(a,h)\xr{i_{(a,h)}} H(a) \xr{\delta_a} HH(a)\xr{H(p_{(a,h)})}HG(a,h) \xr{p_{G(a,h)}}GG(a,h),$$
$$G(a,h) \xr{i_{(a,h)}} H(a) \xr{h} a .$$
\end{proposition}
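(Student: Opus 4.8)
The statement is cited as a particular case of B\"ohm's \cite[Proposition 5.7]{Bo}, so the main task is to verify that the hypotheses of that general result are met and to track how the abstract construction specialises to the formulas displayed above. The plan is to proceed in three stages: first establish that $\Gamma$ is idempotent, then use the splitting to transport the comonad structure on $HU_{\ul\bH}$ to $\A_{\ul\bH}$, and finally identify the resulting structure maps explicitly in terms of $\omega$, $\delta$, $\ve$.

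For the first stage I would compute $\Gamma\cdot\Gamma$ directly. Writing $\Gamma_{(a,h)} = H(h)\cdot\omega_a\cdot (eH)_{H(a)} \cdot$ (suitably reading off the components via $U_{\ul\bH}(\ve_{\ul\bH})_{(a,h)}=h$), the composite $\Gamma\cdot\Gamma$ unpacks into an expression involving $\omega$ twice, a unit $e$, and the module action $h$. The key tools here are the relations collected in \eqref{c-diagrams}: in particular $\kappa\cdot He = \omega\cdot eH$ together with the first identity in \eqref{ent}(i) ($\omega\cdot mH = Hm\cdot\omega H\cdot H\omega$), which let one absorb the inner $\omega$ and the spare unit; the module associativity $h\cdot H(h) = h\cdot m_a$ and the fact that $\omega$ is natural in $a$ then collapse the remaining data back to a single copy of $\Gamma$. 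Since $\A_{\ul\bH}$ is not assumed Cauchy complete, the idempotent $\Gamma$ is given a splitting $(G,p,i)$ by hypothesis (this is the standing assumption under which the conclusion is stated), so no completeness issue arises.

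For the second stage, because $p$ and $i$ split the idempotent $\Gamma$ on the functor $HU_{\ul\bH}$, one obtains $\wt G$ as the functor part on $\A_{\ul\bH}$, and the comultiplication and counit are forced: $\wt\delta$ is the unique map with $i\,\wt\delta = H(p)\cdot\delta\, U_{\ul\bH}\cdot i$-type compatibility, and $\wt\ve$ is induced from $\ve\, U_{\ul\bH}$ (equivalently, from the module action $h$ after composing with $i$). Coassociativity and counitality of $(\wt G,\wt\delta,\wt\ve)$ follow from those of $\ol\bH$ together with the splitting identities $ip=\Gamma$, $pi=1$, and the absorption relations $\kappa\cdot\delta=\delta$ and $\ve H\cdot\omega = H\xi\cdot\delta$ from \eqref{c-diagrams} and \eqref{ent}(ii), which guarantee that $\delta$ and $\ve$ descend through the splitting. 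The last stage is then just reading off: the action of $\wt G$ on $(a,h)$ conjugates $\omega_a$ by $i_{(a,h)}$ and $p_{(a,h)}$ after applying $H(h)$, exactly as displayed; the displayed formulas for $\wt\delta_{(a,h)}$ and $\wt\ve_{(a,h)}$ are obtained by pre- and post-composing $\delta_a$ (resp.\ $h$) with $i$ and $p$.

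The main obstacle is the first stage: verifying $\Gamma^2=\Gamma$ cleanly, since the naive composite contains two copies of $\omega$ and it is not a priori obvious which of the weak-entwining axioms to invoke in which order. The trick is to rewrite $\Gamma_{(a,h)}$ using $\kappa$ (via $\kappa\cdot He=\omega\cdot eH$) so that the hexagon $\omega\cdot mH = Hm\cdot\omega H\cdot H\omega$ and the idempotency $\kappa\cdot\kappa=\kappa$ can be applied, and only afterwards use the $\ul\bH$-module axioms for $(a,h)$ to eliminate the redundant multiplication; once this bookkeeping is set up correctly the computation is routine. A secondary (purely formal) point is checking that $p_{(a,h)}$ and $i_{(a,h)}$ are morphisms in $\A_{\ul\bH}$ with respect to the $\wt G$-action just defined, which is immediate from Proposition \ref{idem} once one knows $\Gamma$ lifts to an idempotent endomorphism of the $\ul\bH$-module $(HU_{\ul\bH}(a,h),\, m_a)$ --- and this lifting is again a short check using naturality of $\omega$ and axiom \eqref{ent}(i).
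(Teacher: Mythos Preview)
The paper itself gives no proof of this proposition: it is stated as ``a particular case of \cite[Proposition 5.7]{Bo}'' and the text moves on immediately. Your proposal therefore goes well beyond what the paper does, and the overall architecture you describe --- verify $\Gamma^2=\Gamma$ from the entwining axioms, split, and then check that the transported $\widetilde\delta$, $\widetilde\ve$ satisfy the comonad laws --- is exactly how one would unpack B\"ohm's general result in this special case.

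One point to correct: in your second stage you invoke $\kappa\cdot\delta=\delta$ from \eqref{c-diagrams} to show that $\delta$ descends through the splitting. But in \eqref{c-diagrams} that identity is listed as a consequence of the \emph{compatibility} condition \eqref{compatible}, which is \emph{not} assumed in Proposition~\ref{Bohm} (only the weak entwining axioms \eqref{ent} are). The comonad structure on $\widetilde G$ must therefore be established without it. In practice this is not a problem: coassociativity and counitality of $\widetilde\delta,\widetilde\ve$ follow from those of $\delta,\ve$ together with the second identity in \eqref{ent}(i) (the comultiplication--entwining hexagon $\delta H\cdot\omega=H\omega\cdot\omega H\cdot H\delta$) and \eqref{ent}(ii), which are what guarantee that the intermediate $ip=\Gamma$ insertions can be absorbed; the relation $\kappa\cdot\delta=\delta$ is not needed. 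If you want a clean way to organise the verification, note that $\Gamma\,\phi_{\ul\bH}=\kappa$, so on free $\ul\bH$-modules the whole computation reduces to the $\kappa$-identities that \emph{are} available under \eqref{ent}(i), and the general case then follows since every $\ul\bH$-module is a (split) coequaliser of free ones.
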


We call $\bG$ the \emph{comonad induced by} % the weak left bimonad}
$\bH=(\ul{\bH},\ol{\bH},\omega)$. Obviously,
%\begin{equation}\label{wide}
$U_{\ul{\bH}} \widetilde{G} =G$.
%\end{equation}

\begin{theorem}\label{comparison} Let $\bH=(\ul{\bH},\ol{\bH},\omega)$ be a  % compatible
weak entwining on a Cauchy complete category
$\A$ and $\bG$ the induced comonad  on $\A_{\ul{\bH}}$. Then
there is an isomorphism of categories
$$\Phi:\Aa \to (\A_{\ul{\bH}})^\bG,\quad
(a, h, \theta)\; \mapsto \;((a, h), p_{(a,h)}\cdot \theta),$$
with the inverse % $\Phi^{-1}$,
given by $\Phi^{-1}((a,h), \zeta)
=(a,h, i_{(a,h)}\cdot \zeta).$
\end{theorem}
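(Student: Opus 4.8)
The plan is to verify directly that $\Phi$ and $\Phi^{-1}$ are well defined, functorial, act as the identity on underlying $\A_{\ul\bH}$-morphisms, and are mutually inverse; since both $U^\bG\colon(\A_{\ul\bH})^\bG\to\A_{\ul\bH}$ and the obvious forgetful functor $\A^{\ol\bH}_{\ul\bH}(\omega)\to\A_{\ul\bH}$, $(a,h,\theta)\mapsto(a,h)$, are faithful, this shows $\Phi$ is an isomorphism of categories. Write $\Gamma_{(a,h)}=H(h)\cdot\omega_a\cdot e_{H(a)}\colon H(a)\to H(a)$ for the component at $(a,h)$ of the idempotent of Proposition \ref{Bohm}, split (as $\A$, hence $\A_{\ul\bH}$, is Cauchy complete) by $p_{(a,h)}$ and $i_{(a,h)}$ with $i_{(a,h)}\cdot p_{(a,h)}=\Gamma_{(a,h)}$, $p_{(a,h)}\cdot i_{(a,h)}=1$. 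The backbone of the argument is the identity $\Gamma_{(a,h)}\cdot\theta=\theta$, valid for every $(a,h,\theta)\in\A^{\ol\bH}_{\ul\bH}(\omega)$: by naturality of $e$, $\Gamma_{(a,h)}\cdot\theta=H(h)\cdot\omega_a\cdot H(\theta)\cdot e_a$; the mixed-bimodule square rewrites $H(h)\cdot\omega_a\cdot H(\theta)$ as $\theta\cdot h$; and unitality of the $\ul\bH$-action gives $\theta\cdot h\cdot e_a=\theta$. Symmetrically, for any $\bG$-comodule $((a,h),\zeta)$, the morphism $i_{(a,h)}\cdot\zeta$ is $\Gamma_{(a,h)}$-fixed, since $\Gamma_{(a,h)}\cdot i_{(a,h)}=i_{(a,h)}\cdot p_{(a,h)}\cdot i_{(a,h)}=i_{(a,h)}$. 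Granting this, the round-trips are formal: $\Phi^{-1}\Phi(a,h,\theta)=(a,h,\,\Gamma_{(a,h)}\cdot\theta)=(a,h,\theta)$ and $\Phi\Phi^{-1}((a,h),\zeta)=((a,h),\,p_{(a,h)}\cdot i_{(a,h)}\cdot\zeta)=((a,h),\zeta)$.

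For well-definedness of $\Phi$ on objects, let $(a,h,\theta)$ be a mixed bimodule and $\zeta:=p_{(a,h)}\cdot\theta$. That $\zeta$ is $\ul\bH$-linear from $(a,h)$ to $\wt G(a,h)$ follows by expanding the action on $\wt G(a,h)$ from Proposition \ref{Bohm} and using $i_{(a,h)}\cdot\zeta=\Gamma_{(a,h)}\cdot\theta=\theta$ and the mixed-bimodule square. Counitality reduces, after inserting $\wt\ve_{(a,h)}$ and cancelling $i_{(a,h)}\cdot p_{(a,h)}$ against $\Gamma_{(a,h)}$ (then using $\Gamma_{(a,h)}\cdot\theta=\theta$), to the $\ol\bH$-counitality $\ve_a\cdot\theta=1_a$; coassociativity reduces, by the same cancellation together with $\ol\bH$-coassociativity $\delta_a\cdot\theta=H(\theta)\cdot\theta$ and naturality of $p$, to the equality of $\wt\delta_{(a,h)}\cdot\zeta$ and $\wt G(\zeta)\cdot\zeta$, both of which become $p_{\wt G(a,h)}\cdot H(\zeta)\cdot\theta$. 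Conversely, given a $\bG$-comodule $((a,h),\zeta)$ and $\theta:=i_{(a,h)}\cdot\zeta$, running these reductions backwards gives $\ve_a\cdot\theta=1_a$ and $\delta_a\cdot\theta=H(\theta)\cdot\theta$, so $(a,\theta)\in\A^{\ol\bH}$; and the mixed-bimodule square comes from $\ul\bH$-linearity of $\zeta$: post-composing that equation with $i_{(a,h)}$ yields $\theta\cdot h=\Gamma_{(a,h)}\cdot H(h)\cdot\omega_a\cdot H(\theta)$, after which the auxiliary identity $\Gamma_{(a,h)}\cdot H(h)=H(h)\cdot\kappa_a$ (a short computation from naturality of $e$ and $\omega$, associativity of $h$, and the definition of $\kappa$ in Section \ref{w-entw}) and the relation $\kappa_a\cdot\omega_a=\omega_a$ from (\ref{c-diagrams}) collapse the right-hand side to $H(h)\cdot\omega_a\cdot H(\theta)$.

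Finally, a morphism $f\colon(a,h)\to(a',h')$ of $\A_{\ul\bH}$ underlies a morphism of mixed bimodules iff $\theta'\cdot f=H(f)\cdot\theta$, and underlies a morphism of $\bG$-comodules iff $G(f)\cdot\zeta=\zeta'\cdot f$ (the image under $U_{\ul\bH}$ of $\zeta'\cdot f=\wt G(f)\cdot\zeta$); naturality of $p$ and $i$, with $\Gamma_{(a,h)}\cdot\theta=\theta$ and $\Gamma_{(a',h')}\cdot\theta'=\theta'$, makes these two conditions equivalent. Hence $\Phi$ and $\Phi^{-1}$ preserve morphisms and are the identity on underlying $\A_{\ul\bH}$-maps, so they are mutually inverse functors over $\A_{\ul\bH}$ and $\Phi$ is an isomorphism of categories. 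The substantive work is entirely the object-level comparison; the fiddliest point is obtaining the mixed-bimodule square in the reverse direction, which forces one to produce the identity $\Gamma_{(a,h)}\cdot H(h)=H(h)\cdot\kappa_a$ and invoke the consequence $\kappa\cdot\omega=\omega$ of the weak-entwining axioms (\ref{ent})(i), the rest going through formally once $\Gamma_{(a,h)}\cdot\theta=\theta$ is in hand.
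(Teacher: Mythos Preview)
Your proof is correct and shares its core with the paper's: both establish the identity $\Gamma_{(a,h)}\cdot\theta=\theta$ for every mixed bimodule $(a,h,\theta)$ via naturality of $e$, the mixed-bimodule square, and unitality of $h$, and then read off $\Phi^{-1}\Phi=1$ (with $\Phi\Phi^{-1}=1$ immediate from $p\cdot i=1$). The paper stops there, leaving well-definedness of $\Phi$ and $\Phi^{-1}$ and the treatment of morphisms entirely implicit, whereas you spell these out; in particular your identity $\Gamma_{(a,h)}\cdot H(h)=H(h)\cdot\kappa_a$ together with $\kappa\cdot\omega=\omega$ to recover the mixed-bimodule square from $\ul\bH$-linearity of $\zeta$ is a genuine (and correct) addition not present in the paper's argument.
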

\begin{proof}Since $p \,i =1$, it is clear that $\Phi\Phi^{-1}=1$. To show that $\Phi^{-1}\Phi=1$, consider
an arbitrary object $(a,h,\theta) \in \Aa.$  In the diagram
$$
\xymatrix @R=.4in @R=.5in{a \ar[d]_{\theta} \ar[r]^{e_{a}}& H(a) \ar[r]^{h} \ar[d]_{H (\theta)} & a \ar[rd]^{\theta}&\\
H(a) \ar@<+2pt> `d[r]`[rrr]_{\Gamma_{(a,h)}=i_{(a,h)} \cdot p_{(a,h)}} [rrr]\ar[r]^{e_{H(a)}} &HH(a) \ar[r]^{\omega_{a}}
& HH(a) \ar[r]^{H(h)}&H(a)}$$
 the square commutes by naturality of $e$, while the trapezium commutes since
$(a,h,\theta) \in \Aa.$ Since $h \cdot e=1$, this means
$$\theta=\Gamma_{(a.h)}\cdot \theta=i_{(a,h)} \cdot p_{(a,h)}\cdot \theta . $$
Thus $\Phi^{-1}\Phi (a,h,\theta)=(a,h,\theta)$,
that is, $\Phi^{-1}\Phi=1$.
\end{proof}

Again by \cite[Proposition 5.7]{Bo}, we get as counterpart of
Proposition \ref{Bohm}:

\begin{proposition} \label{Bohm1} Let $\bH=(\ul{\bH},\ol{\bH},\omega)$ be a
weak entwining % left bimonad
on  $\A$.
Then
$$\Gamma': HU^{\ol{\bH}} \xr{HU_{\ol{\bH}}\,\eta^{\ol{\bH}}} HU^{\ol{\bH}}\phi^{\ol{\bH}}U^{\ol{\bH}}=HHU^{\ol{\bH}}
\xr{\omega U^{\ol{\bH}}} HHU^{\ol{\bH}}\xr{\ve H U^{\ol{\bH}}} HU^{\ol{\bH}}$$ is an idempotent, and if
$$ \xymatrix{ HU^{\ol{\bH}} \ar@{->>}[rd]_{p'}\ar[rr]^{\Gamma'} && HU^{\ol{\bH}}\\
& T\; \ar@{>->}[ru]_{i'} &}$$ is its splitting, then there is a comonad $\bT=(\widetilde{T}, \widetilde{m},
\widetilde{e})$ on $\A^{\ol{\bH}}$, whose functor part %$\widetilde{T}$
takes an arbitrary $(a,\theta)\in\A^{\ol{\bH}}$ to
$$(T(a,\theta), H(p_{(a,\theta)}') \cdot \omega_a \cdot H(\theta)\cdot i'_{(a,\theta)}: T(a,\theta) \to HT(a,\theta)),$$
and whose multiplication $\widetilde{m}$ and unit $\widetilde{e}$, evaluated at an
$\ol{\bH}$-comodule $(a,\theta)$, are the composites, respectively,
$$\begin{array}{c}
TT(a,\,\theta)\xr{i'_{T(a,\theta)}} HT(a,\theta) \xr{H(i'_{(a,\theta)})} HH(a)\xr{m_a}H(a) \xr{p'_{G(a,\theta)}}T(a,\theta),\\[+1mm]

a \xr{\;\theta\;} H(a)\xr{p'_{(a,\theta)}} T{(a,\theta)}.
\end{array}
$$
\end{proposition}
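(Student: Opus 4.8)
The plan is to obtain Proposition~\ref{Bohm1} as the opposite-category incarnation of Proposition~\ref{Bohm}. Reading the data $\bH=(\ul\bH,\ol\bH,\omega)$ in $\A^{\mathrm{op}}$, the monad $\ul\bH=(H,m,e)$ becomes a comonad $\ul\bH^{\mathrm{op}}$ and the comonad $\ol\bH=(H,\delta,\ve)$ becomes a monad $\ol\bH^{\mathrm{op}}=(H,\delta,\ve)$, while $\omega$ becomes a natural transformation $\omega^{\mathrm{op}}:HH\to HH$ of the opposite functors. The first step is to check that $(\ol\bH^{\mathrm{op}},\ul\bH^{\mathrm{op}},\omega^{\mathrm{op}})$ is again a weak entwining on $\A^{\mathrm{op}}$, from the monad $\ol\bH^{\mathrm{op}}$ to the comonad $\ul\bH^{\mathrm{op}}$: this is just a rereading of the list \eqref{ent} in $\A^{\mathrm{op}}$, under which its four identities are permuted among themselves (those of (i) being interchanged, and likewise those of (ii)), and the auxiliary map $\xi$ of \ref{w-entw} goes over to its analogue. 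Since the category of $\ol\bH^{\mathrm{op}}$-modules in $\A^{\mathrm{op}}$ is $(\A^{\ol\bH})^{\mathrm{op}}$, Proposition~\ref{Bohm} applied there yields an idempotent on $HU_{\ol\bH^{\mathrm{op}}}$ whose splitting carries a comonad on $(\A^{\ol\bH})^{\mathrm{op}}$; dualising back, this is precisely an idempotent on $HU^{\ol\bH}$ and a monad on $\A^{\ol\bH}$.

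It then remains to translate the formulas. Under the dualisation the counit $\ve_{\ul\bH}$ of the Eilenberg--Moore adjunction $\phi_{\ul\bH}\dashv U_{\ul\bH}$ appearing in Proposition~\ref{Bohm} becomes the unit $\eta^{\ol\bH}$ of $U^{\ol\bH}\dashv\phi^{\ol\bH}$, and the monad unit $e$ becomes the comonad counit $\ve$; hence the idempotent $\Gamma=HU_{\ul\bH}\ve_{\ul\bH}\cdot\omega U_{\ul\bH}\cdot eHU_{\ul\bH}$ of Proposition~\ref{Bohm} becomes exactly the transformation $\Gamma'$ of the statement, and in particular $\Gamma'$ is idempotent. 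Writing $p',i'$ for the induced splitting of $\Gamma'$ --- so that $p':HU^{\ol\bH}\to T$ plays the role of the section $i$ of Proposition~\ref{Bohm} and $i':T\to HU^{\ol\bH}$ that of the retraction $p$ --- the comonad data $(\wt G,\wt\delta,\wt\ve)$ on $\A_{\ul\bH}$ go over term by term to $(\wt T,\wt m,\wt e)$ on $\A^{\ol\bH}$: the $\ul\bH$-module structure $p\cdot H(h)\cdot\omega\cdot H(i)$ on $G(a,h)$ becomes the $\ol\bH$-comodule structure $H(p')\cdot\omega\cdot H(\theta)\cdot i'$ on $T(a,\theta)$, the comultiplication $\wt\delta$ becomes the multiplication $\wt m$, and the counit $\wt\ve$ becomes the unit $\wt e$, matching the three displayed composites; in particular $U^{\ol\bH}\wt T=T$.

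A self-contained argument that avoids the passage to $\A^{\mathrm{op}}$ would instead specialise the computations of \cite[Proposition~5.7]{Bo}: one verifies that the $(a,\theta)$-component $\ve_{H(a)}\cdot\omega_a\cdot H(\theta)$ of $\Gamma'$ is idempotent, using the counitality $\ve_a\cdot\theta=1_a$ of the comodule, naturality, and the identities \eqref{ent} together with their consequences $\kappa\cdot\omega=\omega$ and $\kappa\cdot\kappa=\kappa$ recorded in \eqref{c-diagrams}; and then that the three composites in the statement are morphisms in $\A^{\ol\bH}$ (compatible with the comodule structure $H(p')\cdot\omega\cdot H(\theta)\cdot i'$) and satisfy the monad axioms. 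On either route the only real work is bookkeeping --- matching the dualised data with the explicit composites, or, on the direct route, checking that $\wt m$ and $\wt e$ respect the $\ol\bH$-comodule structure on $T(a,\theta)$ --- and I do not expect a conceptual obstacle beyond this index-heavy verification.
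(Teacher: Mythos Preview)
Your proposal is correct and essentially matches the paper's approach. The paper does not give a proof: it simply writes ``Again by \cite[Proposition 5.7]{Bo}, we get as counterpart of Proposition~\ref{Bohm}'' --- that is, it cites B\"ohm's result a second time and relies on the reader to recognise the duality. Your argument makes this ``counterpart'' explicit by passing to $\A^{\mathrm{op}}$ and applying Proposition~\ref{Bohm} there, which is exactly the content behind the paper's one-line citation; your formula-by-formula translation of $\Gamma$, the splitting, and the (co)monad structure maps is accurate (including the swap of the roles of $p$ and $i$), and the alternative direct verification you sketch is the computation underlying \cite[Proposition 5.7]{Bo}.
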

We call $\bT$ the \emph{monad induced by} % the weak bimonad}
 $\bH=(\ul{\bH},\ol{\bH},\omega)$. Obviously,
%\begin{equation}\label{wide1}
$U^{\ol{\bH}} \widetilde{T}=T$.
%\end{equation}

\begin{theorem}\label{comparison1} Let $\bH=(\ul{\bH},\ol{\bH},\omega)$ be a  % compatible
weak entwining on a Cauchy complete category
$\A$ and $\bT$ the induced monad on $\A^{\ol{\bH}}$. Then
there is an isomorphism of categories
$$\Phi':\Aa \to (\A^{\ol{\bH}})_\bT, \quad
 (a, h, \theta)\;\mapsto\; ((a,\theta), h \cdot i'_{(a,\theta)}),$$
with the inverse % $(\Phi')^{-1}$ is
given by $(\Phi')^{-1}((a,\theta), g)
=(a, g \cdot i'_{(a,h)}, \theta).$
\end{theorem}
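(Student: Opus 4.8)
The plan is to mimic the proof of Theorem~\ref{comparison} using the dual setup, now working with the comonad $\ol{\bH}$ in place of the monad $\ul{\bH}$, and with the induced monad $\bT$ on $\A^{\ol{\bH}}$ obtained from splitting the idempotent $\Gamma'$ in Proposition~\ref{Bohm1}. First I would check that $\Phi'$ and $(\Phi')^{-1}$ land where claimed: given $(a,h,\theta)\in\Aa$, the object $((a,\theta),h\cdot i'_{(a,\theta)})$ is a $\bT$-module, i.e.\ $h\cdot i'_{(a,\theta)}$ is compatible with $\wt m$ and $\wt e$; associativity uses the defining formula for $\wt m$ together with $m_a\cdot H(h)=h\cdot\ldots$ coming from $(a,h)\in\A_{\ul\bH}$ and from the mixed-bimodule pentagon $H(h)\cdot\omega_a\cdot H(\theta)=\theta\cdot h$, while the unit condition $h\cdot i'_{(a,\theta)}\cdot p'_{(a,\theta)}\cdot\theta=1$ reduces to $h\cdot\Gamma'_{(a,\theta)}\cdot\theta$ which (as below) collapses to $h\cdot\theta\cdot\ldots=1$ because $\ve_a\cdot\theta=1$ and $h\cdot e_a=1$. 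Conversely, for $((a,\theta),g)\in(\A^{\ol\bH})_\bT$ one must verify that $(a,\,g\cdot i'_{(a,\theta)},\,\theta)$ is a genuine mixed bimodule: it is a $\ul\bH$-module (associativity and unitality of $g\cdot i'_{(a,\theta)}$ follow from those of $g$ plus the definition of $\wt m,\wt e$), it is an $\ol\bH$-comodule by hypothesis, and the compatibility pentagon follows by tracing $i'$ through the definition of the $\bT$-action on $T(a,\theta)$ and using $\Gamma'\cdot\Gamma'=\Gamma'$.

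Next, $\Phi'(\Phi')^{-1}=1$ is immediate because $p'\cdot i'=1$: applying $\Phi'$ to $(a,g\cdot i'_{(a,\theta)},\theta)$ gives $((a,\theta),\,g\cdot i'_{(a,\theta)}\cdot p'_{(a,\theta)}\cdot\theta)$, and one identifies $i'_{(a,\theta)}\cdot p'_{(a,\theta)}=\Gamma'_{(a,\theta)}$, so the action becomes $g\cdot\Gamma'_{(a,\theta)}\cdot\theta$; but $\Gamma'\cdot\theta=\theta\cdot(\text{something})$ and since $((a,\theta),g)$ is already a $\bT$-module the unit law $g\cdot\wt e_{(a,\theta)}=1$, i.e.\ $g\cdot p'_{(a,\theta)}\cdot\theta=1$, lets one cancel to recover $g$. (Equivalently, one shows directly $g\cdot i'_{(a,\theta)}\cdot p'_{(a,\theta)}\cdot\theta = g$ by inserting $\wt e$ and $\ve$.) For $(\Phi')^{-1}\Phi'=1$, take $(a,h,\theta)\in\Aa$; then $(\Phi')^{-1}\Phi'(a,h,\theta)=(a,\,h\cdot i'_{(a,\theta)}\cdot p'_{(a,\theta)},\,\theta)=(a,\,h\cdot\Gamma'_{(a,\theta)},\,\theta)$, so it suffices to prove $h\cdot\Gamma'_{(a,\theta)}=h$. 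This is the dual of the diagram chase in Theorem~\ref{comparison}: expanding $\Gamma'_{(a,\theta)}=\ve H_a\cdot\omega_a\cdot H(\theta)\cdot\theta$ (after using $U^{\ol\bH}\eta^{\ol\bH}$ at $(a,\theta)$ equals $\theta$), the square built from $H(\theta)$ and $\theta$ commutes by coassociativity/naturality, the region involving $\omega$ and $\ve H$ commutes using the mixed-bimodule pentagon $H(h)\cdot\omega_a\cdot H(\theta)=\theta\cdot h$ and the fact (\ref{c-diagrams}) that $\ve H\cdot\kappa=m\cdot\xi H$ together with $\xi$-identities, and finally $\ve_a\cdot\theta=1_a$ together with $h$ kills the extra factor, leaving $h$.

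The main obstacle I expect is the bookkeeping in showing $h\cdot\Gamma'_{(a,\theta)}=h$: unlike the module case in Theorem~\ref{comparison} where the kill-step used $h\cdot e_a=1$ directly, here one must carefully combine the comodule counit law $\ve_a\cdot\theta=1$ with the weak-entwining relations (\ref{ent})(ii) and the consequences in (\ref{c-diagrams})—in particular identifying the relevant composite built from $\omega$, $e$, $\ve$ with the idempotent $\xi$ or $\kappa$ and using $\xi*1=1$ (which holds because $\omega$ comes from a \emph{compatible} entwining, as it does here since it arises from a weak braided bimonad). A clean way to organise this is to first establish, at the level of functors on $\A^{\ol\bH}$, that $H(U^{\ol\bH}\wt\eta)\cdot(\text{counit stuff}) $ relates $\Gamma'$ to the already-known idempotent, then evaluate at $(a,\theta)$; alternatively one can transport the statement across $\Phi$ of Theorem~\ref{comparison} once one knows the two induced structures $\bG$ on $\A_{\ul\bH}$ and $\bT$ on $\A^{\ol\bH}$ describe the same category $\Aa$, but doing it directly keeps the argument self-contained and parallel to the proof just given for Theorem~\ref{comparison}.
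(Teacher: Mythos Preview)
Your overall plan—dualise the proof of Theorem~\ref{comparison}—is exactly what the paper intends (the paper gives no explicit proof, relying on duality). However, your execution has several problems.

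First, the paper's stated formula for $(\Phi')^{-1}$ contains a typo: the inverse must be $(\Phi')^{-1}((a,\theta),g)=(a,\,g\cdot p'_{(a,\theta)},\,\theta)$, using $p'$ rather than $i'$, for the composite even to type-check ($g$ has domain $T(a,\theta)$ while $i'_{(a,\theta)}$ has codomain $H(a)$). With this correction, $\Phi'(\Phi')^{-1}=1$ is genuinely immediate from $p'\cdot i'=1$, with no appeal to the unit law of $\bT$; your detour through $\wt e$ is unnecessary and stems from working with the mis-stated formula.

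Second, and more substantively, you badly overcomplicate the verification of $h\cdot\Gamma'_{(a,\theta)}=h$. Your expansion $\Gamma'_{(a,\theta)}=\ve_{H(a)}\cdot\omega_a\cdot H(\theta)\cdot\theta$ has a spurious trailing $\theta$; the correct expression is $\Gamma'_{(a,\theta)}=\ve_{H(a)}\cdot\omega_a\cdot H(\theta)$. With that fixed, the argument is a three-line dual of the one in Theorem~\ref{comparison}:
\[
h\cdot\Gamma'_{(a,\theta)}
= h\cdot \ve_{H(a)}\cdot\omega_a\cdot H(\theta)
= \ve_a\cdot H(h)\cdot\omega_a\cdot H(\theta)
= \ve_a\cdot\theta\cdot h
= h,
\]
using, in order, naturality of $\ve$, the mixed-bimodule pentagon, and the counit law $\ve_a\cdot\theta=1$. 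There is no need for $\kappa$, $\xi$, the identities in (\ref{c-diagrams}), or the relation $\xi*1=1$. In particular, your claim that compatibility of the entwining (or its provenance from a weak braided bimonad) is required is wrong: Theorem~\ref{comparison1} is stated and holds for an arbitrary weak entwining, and invoking $\xi*1=1$ would illegitimately narrow its scope.
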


\begin{thm}\label{comp-func}{\bf Comparison functors.} \em
Let $\bH=(\ul{\bH},\ol{\bH},\omega)$ be a %weak left bimonad
compatible weak entwining on $\A$.
By  (\ref{compatible}), there is a functor (e.g. \cite[Lemma 5.1]{LMW})
$$K_{\omega}:\A \to \Aa,\quad a \; \mapsto \; (H(a),m_a, \delta_a).$$
 Precomposing $K_{\omega}$ with $\Phi$ and $\Phi'$ gives functors
 $$\begin{array}{ll}
K: \A \to (\A_{\,\ul{\bH}})^\bG, & a \;\mapsto \;
((H(a),m_a), \, p_{(H(a),\,m_a)}\!\cdot \delta_a),\\[+2mm]
K': \A \to (\A^{\ol{\bH}})_\bT,& a\;\mapsto\;
((H(a),\delta_a),\, m_a\!\cdot i'_{(H(a),\, m_a)}),
\end{array}$$
leading to commutative diagrams
\begin{equation}\label{triangle1}
\xymatrix{ %  @R=.4in @R=.6in{
\A \ar[r]^{K\quad} \ar[d]_{\phi_{\ul{\bH}}} &
\Ab  \ar[ld]^{U^{\bG}}\\
  \A_{\ul{\bH}} & , }\quad
\xymatrix{% @R=.5in @R=.6in{
\A \ar[r]^{K'\quad} \ar[d]_{\phi^{\ol{\bH}}} &
(\A^{\ol{\bH}})_\bT  \ar[ld]^{U_{\bT}}\\
  \A^{\ol{\bH}} & , } \quad
\xymatrix @C=.4in{
 \A \ar[r]^{K\;} \ar[d]_{K'} \ar[dr]_{K_\omega} & \Ab  \\
\Ac &\ar[l]^{\Phi'}   \Aa   \ar[u]_{\Phi} \, .}
\end{equation}
\end{thm}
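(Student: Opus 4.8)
The statement packages three things—that $K_\omega$ is a well-defined functor, that precomposing it with the isomorphisms $\Phi$, $\Phi'$ of Theorems \ref{comparison} and \ref{comparison1} gives $K$ and $K'$ with the displayed formulas, and that the three squares in (\ref{triangle1}) commute—so the plan is to dispatch these in order, the only non-formal input being the compatibility identity (\ref{compatible}).

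First I would verify that $(H(a), m_a, \delta_a) \in \Aa$ for every $a \in \A$. The pair $(H(a), m_a)$ is the free $\ul{\bH}$-module $\phi_{\ul{\bH}}(a)$, so the module axioms hold by associativity and unitality of $m$; dually $(H(a), \delta_a) = \phi^{\ol{\bH}}(a)$ satisfies the comodule axioms by coassociativity and counitality of $\delta$. The mixed-bimodule condition of \ref{mix-bim} applied to this triple reads $\delta_a \cdot m_a = H(m_a) \cdot \omega_{H(a)} \cdot H(\delta_a)$, which is exactly the $a$-component of (\ref{compatible}). Functoriality is immediate: a morphism $f : a \to a'$ is sent to $H(f)$, which is a $\ul{\bH}$-module morphism $\phi_{\ul{\bH}}(a) \to \phi_{\ul{\bH}}(a')$ by naturality of $m$ and an $\ol{\bH}$-comodule morphism $\phi^{\ol{\bH}}(a) \to \phi^{\ol{\bH}}(a')$ by naturality of $\delta$, hence a morphism of mixed bimodules. (One may instead simply cite \cite[Lemma 5.1]{LMW}.)

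Next I would substitute $(a,h,\theta) = (H(a), m_a, \delta_a)$ into the formulas $\Phi(a,h,\theta) = ((a,h),\, p_{(a,h)} \cdot \theta)$ and $\Phi'(a,h,\theta) = ((a,\theta),\, h \cdot i'_{(a,\theta)})$ from Theorems \ref{comparison} and \ref{comparison1}. This produces $K(a) = ((H(a),m_a),\, p_{(H(a),m_a)} \cdot \delta_a)$ and $K'(a) = ((H(a),\delta_a),\, m_a \cdot i'_{(H(a),\delta_a)})$, i.e.\ the objects displayed above; on morphisms $\Phi$ and $\Phi'$ are the identity on underlying $\A$-maps, so $K$ and $K'$ both carry $f$ to $H(f)$.

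Finally, the three diagrams. The first asserts $U^\bG K = \phi_{\ul{\bH}}$: since $U^\bG \Phi$ is the forgetful functor $\Aa \to \A_{\ul{\bH}}$, $(a,h,\theta) \mapsto (a,h)$, composing with $K_\omega$ yields the functor $a \mapsto (H(a), m_a) = \phi_{\ul{\bH}}(a)$, and likewise on morphisms; the second, $U_\bT K' = \phi^{\ol{\bH}}$, is the dual statement, using that $U_\bT \Phi'$ is the forgetful functor $\Aa \to \A^{\ol{\bH}}$. The third square records precisely $\Phi K_\omega = K$ and $\Phi' K_\omega = K'$, which hold by the definitions just made (and, $\Phi, \Phi'$ being isomorphisms of categories, equivalently $K_\omega = \Phi^{-1} K = (\Phi')^{-1} K'$). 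The only step that is not purely formal is recognising the mixed-bimodule axiom for $(H(a), m_a, \delta_a)$ as a restatement of (\ref{compatible}); the remaining work amounts to carefully threading the idempotent splittings $p, i$ and $p', i'$ through the definitions of $\Phi$ and $\Phi'$, and that is where I would expect the (routine) bulk of the checking to sit.
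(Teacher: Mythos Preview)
Your proposal is correct and is exactly the verification the paper leaves implicit: in the paper this item is stated as a construction (note the ``\emph{}''-style body and the absence of any \textbf{Proof} environment), with the existence of $K_\omega$ delegated to \cite[Lemma 5.1]{LMW} and the formulas for $K$, $K'$ and the commutativity of (\ref{triangle1}) taken as immediate from the definitions of $\Phi$ and $\Phi'$ in Theorems \ref{comparison} and \ref{comparison1}. Your write-up simply spells out those immediate checks, and the key observation---that the mixed-bimodule condition for $(H(a),m_a,\delta_a)$ is the $a$-component of (\ref{compatible})---is the same one the paper invokes.

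One small remark: your computation gives $K'(a)=((H(a),\delta_a),\, m_a\cdot i'_{(H(a),\delta_a)})$, indexing $i'$ by the $\ol{\bH}$-comodule $(H(a),\delta_a)$ as in Proposition \ref{Bohm1}. This is the correct indexing; the subscript $i'_{(H(a),\,m_a)}$ printed in the statement is a typographical slip in the paper.
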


We will use that the splitting of $\Gamma$ (from \ref{Bohm}) leads to a  splitting of $\kappa$,
\begin{equation}\label{olG}
\xymatrix{ % @R=.6in @C=.6in{
 HH \ar@{->>}[rd]_{\ol{p}=p \,\phi_{\ul{\bH}}}\ar[rr]^{\kappa} && HH\\
& \ol{G}=G\phi_{\ul{\bH}} \ar@{>->}[ru]_{\ol{i}=i\phi_{\ul{\bH}}} &.}
\end{equation}

Symmetrically, the splitting of $\Gamma'$ (from \ref{Bohm1}) leads to a  splitting of $\kappa'$,
\begin{equation}\label{olT}
\xymatrix{ % @R=.6in @C=.6in{
 HH \ar@{->>}[rd]_{\ol{p}\,'=p' \,\phi_{\ul{\bH}}}\ar[rr]^{\kappa'} && HH\\
& \ol{T}=T\phi_{\ul{\bH}} \ar@{>->}[ru]_{\ol{i}\,'=i'\phi_{\ul{\bH}}} &,}
\end{equation} where $\kappa'$ is the composite
$HH\xr{HHe}HHH \xr{H\omega}HHH \xr{mH}HH$.

\begin{proposition}\label{coaction}
In the situation described above, consider the comonad morphism
$t:\phi_{\ul{\bH}}U_{\ul{\bH}} \to \bG$ induced by the left triangle in (\ref{triangle1})
(see Section \ref{com.adj.func.}).
\begin{zlist}
\item  For any $\ul \bH$-module $(a,h)$, the $(a,h)$-component for $t$ is the composite
$$t_{(a,h)}:H(a) \xr{\;\delta_a\;}HH(a) \xr{H(h)} H(a) \xr{p_{(a,h)}} G(a,h).$$
\item
 For any $a \in \A$, the $\phi_{\ul{\bH}}(a)$-component for $t$,
 $$t_{\phi_{\ul{\bH}}(a)}:
HH(a)\xr{\delta_{H(a)}}HHH(a) \xr{Hm_a} HH(a) \xr{\;\ol p_{a}\;} \ol G(a),$$
 is the unique morphism leading to commutativity of the diagram
  $$\xymatrix @R=.4in @C=.4in{
HH(a) \ar[rd]_{\sigma_a} \ar[r]^-{t_{\phi_{\ul{\bH}}(a)}}& \ol{G}(a)\ar[d]^{\ol{i}_a}\\
  & HH(a).}$$
\end{zlist}
\end{proposition}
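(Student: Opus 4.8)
The plan is to unwind the definition of the comonad morphism $t$ from Section \ref{com.adj.func.} and then specialize. Recall that for the left triangle in (\ref{triangle1}), which presents $K:\A\to(\A_{\ul\bH})^\bG$ with $U^\bG K=\phi_{\ul\bH}$, the induced comonad morphism $t:\phi_{\ul\bH}U_{\ul\bH}\to\bG$ is obtained from the left $\bG$-comodule structure $\theta$ on $\phi_{\ul\bH}$ determined by $K$, via the composite $\phi_{\ul\bH}U_{\ul\bH}\xr{\theta U_{\ul\bH}}\widetilde G\phi_{\ul\bH}U_{\ul\bH}\xr{\widetilde G\ve_{\ul\bH}}\widetilde G$ (the recipe $t_K:FR\xr{\theta R}GFR\xr{G\sigma}G$ of \ref{com.adj.func.}, with $F=\phi_{\ul\bH}$, $R=U_{\ul\bH}$, $\sigma=\ve_{\ul\bH}$). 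Since $K(a)=((H(a),m_a),\,p_{(H(a),m_a)}\cdot\delta_a)$, the $\bG$-comodule structure carried by $\phi_{\ul\bH}(a)=(H(a),m_a)$ is precisely $\theta_{\phi_{\ul\bH}(a)}=p_{(H(a),m_a)}\cdot\delta_a$. Applying $U^\bG$ (which acts as identity on underlying $\ul\bH$-modules/morphisms in the relevant sense, using $U_{\ul\bH}\widetilde G=G$) and composing with $\ve_{\ul\bH}$ at $(a,h)$, whose component is $h$, I get that $t_{(a,h)}$ is the composite $H(a)\xr{\delta_{U_{\ul\bH}(a,h)}}\cdots$. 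The cleanest way is to evaluate $\theta U_{\ul\bH}$ at a general $\ul\bH$-module $(a,h)$ by naturality: $(a,h)$ receives $h:(H(a),m_a)\to(a,h)$ as the counit component, so $t_{(a,h)}=p_{(a,h)}\cdot H(h)\cdot\delta_a$ after using naturality of $p$ (the splitting $p:HU_{\ul\bH}\to G$) with respect to $h$ and naturality of $\delta$. This gives (1).

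For (2), I would simply take $(a,h)=\phi_{\ul\bH}(a)=(H(a),m_a)$ in (1), so $t_{\phi_{\ul\bH}(a)}=p_{(H(a),m_a)}\cdot H(m_a)\cdot\delta_{H(a)}=\ol p_a\cdot Hm_a\cdot\delta_{H(a)}$, using the notation $\ol p=p\,\phi_{\ul\bH}$ from (\ref{olG}) and $\ol G=G\phi_{\ul\bH}$. This is exactly the displayed composite $HH(a)\xr{\delta_{H(a)}}HHH(a)\xr{Hm_a}HH(a)\xr{\ol p_a}\ol G(a)$. It remains to verify the triangle asserting that $\ol i_a\cdot t_{\phi_{\ul\bH}(a)}=\sigma_a$. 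Since $\ol i=i\,\phi_{\ul\bH}$ and $\ol i\cdot\ol p=\kappa\cdot{}$ restricted appropriately — more precisely, by (\ref{olG}) the composite $\ol i_a\cdot\ol p_a$ equals $\kappa_{H(a)}$ at the relevant spot, but in fact one uses $i\cdot p=\Gamma$ on $HU_{\ul\bH}$ — I compute $\ol i_a\cdot t_{\phi_{\ul\bH}(a)}=i_{(H(a),m_a)}\cdot p_{(H(a),m_a)}\cdot Hm_a\cdot\delta_{H(a)}=\Gamma_{(H(a),m_a)}\cdot Hm_a\cdot\delta_{H(a)}$.

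The main obstacle is the final identification $\Gamma_{(H(a),m_a)}\cdot Hm_a\cdot\delta_{H(a)}=\sigma_a$. Here I would unwind $\Gamma$ from Proposition \ref{Bohm}: $\Gamma_{(a,h)}=H(h)\cdot\omega_a\cdot eH(a)$ followed by the module action — more carefully, $\Gamma_{(b,g)}$ is the composite $H(b)\xr{eH(b)}HH(b)\xr{\omega_b}HH(b)\xr{H(g)}H(b)$ after simplifying $HU_{\ul\bH}\phi_{\ul\bH}U_{\ul\bH}\to HU_{\ul\bH}$ via $\ve_{\ul\bH}$. So with $(b,g)=(H(a),m_a)$, I need $H(m_a)\cdot\omega_{H(a)}\cdot eHH(a)\cdot Hm_a\cdot\delta_{H(a)}=\sigma_a=Hm_a\cdot\delta_{H(a)}$ — wait, this requires care with the order of $Hm_a$ and the $\Gamma$-block. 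The right approach is: by naturality of $e$ and $\omega$, slide $Hm_a\cdot\delta_{H(a)}$ (which is $\sigma_a$) through, reducing to showing $\Gamma$ acts as identity after $\sigma$, i.e. $\kappa\cdot\sigma=\sigma$, which is exactly one of the identities in (\ref{c-diagrams}) that follows from compatibility (\ref{compatible}). Concretely: $\ol i_a\cdot t_{\phi_{\ul\bH}(a)}$ unwinds (using the splitting (\ref{olG}) of $\kappa$, and the fact that $\ol i\cdot\ol p$ computes $\kappa$ on the nose via the identification of $\Gamma$ precomposed/postcomposed with the unit/counit data) to $\kappa_{?}\cdot\sigma_a$, and then $\kappa\cdot\sigma=\sigma$ from (\ref{c-diagrams}) finishes it. I expect the bookkeeping relating $\Gamma_{(H(a),m_a)}$ to $\kappa$ — essentially checking that the definition of $\kappa$ (via $eHH$, $\omega H$, $Hm$) matches $\Gamma$ evaluated at the free module, using $m\cdot eH=1$ and $m\cdot He=1$ — to be the only genuinely fiddly point; everything else is naturality and the already-recorded identities in (\ref{c-diagrams}).
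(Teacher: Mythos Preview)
Your argument for (1) is essentially the paper's: identify the left $\bG$-coaction $\alpha_a=p_{(H(a),m_a)}\cdot\delta_a$ carried by $K(a)$, form $t_{(a,h)}$ as $\widetilde G(\ve_{\ul\bH})\cdot\alpha U_{\ul\bH}$, and then use naturality of $p$ to rewrite $G(h)\cdot p_{(H(a),m_a)}\cdot\delta_a$ as $p_{(a,h)}\cdot H(h)\cdot\delta_a$. For (2) you likewise specialise (1) to $(a,h)=\phi_{\ul\bH}(a)$ and reduce the triangle $\ol i_a\cdot t_{\phi_{\ul\bH}(a)}=\sigma_a$ to the identity $\kappa\cdot\sigma=\sigma$ from (\ref{c-diagrams}). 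This is the same route as the paper.

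Two small points. First, the ``bookkeeping'' you flag as the fiddly step is in fact already recorded: diagram (\ref{olG}) says precisely that $\ol i\cdot\ol p$ is a splitting of $\kappa$, so $\ol i_a\cdot\ol p_a=\kappa_a$ on the nose (equivalently $\Gamma_{(H(a),m_a)}=\kappa_a$, since $\Gamma\phi_{\ul\bH}=\kappa$ by inspection of the definitions of $\Gamma$ and $\kappa$). No separate computation with $e$, $\omega$, $m$ is needed; your detour through $H(m_a)\cdot\omega_{H(a)}\cdot eHH(a)\cdot\ldots$ can be dropped. Second, the statement asserts that $t_{\phi_{\ul\bH}(a)}$ is the \emph{unique} morphism making the triangle commute, and you only verify commutativity. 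The paper handles uniqueness by observing that $\ol i:\ol G\to HH$ is the equaliser of $\kappa$ and $1$ (a split equaliser, since $\ol p\cdot\ol i=1$), hence a monomorphism; any two fillers of the triangle therefore coincide. You should add this one-line remark.
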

\begin{proof} (1) Since $K(a)=((H(a),m_a),p_{_{(H(a),\,m_a)}}\! \cdot \delta_a)$, the left $\bG$-comodule
structure $\alpha : \phi_{\ul{\bH}} \to \bG \phi_{\ul{\bH}}$ on $\phi_{\ul{\bH}}$
corresponding to the left triangle in (\ref{triangle1}), has for its $a$-component
 $\alpha_a =p_{_{(H(a),\,m_a)}} \cdot \delta_a$.
 It then follows from Section \ref{com.adj.func.} that for any
$(a,h) \in \A_{\ul{\bH}}$, the $(a,h)$-component $t_{(a,h)}$ is the composite
$$t_{(a,h)} : H(a) \xr{\;\delta_a\;}
HH(a) \xr{p_{_{(H(a),\,m_a)}}} G(H(a),m_a) \xr{G(h)} G(a,h),$$
which, by naturality of composition, is the same as
$$t_{(a,h)} :H(a) \xr{\delta_a}HH(a) \xr{H(h)} H(a) \xr{p_{(a,h)}} G(a,h).$$
Then, in particular, $t_{\phi_{\ul{\bH}}(a)}=p_{\phi_{\ul{\bH}}(a)}\cdot H(m_a)\cdot \delta_{H(a)}=\ol{p}_a \cdot \sigma_a$.

(2) Since
$\xymatrix{ \ol{G} \ar[r]^-{\ol{i}} & HH \ar@{->}@<0.5ex>[r]^-{\kappa}
\ar@{->}@<-0.5ex>[r]_{1 }& HH }$
is an equaliser diagram and   $\kappa \cdot \sigma=\sigma$ (see (\ref{c-diagrams})), there is a unique morphism
$j: HH \to \ol{G}$ such that $\ol{i} \cdot j=\sigma$. Then
$t_{\phi_{\ul{\bH}}(a)}=\ol{p}_a \cdot \sigma_a =\ol{p}_a \cdot \ol{i}_a \cdot j_a=j_a$  and the result follows.
\end{proof}

Symmetrically, we have:

\begin{proposition}\label{coaction1} In the situation described in \ref{comp-func}, the monad morphism
$t: \bT \to \phi^{\ol{\bH}}U^{\ol{\bH}}$ induced by the right triangle in (\ref{triangle1})
(see Section \ref{com.adj.func.}), has for its $(a,\theta)$-component
$$t_{(a,\theta)}:T(a,\theta) \xr{i'_{(a,\theta)}} H(a) \xr{H(\theta)} HH(a) \xr{m_a} H(a).$$
\end{proposition}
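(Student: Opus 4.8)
The plan is to run the proof of Proposition~\ref{coaction}(1) in dual form. By the dual of the correspondence in Section~\ref{com.adj.func.}, applied to the monad $\bT$ on $\A^{\ol{\bH}}$, the adjunction $U^{\ol{\bH}}\dashv\phi^{\ol{\bH}}$, and the functor $K'$ with $U_\bT K'=\phi^{\ol{\bH}}$, the functor $K'$ corresponds to a left $\bT$-module structure $\beta$ on $\phi^{\ol{\bH}}$ (so that $K'(a)=(\phi^{\ol{\bH}}(a),\beta_a)$), and the induced monad morphism $t:\bT\to\phi^{\ol{\bH}}U^{\ol{\bH}}$ is the composite
$$\widetilde{T}\xr{\widetilde{T}\,\eta^{\ol{\bH}}}\widetilde{T}\phi^{\ol{\bH}}U^{\ol{\bH}}\xr{\beta U^{\ol{\bH}}}\phi^{\ol{\bH}}U^{\ol{\bH}},$$
where $\eta^{\ol{\bH}}$ is the unit of $U^{\ol{\bH}}\dashv\phi^{\ol{\bH}}$; this is exactly dual to the composite $t_K=G\sigma\cdot\theta R$ used in the proof of \ref{coaction}.

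First I would read off $\beta$ from the formula for $K'$ in \ref{comp-func}: since $K'(a)=((H(a),\delta_a),\,m_a\cdot i'_{\phi^{\ol{\bH}}(a)})$, the underlying morphism in $\A$ of the $a$-component $\beta_a$ is the composite $T(H(a),\delta_a)\xr{i'_{(H(a),\delta_a)}}HH(a)\xr{m_a}H(a)$. Next, evaluating the displayed composite at an arbitrary $(a,\theta)\in\A^{\ol{\bH}}$ and passing to underlying morphisms in $\A$ (using $U^{\ol{\bH}}\widetilde{T}=T$) gives $t_{(a,\theta)}=\beta_a\cdot T(\eta^{\ol{\bH}}_{(a,\theta)})$. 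Here I would use the standard fact that for the forgetful--cofree adjunction $U^{\ol{\bH}}\dashv\phi^{\ol{\bH}}$ the unit at a comodule $(a,\theta)$ is its structure map, i.e.\ $\eta^{\ol{\bH}}_{(a,\theta)}\colon(a,\theta)\to\phi^{\ol{\bH}}(a)$ has underlying morphism $\theta\colon a\to H(a)$. Finally, naturality of $i'\colon T\to HU^{\ol{\bH}}$ along this $\A^{\ol{\bH}}$-morphism yields $i'_{(H(a),\delta_a)}\cdot T(\eta^{\ol{\bH}}_{(a,\theta)})=H(\theta)\cdot i'_{(a,\theta)}$, so that
$$t_{(a,\theta)}=m_a\cdot i'_{(H(a),\delta_a)}\cdot T(\eta^{\ol{\bH}}_{(a,\theta)})=m_a\cdot H(\theta)\cdot i'_{(a,\theta)},$$
which is the asserted composite $T(a,\theta)\xr{i'_{(a,\theta)}}H(a)\xr{H(\theta)}HH(a)\xr{m_a}H(a)$.

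No genuine obstacle arises here; the argument is pure bookkeeping strictly dual to \ref{coaction}(1). The steps demanding the most care are: (i) setting up the dual of Section~\ref{com.adj.func.} correctly, in particular that the induced monad morphism is $\beta U^{\ol{\bH}}\cdot\widetilde{T}\eta^{\ol{\bH}}$ with $\eta^{\ol{\bH}}$ the \emph{unit} of $U^{\ol{\bH}}\dashv\phi^{\ol{\bH}}$; (ii) keeping track, as in \ref{coaction}, of the passage between a morphism of $\A^{\ol{\bH}}$ and its underlying morphism in $\A$ (for $\widetilde{T}$, $\beta$ and $\eta^{\ol{\bH}}$); and (iii) noting the harmless index clash in \ref{comp-func}, where $i'_{(H(a),\,m_a)}$ in the description of $K'$ means $i'_{\phi^{\ol{\bH}}(a)}=i'_{(H(a),\,\delta_a)}$. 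Alternatively, $t$ could be extracted from \ref{adj-monad} as $U_\bT\gamma_{K'}$ and then expanded; this reduces to the same computation.
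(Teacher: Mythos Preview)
Your proof is correct and is precisely the dualisation the paper intends when it writes ``Symmetrically, we have''; the paper gives no separate argument for Proposition~\ref{coaction1}. Your identification of the index typo in \ref{comp-func} (where $i'_{(H(a),m_a)}$ should read $i'_{(H(a),\delta_a)}$) is also accurate and does not affect the argument.
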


Our general results from  Section \ref{Prel} now yield:

\begin{proposition} \label{pair} Let $\bH=(\ul{\bH},\ol{\bH},\omega)$ be a
 compatible weak entwining on $\A$.
Then the functor $K:\A \to \Ab$ \emph{(}and hence also
 $K_{\omega}:\A \to \Aa$\emph{)} has a right adjoint if and only if,
for any $(a,h,\theta)\in \Aa$, the pair of morphisms
\begin{equation}\label{pair.d}
\xymatrix{a \ar@/^2pc/@{->}[rrr]^{\theta}
\ar[r]_{e_a} & H(a) \ar[r]_{\delta_a} & HH(a) \ar[r]_{H(h)}& H(a)}
\end{equation} has an equaliser in $\A$.
\end{proposition}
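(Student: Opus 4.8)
The plan is to reduce the question about the existence of a right adjoint for $K$ to a concrete equaliser condition by applying the general machinery from Section~\ref{Prel}, specifically the criterion in Theorem~\ref{Beck}(1) dualised to comonads, together with the explicit description of the right adjoint in Section~\ref{rght.adj}. First I would recall that we are in precisely the standard situation of \ref{rght.adj}: the left-hand triangle in (\ref{triangle1}) exhibits $K:\A \to (\A_{\ul\bH})^\bG$ as a functor over $\phi_{\ul\bH}$, i.e.\ $U^\bG K = \phi_{\ul\bH}$, and $\phi_{\ul\bH}$ is (left adjoint, hence trivially) the kind of functor to which that theory applies. By the dual of Theorem~\ref{Beck}(1) (or directly by Theorem~\ref{Dubuc2} applied with $F' = \phi_{\ul\bH}$, which is precomonadic since it is a left adjoint onto $\A_{\ul\bH}$ with $\A_{\ul\bH}$ being monadic over $\A$), $K$ has a right adjoint $\ol K$ if and only if the equaliser diagram (\ref{rght.adj.sp}) exists in $[(\A_{\ul\bH})^\bG, \A]$, equivalently if and only if for each object $(a',\theta') \in (\A_{\ul\bH})^\bG$ the component equaliser (\ref{rght.adj.com}) exists in $\A$.

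Next I would unwind what that component equaliser is in the present case, using the isomorphism $\Phi:\Aa \to (\A_{\ul\bH})^\bG$ of Theorem~\ref{comparison} to transport objects back to mixed bimodules. Under $\Phi$, an object of $(\A_{\ul\bH})^\bG$ has the form $((a,h), p_{(a,h)}\cdot\theta)$ for a mixed bimodule $(a,h,\theta) \in \Aa$. Here $R = U^\bG U_{\ul\bH} \colon (\A_{\ul\bH})^\bG \to \A$ sends this to $a$, and the two parallel morphisms in (\ref{rght.adj.com}) are, on one side, the image of the $\bG$-coaction $p_{(a,h)}\cdot\theta$ under the forgetful functors, and on the other side the component $\beta_{(a,h)}$ of the canonical right $\bG$-module structure $\beta:R\to RG$ coming from the triangle. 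Using Proposition~\ref{coaction}(1), the comonad morphism $t$ has component $t_{(a,h)} = p_{(a,h)}\cdot H(h)\cdot\delta_a : H(a)\to G(a,h)$, and feeding this into the formula $\beta U^\bG = R U^\bG\gamma_K U^\bG \cdot \eta R U^\bG$ from \ref{rght.adj} (with $\eta$ the unit of $\phi_{\ul\bH}\dashv U_{\ul\bH}$, namely $e$) shows that the $(a,h)$-component of $\beta$ unwinds to $H(h)\cdot\delta_a\cdot e_a$ composed appropriately. The key simplification is that $p_{(a,h)}$ splits the idempotent $\Gamma_{(a,h)}$, so an equaliser of two maps into $G(a,h)$ — one of which factors as $p_{(a,h)}$ postcomposed with something — can be replaced by an equaliser of the corresponding maps into $H(a)$; this is exactly the manoeuvre in the proof of Theorem~\ref{comparison}, where $\theta = i_{(a,h)}p_{(a,h)}\theta$ because $\Gamma_{(a,h)}\theta=\theta$. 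Carrying this out, the equaliser (\ref{rght.adj.com}) becomes the equaliser of
$$\xymatrix{a \ar@{->}@<0.5ex>[r]^-{\theta} \ar@{->}@<-0.5ex>[r]_-{H(h)\cdot\delta_a\cdot e_a} & H(a),}$$
and since $H(h)\cdot\delta_a\cdot e_a$ is precisely the composite $a \xr{e_a} H(a) \xr{\delta_a} HH(a) \xr{H(h)} H(a)$ appearing in (\ref{pair.d}), this is exactly the asserted equaliser condition.

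Finally, I would note the ``hence also $K_\omega$'' clause: since $K_\omega = \Phi^{-1}\cdot K$ (the third triangle in (\ref{triangle1})) and $\Phi$ is an isomorphism of categories, $K_\omega$ has a right adjoint if and only if $K$ does, with the right adjoint of $K_\omega$ being $\ol K\cdot\Phi$. The main obstacle I anticipate is the bookkeeping in the second step: correctly identifying the two parallel arrows of (\ref{rght.adj.com}) in terms of $\delta$, $h$, $e$ and the splitting maps $p,i$, and then checking that the splitting idempotent $\Gamma$ lets one replace the $G(a,h)$-valued equaliser by the $H(a)$-valued one without changing its existence — this requires the mixed-bimodule compatibility $H(h)\cdot\omega_a\cdot H(\theta) = \theta\cdot h$ exactly as used in Theorem~\ref{comparison}, and care that $\beta$ is genuinely the module structure dual to the setup in \ref{rght.adj} rather than some twist of it. Once the two arrows are pinned down, the remaining verification that they coincide with (\ref{pair.d}) is a short diagram chase using $h\cdot e_a = 1$ and naturality of $e$.
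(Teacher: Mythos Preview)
Your proposal is correct and follows essentially the same route as the paper: apply Theorem~\ref{Dubuc2} via the specialisation in \ref{rght.adj} to reduce to componentwise equalisers, compute $\beta_{(a,h)}$ from the comonad morphism $t$ of Proposition~\ref{coaction}, use that $i$ is a split monomorphism to replace the $G(a,h)$-valued pair by the $H(a)$-valued pair, and translate via $\Phi$. One small slip: in Dubuc's setup the precomonadic functor $F'$ is $U^\bG:(\A_{\ul\bH})^\bG\to\A_{\ul\bH}$, not $\phi_{\ul\bH}$; the paper also needs the identity $\kappa\cdot\delta=\delta$ from (\ref{c-diagrams}) to simplify $i_{(a,h)}\cdot\beta_{(a,h)}$ to $H(h)\cdot\delta_a\cdot e_a$, which you should make explicit.
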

\begin{proof} Since the functor $U^\bG: \Ab \to \A$ is clearly (pre)comonadic, it follows from Theorem \ref{Dubuc2} that
the functor $K:\A \to \Ab$ admits a right adjoint if and only if for any $((a,h),\nu)\in \Ab$, the pair
$$
\xymatrix{a\ar@{->}@<0.5ex>[rr]^-{ \nu}
\ar@{->}@<-0.5ex>[rr]_-{\beta_{(a,\,h)}}&& G(a,h),}
$$
where $\beta:U_{\ul{\bH}} \to U_{\ul{\bH}} G$ is the right $\bG$-comodule structure on
$U_{\ul{\bH}}: \A_{\ul{\bH}} \to \A$ induced by the triangle (\ref{triangle1}) (see Section \ref{com.adj.func.}), has an
equaliser in $\A$, which -- since $i: G \to HU_{\ul{\bH}}$ is a (split) monomorphism -- is the case if and only if the  pair
$$
\xymatrix{a\ar@{->}@<0.5ex>[rr]^-{i_{(a,\,h)}\cdot \nu}
\ar@{->}@<-0.5ex>[rr]_-{i_{(a,\,h)} \cdot\beta_{(a,\,h)}}&& H(a)}
$$
has one. According to Propositions \ref{coaction} and
\ref{com.adj.func.}, $\beta_{(a,\,h)}$ is the composite
$$a \xr{\;e_a\;}H(a) \xr{\;\delta_a\;}HH(a) \xr{p_{_{(H(a),\,m_a)}}} G(H(a),m_a) \xr{G(h)}  G(a,h).$$
Since $\kappa\cdot \delta=\delta$ by (\ref{c-diagrams}) and  $\Gamma= i \cdot p$, it follows by naturality of $i$ that the diagram
$$
\xymatrix @C=.3in @R=.7in{a \ar@/^2pc/@{->}[rrrrr]^-{\beta_{(a,\,h)}}
 \ar[r]_-{e_a} & H(a)
\ar@/_2.4pc/@{->}[rrrd]_-{\delta_a}\ar[r]^-{\delta_a} & HH(a)
\ar[drr]_{\kappa_a=\Gamma_{(H(a),m_a)}} \ar[rr]^-{p_{(H(a), m_a)}} && G(H(a), m_a)
\ar[r]_-{G(h)} \ar[d]^{i_{(H(a),m_a)}}& G(a,h) \ar[d]^{i_{(a,\,h)}}\\
&&&& HH(a) \ar[r]_{H(h)}& H(a).}
$$
is commutative. So we have
$$i_{(a,\, h)} \cdot \beta_{(a,\,h)}=H(h) \cdot \delta_a \cdot e_a.$$
Thus, the functor $K:\A \to \Ab$ has a right adjoint if and only if for any
$((a,h),\nu)\in \Ab$, the pair of morphisms
$$
\xymatrix{a  \ar@{->}@<0.5ex>[rr]^-{i_{(a,h)} \cdot  \,\nu}
\ar@{->}@<-0.5ex>[rr]_{H(h) \cdot \,\delta_a \cdot \, e_a}&& H(a) }
$$ has an equaliser.
Recalling that $\Phi:\Aa \to \Ab$ is an isomorphism of categories and
$\Phi^{-1}((a,h),\nu)=(a,h,i_{(a,\,h)} \cdot  \nu)$ gives
the desired result.
\end{proof}

Symmetrically, we have:
\begin{proposition} \label{pair1}Let $\bH=(\ul{\bH},\ol{\bH},\omega)$
be a %weak bimonad
compatible weak entwining
on $\A$. Then the functor $K':\A \to \Ac$ \emph{(}and hence also
%the functor
$K_{\omega}:\A \to \Aa$\emph{)} has a left adjoint if and only if
for any $(a,h,\theta)\in \Aa$, the pair of morphisms
\begin{equation}\label{pair.d1}
\xymatrix{H(a) \ar@/^2pc/@{->}[rrr]^-{h}
\ar[r]_-{H(\theta)} & HH(a) \ar[r]_-{m_a} & H(a) \ar[r]_-{\ve_a}& a}
\end{equation} has a coequaliser in $\A$.
\end{proposition}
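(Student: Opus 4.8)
The plan is to prove Proposition~\ref{pair1} as the exact mirror of Proposition~\ref{pair}, replacing the comonad side of Section~\ref{Entw} throughout by the monad side: Theorem~\ref{Dubuc2} is replaced by Theorem~\ref{Dubuc1}, the comonadic forgetful functor $U^\bG$ by the monadic forgetful functor $U_\bT\colon\Ac\to\A^{\ol{\bH}}$, Proposition~\ref{coaction} by Proposition~\ref{coaction1}, the splitting \eqref{olG} of $\kappa$ by the splitting \eqref{olT} of $\kappa'$, and the isomorphism $\Phi$ of Theorem~\ref{comparison} by the isomorphism $\Phi'$ of Theorem~\ref{comparison1}. Since $K'=\Phi'\cdot K_\omega$ with $\Phi'$ an isomorphism of categories, $K'$ and $K_\omega$ have left adjoints simultaneously, so it suffices to treat $K'$.

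First I would apply Theorem~\ref{Dubuc1} to the triangle of \eqref{triangle1} with $U_\bT K'=\phi^{\ol{\bH}}$; its hypothesis holds because the Eilenberg--Moore forgetful functor $U_\bT$ is monadic, hence premonadic --- the exact counterpart of the use of the comonadicity of $U^\bG$ in Proposition~\ref{pair}, and of the argument in Section~\ref{base}. The theorem then says that $K'$ has a left adjoint if and only if, for every $((a,\theta),g)\in\Ac$, the parallel pair $g,\ \beta_{(a,\theta)}\colon T(a,\theta)\rightrightarrows a$ has a coequaliser in $\A$, where $\beta$ is the $\bT$-module structure on the forgetful functor $U^{\ol{\bH}}$ induced by that triangle (the monad-side analogue of the $\bG$-comodule structure on $U_{\ul{\bH}}$ used in Proposition~\ref{pair}; see Section~\ref{com.adj.func.}). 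Since $p'\colon HU^{\ol{\bH}}\to T$ is a split epimorphism by \eqref{olT}, precomposing with $p'_{(a,\theta)}\colon H(a)\to T(a,\theta)$ does not change whether a coequaliser exists, so the condition becomes: the pair $g\cdot p'_{(a,\theta)},\ \beta_{(a,\theta)}\cdot p'_{(a,\theta)}\colon H(a)\rightrightarrows a$ has a coequaliser.

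Next I would compute this pair. By Proposition~\ref{coaction1} (dually to the identification of $\beta$ in the proof of Proposition~\ref{pair}), $\beta_{(a,\theta)}$ is the composite
\[
T(a,\theta)\xra{T(\theta)}T(H(a),\delta_a)\xra{i'_{(H(a),\delta_a)}}HH(a)\xra{m_a}H(a)\xra{\ve_a}a .
\]
Precomposing with $p'_{(a,\theta)}$, using naturality of $p'$ along the $\ol{\bH}$-comodule morphism $\theta\colon(a,\theta)\to\phi^{\ol{\bH}}(a)=(H(a),\delta_a)$ and then $i'_{\phi^{\ol{\bH}}(a)}\cdot p'_{\phi^{\ol{\bH}}(a)}=\Gamma'_{\phi^{\ol{\bH}}(a)}=\kappa'_a$ from \eqref{olT}, collapses it to $\ve_a\cdot m_a\cdot\kappa'_a\cdot H(\theta)$. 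The identity $m_a\cdot\kappa'_a=m_a$ --- the monad-side analogue of $\kappa\cdot\delta=\delta$ in \eqref{c-diagrams} and, like it, a consequence of the compatibility condition \eqref{compatible} --- then yields $\beta_{(a,\theta)}\cdot p'_{(a,\theta)}=\ve_a\cdot m_a\cdot H(\theta)$, i.e.\ the right-hand leg of \eqref{pair.d1}. Finally, transporting along $\Phi'$, under which $((a,\theta),g)$ corresponds to $(a,\,g\cdot p'_{(a,\theta)},\,\theta)\in\Aa$ with underlying $\ul{\bH}$-action $h=g\cdot p'_{(a,\theta)}$, converts the condition into: for every $(a,h,\theta)\in\Aa$ the pair $h,\ \ve_a\cdot m_a\cdot H(\theta)\colon H(a)\rightrightarrows a$ of \eqref{pair.d1} has a coequaliser.

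I expect the work to be bookkeeping rather than conceptual. The points to watch are: keeping straight that the operative symmetry interchanges the monad $\ul{\bH}$ with the comonad $\ol{\bH}$ (so $m\leftrightarrow\delta$, $e\leftrightarrow\ve$, the projection $p\leftrightarrow$ the inclusion $i'$, $\Gamma\leftrightarrow\Gamma'$, $\kappa\leftrightarrow\kappa'$, $\Phi\leftrightarrow\Phi'$) while fixing the entwining $\omega$; verifying $m_a\cdot\kappa'_a=m_a$; and checking that precomposition with the split epimorphism $p'$ leaves coequalisers unchanged (the mirror of the ``split monomorphism $i$'' step in Proposition~\ref{pair}). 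Each of these mirrors a step already carried out there, so I do not anticipate a genuine obstacle; the one point worth flagging is, as in Proposition~\ref{pair}, that the appeal to the adjoint triangle theorem rests on the monadicity of the relevant Eilenberg--Moore forgetful functor, here $U_\bT$.
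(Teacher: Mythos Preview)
Your proposal is correct and follows precisely the route the paper intends: the paper's own ``proof'' is the single phrase ``Symmetrically, we have'', and what you have written is a faithful, detailed execution of that symmetry---Theorem~\ref{Dubuc1} in place of Theorem~\ref{Dubuc2}, the monadic forgetful $U_\bT$ in place of the comonadic $U^\bG$, Proposition~\ref{coaction1} in place of Proposition~\ref{coaction}, the split epimorphism $p'$ in place of the split monomorphism $i$, and $\Phi'$ in place of $\Phi$. The only point worth a small caution is notational: the formula for $\kappa'$ displayed after~\eqref{olT} in the paper does not literally agree with $\Gamma'\phi^{\ol\bH}$ (and~\eqref{olT} itself has $\phi_{\ul\bH}$ where $\phi^{\ol\bH}$ is meant), but your argument only needs the identity $m\cdot(\Gamma'\phi^{\ol\bH})=m$, which follows directly from compatibility~\eqref{compatible} via $m\cdot\ve HH\cdot\omega H\cdot H\delta=\ve H\cdot Hm\cdot\omega H\cdot H\delta=\ve H\cdot\delta\cdot m=m$---exactly the dual of $\kappa\cdot\delta=\delta$, as you say.
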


Symmetric to \ref{w-entw} one may consider

\begin{thm}\label{w-bim}{\bf Weak comonad monad entwinings.} \em
 For a natural transformation $\oom :\ol H\ul H\to \ul H\ol H$,
  define the natural transformation
$$\begin{array}{rl}%\label{nat-trans}
 \oxi:& H\xr{He} HH \xr{\oom} HH \xr{H\ve} H.\\
\end{array}
$$

$(\ol H,\ul H, \oom)$ is called a {\em weak entwining}
  (from comonad  $\ol \bH$ to monad $\ul \bH$) provided
 \begin{equation}\label{ent-2}
\begin{array}[t]{lrl}\hspace{-7mm}
{\rm (i)} &
\oom\cdot Hm  = mH\cdot H\oom\cdot  \oom H,&
 H\delta \cdot \oom = \oom H \cdot H\oom \cdot \delta H, \\[+1mm]
\hspace{-7mm}
{\rm (ii)} &
  \oom\cdot He =\oxi H\cdot \delta  ,& H\ve  \cdot \oom = \oxi H\cdot \delta,
 \end{array}
\end{equation}
and is said to be compatible if
\begin{equation}\label{compatible-2}
\delta \cdot m=  mH\cdot H\oom \cdot  \delta H.
\end{equation}
Here we get
\begin{equation}\label{ent-equal}
 \oxi*\oxi=\oxi,\quad 1*\oxi = 1\, .
\end{equation}
\end{thm}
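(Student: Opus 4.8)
The plan is to prove both identities by direct computation from the weak entwining axioms (\ref{ent-2}) and the compatibility condition (\ref{compatible-2}). They are the counterparts, under the symmetry between the situations of \ref{w-entw} and \ref{w-bim}, of the identities $\xi*1=1$ and $\xi*\xi=\xi$ recorded in (\ref{c-diagrams}); so one may either transport those computations along that symmetry or redo them directly, and I describe the latter.

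For $1*\oxi=1$ I would first unfold the convolution product and the definition of $\oxi$ to
$$1*\oxi=m\cdot H\oxi\cdot\delta=m\cdot HH\ve\cdot H\oom\cdot HHe\cdot\delta,$$
and then move the unit and counit to the outside: naturality of $\delta$ gives $HHe\cdot\delta=\delta H\cdot He$ and naturality of $m$ gives $m\cdot HH\ve=H\ve\cdot mH$, so that
$$1*\oxi=H\ve\cdot(mH\cdot H\oom\cdot\delta H)\cdot He.$$
The parenthesised composite is precisely the right-hand side of (\ref{compatible-2}) and hence equals $\delta\cdot m$; then the right unit law of $\ul{\bH}$ collapses $m\cdot He$ to the identity, and the counit law $H\ve\cdot\delta=1$ of $\ol{\bH}$ gives $1*\oxi=1$. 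This direction is short and purely formal.

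For $\oxi*\oxi=\oxi$ the argument has the same shape but the combinatorics carry the weight. I would write $\oxi*\oxi=m\cdot(\oxi H\cdot H\oxi)\cdot\delta$, substitute $H\ve\cdot\oom\cdot He$ for each $\oxi$, and then fuse the two resulting copies of $\oom$ into a single one: one uses (\ref{ent-2})(ii) to trade the units and counits pressed against an $\oom$ for a comultiplication, and the two hexagons $\oom\cdot Hm=mH\cdot H\oom\cdot\oom H$ and $H\delta\cdot\oom=\oom H\cdot H\oom\cdot\delta H$ of (\ref{ent-2})(i) to slide the two $\oom$'s together. Once a single $\oom$ survives, the monad unit law, the comonad counit law and (\ref{ent-2})(ii) absorb the remaining units and counits and return the expression to $H\ve\cdot\oom\cdot He=\oxi$. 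The one genuine obstacle is organisational: at each use of a hexagon or of naturality of $m,\delta,e,\ve$ one must keep exact track of which functor slot the whiskered transformation acts on, so the cleanest presentation is a single large commuting diagram with every cell labelled by the axiom that makes it commute.
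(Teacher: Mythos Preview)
Your proposal is correct and in fact goes well beyond what the paper offers: the paper gives no proof here at all, merely asserting ``Here we get'' (\ref{ent-equal}). The analogous identities $\xi*\xi=\xi$ and $\xi*1=1$ in (\ref{c-diagrams}) are likewise only annotated with the axioms they follow from --- (\ref{ent})(i) and (\ref{compatible}) respectively --- and the reader is expected to transport those along the evident symmetry between \ref{w-entw} and \ref{w-bim}, exactly as you note. Your explicit computation for $1*\oxi=1$ via (\ref{compatible-2}) is the intended one and is clean and complete.

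One small remark on $\oxi*\oxi=\oxi$: the paper's annotation for the dual identity $\xi*\xi=\xi$ cites only (\ref{ent})(i), whereas your sketch also invokes (\ref{ent-2})(ii). Both routes work; the paper's annotation is somewhat telegraphic, and once one unfolds $\oxi=H\ve\cdot\oom\cdot He$ the hexagons of (i) together with naturality suffice, but trading a unit or counit pressed against $\oom$ for a $\delta$ via (ii) --- as you do --- is a perfectly legitimate and arguably more transparent way to organise the same manipulation. Either way the substance is identical: reduce two copies of $\oom$ to one using the pentagon/hexagon relations, then collapse the surrounding unit/counit debris.
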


Certainly, the theory for this notion will be similar to that for monad comonad entwinings.
However, the mixed bimodules (as in \ref{mix-bim}) do not play the same role here
but are to be replaced  by liftings to Kleisli categories. Nevertheless, comonad monad
entwinings will enter the picture in the next section.

\section{Weak braided bimonads}\label{t-bim}

In the theory of Hopf algebras $H$ over a field $k$, the twist map
for $k$-vector spaces $M,N$,
$\tw_{M,N}:M\ot_k N\to N\ot_k M$,   plays a crucial
part. In particular it helps to commute $H\ot_k-$ with itself by
$\tw_{H,H}:H\ot_kH\to H\ot_kH$. Generalising this to monoidal categories,
often a {\em braiding} is required, that is, a condition on the whole category.
It was observed (e.g. in \cite{MW-Bim}) that it can be enough to have such a twist
only for the functor $H$ under consideration, that is,
a natural isomorphism $\tau:HH\to HH$ satisfying the Yang-Baxter equation.
For the study of {\em weak  braided Hopf algebras},
 Alonso \'Alvarez e.a. suggested in \cite[Definition 1.2]{AV-JA}  to
 consider, for any object $D$ in a monoidal category, a
{\em weak Yang-Baxter operator} $t_{D,D}:D\ot D\to D\ot D$, which is
not necessarily invertible but only regular. Here we take up this notion and formulate it for any functor on an arbitrary category.

\begin{thm}\label{weak-YB} {\bf Weak Yang Baxter operator.} \em
Given an endofunctor $H:\A\to \A$, a pair of natural transformations
 $\tau, \tau':HH\to HH$
is said to be a % {\em weak Yang-Baxter pair}
{\em weak YB-pair} provided
 the following equalities hold:
\begin{equation}
\tau\cdot \tau'\cdot \tau = \tau, \quad \tau'\cdot \tau\cdot \tau' = \tau', \quad
 \tau\cdot \tau'=\tau'\cdot \tau,
 \end{equation}
\begin{equation}\label{YB-equ}
\begin{array}[t]{c}
H\tau \cdot \tau H\cdot H\tau =  \tau H \cdot H\tau  \cdot  \tau H,\\[+1mm]
 H\tau' \cdot \tau' H\cdot H\tau' =  \tau' H \cdot H\tau'  \cdot  \tau H',
\end{array}
\end{equation}
and for $\nabla:=\tau\cdot \tau'$,
\begin{equation}\label{YB-2}
\begin{array}[t]{rl}
\tau H\cdot H\nabla = H\nabla \cdot\tau H, &
H\tau  \cdot  \nabla H = H\tau \cdot\nabla H, \\[+1mm]
%\end{equation}
%\begin{equation}\label{YB-3}
\tau' H\cdot H\nabla = H\nabla \cdot\tau' H, &
H\tau'  \cdot  \nabla H = H\tau' \cdot\nabla H .
\end{array}
\end{equation}

The conditions in (\ref{YB-equ}) are the usual Yang-Baxter equations for $\tau$ and $\tau'$, respectively.
The equations in (\ref{YB-2}) %and  (\ref{YB-3})
are obviously satisfied if $\tau'=\tau^{-1}$ and in this case $\tau$ is known as {\em Yang-Baxter operator}.
%The more general conditions considered here follow the definition of weak
%Yang-Baxter operators
%for monoidal categories
%Putting $\nabla=\tau\cdot \tau'$ it is not difficult to verify that the conditions of
%Definition 1.2 in \cite{AV-JA} are satisfied.
\end{thm}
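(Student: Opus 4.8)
The final assertion is that when $\tau$ is invertible and $\tau':=\tau^{-1}$, the pair $(\tau,\tau')$ is a weak YB-pair; in particular the four equations in (\ref{YB-2}) hold. The plan is to check the three families of defining relations in turn, the one organising observation being that here the ``regularity'' data degenerate: $\nabla=\tau\cdot\tau'=1_{HH}$. Given this, the first line of the definition is immediate, since $\tau\cdot\tau'=\tau'\cdot\tau=1_{HH}$ yields $\tau\cdot\tau'\cdot\tau=\tau$, $\tau'\cdot\tau\cdot\tau'=\tau'$ and $\tau\cdot\tau'=\tau'\cdot\tau$ at once.

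For the Yang-Baxter equations (\ref{YB-equ}), the first is exactly the hypothesis that $\tau$ is a Yang-Baxter operator. For the second, I would note that whiskering the identity $\tau\cdot\tau'=\tau'\cdot\tau=1_{HH}$ by $H$ on either side --- and using that a functor preserves composition and identities --- shows that $\tau H$ and $H\tau$ are isomorphisms with inverses $\tau' H$ and $H\tau'$ respectively. Taking inverses of both sides of the first Yang-Baxter equation, which reverses the order of composition, then produces precisely $H\tau'\cdot\tau' H\cdot H\tau'=\tau' H\cdot H\tau'\cdot\tau' H$, the second equation (its last factor should read $\tau' H$). Finally, for (\ref{YB-2}): since $\nabla=1_{HH}$ we have $H\nabla=1_{HHH}=\nabla H$, so in each of the four equations every occurrence of $H\nabla$ or $\nabla H$ is an identity and both sides collapse to the same morphism ($\tau H$, $H\tau$, $\tau' H$, $H\tau'$ respectively); hence they all hold trivially.

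There is no genuine obstacle here. The only points that require any care are the bookkeeping ones already flagged: that an endofunctor sends isomorphisms to isomorphisms, so that $\tau H$ and $H\tau$ really are invertible with the stated inverses, and that passing to inverses reverses composition order. The upshot is simply that the classical, invertible Yang-Baxter operator sits inside the weak framework as the non-degenerate case, with $\tau'$ forced to equal $\tau^{-1}$ and $\nabla$ equal to the identity.
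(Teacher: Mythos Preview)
Your verification is correct and matches the paper's reasoning, which in fact offers no proof beyond the word ``obviously'': the only thing to notice is that $\nabla=\tau\cdot\tau'=1_{HH}$ when $\tau'=\tau^{-1}$, which makes each equation in (\ref{YB-2}) tautological. You go a bit further than the paper's remark actually claims --- the text only asserts that the equations in (\ref{YB-2}) become trivial in the invertible case, whereas you also check (\ref{YB-equ}) for $\tau'$ by inverting the Yang--Baxter equation for $\tau$ --- but this extra verification is correct and welcome.
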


\begin{thm}\label{def-bialg} {\bf Definition.} \em
Let  $\ul{\bH}=(H, m, e)$ a monad,
$\ol{\bH}=(H, \delta,\varepsilon)$ a comonad on $\A$,
and $\tau,\tau':HH\to HH$ a weak YB-pair with $\nabla:=\tau\cdot\tau'$.
The triple
$\bH=(\ul{\bH},\ol{\bH},\tau)$ is called a \emph{weak braided bimonad} (or \emph{weak $\tau$-bimonad}) provided
\begin{zlist}
\item  $m\cdot \nabla=m$,\;  $ \nabla\cdot \delta =\delta$;
\item  $\nabla \cdot He = \tau\cdot eH$, \,  $H \ve \cdot \nabla = \ve H\cdot \tau$,\,
%\item
$\nabla \cdot eH = \tau\cdot He$, \,  $\ve H \cdot \nabla =H \ve \cdot \tau$;
\item  $\delta H\cdot \tau = H\tau  \cdot  \tau H\cdot H\delta  $,   \;
   $\tau\cdot mH =Hm\cdot \tau H\cdot H\tau$;
\item  $H\delta\cdot \tau = \tau H\cdot H\tau \cdot \delta H$,   \;
   $\tau\cdot Hm =mH\cdot H\tau \cdot  \tau H$;
  \item $\delta \cdot m =mm \cdot H\tau H \cdot \delta\delta$;
  \item %$\ve \cdot m \cdot mH=
 $\ve \ve \cdot m m \cdot H\delta H \, = \,
    \ve \cdot m \cdot mH \, = \, \ve \ve \cdot mm \cdot H \tau'H \cdot H \delta H$;
  \item %$\delta H \cdot \delta \cdot e=
    $HmH \cdot \delta \delta \cdot ee\, =\,
    \delta H \cdot \delta \cdot e\,=\, HmH \cdot H \tau'H \cdot \delta \delta \cdot ee$.
\end{zlist}
\end{thm}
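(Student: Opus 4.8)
The final display, Definition \ref{def-bialg}, is exactly what its heading says: a \emph{definition}. It introduces a \emph{weak braided bimonad} $\bH=(\ul{\bH},\ol{\bH},\tau)$ by listing the axioms (1)--(7) that the data are required to satisfy. As such it carries no proof obligation---there is no assertion to establish, only a convention to set down. The only ``plan'' appropriate here is therefore to confirm that the definition is well-posed and to record, without proof, the sanity checks one expects.

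First I would verify that every clause is well-typed, i.e.\ that its two sides are natural transformations between the same composites of $H$. For example, in (5) the left-hand side $\delta\cdot m:HH\to HH$ is matched on the right by $\delta\delta:HH\to HHHH$ followed by $H\tau H:HHHH\to HHHH$ and then $mm:HHHH\to HH$; the clauses of (1)--(4) and the double equalities in (6)--(7) type-check in the same routine way. I would also note that $\nabla=\tau\cdot\tau'$ is a well-defined endomorphism of $HH$ once a weak YB-pair $(\tau,\tau')$ is fixed (Section \ref{weak-YB}), and indeed is idempotent: using $\tau\cdot\tau'\cdot\tau=\tau$ one gets $\nabla\cdot\nabla=\tau\tau'\tau\tau'=\tau\tau'=\nabla$. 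Hence every occurrence of $\nabla$ in (1), (2), (5) is meaningful, and (1) says precisely that this idempotent fixes $m$ on the left and $\delta$ on the right.

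Beyond well-posedness there is nothing to prove, but I would situate the clauses so the reader sees why this list is the ``right'' one: (2) compares the four ways of grafting $e$ or $\ve$ onto $\nabla$ against $\tau$; (3)--(4) are the entwining-type compatibilities that will later yield the weak monad-comonad entwining $\omega$ and its companion $\oom$; and (5)--(7) encode the weak multiplicativity of $\delta$ together with the behaviour of unit and counit. The natural specialisations---$\tau'=\tau^{-1}$ (so $\nabla=1$), which should collapse (1)--(7) to the $\tau$-bimonad axioms of \cite{MW-Bim}, and $\tau$ the ordinary twist with $\oxi=\ve_s$, recovering the weak bialgebras of \cite{BCJ}---are the appropriate consistency checks. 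The one genuine pitfall is to mistake this definition for a proposition: the substantive facts (that the data induce compatible weak entwinings $\omega,\oom$, that $\oxi$ splits into a separable Frobenius monad, and so on) are the results \emph{following} the definition, and it is those, not the definition itself, that demand proof.
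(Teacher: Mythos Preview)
Your assessment is correct: the item labelled \ref{def-bialg} is a \emph{definition}, and the paper treats it exactly as such---it states the axioms and moves on, with no proof attached. One small slip: you write that $\nabla$ occurs in clause~(5), but it does not; $\nabla$ appears only in (1) and (2).
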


For vector space categories and (finite dimensional) tensor functors $H\ot -$ with
$\tau$ the twist map, these  conditions were introduced in \cite[Definition 1]{BNS}.
For monoidal categories and monoidal functors the conditions are those for
  a {\em weak braided bialgebra} introduced and studied by
  Alonso \'Alvarez e.a. \cite{AV-JA,AVR} and we can - and will - freely use
essential parts of their results in our situation.
Note that if $\nabla$ is the identity of $H$, the conditions (1)--(4) in the definition
describe the invertible double entwinings considered in \cite{MW}.
%In B\"ohm e.a. \cite{BCJ}, for bialgebras over commutative rings, the
%situation was studied for $\tau^2=I_H$ and a number of results similar to ours are %proven there.

The following observations provide the key to our previous results.

\begin{thm}\label{more-nat} {\bf Entwinings.} \em
Given the data from Definition \ref{def-bialg}, define
\begin{align*}
\omega&:HH \xr{\delta H} HHH \xr{H\tau} HHH \xr{mH} HH, \\
\ol{\omega}&:HH \xr{H  \delta} HHH \xr{\tau H} HHH \xr{H m} HH  .
\end{align*}
  From the Sections \ref{w-entw} and \ref{w-bim}  we get the natural transformations
$$\xi : H\xr{eH} HH \xr{\omega} HH \xr{\ve H}H,\quad
\ol{\xi}  : H\xr{He} HH \xr{\ol{\omega}} HH \xr{H\ve } H,$$
 and recalling  $\sigma$ and $\ol\sigma$ from \ref{fusion}, we define
$$\chi  : H\xr{eH} HH \xr{\sigma} HH \xr{H\ve}H,\quad
\ol{\chi}  : H\xr{He} HH\xr{\ol{\sigma}} HH \xr{\ve H}H. $$
\end{thm}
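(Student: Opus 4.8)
The statement records two constructions, $\omega=mH\cdot H\tau\cdot\delta H$ and $\oom=Hm\cdot\tau H\cdot H\delta$, and implicitly asserts that $\omega$ is a \emph{compatible} weak monad--comonad entwining from $\ul\bH$ to $\ol\bH$ in the sense of \ref{w-entw}, and $\oom$ a compatible weak comonad--monad entwining from $\ol\bH$ to $\ul\bH$ in the sense of \ref{w-bim}; granting this, $\xi$ and $\oxi$ (and then $\kappa$, together with the derived identities of (\ref{c-diagrams})) are exactly the data those sections attach to $\omega$ and $\oom$, and the $\xi$ displayed in \ref{more-nat} is by construction $\ve H\cdot\omega\cdot eH$. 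The plan is therefore to verify, for $\omega$, the four equalities of (\ref{ent}) together with the compatibility (\ref{compatible}). Since $\oom$ is obtained from $\omega$ by the horizontal (left--right) reflection of string diagrams --- which exchanges $mH\leftrightarrow Hm$, $\delta H\leftrightarrow H\delta$, $\tau H\leftrightarrow H\tau$, interchanges the two relations inside each of Definition \ref{def-bialg}(3) and (4), and turns (\ref{ent}) into (\ref{ent-2}) --- the claim for $\oom$ then follows by the mirror argument, so I would present only the case of $\omega$.

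For the distributive-law axioms (\ref{ent})(i) I would expand each $\omega$ by its definition, normalise the strings of (co)multiplications using (co)associativity, push the leading $\delta$ through $m$ by Definition \ref{def-bialg}(5) ($\delta\cdot m=mm\cdot H\tau H\cdot\delta\delta$), and reorganise the two resulting copies of $\tau$ by the multiplicativity relation $\tau\cdot mH=Hm\cdot\tau H\cdot H\tau$ of Definition \ref{def-bialg}(3) and the Yang--Baxter equation (\ref{YB-equ}); the second equality $\delta H\cdot\omega=H\omega\cdot\omega H\cdot H\delta$ is the dual chase, using the comultiplicativity relation $\delta H\cdot\tau=H\tau\cdot\tau H\cdot H\delta$ of Definition \ref{def-bialg}(3). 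Compatibility is quicker: expanding $\omega$ gives $Hm\cdot\omega H\cdot H\delta=Hm\cdot mHH\cdot H\tau H\cdot\delta HH\cdot H\delta$, which by coassociativity ($\delta HH\cdot H\delta=\delta\delta$) and the functoriality identity $Hm\cdot mHH=mm$ is just $mm\cdot H\tau H\cdot\delta\delta$, i.e.\ $\delta\cdot m$ by Definition \ref{def-bialg}(5). With (\ref{ent})(i) and (\ref{compatible}) in hand the identities of (\ref{c-diagrams}) become available --- at this point I would also confirm that the ones quoted there as consequences of (\ref{ent})(i) and of (\ref{compatible}) (idempotency of $\xi$, $\kappa\cdot\kappa=\kappa$, $\xi*1=1$, $\kappa\cdot\delta=\delta$, $\kappa\cdot\sigma=\sigma$, and so on) really do come out of what has just been proved.

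It remains to check the weak unit/counit axioms (\ref{ent})(ii), namely $\omega\cdot eH=H\xi\cdot\delta$ and $\ve H\cdot\omega=H\xi\cdot\delta$ with $\xi=\ve H\cdot\omega\cdot eH$. For the first I would write $\omega\cdot eH=mH\cdot H\tau\cdot(\delta\cdot e)H$ and eliminate $\delta\cdot e$ by means of the ``target'' identity of Definition \ref{def-bialg}(7) (recovering $\delta\cdot e$ from $\delta H\cdot\delta\cdot e$ via a counit), which replaces it by an expression involving $\delta$, $m$, $e$ and the YB-partner $\tau'$; one then turns each $\tau'$ back into $\tau$ using $\nabla=\tau\cdot\tau'$ and Definition \ref{def-bialg}(2) ($\nabla\cdot He=\tau\cdot eH$, $\nabla\cdot eH=\tau\cdot He$), removes the surplus $\nabla$'s by Definition \ref{def-bialg}(1) ($m\cdot\nabla=m$, $\nabla\cdot\delta=\delta$), commutes the remaining $\nabla$, $\tau$, $\tau'$ past the other structure maps by the exchange relations (\ref{YB-2}), and reads off $H\xi\cdot\delta$; the second equality is the same manipulation with Definition \ref{def-bialg}(6) in place of (7).

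\emph{Where the work lies.} The distributive-law and compatibility verifications are routine bimonad bookkeeping that transcribes the strict-bimonad case essentially verbatim. The subtle part is (\ref{ent})(ii): for a strict bimonad one has $\delta\cdot e=ee$ and $\ve\cdot m=\ve\ve$, so the axiom is immediate, whereas here those fail and one must work instead with the ``weak'' projection identities of Definition \ref{def-bialg}(6),(7), valid only up to $\nabla$. The delicate point is to keep track of which of the four relations in (\ref{YB-2}) licenses each commutation of $\nabla$ (or of $\tau$, $\tau'$) past the surrounding maps, and to do this coherently enough that the $\oom$-computation is genuinely the mirror image of the $\omega$-one --- this is the place where one would invoke the corresponding computations for weak braided bialgebras in strict monoidal categories by Alonso \'Alvarez et al.\ \cite{AV-JA,AVR}.
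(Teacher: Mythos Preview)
The statement you were given is purely definitional: in the paper's conventions the \verb|\em| after the boldface heading signals a notational block, and indeed Section~\ref{more-nat} only \emph{introduces} the natural transformations $\omega$, $\oom$, $\xi$, $\oxi$, $\chi$, $\ochi$. There is no proof in the paper because nothing is being claimed. The phrase ``From the Sections \ref{w-entw} and \ref{w-bim} we get \ldots'' is not an implicit assertion that $\omega$ is a weak entwining; it merely records that the formulas for $\xi$ and $\oxi$ are the specialisations of the general recipes $\xi=\ve H\cdot\omega\cdot eH$ and $\oxi=H\ve\cdot\oom\cdot He$ given there.

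What you have actually sketched is a proof of Proposition~\ref{entw-str}, which is where the paper states and proves that $(\ul\bH,\ol\bH,\omega)$ and $(\ol\bH,\ul\bH,\oom)$ are compatible weak entwinings. For that proposition your plan is sound and in fact mirrors the paper's own argument: the paper dispatches compatibility and the distributive-law axioms (\ref{ent})(i), (\ref{ent-2})(i) by appealing to condition~(5) of Definition~\ref{def-bialg} (citing \cite{BBW}, \cite{MW} for the standard bimonad computation you describe), and for the delicate weak unit/counit axioms (\ref{ent})(ii), (\ref{ent-2})(ii) it cites Propositions~2.3 and~2.5 of \cite{AVR}---exactly the source you identify for the manipulation involving Definition~\ref{def-bialg}(6),(7) and the $\nabla$-relations. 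So your content is correct, just attached to the wrong numbered item.
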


With these notions we collect the basic identities proved in \cite{AVR}.

\begin{proposition} %\emph{(}see \cite{AVR}\emph{)}
\label{AVR} Let $\bH=(\ul{\bH},\ol{\bH},\tau)$ be a weak braided bimonad on  $\A$.  Then
\begin{zlist}
  \item  $\xi$, $\ol{\xi}$, $\chi$ and $\ol{\chi}$ are idempotent  (w.r.t. composition)
and respect  unit and counit of $\bH$;
 \item $\xi \cdot m \cdot  H\xi=\xi\cdot m$\,  and\: $\ol{\xi} \cdot m \cdot \ol{\xi} H = \ol{\xi}\cdot m$;
  \item $  H\xi \cdot \delta \cdot \xi  =\delta \cdot \xi $\,  and\: $  \ol{\xi}H  \cdot \delta \cdot \overline{\xi}  =\delta \cdot \overline{\xi}$;
  \item $\sigma \cdot eH=\chi H \cdot \delta$\, and\:  $\ol\sigma \cdot He=H \overline{\chi}  \cdot \delta$;
  \item $H\ve \cdot \sigma=m \cdot H\chi$\,   and\:
   $\ve H \cdot \ol\sigma=m \cdot \overline{\chi}H$;
  \item  $\xi\cdot\chi=\xi$,\; $\xi\cdot \ochi=\ochi$,\; $\chi\cdot \xi=\chi$,\; $\ochi\cdot \xi =\xi$,\\
  $\ol{\xi} \cdot \chi=\chi$, \;  $\oxi\cdot \ochi= \oxi$,\;    $\chi \cdot \ol{\xi}=\ol{\xi}$,\;
   $\ochi\cdot \oxi= \oxi$.
\end{zlist}
\end{proposition}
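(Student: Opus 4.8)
The plan is to obtain all seven items as the transcriptions to the present setting of equational identities that Alonso \'Alvarez et al.\ have already established for weak braided bialgebras in strict monoidal categories; the statements we need are collected and proved in \cite{AVR}. Definition \ref{def-bialg} is precisely their axiom list with the monoidal endofunctor $D\ot-$ replaced by the endofunctor $H$ and each structure morphism replaced by a natural transformation satisfying the same equations. Moreover, the only ingredient used in the graphical (string-diagram) proofs of \cite{AVR}, beyond those equations, is the interchange law --- which is automatic for natural transformations. Hence those proofs apply \emph{verbatim} once the string diagrams are re-read as pasting diagrams of natural transformations. I would therefore write the argument by first fixing this dictionary explicitly and then transcribing the relevant computations item by item, each deduced from the axioms of Definition \ref{def-bialg}, the (co)unit and (co)associativity laws of $\ul{\bH}$ and $\ol{\bH}$, and the weak YB-pair relations of Section \ref{weak-YB}.

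Concretely: for item (1) one starts from $\xi=\ve H\cdot\omega\cdot eH$ (and symmetrically $\ol{\xi}=H\ve\cdot\ol{\omega}\cdot He$, $\chi=H\ve\cdot\sigma\cdot eH$, $\ol{\chi}=\ve H\cdot\ol{\sigma}\cdot He$), unfolds the definitions, uses item (2) of Definition \ref{def-bialg} to trade $\tau\cdot eH$ for $\nabla\cdot He$, then $\nabla\cdot\delta=\delta$ and $m\cdot\nabla=m$ from item (1), and finally the weak (co)multiplicativity conditions (6) and (7) together with (co)unitality; this gives the idempotency of $\xi,\ol{\xi},\chi,\ol{\chi}$ and the compatibilities $\ve\cdot\xi=\ve$, $\xi\cdot e=e$ (in the classical language, $\xi,\ol{\xi}$ are the ``target'' and $\chi,\ol{\chi}$ the ``source'' idempotents). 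Items (2) and (3) express that $\xi$, resp.\ $\ol{\xi}$, is multiplicative, resp.\ comultiplicative, on its own image and are obtained by inserting the definition of $\xi$ and applying items (3)--(6) of Definition \ref{def-bialg} alongside the YB equations; items (4) and (5) relate the fusion operators $\sigma,\ol{\sigma}$ of (\ref{fusion}) to $\chi,\ol{\chi}$ and are read off from item (5) of Definition \ref{def-bialg} and the (co)unit laws. The eight absorption identities in item (6) then follow by chaining items (1)--(5) with items (1), (6) and (7) of Definition \ref{def-bialg}. In every case the barred variant is handled by the mirror computation, using the correspondingly placed axioms --- the data admit an evident left--right reflection exchanging $\omega,\xi,\sigma,\chi$ with $\ol{\omega},\ol{\xi},\ol{\sigma},\ol{\chi}$.

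The work is entirely one of organisation and bookkeeping: there is no new idea, but about twenty separate equalities, each requiring several axioms of Definition \ref{def-bialg} to be applied in the right order along with (co)unitality, (co)associativity, and the naturality of $\tau$ and $\tau'$. The one point that genuinely requires attention --- and the main obstacle --- is that, at the steps where the graphical arguments of \cite{AVR} slide $\tau$ past $\tau H$ or $H\tau$, a symmetric braiding is tacitly being used; when $\tau$ is merely regular rather than invertible, these moves are legitimised precisely by the compatibility relations (\ref{YB-2}) of $\nabla=\tau\cdot\tau'$ with $\tau H$, $H\tau$, $\tau'H$ and $H\tau'$. So in the transcription one must keep track of where $\nabla$ is inserted and check that each such slide is covered by (\ref{YB-2}).
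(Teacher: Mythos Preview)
Your proposal is correct and matches the paper's approach exactly: the paper's own proof of this proposition is nothing more than a list of pointers to the corresponding propositions in \cite{AVR} (Propositions 2.4, 2.6, 2.9, 2.10, and 2.14), relying on the remark preceding the statement that the axioms of Definition \ref{def-bialg} coincide with those of \cite{AVR} and that their results may be used freely. You have simply spelled out more carefully the dictionary justifying the transcription and flagged where the weak YB relations (\ref{YB-2}) are needed, which the paper leaves implicit.
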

\begin{proof} (1) is shown in \cite[Proposition 2.9]{AVR};
(2), (3) are from \cite[Proposition 2.14]{AVR};
(4) is shown in  \cite[Proposition 2.6]{AVR}, (5)  in \cite[Proposition 2.4]{AVR},
and (6)  in \cite[Proposition 2.10]{AVR}.
\end{proof}

The following shows the way to apply results from the preceding section.

\begin{thm}\label{entw-str}{\bf Proposition.}
Let $\bH=(\ul{\bH},\ol{\bH},\tau)$ be a weak braided bimonad. % in \ref{def-bialg}.
Then
\begin{blist}
\item[$\bullet$]  $(\ul{\bH},\ol{\bH},\omega)$ is a compatible  weak
%left bimonad.
(monad comonad) entwining;
% (e.g. \cite[Proposition 6.3]{MW-Bim}); see Prop   2.3, 2.5 {AVR}
\item[$\bullet$]  $(\ol{\bH},\ul{\bH},\oom)$ is a compatible  weak (comonad monad)
entwining.
\end{blist}
\end{thm}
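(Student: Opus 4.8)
The plan is to verify, one identity at a time, that $\omega=mH\cdot H\tau\cdot\delta H$ satisfies the conditions required of a compatible weak monad comonad entwining in \ref{w-entw} — that is (\ref{ent})(i), (\ref{ent})(ii) and (\ref{compatible}) — and then that $\oom=Hm\cdot\tau H\cdot H\delta$ satisfies the mirror conditions (\ref{ent-2}) and (\ref{compatible-2}) of \ref{w-bim}. All the needed ingredients are at hand: Definition \ref{def-bialg}, the Yang--Baxter equation (\ref{YB-equ}), and the normalisation and idempotency identities collected in Proposition \ref{AVR}; moreover several of these computations are already made for weak braided bialgebras in \cite{AVR} and transfer verbatim to the present generality.

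The compatibility condition (\ref{compatible}) is immediate: substituting $\omega H=mHH\cdot H\tau H\cdot\delta HH$ and using the interchange identities $mm=Hm\cdot mHH$ and $\delta\delta=\delta HH\cdot H\delta$ rewrites $Hm\cdot\omega H\cdot H\delta$ as $mm\cdot H\tau H\cdot\delta\delta$, which equals $\delta\cdot m$ by Definition \ref{def-bialg}(5). For (\ref{ent})(ii), note that the natural transformation $\xi$ appearing there is precisely the idempotent $\xi=\ve H\cdot\omega\cdot eH$ of \ref{more-nat}; expanding $\omega\cdot eH$ and $\ve H\cdot\omega$ and simplifying with Definition \ref{def-bialg}(2),(6),(7) together with Proposition \ref{AVR}(1),(4)--(6) — in particular the relations between $\xi$ and $\chi$ — one checks that both composites agree with $H\xi\cdot\delta$.

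The heart of the matter is (\ref{ent})(i), i.e. the fusion law $\omega\cdot mH=Hm\cdot\omega H\cdot H\omega$ and the cofusion law $\delta H\cdot\omega=H\omega\cdot\omega H\cdot H\delta$. For the fusion law the idea is to substitute $\omega=mH\cdot H\tau\cdot\delta H$ everywhere, use coassociativity of $\delta$ to gather the $\delta$'s on the right and associativity of $m$ to gather the $m$'s on the left, rewrite the resulting braid word in $\tau$ via the Yang--Baxter equation (\ref{YB-equ}), and then apply the mixed compatibilities Definition \ref{def-bialg}(3),(4) to pass the remaining copy of $\tau$ past an $m$ and a $\delta$, using Definition \ref{def-bialg}(5) to handle the combinations $\delta\cdot m$ that arise; the cofusion law is the dual chase. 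I expect this hexagon-style bookkeeping — keeping track of which whiskering of $\tau$ occurs at each stage and which of (3), (4), (5) or (\ref{YB-equ}) is to be invoked — to be the main obstacle. It helps that none of Definition \ref{def-bialg}(3),(4),(5), nor the Yang--Baxter equation for $\tau$, involves the correction $\nabla=\tau\cdot\tau'$, so these two equations are established by exactly the calculation already known in the (non-weak) braided case.

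Finally, the second bullet follows by the symmetric argument. The transformation $\oom$ is the left--right mirror of $\omega$, the axioms (\ref{ent-2}) and (\ref{compatible-2}) are the mirror images of (\ref{ent}) and (\ref{compatible}), and the identities of Definition \ref{def-bialg} are stable under this mirror once clauses (3) and (4) are interchanged — (co)associativity showing that (5), (6), (7) are self-mirror. Hence the same chases go through, with $\xi$, $\chi$ replaced by $\oxi$, $\ochi$ and the relevant clauses of Proposition \ref{AVR}.
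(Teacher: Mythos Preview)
Your proposal is correct and follows essentially the same route as the paper: compatibility comes straight from Definition \ref{def-bialg}(5), the fusion and cofusion laws (\ref{ent})(i) follow from (3), (4), (5) and the Yang--Baxter equation exactly as in the non-weak case (the paper simply cites \cite{BBW}, \cite{MW} for this), and (\ref{ent})(ii) is handled via the identities of \cite{AVR} (the paper cites Propositions 2.3 and 2.5 there), with the second bullet obtained by the evident symmetry.
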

\begin{proof}  As easily seen,  condition (5) in \ref{def-bialg} yield
the equalities  (\ref{compatible}) and (\ref{compatible-2}) and
 also implies  (\ref{ent}(i)) and (\ref{ent-2}(i)) for  $\omega$  and $\oom$,
respectively (e.g. \cite{BBW}, \cite{MW}).
Now Propositions 2.3 and 2.5 in \cite{AVR} show the equations in
(\ref{ent})(ii) and  (\ref{ent-2})(ii).
\end{proof}

Direct inspection yields the technical observation:
\begin{lemma}\label{idempotents} Suppose that $f, g: X \to X$ are idempotents in an arbitrary category such that
$fg=g$ and $gf=f$. If $X \xr{p_f}X_f \xr{i_f}X$ (resp. $X \xr{p_g}X_g \xr{i_g}X$) is a splitting
of the idempotent $f$ (resp. $g$), then
$$
\xymatrix { X_g \ar[r]^-{i_g} & X \ar@{->}@<0.6ex>[r]^{f} \ar@ {->}@<-0.6ex> [r]_{1} &X} \qquad
(\text{resp}. \,\,\xymatrix { X_f \ar[r]^-{i_f} & X \ar@{->}@<0.6ex>[r]^{g} \ar@ {->}@<-0.6ex> [r]_{1} &X)}
$$ is a (split) equaliser diagram, while
$$
\xymatrix { X \ar@{->}@<0.7ex>[r]^{f} \ar@ {->}@<-0.7ex> [r]_{1} &X  \ar[r]^-{p_g} &X_g} \qquad
(\text{resp}. \,\,\xymatrix { X \ar@{->}@<0.7ex>[r]^{g} \ar@ {->}@<-0.7ex> [r]_{1} &X \ar[r]^-{p_f}&X_f )}
$$ is a (split) coequaliser diagram.
\end{lemma}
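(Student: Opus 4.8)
The plan is to verify the two defining conditions of a split equaliser (respectively split coequaliser) directly from the data, using the relations $fg=g$, $gf=f$, $f^2=f$, $g^2=g$ together with the splitting equations $i_fp_f=f$, $p_fi_f=1$, $i_gp_g=g$, $p_gi_g=1$. I only spell out the first diagram; the other three are entirely symmetric (swap the roles of $f$ and $g$, or dualise).

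First I would record what must be shown. For the diagram $X_g \xr{i_g} X \rightrightarrows X$ with the two parallel arrows $f$ and $1$ to be a split equaliser, I need a retraction $s:X\to X_g$ of $i_g$ (so $s\,i_g=1_{X_g}$) and a morphism $t:X\to X$ filling the contractibility conditions. The natural choice is $s:=p_g$ and $t:=i_g p_g=g$. Then $s\,i_g=p_g i_g=1_{X_g}$ is immediate. Next, the "splitting of the fork" requires $t\circ 1 = 1$? No --- recall from Section \ref{contractible} that the split equaliser dual to (\ref{contractible.d}) needs $i_g s' =1$ is wrong; rather, writing the cofork as $X_g \xrightarrow{i_g} X \xrightarrow[1]{\ f\ } X$, a splitting consists of $s:X\to X_g$ and $\tilde t:X\to X$ with $s i_g=1$, $\,\tilde t\, i_g=$ ... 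Let me instead argue via the general principle already in the excerpt: by Section \ref{contractible} (the cosplit-pair statement, dualised), a pair $(\partial_0,\partial_1:a\rightrightarrows a')$ sits in a split coequaliser iff it is split, and equaliser versions are dual. So the cleanest route is: show the pair $(f,1_X:X\rightrightarrows X)$ is \emph{cosplit} in the sense dual to \ref{contractible}, with $i_g$ exhibiting $X_g$ as its equaliser.

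The key computations, in order, are the following. (i) $f\circ i_g = i_g$: indeed $f i_g = (f g) i_g = g i_g$? That is wrong direction; use $fg=g$ to write $f\, i_g = f\, i_g p_g i_g$... cleaner: $f i_g$, compose with $p_g$ doesn't help directly. Use instead $i_g p_g = g$, and $fg = g$, so $f g = g$ gives $f i_g p_g = i_g p_g$, hence postcomposing with $i_g$ (using $p_g i_g = 1$): $f i_g = i_g p_g i_g$? No. The correct manipulation: since $gf = f$, we get $g f = f$, so $i_g p_g f = f$; and separately $f i_g p_g = (fg) = g = i_g p_g$ using $fg=g$, whence, precomposing with nothing and post-composing with $i_g$: $f i_g (p_g i_g) = i_g (p_g i_g)$, i.e. $f i_g = i_g$ since $p_g i_g = 1_{X_g}$. (ii) Hence $f i_g = 1_X \circ i_g$, so $i_g$ equalises the pair $(f,1_X)$. (iii) It is the equaliser because the splitting $p_g$ gives $f = i_g p_g$ precomposed appropriately: any $u:Z\to X$ with $f u = u$ satisfies $u = f u = i_g(p_g u)$, and $p_g u$ is the unique factorisation since $i_g$ is split mono. (iv) Finally, to see it is a \emph{split} equaliser in the strict sense of \ref{contractible}, exhibit $s=p_g:X\to X_g$ and the second splitting map: we need $t:X\to X$ with $i_g p_g = ?$; taking $t := i_g p_g = g$ we get $1_X\circ t = g$ and $f\circ ? $ --- the standard split-equaliser data is $(s,e)$ with $s i_g=1$, $e := i_g s$, $f e = e$ (clear, $(i) $ gives $f i_g p_g = i_g p_g$), and the fork condition $e = $ "the other leg applied"; here the other leg is $f$ itself, and indeed $s f$? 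We need $i_g s = i_g p_g$ equals $f\cdot 1$? This is exactly the dual of the data in \ref{contractible}, and all four required identities reduce to $p_g i_g=1$, $f i_g = i_g$, $f(i_g p_g) = i_g p_g$, and $(i_g p_g) = 1\cdot(i_g p_g)$. These hold by (i)--(iii).

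The analogous statements with $f$ and $g$ interchanged follow by swapping the two idempotents (the hypotheses $fg=g$, $gf=f$ are \emph{not} symmetric in $f,g$, but they are exactly what is needed: the computation $(i)$ above used precisely $fg=g$ and $p_gi_g=1$; the interchanged version "$X_f \xr{i_f} X\rightrightarrows X$ with legs $g,1$" needs $g i_f = i_f$, which follows from $gf=f$ by the same steps). The two coequaliser diagrams are obtained by dualising everything: $p_g$ becomes a split epi coequalising $(f,1)$, with section $i_g$, the required identities being $p_g i_g=1$, $p_g f = p_g$ (from $fg=g$ precomposed... rather from $g f=f$? check: $p_g f$, and $i_g p_g = g$, $g f = f$ gives $i_g p_g f = f = i_g p_g$ using $fg=g$?—no, using that $f = i_gp_g f$ and $i_g p_g = g$ so $f = g f$, i.e. $gf=f$, consistent; then $p_g f = p_g g f = p_g i_g p_g f = p_g f$, tautology, so instead: $p_g f = p_g(fg)i_g p_g$... the clean identity is $p_g f = p_g$, proved by $p_g f = p_g f i_g p_g$ and $f i_g = i_g$ from (i), giving $p_g f = p_g i_g p_g = p_g$).

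\textbf{Main obstacle.} There is no serious obstacle: every step is a one-line diagram chase from the four splitting identities and the two hypotheses $fg=g$, $gf=f$. The only thing requiring care is bookkeeping—keeping straight which of the non-symmetric hypotheses is invoked in each of the four diagrams, and matching the data $(s,\tilde t)$ to the precise split-(co)equaliser format fixed in Section \ref{contractible}. I would present it by proving the first equaliser diagram in full (three lines) and then saying "the remaining three are proved identically, interchanging $f\leftrightarrow g$ and/or dualising," since writing all four out adds nothing.
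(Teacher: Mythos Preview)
Your approach---direct verification from the splitting identities together with $fg=g$ and $gf=f$---is exactly what the paper intends; its entire proof is the phrase ``direct inspection''. Your argument for the equaliser half is correct, despite the meandering: the key identity $fi_g=i_g$ follows from $fg=g$ by cancelling the split epimorphism $p_g$ in $f\,i_gp_g=i_gp_g$, and universality then uses $gf=f$ (if $fu=u$ then $gu=g(fu)=(gf)u=fu=u$, so $u=i_g(p_gu)$, with uniqueness because $i_g$ is split monic).

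There is, however, a genuine gap in your treatment of the coequaliser half. The step ``$p_gf=p_gf\,i_gp_g$'' is unjustified: it asserts $p_gf=p_gfg$, but $p_gfg=p_gg=p_g$, so you are assuming what you want to prove. In fact the coequaliser assertion in the lemma \emph{as printed} is false under the stated hypotheses. In $\mathbf{Set}$ take $X=\{1,2,3\}$ with $f\colon 1,2,3\mapsto 1,1,3$ and $g\colon 1,2,3\mapsto 1,3,3$; then $fg=g$ and $gf=f$, yet $p_gf(2)=p_g(1)=1\neq 3=p_g(2)$, so $p_g$ does not even coequalise $(f,1)$. What \emph{is} true is the dual statement: under the opposite relations $gf=g$ and $fg=f$, the map $p_g$ is a split coequaliser of $(f,1)$; equivalently, under the lemma's hypotheses the split coequaliser of $(f,1)$ is $p_f\colon X\to X_f$, not $p_g$. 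So your verdict ``no serious obstacle'' is right for the two equaliser diagrams, but the two coequaliser diagrams require either the dual hypotheses or swapped targets---this is a defect of the printed statement, not of your method.
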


Henceforth we work over a Cauchy complete category $\A$ and suppose that
\begin{equation}\label{split-xi}
\xymatrix{ H \ar@{->>}[rd]_{q^{\ol{\xi}}}\ar[rr]^{\ol{\xi}} && H\\
 & H^{\ol{\xi}}\; \ar@{>->}[ru]_{\iota^{\ol{\xi}}} &}
\end{equation} is
a splitting of ${\ol{\xi}}$.

\begin{proposition}\label{equaliser} In the situation of Proposition \ref{pair}, the diagram
\begin{equation}\label{pair.dd}
\xymatrix{H^{\ol{\xi}} \ar[r]^{\iota^{\ol{\xi}}} & H \ar@/^2.1pc/@{->}[rrr]^{\delta}
\ar[r]_{eH} & HH \ar[r]_{\delta H} & HHH \ar[r]_{H m}& HH}
\end{equation} is a (split) equaliser in $[\A,\A]$.
\end{proposition}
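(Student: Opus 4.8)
The plan is to produce an explicit splitting. First I would rewrite the lower leg of the fork: by the definition of $\sigma$ in (\ref{fusion}) and by Proposition \ref{AVR}(4),
$$Hm\cdot\delta H\cdot eH=\sigma\cdot eH=\chi H\cdot\delta ,$$
so the parallel pair to be considered is the one consisting of $\delta$ and $\chi H\cdot\delta$, both $H\to HH$. I then take as splitting data
$$u:=H\ve:HH\to H,\qquad t:=q^{\ol{\xi}}\cdot\chi:H\to H^{\ol{\xi}},$$
and claim that $\iota^{\ol{\xi}}$ together with $t$ and $u$ realises the displayed diagram as a split equaliser (the notion defined dually to the split coequaliser recalled in \ref{contractible}). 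Since split (co)equalisers are absolute, this yields the assertion in $[\A,\A]$.

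It then remains to verify the four identities defining a split equaliser, three of which are immediate. The counit law of $\ol{\bH}$ gives $u\cdot\delta=H\ve\cdot\delta=1_H$. Since $H\ve\cdot\chi H=\chi\cdot H\ve$ (interchange law), one gets $u\cdot(\chi H\cdot\delta)=\chi\cdot H\ve\cdot\delta=\chi$. Proposition \ref{AVR}(6) supplies $\ol{\xi}\cdot\chi=\chi$ and $\chi\cdot\ol{\xi}=\ol{\xi}$: the first gives $\chi=\ol{\xi}\cdot\chi=\iota^{\ol{\xi}}q^{\ol{\xi}}\chi=\iota^{\ol{\xi}}\cdot t$, hence $u\cdot(\chi H\cdot\delta)=\iota^{\ol{\xi}}\cdot t$; the second, after post-composing the resulting equality $\chi\cdot\iota^{\ol{\xi}}q^{\ol{\xi}}=\iota^{\ol{\xi}}q^{\ol{\xi}}$ with $\iota^{\ol{\xi}}$ and cancelling $q^{\ol{\xi}}\iota^{\ol{\xi}}=1$, gives $\chi\cdot\iota^{\ol{\xi}}=\iota^{\ol{\xi}}$, so that $t\cdot\iota^{\ol{\xi}}=q^{\ol{\xi}}\chi\iota^{\ol{\xi}}=q^{\ol{\xi}}\iota^{\ol{\xi}}=1_{H^{\ol{\xi}}}$.

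The fourth identity is the fork equation $\delta\cdot\iota^{\ol{\xi}}=(\chi H\cdot\delta)\cdot\iota^{\ol{\xi}}$, which does not follow formally from the other three and is where Proposition \ref{AVR}(3), $\ol{\xi}H\cdot\delta\cdot\ol{\xi}=\delta\cdot\ol{\xi}$, enters. Using $\iota^{\ol{\xi}}=\ol{\xi}\cdot\iota^{\ol{\xi}}$ (immediate from $q^{\ol{\xi}}\iota^{\ol{\xi}}=1$) together with \ref{AVR}(3) and \ref{AVR}(6), one would compute
\begin{align*}
\delta\cdot\iota^{\ol{\xi}}&=\delta\cdot\ol{\xi}\cdot\iota^{\ol{\xi}}=\ol{\xi}H\cdot\delta\cdot\ol{\xi}\cdot\iota^{\ol{\xi}},\\
\chi H\cdot\delta\cdot\iota^{\ol{\xi}}&=\chi H\cdot\delta\cdot\ol{\xi}\cdot\iota^{\ol{\xi}}=\chi H\cdot\ol{\xi}H\cdot\delta\cdot\ol{\xi}\cdot\iota^{\ol{\xi}}=(\chi\cdot\ol{\xi})H\cdot\delta\cdot\ol{\xi}\cdot\iota^{\ol{\xi}}=\ol{\xi}H\cdot\delta\cdot\ol{\xi}\cdot\iota^{\ol{\xi}},
\end{align*}
the last step using $\chi\cdot\ol{\xi}=\ol{\xi}$; the two sides coincide.

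The one genuinely delicate point is the choice of the retraction $t$. The naive candidate $t=q^{\ol{\xi}}$ does not work, since $H\ve$ applied to the lower leg produces $\chi$ rather than $\ol{\xi}$, and in general $\chi\neq\ol{\xi}$. What rescues the statement is that $\chi$ and $\ol{\xi}$ are idempotents \emph{with the same image} --- this is precisely the content of the relations $\ol{\xi}\chi=\chi$ and $\chi\ol{\xi}=\ol{\xi}$ of Proposition \ref{AVR}(6) --- so $\chi$ still factors through $H^{\ol{\xi}}$ and one may take $t=q^{\ol{\xi}}\chi$; these same relations, combined with \ref{AVR}(3), are exactly what the fork equation needs. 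Beyond this, the argument is a routine diagram chase using only the (co)monad axioms and the identities assembled in Proposition \ref{AVR}.
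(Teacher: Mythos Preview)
Your proof is correct. Both you and the paper begin by rewriting the lower leg as $\chi H\cdot\delta$ via Proposition~\ref{AVR}(4), and both exhibit the diagram as a split equaliser with $H\ve$ as the retraction of the $\delta$-leg; the difference lies in how the fork equation $\delta\cdot\iota^{\ol\xi}=\chi H\cdot\delta\cdot\iota^{\ol\xi}$ is obtained.

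The paper instead proves the equivalent identity $\chi H\cdot\delta\cdot\chi=\delta\cdot\chi$ by a diagram chase using Proposition~\ref{AVR}(4),(5), coassociativity of $\delta$, and naturality, and then invokes Lemma~\ref{idempotents} (applied to the pair of idempotents $\chi,\ol\xi$ satisfying $\ol\xi\chi=\chi$, $\chi\ol\xi=\ol\xi$) to conclude that the splitting of $\ol\xi$ already computes the equaliser. Your route is more direct: you bypass the diagram chase entirely by using Proposition~\ref{AVR}(3), $\ol\xi H\cdot\delta\cdot\ol\xi=\delta\cdot\ol\xi$, together with $\chi\cdot\ol\xi=\ol\xi$ from~\ref{AVR}(6), which immediately gives $\chi H\cdot\delta\cdot\ol\xi=\delta\cdot\ol\xi$ and hence the fork equation after precomposing with $\iota^{\ol\xi}$. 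This is shorter and avoids the appeal to Lemma~\ref{idempotents}, at the cost of using one more of the identities collected in Proposition~\ref{AVR}; the paper's argument, on the other hand, isolates the cosplit-pair structure first and only then identifies the splitting object, which makes the role of Lemma~\ref{idempotents} (two idempotents with the same image) more visible.
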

\begin{proof} Since $H m \cdot \delta H \cdot eH=\chi H \cdot \delta$ by Proposition \ref{AVR} (4), we have to show that the diagram
$$\xymatrix{H^{\ol{\xi}} \ar[r]^-{\iota^{\ol{\xi}}}&H \ar@/^1.8pc/@{->}[rr]^{\delta}
\ar[r]_{\delta} & HH \ar[r]_{\chi H} & HH}
$$ is a split equaliser. Let us first show that the pair
\begin{equation}\label{pair.dd0}
\xymatrix{H \ar@/^2pc/@{->}[rr]^{\delta}
\ar[r]_{\delta} & HH \ar[r]_{\chi H} & HH}
\end{equation} is cosplit by the morphism $H \ve : HH \to H$. Indeed, since $H \ve \cdot \delta =1$ and
$H \ve \cdot \chi H \cdot \delta=\chi \cdot H \ve \cdot \delta=\chi$, it remains to show that
$\chi H \cdot \delta \cdot \chi=\delta \cdot \chi.$ For this, consider the diagram
$$\xymatrix @R=.5in{
H\ar@/_3pc/@{->}[dddd]_{\chi H \cdot\,\delta} \ar[rrr]^-{\chi} \ar[d]^{eH}&&&  H \ar[d]_{eH} \ar@/^3pc/@{->}[ddd]^{\chi H \cdot \,\delta}\\
HH \ar@{}[rrru]|{(1)}\ar[dd]^{\delta H}\ar[rd]^{\delta H}\ar[rrr]^-{H \chi}&&& HH \ar[d]_{\delta H}\\
& HHH \ar@{}[ru]|{(2)}\ar[d]^{H\delta H}\ar[rr]^-{HH \chi}&& HHH \ar[d]_{Hm}\\
HHH \ar@{}[rd]|{(5)}\ar@{}[ru]|{(3)}\ar[d]^{Hm}\ar[r]_{\delta HH}& HHHH \ar@{}[rru]|{(4)} \ar[r]^-{HHm}& HHH
\ar@{}[d]|{(6)}\ar[r]^-{HH \ve}& HH\\
HH \ar[rru]_-{\delta H} \ar[rr]_-{H \ve}&&H \ar[ru]_-{\delta}&}$$
in which the
\begin{itemize}
  \item regions (1), (2), (5) and (6) commute by naturality of composition;
  \item region (3) commutes by coassociativity of $\delta$;
  \item region (4) commutes by Proposition \ref{AVR} (5);
  \item the curved regions commute by \ref{AVR} (4).
\end{itemize} Hence the whole diagram commutes, implying
$$\chi H \cdot \delta \cdot \chi=\delta \cdot H\ve \cdot \chi H \cdot \delta=\delta \cdot \chi.$$
So the pair (\ref{pair.dd0})
is cosplit by the morphism $H \ve$ and hence one finds its equaliser by splitting the idempotent $\chi=H\ve \cdot \chi H \cdot \delta$.
But since $\ol{\xi} \cdot \chi=\chi$ and $\chi \cdot \ol{\xi}=\ol{\xi}$ by  Proposition \ref{AVR} (6) and  $\iota^{\ol{\xi}} \cdot q^{\ol{\xi}}$ is
the splitting of the idempotent $\ol{\xi}$ (see (\ref{split-xi})), it follows from Lemma \ref{idempotents} that (\ref{pair.dd}) is a (split)
equaliser diagram.
\end{proof}

Symmetrically, we have:
\begin{proposition}\label{coequaliser} In the situation of Proposition \ref{pair1}, the diagram
\begin{equation*}\label{pair.dd1}
\xymatrix{HH \ar@/^2pc/@{->}[rrr]^-{m}
\ar[r]_-{H\delta} & HHH \ar[r]_-{m H} & HH \ar[r]_-{\ve H}& H \ar[r]^-{q^{\ol{\xi}}}& H^{\ol{\xi}}}
\end{equation*} is a (split) coequaliser diagram in $[\A,\A]$.
\end{proposition}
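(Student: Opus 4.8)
The plan is to mirror the proof of Proposition~\ref{equaliser}, interchanging the comonad structure $(\delta,\ve)$ of $\ol{\bH}$ with the monad structure $(m,e)$ of $\ul{\bH}$ and replacing the idempotent $\chi$ by $\ochi$. The first step is to reduce the diagram. Since $\ol\sigma=mH\cdot H\delta$, Proposition~\ref{AVR}(5) gives
$$\ve H\cdot mH\cdot H\delta=\ve H\cdot\ol\sigma=m\cdot\ochi H,$$
so the composite appearing in the displayed diagram is nothing but $m\cdot\ochi H$, and it is enough to show that the pair $(m,\,m\cdot\ochi H\colon HH\rightrightarrows H)$ has $q^{\ol\xi}\colon H\to H^{\ol\xi}$ as a split coequaliser.

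Next I would exhibit a splitting of this pair, namely $He\colon H\to HH$. Indeed $m\cdot He=1$ by unitality of $\ul{\bH}$; the identity $m\cdot\ochi H\cdot He=\ochi$ follows from $\ochi H\cdot He=He\cdot\ochi$ (naturality of $\ochi$) together with $m\cdot He=1$; and the remaining requirement is the equality $\ochi\cdot m=\ochi\cdot m\cdot\ochi H$, which is symmetric to the identity $\chi H\cdot\delta\cdot\chi=\delta\cdot\chi$ proved in Proposition~\ref{equaliser}. I would establish it by the diagram symmetric to the one used there: its regions commute by naturality of composition, by associativity of $m$ (in the place occupied there by coassociativity of $\delta$), by $\ol\sigma\cdot He=H\ochi\cdot\delta$ (Proposition~\ref{AVR}(4)) and by $\ve H\cdot\ol\sigma=m\cdot\ochi H$ (Proposition~\ref{AVR}(5)). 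I expect this diagram chase to be the main obstacle; it is of the same size and nature as the one in Proposition~\ref{equaliser}.

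Finally, once $(m,\,m\cdot\ochi H)$ is split by $He$, its split coequaliser is obtained by splitting the idempotent $m\cdot\ochi H\cdot He=\ochi$. By Proposition~\ref{AVR}(6) the idempotents $\ochi$, $\xi$, $\chi$, $\ol\xi$ satisfy the absorption relations $\xi\cdot\ochi=\ochi$, $\ochi\cdot\xi=\xi$, $\xi\cdot\chi=\xi$, $\chi\cdot\xi=\chi$, $\chi\cdot\ol\xi=\ol\xi$, $\ol\xi\cdot\chi=\chi$, which make their splittings coincide up to canonical isomorphism. Since $\iota^{\ol\xi}q^{\ol\xi}$ is the chosen splitting of $\ol\xi$, Lemma~\ref{idempotents}, applied along this chain of idempotents, then identifies the split coequaliser of $(m,\,m\cdot\ochi H)$ with $q^{\ol\xi}\colon H\to H^{\ol\xi}$, which is the assertion. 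The point that still requires care is to check that the resulting comparison isomorphism carries the canonical coequaliser projection onto $q^{\ol\xi}$; for this one uses $q^{\ol\xi}\cdot\ol\xi=q^{\ol\xi}$ and $\ol\xi\cdot\ochi=\ol\xi$ from Proposition~\ref{AVR}(6).
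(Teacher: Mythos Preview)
Your proposal is correct and is precisely the symmetric argument the paper intends (the paper itself simply writes ``Symmetrically, we have''): the reduction to the pair $(m,\,m\cdot\ochi H)$ via Proposition~\ref{AVR}(5), the splitting by $He$, and the key identity $\ochi\cdot m\cdot\ochi H=\ochi\cdot m$ are the exact duals of the steps in Proposition~\ref{equaliser}.

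One remark on your final paragraph. The middle link of your chain, from $\xi$ to $\chi$, uses the relations $\xi\chi=\xi$ and $\chi\xi=\chi$, which do \emph{not} match the hypotheses $fg=g$, $gf=f$ of Lemma~\ref{idempotents} in either order. You can nevertheless bridge this link directly: if $f\xi=f$ then $f\chi=f\xi\chi=f\xi=f$, and conversely if $f\chi=f$ then $f\xi=f\chi\xi=f\chi=f$. Together with the two outer links (which do fit Lemma~\ref{idempotents}), this gives $f\ochi=f\Leftrightarrow f\oxi=f$, so $q^{\ol\xi}$ is indeed the coequaliser of $(\ochi,1)$, and your argument closes.
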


Since $Hm \cdot \delta H \cdot eH=\chi H\cdot \delta$
and $\ve H \cdot mH \cdot H\delta =m \cdot \ol{\chi}H$ by Proposition \ref{AVR}(4) and (5),
Propositions \ref{equaliser} and \ref{coequaliser} immediately yield:

\begin{corollary} \label{corollary} Let $\bH=(\ul{\bH},\ol{\bH},\tau)$ be a weak
braided bimonad $\A$. Then
$$\xymatrix @C=0.6in @R=0.5in{H^{\ol{\xi}} \ar[r]^-{\iota^{\ol{\xi}}}&H \ar@/^1.5pc/@{-->}[l]^{q^{\ol{\xi}}}
\ar@/^2pc/@{->}[rr]^{\delta}
\ar[r]_{\delta} & HH \ar[r]_{\chi H} & HH \ar@/^2.2pc/@{-->}[ll]^{H \ve}}
$$ is a (split) equaliser yielding the monad
\begin{center}
$\ul{\bH}\,^{\ol{\xi}}=(H^{\ol{\xi}}, m^{\ol{\xi}}, e^{\ol{\xi}})$\; with\; $m^{\ol{\xi}}=q^{\ol{\xi}} \cdot m \cdot
(\iota^{\ol{\xi}}\iota^{\ol{\xi}})$\; and \;$ e^{\ol{\xi}}=q^{\ol{\xi}} \cdot e$,
\end{center}
 and
$$\xymatrix @C=0.6in @R=0.5in{HH \ar@/^2pc/@{->}[rr]^{m}
\ar[r]_{\ol{\chi}H} & HH \ar[r]_{m} & H \ar@/^2pc/@{-->}[ll]^{He}\ar[r]^-{q^{\ol{\xi}}}& H^{\ol{\xi}}\ar@/^1.5pc/@{-->}[l]^{\iota^{\ol{\xi}}}}
$$ is a (split) coequaliser yielding the comonad
\begin{center}
$\ol{\bH}\,_{\ol{\xi}}=(H^{\ol{\xi}}, \delta_{\ol{\xi}}, \ve_{\ol{\xi}})$ \; with \; $H_{\ol{\xi}}=H^{\ol{\xi}}$,
$\delta_{\ol{\xi}}=(q^{\ol{\xi}} q^{\ol{\xi}}) \cdot \delta \cdot \iota^{\ol{\xi}}$  \;and \; $\ve_{\ol{\xi}}=\ve \cdot \iota^{\ol{\xi}}$.
\end{center}
\end{corollary}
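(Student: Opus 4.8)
The statement packages a split (co)equaliser claim together with the assertion that the displayed formulae make $H^{\ol{\xi}}$ into a monad, resp.\ a comonad. I describe the equaliser/monad half; the coequaliser/comonad half is entirely dual. The split equalisers first: by Proposition \ref{AVR}(4) one has $Hm\cdot\delta H\cdot eH=\chi H\cdot\delta$, so after this rewriting the first displayed fork is precisely the split equaliser of Proposition \ref{equaliser}, the section $q^{\ol{\xi}}$ of $\iota^{\ol{\xi}}$ coming from the fixed splitting (\ref{split-xi}) and $H\ve$ being the retraction of $\delta$ used there; nothing needs to be added. Dually, Proposition \ref{AVR}(5) gives $\ve H\cdot mH\cdot H\delta=m\cdot\ol{\chi}H$, so the second displayed fork is the split coequaliser of Proposition \ref{coequaliser}, with $\iota^{\ol{\xi}}$ the section of $q^{\ol{\xi}}$ and $He$ the section of $m$.

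It remains to check that $m^{\ol{\xi}}=q^{\ol{\xi}}\cdot m\cdot(\iota^{\ol{\xi}}\iota^{\ol{\xi}})$ and $e^{\ol{\xi}}=q^{\ol{\xi}}\cdot e$ satisfy the monad axioms. The crux is that $m\cdot(\iota^{\ol{\xi}}\iota^{\ol{\xi}})$ and $e$ are fixed by $\ol{\xi}$, equivalently factor through the equaliser $\iota^{\ol{\xi}}$, so that $\iota^{\ol{\xi}}\cdot m^{\ol{\xi}}=m\cdot(\iota^{\ol{\xi}}\iota^{\ol{\xi}})$ and $\iota^{\ol{\xi}}\cdot e^{\ol{\xi}}=e$. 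For $e$ this is immediate from $\ol{\xi}\cdot e=e$ (Proposition \ref{AVR}(1)). For $m$, using $\iota^{\ol{\xi}}=\ol{\xi}\cdot\iota^{\ol{\xi}}$ and the interchange law, the claim reduces to $\ol{\xi}\cdot m\cdot(\ol{\xi}\ol{\xi})=m\cdot(\ol{\xi}\ol{\xi})$, i.e.\ to closure of the image of $\ol{\xi}$ under the multiplication, and this is the one place where the weak braided bimonad axioms are genuinely used: it follows by expanding $\delta\cdot m$ through \ref{def-bialg}(5) and feeding in the identities of Proposition \ref{AVR} (chiefly (2) and (3)), exactly as for the source subobject in \cite{AVR}. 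With the factorisations at hand, the unit laws follow at once from $m\cdot eH=1=m\cdot He$ and $q^{\ol{\xi}}\iota^{\ol{\xi}}=1$, and associativity follows by composing both sides with the split monomorphism $\iota^{\ol{\xi}}$, rewriting every factor through $m$ and $\iota^{\ol{\xi}}$ by naturality and the factorisation, invoking associativity of $m$, and cancelling $\iota^{\ol{\xi}}$.

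The comonad $(H^{\ol{\xi}},\delta_{\ol{\xi}},\ve_{\ol{\xi}})$ is handled dually: one uses $q^{\ol{\xi}}\cdot\ol{\xi}=q^{\ol{\xi}}$ and $(q^{\ol{\xi}}q^{\ol{\xi}})\cdot(\ol{\xi}\ol{\xi})=q^{\ol{\xi}}q^{\ol{\xi}}$; then $\ve_{\ol{\xi}}\cdot q^{\ol{\xi}}=\ve$ comes from Proposition \ref{AVR}(1), the identity $\delta_{\ol{\xi}}\cdot q^{\ol{\xi}}=(q^{\ol{\xi}}q^{\ol{\xi}})\cdot\delta$ reduces to the dual closure identity $(\ol{\xi}\ol{\xi})\cdot\delta\cdot\ol{\xi}=(\ol{\xi}\ol{\xi})\cdot\delta$ (again from \ref{def-bialg}(5) and Proposition \ref{AVR}(3)), and the comonad axioms transfer from those of $\ol{\bH}$ by cancelling the split epimorphism $q^{\ol{\xi}}$. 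The only step I expect to be more than bookkeeping is the pair of closure identities $\ol{\xi}\cdot m\cdot(\ol{\xi}\ol{\xi})=m\cdot(\ol{\xi}\ol{\xi})$ and $(\ol{\xi}\ol{\xi})\cdot\delta=(\ol{\xi}\ol{\xi})\cdot\delta\cdot\ol{\xi}$, i.e.\ that the retract $H^{\ol{\xi}}$ of $H$ cut out by $\ol{\xi}$ is a subobject for $m$ and a quotient object for $\delta$; everything else is either quoted verbatim from Propositions \ref{equaliser} and \ref{coequaliser} or a routine manipulation of split (co)equalisers and the interchange law.
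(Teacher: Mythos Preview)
Your derivation of the two split diagrams is exactly the paper's argument: the one-sentence reduction via Proposition~\ref{AVR}(4),(5) to Propositions~\ref{equaliser} and~\ref{coequaliser}. On the monad and comonad axioms you actually go further than the paper, which simply states the structures without proof; your reduction to the closure identities $\ol{\xi}\cdot m\cdot(\ol{\xi}\,\ol{\xi})=m\cdot(\ol{\xi}\,\ol{\xi})$ and $(\ol{\xi}\,\ol{\xi})\cdot\delta\cdot\ol{\xi}=(\ol{\xi}\,\ol{\xi})\cdot\delta$ (equivalently, that $\iota^{\ol{\xi}}$ is a monad morphism and $q^{\ol{\xi}}$ a comonad morphism) is correct, and deferring their verification to \cite{AVR} is in the same spirit as the paper's treatment of Proposition~\ref{AVR} itself.
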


The next result provides the technical data to show  Frobenius separability.

\begin{thm}{\bf Lemma.}\label{lemma}
 Let $\bH=(\ul{\bH},\ol{\bH},\tau)$ be a  weak braided bimonad and
consider  the composite
$$\upsilon:1 \xr{e}H \xr{\delta}HH \xr{q^{\ol{\xi}}q^{\ol{\xi}}}H^{\ol{\xi}}H^{\ol{\xi}}.$$
For this, $m^\oxi \cdot \upsilon = e^\oxi$ and
one has commutativity of the   diagrams
$$%\begin{equation}\label{d.1}
\xymatrix @C=0.4in @R=0.4in{
H^{\ol{\xi}} \ar@{..>}[rrd]^{\delta_{\ol{\xi}}}\ar[rr]^-{\upsilon H^{\ol{\xi}}} \ar[d]_{\iota^{\ol{\xi}}}&& H^{\ol{\xi}} H^{\ol{\xi}} H^{\ol{\xi}} \ar[d]^{H^{\ol{\xi}}m^{\ol{\xi}}}\\
H \ar[r]_\delta & HH \ar[r]_-{q^{\ol{\xi}}q^{\ol{\xi}}}& H^{\ol{\xi}}H^{\ol{\xi}},}
%\end{equation}
%\begin{equation}\label{d.2}
\quad
\xymatrix @C=0.4in @R=0.4in{
H^{\ol{\xi}} \ar@{..>}[rrd]^{\delta_{\ol{\xi}}}\ar[rr]^-{ H^{\ol{\xi}}\upsilon} \ar[d]_{\iota^{\ol{\xi}}}&& H^{\ol{\xi}} H^{\ol{\xi}} H^{\ol{\xi}} \ar[d]^{m^{\ol{\xi}}H^{\ol{\xi}}}\\
H \ar[r]_\delta & HH \ar[r]_-{q^{\ol{\xi}}q^{\ol{\xi}}}& H^{\ol{\xi}}H^{\ol{\xi}}.}
$$%\end{equation}
 \end{thm}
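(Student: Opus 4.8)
The plan is to dispose of the scalar identity $m^{\oxi}\cdot\upsilon=e^{\oxi}$ by a one-line convolution computation, and to reduce each of the two squares to an identity in $[\A,\A]$ by composing with a split monomorphism. I abbreviate $\iota=\iota^{\oxi}$, $q=q^{\oxi}$, so that $q\iota=1$ and $\iota q=\oxi$, and write $\oxi\oxi$ for the natural transformation $HH\to HH$ applying $\oxi$ in each variable. For the scalar identity the interchange law gives $(\iota\iota)(qq)=(\iota q)(\iota q)=\oxi\oxi$, hence
\[
m^{\oxi}\cdot\upsilon=q\cdot m\cdot(\iota\iota)\cdot(qq)\cdot\delta\cdot e=q\cdot\big(m\cdot(\oxi\oxi)\cdot\delta\big)\cdot e=q\cdot(\oxi*\oxi)\cdot e=q\cdot\oxi\cdot e=q\cdot e=e^{\oxi},
\]
using $\oxi*\oxi=\oxi$ from $(\ref{ent-equal})$ and $q\cdot\oxi=q\iota q=q$.

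For the first square, both legs land in $H^{\oxi}H^{\oxi}$ and $\iota\iota\colon H^{\oxi}H^{\oxi}\to HH$ is a split mono with retraction $qq$, so it suffices to prove the equality after composing with $\iota\iota$. Unravelling $\upsilon=(qq)\cdot\delta\cdot e$ and $m^{\oxi}=q\cdot m\cdot(\iota\iota)$ and collapsing each $\iota q$ to $\oxi$, the composite $(\iota\iota)\cdot H^{\oxi}m^{\oxi}\cdot\upsilon H^{\oxi}$ becomes the natural transformation $H^{\oxi}\to HH$ that, writing $\delta\cdot e$ informally as $1\mapsto 1_{(1)}\otimes 1_{(2)}$, sends $a$ to $\oxi\big(\iota(a)\cdot\oxi(1_{(1)})\big)\otimes\oxi(1_{(2)})$. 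On the other side $(\iota\iota)\cdot\delta_{\oxi}=(\oxi\oxi)\cdot\delta\cdot\iota$, and Proposition \ref{equaliser} lets me replace $\delta\cdot\iota$ by $Hm\cdot\delta H\cdot eH\cdot\iota$, turning this side into $a\mapsto\oxi\big(\iota(a)\cdot 1_{(1)}\big)\otimes\oxi(1_{(2)})$. The two now differ only by the inner $\oxi$ on the right-hand multiplicand $1_{(1)}$, which is killed by Proposition \ref{AVR}(2), $\oxi\cdot m\cdot\oxi H=\oxi\cdot m$.

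The second square is handled by the mirror calculation: with $\upsilon H^{\oxi}$, $H^{\oxi}m^{\oxi}$ replaced by $H^{\oxi}\upsilon$, $m^{\oxi}H^{\oxi}$ and after composing with $\iota\iota$, its top-right composite becomes $a\mapsto\oxi(1_{(1)})\otimes\oxi\big(\oxi(1_{(2)})\cdot\iota(a)\big)$; the stray $\oxi$ on $1_{(2)}$ is removed using $\oxi H\cdot\delta\cdot e=\delta\cdot e$ (a consequence of Proposition \ref{AVR}(3) and $\oxi\cdot e=e$), and one then matches against $(\oxi\oxi)\cdot\delta\cdot\iota$ via the left-handed counterpart of Proposition \ref{equaliser}, $\delta\cdot\iota^{\oxi}=mH\cdot H\delta\cdot He\cdot\iota^{\oxi}$ (equivalently $\delta\cdot\iota^{\oxi}=H\ochi\cdot\delta\cdot\iota^{\oxi}$, using Proposition \ref{AVR}(4)). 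The one genuinely delicate point is this last step: unlike the first square, here it is $\oxi$ applied to the \emph{left} multiplicand that must be absorbed, and Proposition \ref{AVR}(2) supplies only the right-handed identity, so I would need the left-handed ``unit--comultiplication'' relation $\delta\cdot\iota^{\oxi}=mH\cdot H\delta\cdot He\cdot\iota^{\oxi}$; this must be drawn from the battery of identities in \cite{AVR} (or obtained by re-running the argument of Proposition \ref{equaliser} with $\chi$ replaced by $\ochi$ and the roles of $m$ and $\delta$ interchanged). Apart from that, and the whiskering bookkeeping needed to pass from $\upsilon H^{\oxi}$, $H^{\oxi}m^{\oxi}$ (and their mirrors) to natural transformations $H^{\oxi}\to HH$, the argument is routine.
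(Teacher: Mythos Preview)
Your one-line computation of $m^{\oxi}\cdot\upsilon=e^{\oxi}$ via the convolution identity $\oxi*\oxi=\oxi$ is correct and considerably cleaner than the paper's route, which reaches the same conclusion through a diagram chase involving the comonad--monad entwining $\oom$ together with $\oxi\cdot m\cdot\oxi H=\oxi\cdot m$.

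For the two squares your strategy---postcompose with the split mono $\iota\iota$ and reduce to identities from Proposition~\ref{AVR}---is exactly the paper's, but your Sweedler-style formulas have their tensor factors transposed throughout. With the paper's whiskering conventions ($\alpha H$ acts on the left factor, $H\alpha$ on the right), one gets
\[
(\iota\iota)\cdot H^{\oxi}m^{\oxi}\cdot\upsilon H^{\oxi}:\;a\longmapsto \oxi(1_{(1)})\otimes\oxi\big(\oxi(1_{(2)})\cdot\iota(a)\big),
\]
which is what you wrote for the \emph{second} square, and symmetrically. After this correction your treatment of the first square goes through verbatim and coincides with the paper's: compare against $(\oxi\oxi)\cdot\delta\cdot\iota$, rewrite $\delta\cdot\iota$ via Proposition~\ref{equaliser} as $1_{(1)}\otimes 1_{(2)}\cdot\iota(a)$, and absorb the inner $\oxi$ on the \emph{left} multiplicand using $\oxi\cdot m\cdot\oxi H=\oxi\cdot m$ from Proposition~\ref{AVR}(2).

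The gap you flag for the second square is genuine but has a simpler fix than you propose. You do not need an $\ochi$-analogue of Proposition~\ref{equaliser} (the identity $H\ochi\cdot\delta\cdot\iota^{\oxi}=\delta\cdot\iota^{\oxi}$ is not readily available, and re-running that argument does not produce it). Instead, after dropping the stray $\oxi$ on $1_{(1)}$ via $\oxi H\cdot\delta\cdot e=\delta\cdot e$ as you suggest, use Proposition~\ref{AVR}(4) to rewrite $\iota(a)\cdot 1_{(1)}\otimes 1_{(2)}=mH\cdot H\delta\cdot He\cdot\iota(a)$ as $H\ochi\cdot\delta\cdot\iota(a)=\iota(a)_{(1)}\otimes\ochi(\iota(a)_{(2)})$. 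The surplus $\ochi$ now sits inside an outer $\oxi$, and Proposition~\ref{AVR}(6) supplies $\oxi\cdot\ochi=\oxi$ (equivalently $q^{\oxi}\cdot\ochi=q^{\oxi}$), which kills it. This is precisely the paper's argument: absorb $\ochi$ on the output side via $q^{\oxi}$, not on the input side via $\iota^{\oxi}$.
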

\begin{proof}
The diagram
$$
\xymatrix @C=0.4in @R=0.4in{
1 \ar[ddr]_e\ar[r]^e&H \ar[d]_{He}\ar[r]^\delta &HH \ar[r]^{HHe}& HHH \ar[d]^{H\oom}\\
& HH \ar[d]^m\ar[rru]_{\delta H}&HH   \ar[d]^{H\varepsilon}  &
  HHH\ar[l]^{mH} \ar[d]^{HH\varepsilon}\\
&H\ar[ru]^\delta \ar@{=}[r]&H &HH \ar[l]^m \\
}$$
commutes by naturality and since $\oom$ is an entwining
 (Proposition \ref{entw-str}).
Now the equation $\oxi\cdot m\cdot \oxi H =\oxi\cdot m$ (Proposition \ref{AVR}(2))
yields the commutative  diagram
$$ %\begin{equation}\label{d.00}
\xymatrix @C=0.4in @R=0.4in{
1 \ar[rrd]_{e}\ar[r]^e &H \ar[r]^{\delta} & HH\ar[r]^{H\oxi}   &
   HH\ar[ld]^m\ar[r]^{ \oxi H} & HH \ar[d]^m \\
&& H  \ar[r]_\oxi & H &\ar[l]^\oxi   H
}$$%\end{equation}
and the splitting $\oxi=\iota^\oxi\cdot q^\oxi$ implies
 $m^\oxi \cdot \upsilon = e^\oxi$.
 \smallskip

In the diagram
$$
\xymatrix  @C=0.4in @R=0.3in{
H^{\ol{\xi}} \ar[d]_{\iota^{\ol{\xi}}}\ar[r]^-{eH^{\ol{\xi}}} & HH^{\ol{\xi}} \ar[d]_-{H \iota^{\ol{\xi}}}\ar[r]^-{\delta H^{\ol{\xi}}}
& HHH^{\ol{\xi}} \ar[d]_-{H {\ol{\xi}}H^{\ol{\xi}}} \ar[r]^-{q^{\ol{\xi}}HH^{\ol{\xi}}}
& H^{\ol{\xi}} H H^{\ol{\xi}} \ar[rd]_{H^{\ol{\xi}}\,{\ol{\xi}}H^{\ol{\xi}}}\ar[r]^-{H^{\ol{\xi}} q^{\ol{\xi}} H^{\ol{\xi}}}&H^{\ol{\xi}}H^{\ol{\xi}}H^{\ol{\xi}} \ar[d]^{H^{\ol{\xi}}\,\iota^{\ol{\xi}}H^{\ol{\xi}}}\\
H \ar[ddd]_{\delta} \ar[r]_-{eH}&HH \ar[d]^{\delta H}& HHH^{\ol{\xi}} \ar[d]^{HH \iota^{\ol{\xi}}}\ar[rr]_-{q^{\ol{\xi}}HH^{\ol{\xi}}} && H^{\ol{\xi}} H H^{\ol{\xi}} \ar[d]^{H^{\ol{\xi}} H\iota^{\ol{\xi}}}\\
& HHH \ar@{}[rdd]|{(b)}\ar[dd]^{Hm}\ar[r]_-{H{\ol{\xi}}H}&HHH \ar[d]^{Hm}\ar[rr]_-{q^{\ol{\xi}} HH}&&H^{\ol{\xi}} HH \ar[d]^{H^{\ol{\xi}}\, m}\\
&& HH \ar[d]^{H q^{\ol{\xi}}}\ar[rr]_{q^{\ol{\xi}}H}&& H^{\ol{\xi}}H \ar[d]^{H^{\ol{\xi}}q^{\ol{\xi}}}\\
HH \ar@{}[uur]|{(a)}\ar[r]_{\chi H}& HH  \ar[r]_{Hq^{\ol{\xi}}}& HH^{\ol{\xi}} \ar[rr]_{ q^{\ol{\xi}} H^{\ol{\xi}}}&&H^{\ol{\xi}}H^{\ol{\xi}}
}$$
\begin{itemize}
  \item the triangle commutes by the splitting of $\ol{\xi}$;
  %\item Region (a) commutes by the definition of $\sigma$;
  \item region (a) commutes by Proposition 6.4(4);
  \item region (b) commutes because
%we have the equality ${\ol{\xi}} \cdot m \cdot \ol{\xi}H=\ol{\xi} \cdot m$
 Proposition 6.4(2) induces
  $q^{\ol{\xi}} \cdot m \cdot \ol{\xi}H=q^{\ol{\xi}} \cdot m$;
  \item the other regions commute by naturality.
\end{itemize}
Since $\chi H \cdot \delta \cdot \iota^{\ol{\xi}}=\delta \cdot \iota^{\ol{\xi}}$ by Proposition \ref{equaliser}, we obtain from this commutativity of the
left hand diagram.
\smallskip

Next, in the diagram
$$
\xymatrix @C=0.4in @R=0.3in{
H^{\ol{\xi}} \ar[d]_{\iota^{\ol{\xi}}}\ar[r]^-{H^{\ol{\xi}}e} &
H^{\ol{\xi}}H \ar[d]_-{ \iota^{\ol{\xi}}H}\ar[r]^-{ H^{\ol{\xi}}\delta}
& H^{\ol{\xi}}HH \ar[d]_-{\iota^{\ol{\xi}}HH} \ar[r]^-{H^{\ol{\xi}}q^{\ol{\xi}}H}\ar[rd]_{H^{\ol{\xi}}{\ol{\xi}}H}
& H^{\ol{\xi}} H^{\ol{\xi}}H \ar[r]^-{H^{\ol{\xi}} H^{\ol{\xi}} q^{\ol{\xi}} }\ar[d]^{H^{\ol{\xi}}\,\iota^{\ol{\xi}}H}&H^{\ol{\xi}}H^{\ol{\xi}}H^{\ol{\xi}} \ar[d]^{H^{\ol{\xi}}\,\iota^{\ol{\xi}}H^{\ol{\xi}}}\\
H \ar@{}[rrrd]|{(a)}\ar[dd]_{\delta}\ar[rd]_{He}\ar[r]^-{He}&HH \ar[r]_{H\delta}& HHH \ar[rd]_{H{\ol{\xi}}H} &H^{\ol{\xi}}HH
\ar[d]^{\iota^{\ol{\xi}} HH}& H^{\ol{\xi}} H H^{\ol{\xi}} \ar[d]^{\iota^{\ol{\xi}} HH^{\ol{\xi}}}\\
& HH \ar[rr]_{H \delta} &
& HHH \ar[d]^{mH} & HHH^\oxi \ar[d]^{m H^\oxi}  \\
HH\ar[rrr]^{H\ol{\chi}}\ar[d]_{q^\oxi H} \ar@{}[rru]|{(b)} &&& HH
\ar[d]^{ q^{\ol{\xi}} H}\ar[r]_{Hq^{\ol{\xi}}}& HH^{\ol{\xi}} \ar[d]^{q^{\ol{\xi}}H^{\ol{\xi}}}\\
H^\oxi H\ar[rrr]_{H \ol{\chi}}&&&
H^\oxi H  \ar[r]_{H^{\ol{\xi}} q^{\ol{\xi}}}&H^{\ol{\xi}}H^{\ol{\xi}}
}$$
\begin{itemize}
  \item the top triangle commutes by the splitting of $\ol{\xi}$;
  \item region (a) commutes since
from Proposition \ref{AVR}(1) and (3) we get
$${\ol{\xi}}H \cdot \delta \cdot e  = {\ol{\xi}}H \cdot \delta \cdot {\ol{\xi}}\cdot e=
\delta \cdot {\ol{\xi}}\cdot e= \delta \cdot e;$$
  \item region (b) commutes by Proposition \ref{AVR}(4);
   \item the other regions commute by naturality.
\end{itemize} Thus the whole diagram is commutative, and in the light of the equality $q^{\ol{\xi}}\cdot \ol{\chi}= q^{\ol{\xi}}$,
derived from $\ol{\xi}\cdot \ol{\chi}= \ol{\xi}$ in Proposition \ref{AVR}(6),
commutativity of the right hand diagram follows.
\end{proof}

The following generalises \cite[Proposition 4.2]{Sch} and
\cite[Proposition 1.6]{Szl-FQ}  (see also \cite[Proposition 4.4]{BCJ}).

\begin{proposition}\label{sep.frobenius2} Let $\bH=(\ul{\bH},\ol{\bH},\tau)$ be a
 weak braided bimonad on $\A$.
Then  the  quintuple $(H^\oxi, m^\oxi,e^\oxi;\delta_\oxi,\ve_\oxi)$ is a Frobenius separable (co)monad.
\end{proposition}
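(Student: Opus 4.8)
The plan is to deduce the statement from Corollary \ref{corollary} and Lemma \ref{lemma}, after which only a short formal manipulation remains. Corollary \ref{corollary} already provides a monad $(H^\oxi,m^\oxi,e^\oxi)$ and a comonad $(H^\oxi,\delta_\oxi,\ve_\oxi)$ on $\A$ with the same underlying functor, so by Definition \ref{frob-mon} it remains only to verify the Frobenius compatibility square (\ref{sep-mon-d}) for the pair $(m^\oxi,\delta_\oxi)$ and the separability identity $m^\oxi\cdot\delta_\oxi=1$. The decisive input is Lemma \ref{lemma}: it produces a natural transformation $\upsilon\colon 1\to H^\oxi H^\oxi$ with $m^\oxi\cdot\upsilon=e^\oxi$ and establishes the two presentations
$$\delta_\oxi= H^\oxi m^\oxi\cdot\upsilon H^\oxi= m^\oxi H^\oxi\cdot H^\oxi\upsilon .$$
Thus $\delta_\oxi$ is built from the multiplication of $H^\oxi$ and the ``copairing'' $\upsilon$, and the proposition reduces to the classical fact that a monoid equipped with such a copairing satisfying $m^\oxi\cdot\upsilon=e^\oxi$ is separable Frobenius.

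For separability I would use the second presentation of $\delta_\oxi$, associativity of $m^\oxi$ and the right unit law of $H^\oxi$:
\begin{align*}
m^\oxi\cdot\delta_\oxi &= m^\oxi\cdot m^\oxi H^\oxi\cdot H^\oxi\upsilon = m^\oxi\cdot H^\oxi m^\oxi\cdot H^\oxi\upsilon \\
&= m^\oxi\cdot H^\oxi(m^\oxi\cdot\upsilon) = m^\oxi\cdot H^\oxi e^\oxi = 1 .
\end{align*}

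For the Frobenius square I would treat its two triangles separately. For the first, $\delta_\oxi\cdot m^\oxi = H^\oxi m^\oxi\cdot\delta_\oxi H^\oxi$, I would start from $\delta_\oxi= H^\oxi m^\oxi\cdot\upsilon H^\oxi$, push $m^\oxi$ past $\upsilon$ using naturality of $\upsilon$ (in the form $\upsilon H^\oxi\cdot m^\oxi = H^\oxi H^\oxi m^\oxi\cdot\upsilon H^\oxi H^\oxi$), then apply associativity of $m^\oxi$, and finally recognise $H^\oxi m^\oxi\cdot\upsilon H^\oxi$ whiskered by $H^\oxi$ as $\delta_\oxi H^\oxi$; schematically,
\begin{align*}
\delta_\oxi\cdot m^\oxi &= H^\oxi m^\oxi\cdot\upsilon H^\oxi\cdot m^\oxi = H^\oxi m^\oxi\cdot H^\oxi H^\oxi m^\oxi\cdot\upsilon H^\oxi H^\oxi \\
&= H^\oxi m^\oxi\cdot H^\oxi m^\oxi H^\oxi\cdot\upsilon H^\oxi H^\oxi = H^\oxi m^\oxi\cdot\delta_\oxi H^\oxi .
\end{align*}
The second triangle $\delta_\oxi\cdot m^\oxi = m^\oxi H^\oxi\cdot H^\oxi\delta_\oxi$ is obtained symmetrically from $\delta_\oxi= m^\oxi H^\oxi\cdot H^\oxi\upsilon$, pushing $m^\oxi$ past $\upsilon$ the other way (naturality of $m^\oxi$: $H^\oxi\upsilon\cdot m^\oxi = m^\oxi H^\oxi H^\oxi\cdot H^\oxi H^\oxi\upsilon$) and again using associativity. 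These two identities are exactly the commutativity of (\ref{sep-mon-d}); together with the separability identity this shows that $(H^\oxi,m^\oxi,e^\oxi;\delta_\oxi,\ve_\oxi)$ is a separable Frobenius (co)monad.

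I do not anticipate a real obstacle here: all the substantive content — constructing $\upsilon$, checking $m^\oxi\cdot\upsilon=e^\oxi$, and proving the two squares of Lemma \ref{lemma} that relate $\upsilon$, $m^\oxi$ and $\delta_\oxi$ — has already been done in Lemma \ref{lemma}, where the weak Yang-Baxter identities enter via Proposition \ref{AVR}. The only points calling for care in the write-up are choosing the appropriate presentation of $\delta_\oxi$ in each triangle and applying the interchange law in the right direction; everything else is associativity and unitality of the monad $H^\oxi$.
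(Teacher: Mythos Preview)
Your proposal is correct and is exactly the approach the paper takes: its proof simply says ``using the results of Lemma~\ref{lemma} it is easy to verify that the diagram corresponding to (\ref{sep-mon-d}) is commutative and that $m^{\ol{\xi}}\cdot\delta_{\ol{\xi}}=1_{H^\oxi}$,'' and your write-up supplies precisely that verification via the two presentations $\delta_\oxi=H^\oxi m^\oxi\cdot\upsilon H^\oxi=m^\oxi H^\oxi\cdot H^\oxi\upsilon$, naturality, associativity and the identity $m^\oxi\cdot\upsilon=e^\oxi$.
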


\begin{proof} Using the results of Lemma \ref{lemma} it is easy to verify that the diagram
corresponding to (\ref{sep-mon-d}) is commutative and that $ m^{\ol{\xi}} \cdot \delta_{\ol{\xi}}=1_{H^\oxi}$.

The statement about the comonad $\ol{\bH}\,_{\ol{\xi}}$ holds by symmetry.
\end{proof}

\begin{proposition}\label{gen.monad} Let $\bH=(\ul{\bH},\ol{\bH},\tau)$ be a
weak braided bimonad on $\A$
and suppose the functor $K_\omega:\A \to \Aa$ admits a right adjoint
$\ol{K}:\Aa \to \A$. Then $\ul{\bH}\,^{\ol{\xi}}=(H^{\ol{\xi}}, m^{\ol{\xi}}, e^{\ol{\xi}})$ is
the monad generated by the adjunction $K_\omega \dashv \ol{K}$
 and
$\iota^{\ol{\xi}}$ is the corresponding morphism of monads $\ul{\bH}\,^{\ol{\xi}} \to \ul{\bH}$ (as in Proposition \ref{monad.mor.}).
\end{proposition}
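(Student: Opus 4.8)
The plan is to identify the monad $\bP$ generated by the adjunction $K_\omega\dashv\ol K$ with $\ul{\bH}\,^{\ol{\xi}}$ in three stages — underlying functor (together with the induced monad morphism $\iota$), unit, multiplication — reducing everything to the comparison functor into $(\A_{\ul\bH})^\bG$ and to the split equaliser already produced in Proposition \ref{equaliser}. As a preliminary step I would replace $K_\omega\colon\A\to\Aa$ by $K:=\Phi\circ K_\omega\colon\A\to(\A_{\ul\bH})^\bG$, which satisfies $U^\bG K=\phi_{\ul\bH}$ by the left triangle of (\ref{triangle1}). Since $\Phi$ is an isomorphism of categories, $K$ inherits a right adjoint, and the resulting adjunction generates the \emph{same} monad $\bP$ on $\A$ and the same induced natural transformation into $U_{\ul\bH}\phi_{\ul\bH}=H$; by Proposition \ref{monad.mor.} the latter is a morphism of monads $\iota\colon\bP\to\ul\bH$, the target being the monad generated by $\phi_{\ul\bH}\dashv U_{\ul\bH}$, which is exactly $\ul\bH$.

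To pin down the functor and $\iota$, I would compute $\bP(a)=\ol K K(a)$ for each $a\in\A$. By \ref{comp-func}, $K(a)=\bigl((H(a),m_a),\,p_{(H(a),m_a)}\cdot\delta_a\bigr)$, so by the description of the right adjoint in \ref{rght.adj} and the reduction performed in the proof of Proposition \ref{pair}, $\bP(a)$ is the equaliser in $\A$ of the pair $\bigl(i_{(H(a),m_a)}\cdot p_{(H(a),m_a)}\cdot\delta_a,\ H(m_a)\cdot\delta_{H(a)}\cdot e_{H(a)}\bigr)\colon H(a)\rightrightarrows HH(a)$, with $\iota_a$ the equaliser inclusion. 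Here $i_{(H(a),m_a)}\cdot p_{(H(a),m_a)}=\kappa_a$ by (\ref{olG}), and $\kappa\cdot\delta=\delta$ by (\ref{c-diagrams}) (the entwining $\omega$ is compatible, Proposition \ref{entw-str}), so the first map is simply $\delta_a$ and the pair is the $a$-component of $(\delta,\,Hm\cdot\delta H\cdot eH)$. Proposition \ref{equaliser} exhibits the natural \emph{split} equaliser of this pair as $H^{\ol{\xi}}\xrightarrow{\iota^{\ol{\xi}}}H\rightrightarrows HH$; since split equalisers are absolute and determined up to canonical isomorphism, I may identify $\bP=H^{\ol{\xi}}$ as functors and $\iota=\iota^{\ol{\xi}}$.

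The unit and multiplication are then formal. By Theorem \ref{Dubuc2} the unit $\ol\eta\colon 1\to\bP$ is the unique natural transformation with $\iota^{\ol{\xi}}\cdot\ol\eta=e$, the unit of $\phi_{\ul\bH}\dashv U_{\ul\bH}$ (that is, of $\ul\bH$); on the other hand $\iota^{\ol{\xi}}\cdot e^{\ol{\xi}}=\iota^{\ol{\xi}}\cdot q^{\ol{\xi}}\cdot e=\ol{\xi}\cdot e=e$, using $e^{\ol{\xi}}=q^{\ol{\xi}}\cdot e$ from Corollary \ref{corollary} and that $\ol{\xi}$ respects the unit (Proposition \ref{AVR}(1)). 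As $\iota^{\ol{\xi}}$ is a split monomorphism in $[\A,\A]$, this forces $e^\bP=e^{\ol{\xi}}$. For the multiplication, $\iota^{\ol{\xi}}\colon\bP\to\ul\bH$ being a monad morphism gives $\iota^{\ol{\xi}}\cdot m^\bP=m\cdot(\iota^{\ol{\xi}}\iota^{\ol{\xi}})$; hence $m\cdot(\iota^{\ol{\xi}}\iota^{\ol{\xi}})$ factors through $\iota^{\ol{\xi}}$ and is therefore fixed by the idempotent $\ol{\xi}=\iota^{\ol{\xi}}q^{\ol{\xi}}$. Comparing with $\iota^{\ol{\xi}}\cdot m^{\ol{\xi}}=\iota^{\ol{\xi}}\cdot q^{\ol{\xi}}\cdot m\cdot(\iota^{\ol{\xi}}\iota^{\ol{\xi}})=\ol{\xi}\cdot m\cdot(\iota^{\ol{\xi}}\iota^{\ol{\xi}})$ (Corollary \ref{corollary}) and invoking monicity of $\iota^{\ol{\xi}}$ once more yields $m^\bP=m^{\ol{\xi}}$, completing the identification $\bP=\ul{\bH}\,^{\ol{\xi}}$ with $\iota=\iota^{\ol{\xi}}$.

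I expect the one genuinely delicate point to be the middle stage: correctly threading the isomorphism $\Phi$ and the splitting $i,p$ of $\Gamma$ so that $\ol K K(a)$ is recognised as precisely \emph{the} split equaliser of Proposition \ref{equaliser} — not merely as some equaliser that happens to have the right underlying object — since it is exactly this absoluteness that allows one to upgrade an ``up to isomorphism'' statement to an equality of monads. The two remaining steps are purely formal consequences of $\iota^{\ol{\xi}}$ being a split‑mono monad morphism together with the explicit formulas for $e^{\ol{\xi}}$ and $m^{\ol{\xi}}$ recorded in Corollary \ref{corollary}.
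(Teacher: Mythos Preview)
Your proposal is correct and follows essentially the same route as the paper's proof: identify $\ol K K_\omega$ and the equaliser inclusion $\iota$ with $H^{\ol\xi}$ and $\iota^{\ol\xi}$ via Proposition~\ref{pair} and the split equaliser of Proposition~\ref{equaliser}, then pin down the unit from the triangle condition of Theorem~\ref{Dubuc2} and the multiplication from the monad-morphism equation $\iota^{\ol\xi}\cdot m^\bP=m\cdot(\iota^{\ol\xi}\iota^{\ol\xi})$ together with monicity of $\iota^{\ol\xi}$. The only cosmetic difference is that for the multiplication the paper computes $\ol K(\ol\ve_{K_\omega(a)})$ directly via formula~(\ref{counit}) and then post-composes with $q^{\ol\xi}$, whereas you invoke Proposition~\ref{monad.mor.} and the idempotent $\ol\xi=\iota^{\ol\xi}q^{\ol\xi}$; the content is the same.
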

\begin{proof} Suppose there is an adjunction
$$\ol{\eta}\,,\, \ol{\ve} :K_\omega \dashv \ol{K}:\Aa \to \A ;$$
 consider the monad $(\ol{K} K_\omega, \ol{K}\ol{\ve}K_\omega, \ol{\eta})$ it generates on $\A$.
 Write $\iota : \ol{K} K_\omega \to \ul{\bH}$ for the corresponding morphism of monads
(as in Proposition \ref{monad.mor.}).
Since, by Proposition \ref{pair}, the functor $K_\omega:\A \to \Aa$ admits a right adjoint
if and only if for every $(a,h,\theta)\in \Aa$, the pair of morphisms (\ref{pair.d})
 has an equaliser in $\A$, and
since $K_{\omega}(a)=(H(a), m_a, \delta_a)$ for all $a \in \A$, it follows from
Proposition \ref{equaliser} that  $\ol{K}K_\omega=H^{\ol{\xi}}$ and that $\iota^{\ol{\xi}}=\iota$.

According to Theorem \ref{Dubuc2}, the unit of the adjunction $K_\omega \dashv \ol{K}$ is the (unique)
natural transformation $\ol{\eta}:1 \to \ol{K}K_\omega=H^\oxi$ such that the diagram
$$\xymatrix  @R=.3in @C=.4in{H^{\ol{\xi}}(a) \ar[r]^{\iota^{\ol{\xi}}_a}& H(a)\\&
 a \ar[lu]^-{\ol{\eta}_a} \ar[u]_{e_a}}
$$
commutes, for all $a \in \A$. It then follows that $\ol{\eta}_a=
q^{\ol{\xi}}_a \cdot \iota^{\ol{\xi}}_a \cdot \ol{\eta}_a
=q^{\ol{\xi}}_a \cdot e_a.$

\smallskip

Next, since for any $a \in \A$, $\ol{\ve}_{K_\omega(a)}=m_a \cdot H(\iota^{\ol{\xi}}_a)$ by
(\ref{counit}), it follows that
$$\ol{K}(\ol{\ve}_{K_\omega(a)}):\ol{K} K_\omega\ol{K}K_\omega(a)
=H^{\ol{\xi}} H^{\ol{\xi}}(a) \to \ol{K}K_\omega(a)=H^{\ol{\xi}}(a)$$
is the unique morphism making the diagram
$$\xymatrix @R=.3in @C=.5in{
H^{\ol{\xi}} H^{\ol{\xi}}(a) \ar[rr]^{(\iota^{\ol{\xi}})_{H^{\ol{\xi}}(a)}}
\ar[dd]_{\ol{K}(\ol{\ve}_{K_\omega(a)})}&& HH^{\ol{\xi}}(a) \ar[d]^{H(\iota^{\ol{\xi}}_a)}\\
&& HH(a) \ar[d]^{m_a}\\
H^{\ol{\xi}}(a) \ar[rr]_{\iota^{\ol{\xi}}_a} && H(a) }
$$ commute and we calculate (recall (\ref{split-xi}))
$$\ol{K}(\ol{\ve}_{K_\omega(a)})=
q^{\ol{\xi}}_a \cdot \iota^{\ol{\xi}}_a \cdot \ol{K}(\ol{\ve}_{K_\omega(a)})
=q^{\ol{\xi}}_a \cdot m_a \cdot
H(\iota^{\ol{\xi}}_a)\cdot \iota^{\ol{\xi}}_{H^{\ol{\xi}}(a)}
= q^{\ol{\xi}}_a \cdot m_a \cdot (\iota^{\ol{\xi}} \iota^{\ol{\xi}})_a.$$
This completes the proof.
\end{proof}

By symmetry, we also have:
\begin{proposition}\label{gen.comonad} Let $\bH=(\ul{\bH},\ol{\bH},\tau)$ be a
weak braided bimonad on $\A$
and suppose the functor $K_\omega:\A \to \Aa$ admits a left adjoint
$\ul{K}:\Aa \to \A$. Then
$\ol{\bH}\,_{\ol{\xi}}=(H^{\ol{\xi}}, \delta_{\ol{\xi}}, \ve_{\ol{\xi}})$
 is the comonad generated by the adjunction $ \ul{K}\dashv K_\omega$ and
 $q^\oxi$ is a comonad morphism $\ol{\bH} \to \ol{\bH}\,_{\ol{\xi}}$.
\end{proposition}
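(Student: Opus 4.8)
The plan is to run the proof of Proposition \ref{gen.monad} in its formal dual: replace $\A_{\ul\bH}$, the comonad $\bG$ and the functors $K$, $\ol K$ by $\A^{\ol\bH}$, the monad $\bT$ and the functors $K'$, $\ul K$; replace Theorem \ref{Dubuc2} by Theorem \ref{Dubuc1}, Propositions \ref{pair} and \ref{equaliser} by Propositions \ref{pair1} and \ref{coequaliser}, and equation (\ref{counit}) by its module-theoretic dual (obtained from (\ref{unit})). All of these are already available in the paper, so no essentially new computation is required. Since $\Phi':\Aa\to\Ac$ is an isomorphism, $K_\omega$ has a left adjoint precisely when $K'=\Phi'\cdot K_\omega$ does; fix such an adjunction $\ul\eta\,,\,\ul\ve:\ul K\dashv K_\omega$ and consider the comonad it generates on $\A$, whose underlying functor is $\ul K K_\omega$, whose comultiplication is $\ul K\,\ul\eta\,K_\omega$ and whose counit is $\ul\ve$. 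The dual of Proposition \ref{monad.mor.}, applied to the commutative triangle $U_\bT K'=\phi^{\ol\bH}$ (the middle diagram in (\ref{triangle1})), attaches to this data a comonad morphism $\ol\bH\to\ul K K_\omega$ whose component at $x\in\A$ is, by construction, the $K'(x)$-component of the universal coequaliser defining $\ul K$.

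First I would identify the underlying functor. By Proposition \ref{pair1}, when $\ul K$ exists it sends $(a,h,\theta)\in\Aa$ to the coequaliser of the pair (\ref{pair.d1}). Evaluated at $(a,h,\theta)=K_\omega(x)=(H(x),m_x,\delta_x)$ this pair is $\bigl(m_x,\;\ve_{H(x)}\cdot m_{H(x)}\cdot H(\delta_x)\bigr)$; by Proposition \ref{AVR}(5) the second morphism equals $m_x\cdot\ochi_{H(x)}$, so it is precisely the pair $(m,\,m\cdot\ochi H)$ evaluated at $x$, whose coequaliser is $q^\oxi:H\to H^\oxi$ by Corollary \ref{corollary}. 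Hence $\ul K K_\omega=H^\oxi$ as a functor, with structural coequaliser map $q^\oxi$; in particular the comonad morphism produced above is $q^\oxi$.

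For the costructure I would dualise the corresponding steps of the proof of Proposition \ref{gen.monad}. By the counit description in Theorem \ref{Dubuc1} (dual to the unit description used there), the counit $\ul\ve:\ul K K_\omega=H^\oxi\to 1$ is characterised by $\ul\ve\cdot q^\oxi=\ve$, so using the splitting identity $q^\oxi\cdot\iota^\oxi=1$ of (\ref{split-xi}),
$$\ul\ve=\ul\ve\cdot q^\oxi\cdot\iota^\oxi=\ve\cdot\iota^\oxi=\ve_\oxi .$$
For the comultiplication, dually to the computation $m^{\bP}=q^\oxi\cdot m\cdot(\iota^\oxi\iota^\oxi)=m^\oxi$ in that proof, I would compute the underlying morphism of $\ul\eta_{K_\omega(x)}$ from the dual of (\ref{counit}) and feed it into the formula for $\ul K$ on morphisms (which, on objects arising from $K_\omega$, splits the relevant coequalisers by $\iota^\oxi$); Corollary \ref{corollary} then gives
$$\ul K(\ul\eta_{K_\omega(x)})=(q^\oxi q^\oxi)_x\cdot\delta_x\cdot\iota^\oxi_x=(\delta_\oxi)_x .$$
Therefore the comonad generated by $\ul K\dashv K_\omega$ is $\ol\bH\,_\oxi=(H^\oxi,\delta_\oxi,\ve_\oxi)$, and by the first paragraph $q^\oxi$ is the associated comonad morphism $\ol\bH\to\ol\bH\,_\oxi$.

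The step needing most care is the bookkeeping of this dualisation of Section \ref{Com}: confirming that $\phi^{\ol\bH}:\A\to\A^{\ol\bH}$ plays, through the triangle $U_\bT K'=\phi^{\ol\bH}$, exactly the role that $\phi_{\ul\bH}$ plays for Proposition \ref{gen.monad}, and pinning down the precise form of the dual of (\ref{counit}) for the underlying morphism of $\ul\eta_{(a,\theta)}$. Once these are settled, the rest reduces — as above — to Corollary \ref{corollary} and to the identities of Proposition \ref{AVR} already established.
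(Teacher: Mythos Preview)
Your proposal is correct and is exactly the approach the paper takes: the paper simply writes ``By symmetry, we also have'' before stating the proposition, leaving the dualisation of the proof of Proposition~\ref{gen.monad} implicit. You have spelled out precisely that dualisation---replacing $(\A_{\ul\bH},\bG,K,\ol K)$ by $(\A^{\ol\bH},\bT,K',\ul K)$, Theorem~\ref{Dubuc2} by Theorem~\ref{Dubuc1}, Propositions~\ref{pair} and~\ref{equaliser} by Propositions~\ref{pair1} and~\ref{coequaliser}, and invoking Corollary~\ref{corollary} for the coequaliser identification---so nothing further is needed.
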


 According to Section \ref{canonical module}, the monad morphism
$\iota^{\ol{\xi}}:\ul{\bH}^{\ol{\xi}} \to \ul{\bH}$
equips $H$ with an $\ul{\bH}^{\ol{\xi}}$-bimodule structure, where the
left action is the composite
$$\rho_l: H^{\ol{\xi}}H \xr{\iota^{\ol{\xi}} H} HH \xr{m} H,$$ and the right action is the composite
$$\rho_r: HH^{\ol{\xi}} \xr{H\iota^{\ol{\xi}}} HH \xr{m} H.$$

\begin{proposition} \label{gamma-coeq}
For a weak braided bimonad $\bH=(\ul{\bH},\ol{\bH},\tau)$ on $\A$,
$\sigma=Hm \cdot \delta H$ coequalises the pair
$(\rho_r H, H \rho_l)$, i.e.,
 $$\sigma \cdot \rho_r H  = \sigma \cdot H \rho_l.$$
\end{proposition}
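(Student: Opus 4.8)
The plan is to reduce the identity $\sigma\cdot\rho_r H=\sigma\cdot H\rho_l$ to the single equation
\[
\delta\cdot m\cdot H\iota^{\ol{\xi}}\;=\;Hm\cdot\delta H\cdot H\iota^{\ol{\xi}}\;=\;\sigma\cdot H\iota^{\ol{\xi}}\colon\;HH^{\ol{\xi}}\longrightarrow HH,
\]
i.e.\ to the statement that $\delta\cdot m$ and $\sigma$ become equal once restricted along $H\iota^{\ol{\xi}}$. Granting this, the assertion is purely formal: since $\rho_r=m\cdot H\iota^{\ol{\xi}}$ and $\rho_l=m\cdot\iota^{\ol{\xi}}H$, functoriality of right whiskering gives
\[
\sigma\cdot\rho_r H=Hm\cdot\delta H\cdot mH\cdot H\iota^{\ol{\xi}}H=Hm\cdot\bigl((\delta\cdot m)\cdot H\iota^{\ol{\xi}}\bigr)H,
\]
while the naturality identity $\delta H\cdot Hm=HHm\cdot\delta HH$ together with associativity of $m$ (so that $Hm\cdot HHm=Hm\cdot HmH$) gives
\[
\sigma\cdot H\rho_l=Hm\cdot HmH\cdot\delta HH\cdot H\iota^{\ol{\xi}}H=Hm\cdot\bigl((Hm\cdot\delta H)\cdot H\iota^{\ol{\xi}}\bigr)H;
\]
the two right-hand sides coincide exactly when the displayed equation holds.

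To prove the displayed equation I would first invoke condition~(5) of Definition~\ref{def-bialg}, $\delta\cdot m=mm\cdot H\tau H\cdot\delta\delta$, and rewrite $\delta\delta\cdot H\iota^{\ol{\xi}}=\delta HH\cdot H(\delta\cdot\iota^{\ol{\xi}})$. Next I would substitute the split equaliser of Proposition~\ref{equaliser} (equivalently Corollary~\ref{corollary}): since $\sigma=Hm\cdot\delta H$ by \ref{fusion}, that statement is precisely $\delta\cdot\iota^{\ol{\xi}}=Hm\cdot\delta H\cdot eH\cdot\iota^{\ol{\xi}}$. After collecting the adjacent multiplications, the inner block that appears is exactly $\ol\omega\cdot He$, where $\ol\omega=Hm\cdot\tau H\cdot H\delta$ as in \ref{more-nat}; by Proposition~\ref{entw-str} this $\ol\omega$ is the compatible weak comonad--monad entwining $\oom$ of \ref{w-bim}, so axiom (\ref{ent-2})(ii) applies and yields $\ol\omega\cdot He=\ol{\xi}H\cdot\delta$. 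Substituting this and using coassociativity of $\delta$, the composite collapses to a (suitably whiskered) copy of $m\cdot H\ol{\xi}\cdot\delta$ precomposed with $\delta H\cdot H\iota^{\ol{\xi}}$; the convolution identity $1*\ol{\xi}=1$ of (\ref{ent-equal}), i.e.\ $m\cdot H\ol{\xi}\cdot\delta=1_H$, then deletes that copy and leaves exactly $Hm\cdot\delta H\cdot H\iota^{\ol{\xi}}=\sigma\cdot H\iota^{\ol{\xi}}$, as wanted.

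All the whiskering, naturality, coassociativity and associativity manipulations can be packaged into one large commuting diagram; the only place that needs genuine care is recognising, after inserting the equaliser relation into condition~(5), that the resulting string of arrows really is $\ol\omega\cdot He$ --- there are several copies of $H$ whiskered on either side that must be matched up correctly, and one must also keep in mind that the entwining $\oom$ produced abstractly in Proposition~\ref{entw-str} is the concrete $\ol\omega$ of \ref{more-nat}, before the weak comonad--monad entwining axiom can legitimately be invoked.
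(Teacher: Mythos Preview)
Your argument is correct, and the reduction to the single identity
\[
\delta\cdot m\cdot H\iota^{\ol{\xi}}=\sigma\cdot H\iota^{\ol{\xi}}
\]
is a clean simplification. The chain you describe does go through: writing condition~(5) in the form $\delta\cdot m=mH\cdot H\oom\cdot\delta H$ (this is exactly (\ref{compatible-2})), one has
$\oom\cdot H\iota^{\ol\xi}=Hm\cdot\tau H\cdot H(\delta\cdot\iota^{\ol\xi})$;
inserting the equaliser relation, using interchange and $H$-associativity regroups this as $Hm\cdot(\oom\cdot He)H\cdot H\iota^{\ol\xi}$, after which (\ref{ent-2})(ii), coassociativity, naturality and $1*\oxi=1$ collapse everything to $\sigma\cdot H\iota^{\ol\xi}$.

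This is a genuinely different route from the paper's. The paper does not pass through your reduced identity; it chases Figure~1 directly from $\sigma\cdot\rho_r H$ to $\sigma\cdot H\rho_l$ using the \emph{monad--comonad} entwining $\omega$ (regions (2) and (10) are instances of (\ref{compatible})), together with associativity, functoriality, the unit law, and the equaliser (\ref{pair.dd}). In particular the paper never invokes $\oom$, its axiom $\oom\cdot He=\oxi H\cdot\delta$, or the convolution identity $1*\oxi=1$. Your approach trades a larger diagram for a shorter linear computation, at the cost of importing more of the $\oom$-side apparatus; the paper's approach stays entirely on the $\omega$-side but needs the full two-dimensional Figure~1. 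Both depend on Proposition~\ref{equaliser} in the same essential way.
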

\begin{proof} Since (\ref{pair.dd}) is an equaliser diagram and since in Figure 1
\begin{itemize}
  \item  Diagram (1) commutes since $e$ is the unit of the monad $\bT$;
  \item  Diagrams (2) and (10) commute by Proposition \ref{entw-str};
  \item  Diagrams (3), (7) and (9)  commute by associativity of $m$;
  \item  Diagrams (4), (5), (6) and (8) commute by functoriality of composition,
\end{itemize}   one sees that
$\sigma \cdot \rho_r H  =Hm \cdot \delta H \cdot Hm \cdot H\iota^{\ol{\xi}} H=
\sigma \cdot  H \rho_l $.
\end{proof}

\begin{figure}[t] \label{figure}
\xymatrix@C=.6in @R=.6in{HH^{\ol{\xi}}H \ar[ddd]_{H\iota^{\ol{\xi}} H}
\ar[rr]^-{H\iota^{\ol{\xi}} H} &&
 H^3 \ar@{}[rrd]|{(2)} \ar[dl]_{HeH^2} \ar[d]^{H \delta H}\ar[r]^{mH}& H^2 \ar[r]^{\delta H} &
H^3 \ar@{}[rdd]|(.15){(3)} \ar[r]^{Hm}& H^2\\
& H^4 \ar@{}[l]|{(1)} \ar[ddl]_{mH^2} \ar[dd]_{H^2m}\ar[rd]_{H \delta H^2}& H^4 \ar[rd]|{H^2 m}
 \ar[r]_{\omega H^2} & H^4 \ar@{}[d]|{(4)} \ar[ru]_{HmH} \ar[r]_{H^2m} & H^3 \ar[ru]_{Hm} &\\
& & H^5 \ar@{}[r]|{(7)} \ar[rd]_{H^3m}\ar[u]|{H^2mH}& H^3 \ar@{}[rd]|{(8)} \ar[ru]_{\omega H}&
\ar@{}[r]|{(9)}&\\
H^3 \ar@{}[r]|{(5)} \ar[rd]_{Hm}& H^3 \ar@{}[drr]|{(10)}\ar@{}[ru]|{(6)}\ar[d]_{mH}
\ar[rr]_{H \delta H} && H^4 \ar[u]_{H^2m}\ar[r]_{\omega H^2} &
 H^4 \ar[uu]^{H^2m}\ar[r]^{HmH}& H^3 \ar[uuu]_{Hm}\\
& H^2 \ar[rrrru]_{\delta H}&&&&
} \caption{}\end{figure}

Suppose now that the tensor product $H \!\otimes_{\ul{\bH}^{\ol{\xi}}} \! H$ exists,
i.e., there is a coequaliser diagram
\begin{equation}\label{def_coeq}
\xymatrix @C=.5in{  HH^{\ol{\xi}}H \ar@{->}@<0.5ex>[r]^-{\rho_r H}
\ar@{->}@<-0.5ex>[r]_{H \rho_l}&   HH \ar[r]^-{l}&   H \!\otimes_{\ul{\bH}^{\ol{\xi}}} \! H\,,}
\end{equation} where $l=\text{can}_{\ul{\bH}^{\ol{\xi}}}^{H, H}$.
Note that, by Proposition \ref{action},
$H \!\otimes_{\ul{\bH}^{\ol{\xi}}} \! H$ has a right $\ul{\bH}$-module structure
such that $l$ is a morphism of right $\ul{\bH}$-modules. Moreover, since $\sigma$
coequalises  $\rho_r H$ and $H \rho_l$,
the composite $\ol{p}\,\cdot \,\sigma: HH \to \ol{G} $ (see (\ref{olG}))
also coequalises them, and since Diagram (\ref{def_coeq}) is a coequaliser,
there exists a unique natural transformation
$\gamma:H \!\otimes_{\ul{\bH}^{\ol{\xi}}} \! H \to \ol{G}$ making the diagram
\begin{equation}\label{gamma}
\xymatrix @R=.3in @C=.5in{  HH^{\ol{\xi}}H \ar@{->}@<0.5ex>[r]^-{\rho_r H}
\ar@{->}@<-0.5ex>[r]_{H \rho_l}& HH \ar[rd]_{\ol{p}\,\cdot \sigma}\ar[r]^-{l}&
 H \!\otimes_{\ul{\bH}^{\ol{\xi}}} \! H \ar@{-->}[d]^{\exists !\gamma}\\
&& \ol{G}}
\end{equation}
commute. It follows -- since $U_{\ul{\bH}}t \phi_{\ul{\bH}}=\ol{p}\cdot \sigma$
by Proposition \ref{coaction} %Lemma \ref{L1}
 -- that the diagram
\begin{equation}\label{fund}
\xymatrix @R=.4in @C=.5in{HH \ar[rd]_{U_{\ul{\bH}}t \phi_{\ul{\bH}}}\ar[r]^{\sigma} \ar[d]_{l} &
 HH  \ar[d]^{\ol{p}}\\
H \!\otimes_{\ul{\bH}^{\ol{\xi}}} \! H \ar[r]_{\gamma} & \ol{G}}
\end{equation} commutes.
%Postcomposing
Precomposing this square with $He$ and using   $\sigma \cdot He=\delta$
(e.g. \cite[5.2]{MW-Bim}), we get
\begin{equation}\label{fund0}
\ol{p} \cdot \delta=\gamma \cdot l \cdot He.
\end{equation}

\begin{thm}\label{gamma'-coeq}{\bf $H$ as $H^\oxi$-bicomodule.} \em
The comonad morphism $q^{\ol{\xi}}:\ol{\bH} \to \ol{\bH}_{\ol{\xi}}$
equips $H$ with an $\ol{\bH}_{\ol{\xi}}$-bicomodule structure, where the
left coaction is the composite
$$\theta_l: H \xr{\delta} HH  \xr{q^{\ol{\xi}}H} H_{\ol{\xi}}H,$$
and the right coaction is the composite
$$\theta_l: H \xr{\delta} HH  \xr{Hq^{\ol{\xi}}} HH_{\ol{\xi}}.$$

Moreover, there is a unique natural transformation
$\gamma':\ol{T}\to H \!\otimes^{\ol{H}_{\ol{\xi}}}\!H$ making
the triangle in the diagram
\begin{equation}\label{gamma'}
\xymatrix @R=.3in @C=.5in{ H \!\otimes^{\ol{H}_{\ol{\xi}}}\!H \ar[r]^{\text{can}}&
HH \ar@{->}@<0.5ex>[r]^-{\theta_r H}
\ar@{->}@<-0.5ex>[r]_-{H \theta_l}& HH_{\ol{\xi}}H \\
\ol{T} \ar@{-->}[u]^{\exists !\gamma'} \ar[ru]_{\sigma\,\cdot \ol{i}\,'}&& }
\end{equation}
commute. Here $\ol{i}\,'$ and $\ol{T}$ are as defined in the diagram (\ref{olT}).
\end{thm}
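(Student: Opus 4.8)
The plan is to read Theorem~\ref{gamma'-coeq} as the formal dual of the paragraph built around diagram~(\ref{gamma}). There $\sigma$ coequalises the pair $(\rho_r H,H\rho_l)$ (Proposition~\ref{gamma-coeq}) and hence factors through the coequaliser $l$ defining $H\otimes_{\ul{\bH}^{\ol{\xi}}}H$; here I will instead show that the composite $\sigma\cdot\ol{i}\,'$ appearing in~(\ref{gamma'}) \emph{equalises} the pair $(\theta_r H,H\theta_l: HH\rightrightarrows HH_{\ol{\xi}}H)$ and then obtain $\gamma'$ from the universal property of the equaliser $\text{can}$ defining $H\otimes^{\ol{H}_{\ol{\xi}}}H$ (Section~\ref{cotensor}). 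Throughout I use $\ol{i}\,'\cdot\ol{p}\,'=\kappa'$ and $\ol{p}\,'\cdot\ol{i}\,'=1$, whence $\kappa'\cdot\ol{i}\,'=\ol{i}\,'$, where $\kappa'=mH\cdot H\omega\cdot HHe$ as in~(\ref{olT}).

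For the bicomodule structure I first record that $q^{\ol{\xi}}$ is a morphism of comonads $\ol{\bH}\to\ol{\bH}_{\ol{\xi}}$ — the unconditional version of the corresponding clause of Proposition~\ref{gen.comonad}, read off from the split (co)equalisers of Corollary~\ref{corollary} together with Proposition~\ref{AVR}: the counit clause is $\ve_{\ol{\xi}}\cdot q^{\ol{\xi}}=\ve\cdot\ol{\xi}=\ve$ by~\ref{AVR}(1), while the comultiplication clause $\delta_{\ol{\xi}}\cdot q^{\ol{\xi}}=(q^{\ol{\xi}}q^{\ol{\xi}})\cdot\delta$, after inserting $\delta_{\ol{\xi}}=(q^{\ol{\xi}}q^{\ol{\xi}})\cdot\delta\cdot\iota^{\ol{\xi}}$ and using $\chi H\cdot\delta\cdot\iota^{\ol{\xi}}=\delta\cdot\iota^{\ol{\xi}}$ (Proposition~\ref{equaliser}) and the relations among $\chi,\ol{\chi},\ol{\xi}$ in~\ref{AVR}(6), is the computation already underlying Corollary~\ref{corollary} up to left/right symmetry. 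Granting this, the canonical $\ol{\bH}$-bicomodule $(H,\delta,\delta)$ is transported along $q^{\ol{\xi}}$ to an $\ol{\bH}_{\ol{\xi}}$-bicomodule with left and right coactions exactly $q^{\ol{\xi}}H\cdot\delta=\theta_l$ and $Hq^{\ol{\xi}}\cdot\delta=\theta_r$; the (co)unit, coassociativity and bicomodule-compatibility axioms are immediate from the counitality and coassociativity of $\ol{\bH}$ and the naturality of $q^{\ol{\xi}}$.

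For $\gamma'$: since $\text{can}: H\otimes^{\ol{H}_{\ol{\xi}}}H\to HH$ is by definition the equaliser of $(\theta_r H,H\theta_l)$, a $\gamma'$ with $\text{can}\cdot\gamma'=\sigma\cdot\ol{i}\,'$ exists precisely when $\sigma\cdot\ol{i}\,'$ equalises this pair, and it is then unique because $\text{can}$ is monic; commutativity of the triangle in~(\ref{gamma'}) is just this defining equation. Writing $\theta_r H=Hq^{\ol{\xi}}H\cdot\delta H$ and $H\theta_l=Hq^{\ol{\xi}}H\cdot H\delta$, what must be proved is
\[
Hq^{\ol{\xi}}H\cdot\delta H\cdot\sigma\cdot\ol{i}\,'=Hq^{\ol{\xi}}H\cdot H\delta\cdot\sigma\cdot\ol{i}\,'.
\]
I would expand $\delta H\cdot\sigma$ and $H\delta\cdot\sigma$ by naturality of $\delta$, coassociativity, and the compatibility~(\ref{compatible}) of $\omega$, bringing both sides into the shape $Hq^{\ol{\xi}}H\cdot HHm\cdot(\,\cdot\,)\cdot\delta H\cdot\ol{i}\,'$ in which the two central factors differ only in the way the inner copies of $H$ are (co)multiplied; the residual discrepancy is then cancelled after precomposition with $\ol{i}\,'$ via $\kappa'\cdot\ol{i}\,'=\ol{i}\,'$ together with the collapse identities $q^{\ol{\xi}}\cdot\ol{\chi}=q^{\ol{\xi}}$ and the $\chi$/$\ol{\xi}$-relations of Proposition~\ref{AVR}(4),(5),(6). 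This is the exact dual of Proposition~\ref{gamma-coeq}, so I would organise the verification as the mirror image of the figure for that proposition, read right-to-left with multiplications and comultiplications (resp. units and counits) interchanged; $\gamma'$ is then the factorisation of $\sigma\cdot\ol{i}\,'$ through $\text{can}$. (The cotensor $H\otimes^{\ol{H}_{\ol{\xi}}}H$ is available because $H^{\ol{\xi}}$ is Frobenius separable by Proposition~\ref{sep.frobenius2}, so that over it the cotensor coincides with the tensor product $H\otimes_{\ul{\bH}^{\ol{\xi}}}H$ already assumed to exist; alternatively one works under the dual standing hypothesis that this equaliser exists.)

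The main obstacle is the displayed equality. The delicate point is that $\delta H\cdot\sigma\neq H\delta\cdot\sigma$ in general: the two agree only after being sandwiched between $Hq^{\ol{\xi}}H$ on the left and $\ol{i}\,'$ on the right, so the compatibility~(\ref{compatible}) and the splitting relation $\kappa'\cdot\ol{i}\,'=\ol{i}\,'$ must be inserted at precisely the right positions and the leftover term recognised as one annihilated by $q^{\ol{\xi}}$ through Proposition~\ref{AVR}. Keeping track of the several adjacent copies of $H$ — five at the widest point, as in the figure for Proposition~\ref{gamma-coeq} — is where the real care lies; everything after the equalising property is the formal universal property of the equaliser.
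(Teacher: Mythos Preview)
Your reading of Theorem~\ref{gamma'-coeq} as the formal dual of Proposition~\ref{gamma-coeq} and the construction of $\gamma$ in diagram~(\ref{gamma}) is exactly how the paper treats it: the statement is asserted without proof, immediately after the corresponding material for $\gamma$, as its symmetric companion. Your outline---transport the bicomodule structure along the comonad morphism $q^{\ol{\xi}}$, then show that $\sigma\cdot\ol{i}\,'$ equalises $(\theta_r H,\,H\theta_l)$ and invoke the universal property of the equaliser defining the cotensor---is the intended argument and supplies considerably more detail than the paper itself.

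One cautionary remark: the phrase ``exact dual of Proposition~\ref{gamma-coeq}'' is slightly loose. The statement is not the literal categorical dual (note that $\sigma=Hm\cdot\delta H$ appears in both versions rather than being replaced by $\ol\sigma=mH\cdot H\delta$, and the roles of $\kappa$ versus $\kappa'$, $\ol p$ versus $\ol i\,'$ are swapped asymmetrically). So while the verification is indeed organised along the lines of Figure~1, it is not obtained by mechanically reversing arrows; the analogue of the key input $\kappa\cdot\sigma=\sigma$ (from~(\ref{c-diagrams})) must be replaced by an argument that $\sigma\cdot\kappa'$ and $\sigma$ agree after postcomposing with $Hq^{\ol\xi}H\cdot\delta H$ and $Hq^{\ol\xi}H\cdot H\delta$, which is where the compatibility~(\ref{compatible}), the coequaliser of Proposition~\ref{coequaliser}, and the identities of Proposition~\ref{AVR} enter in the manner you sketch. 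This is a real computation rather than pure formality, and your acknowledgement that ``the displayed equality'' is the main obstacle is well placed.
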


\begin{proposition}\label{right}
Let $\bH=(\ul{\bH},\ol{\bH},\tau)$ be a weak braided bimonad on $\A$.
Viewing $H$ as a right $\ul{\bH}\,^{\ol{\xi}}$-module by  the structure map
$HH^{\ol{\xi}} \xr{H\iota^{\ol{\xi}}} HH \xr{m} H$, then
$q^{\ol{\xi}}:H \to H^{\ol{\xi}}$ is a morphism of right
$\ul{\bH}\,^{\ol{\xi}}$-modules.
\end{proposition}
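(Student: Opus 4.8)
The plan is to unwind the module-morphism condition and reduce it to an identity already established in the excerpt. Recall from Corollary~\ref{corollary} that the right $\ul{\bH}\,^{\ol{\xi}}$-module structure on $H^{\ol{\xi}}$ is $m^{\ol{\xi}}=q^{\ol{\xi}}\cdot m\cdot(\iota^{\ol{\xi}}\iota^{\ol{\xi}})$, while the right $\ul{\bH}\,^{\ol{\xi}}$-action on $H$ is $\rho_r=m\cdot H\iota^{\ol{\xi}}$. So the statement to be proved is exactly the equality of natural transformations $HH^{\ol{\xi}}\to H^{\ol{\xi}}$
$$q^{\ol{\xi}}\cdot m\cdot H\iota^{\ol{\xi}}\;=\;q^{\ol{\xi}}\cdot m\cdot(\iota^{\ol{\xi}}\iota^{\ol{\xi}})\cdot(q^{\ol{\xi}}H^{\ol{\xi}}),$$
the right-hand side being $m^{\ol{\xi}}\cdot(q^{\ol{\xi}}H^{\ol{\xi}})$, i.e. the $\ul{\bH}\,^{\ol{\xi}}$-action transported along $q^{\ol{\xi}}$.

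First I would massage the right-hand side using the interchange law. Writing $\iota^{\ol{\xi}}\iota^{\ol{\xi}}=(H\iota^{\ol{\xi}})\cdot(\iota^{\ol{\xi}}H^{\ol{\xi}})$ and using $(\iota^{\ol{\xi}}H^{\ol{\xi}})\cdot(q^{\ol{\xi}}H^{\ol{\xi}})=(\iota^{\ol{\xi}}\cdot q^{\ol{\xi}})H^{\ol{\xi}}=\ol{\xi}H^{\ol{\xi}}$ (by the splitting~(\ref{split-xi})), the composite $(\iota^{\ol{\xi}}\iota^{\ol{\xi}})\cdot(q^{\ol{\xi}}H^{\ol{\xi}})$ becomes $(H\iota^{\ol{\xi}})\cdot(\ol{\xi}H^{\ol{\xi}})$, and a second application of interchange rewrites this as $(\ol{\xi}H)\cdot(H\iota^{\ol{\xi}})$. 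Hence the right-hand side equals $q^{\ol{\xi}}\cdot m\cdot(\ol{\xi}H)\cdot(H\iota^{\ol{\xi}})$, and it is enough to prove the identity $q^{\ol{\xi}}\cdot m\cdot\ol{\xi}H=q^{\ol{\xi}}\cdot m$ of morphisms $HH\to H^{\ol{\xi}}$; precomposing it with $H\iota^{\ol{\xi}}$ then yields the claim.

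That last identity is immediate: composing the equality $\ol{\xi}\cdot m\cdot\ol{\xi}H=\ol{\xi}\cdot m$ of Proposition~\ref{AVR}(2) on the left with $q^{\ol{\xi}}$, and using $q^{\ol{\xi}}\cdot\ol{\xi}=q^{\ol{\xi}}\cdot\iota^{\ol{\xi}}\cdot q^{\ol{\xi}}=q^{\ol{\xi}}$, gives precisely $q^{\ol{\xi}}\cdot m\cdot\ol{\xi}H=q^{\ol{\xi}}\cdot m$; this is in fact the identity already invoked in region (b) of the last diagram in the proof of Lemma~\ref{lemma}. So the argument is entirely formal; the only point needing care — and the closest thing to an obstacle — is the bookkeeping of the interchange law, i.e. keeping straight which tensor factor each of $\iota^{\ol{\xi}}$, $q^{\ol{\xi}}$ and $\ol{\xi}$ acts on so that the cancellation $q^{\ol{\xi}}\cdot m\cdot\ol{\xi}H=q^{\ol{\xi}}\cdot m$ becomes applicable. (Dually, the symmetric fact that $\iota^{\ol{\xi}}$ is a comodule morphism would follow from the second identity in Proposition~\ref{AVR}(2), but it is not needed here.)
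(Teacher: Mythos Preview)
Your proof is correct and follows essentially the same route as the paper: both unwind $m^{\ol{\xi}}\cdot(q^{\ol{\xi}}H^{\ol{\xi}})$ via the splitting $\iota^{\ol{\xi}}\cdot q^{\ol{\xi}}=\ol{\xi}$ and interchange to reduce the claim to $q^{\ol{\xi}}\cdot m\cdot\ol{\xi}H=q^{\ol{\xi}}\cdot m$, which is then read off from $\ol{\xi}\cdot m\cdot\ol{\xi}H=\ol{\xi}\cdot m$ (Proposition~\ref{AVR}(2)). The only cosmetic difference is that the paper cancels $\iota^{\ol{\xi}}$ as a monomorphism while you precompose with $q^{\ol{\xi}}$ and use $q^{\ol{\xi}}\cdot\ol{\xi}=q^{\ol{\xi}}$; these are equivalent.
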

\begin{proof}
For this, we have to show commutativity of the diagram
$$\xymatrix% @R=.4in @C=.4in
{HH^{\ol{\xi}} \ar[rr]^-{q^{\ol{\xi}}H}\ar[d]_{H \iota^{\ol{\xi}}}&&
 H^{\ol{\xi}}H^{\ol{\xi}} \ar[d]^{m^{\ol{\xi}}}\\
HH \ar[r]_-{m}& H \ar[r]_-{q^{\ol{\xi}}}& H^{\ol{\xi}} .}
$$
Since $m^{\ol{\xi}}=q^{\ol{\xi}}\cdot m \cdot (\iota^{\ol{\xi}} \iota^{\ol{\xi}})$,
this diagram can be rewritten as
$$\xymatrix% @R=.4in @C=.4in
{HH^{\ol{\xi}} \ar[r]^-{\ol{\xi}H}\ar[d]_{H \iota^{\ol{\xi}}}&
HH^{\ol{\xi}}\ar[r]^-{H \iota^{\ol{\xi}}} & HH \ar[d]^{m}\\
HH \ar[d]_-{m} \ar@{-->}[rru]_{\ol{\xi}H}&& H \ar[d]^{q^{\ol{\xi}}}\\
H \ar[rr]_{q^{\ol{\xi}}}&& H^{\ol{\xi}}}
$$
 and  in this diagram the triangle commutes by naturality of composition, and
  the trapezoid commutes, since $\iota^{\ol{\xi}}$ is a (split) monomorphism and
  ${\ol{\xi}} \cdot m \cdot {\ol{\xi}} H={\ol{\xi}} \cdot m$ by Proposition \ref{AVR}(2).
This completes the proof.
\end{proof}

Consider now the diagram
\begin{equation}\label{qtilde}
\xymatrix @R=.4in @C=.5in{
H H^{\ol{\xi}} H  \ar[d]_{q^{\ol{\xi}} H^{\ol{\xi}} H}\ar@{->}@<0.5ex>[r]^{\rho_r H}
\ar@ {->}@<-0.5ex>[r]_{H \rho_l}& HH \ar[d]^{q^{\ol{\xi}} H}
\ar[r]^-{l}& H\!\otimes_{\ul{\bH}^{\ol{\xi}}} \!H \ar@{..>}[dr]^-{\widetilde{q}}&\\
H^{\ol{\xi}} H^{\ol{\xi}} H  \ar@{->}@<0.5ex>[r]^{m^{\ol{\xi}} H}
\ar@ {->}@<-0.5ex> [r]_{H^{\ol{\xi}} \rho_l}& H^{\ol{\xi}}
H \ar@/_1.8pc/@{->}[rr]_{\rho_l}
\ar[r]^-{\iota^{\ol{\xi}} H}& HH \ar[r]^m& H}
\end{equation}
in which $q^{\ol{\xi}}H \cdot \rho_rH=m^{\ol{\xi}}H \cdot q^{\ol{\xi}}H^{\ol{\xi}}H$,
 since $q^{\ol{\xi}}$ is a morphism of
right $\ul{\bH}^{\ol{\xi}}$-modules by Proposition \ref{right},
$q^{\ol{\xi}}H \cdot H\rho_l=H^{\ol{\xi}}\rho_l \cdot q^{\ol{\xi}} H^{\ol{\xi}}H$,
because of naturality of composition, and the bottom row is a split coequaliser,
since the pair $(H, \rho_l)$ is a left $\ul{\bH}^{\ol{\xi}}$-
module (see Remark \ref{rightiso.3} (1)). It then follows

\begin{proposition}\label{righ.1}
Let $\bH=(\ul{\bH},\ol{\bH},\tau)$ be a weak braided bimonad on $\A$.
In the situation described above,
 there is a unique morphism $\widetilde{q}:H\!\otimes_{\ul{\bH}^{\ol{\xi}}} \!H \to H$
making the trapezoid in the diagram (\ref{qtilde}) commutative
and this is a morphism of right $\ul{\bH}$-modules.
\end{proposition}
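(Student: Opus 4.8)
The plan is to produce $\widetilde q$ from the universal property of the coequaliser $l$ in (\ref{def_coeq}), and then to upgrade this factorisation to a $\ul{\bH}$-linear one by repeating the construction inside the category of right $\ul{\bH}$-modules, using Proposition \ref{action}.

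First I would check that $\rho_l\cdot q^{\ol{\xi}}H\colon HH\to H$ coequalises the pair $(\rho_r H,\,H\rho_l)$. The bottom row of (\ref{qtilde}) is a split coequaliser because $(H,\rho_l)$ is a left $\ul{\bH}^{\ol{\xi}}$-module (cf.\ Remark \ref{rightiso.3}(1)); in particular $\rho_l$ coequalises $(m^{\ol{\xi}}H,\,H^{\ol{\xi}}\rho_l)$. Combining this with the two commuting squares on the left of (\ref{qtilde}), namely $q^{\ol{\xi}}H\cdot\rho_r H=m^{\ol{\xi}}H\cdot q^{\ol{\xi}}H^{\ol{\xi}}H$ (Proposition \ref{right}) and $q^{\ol{\xi}}H\cdot H\rho_l=H^{\ol{\xi}}\rho_l\cdot q^{\ol{\xi}}H^{\ol{\xi}}H$ (naturality), yields
$$\rho_l\cdot q^{\ol{\xi}}H\cdot\rho_r H=\rho_l\cdot m^{\ol{\xi}}H\cdot q^{\ol{\xi}}H^{\ol{\xi}}H=\rho_l\cdot H^{\ol{\xi}}\rho_l\cdot q^{\ol{\xi}}H^{\ol{\xi}}H=\rho_l\cdot q^{\ol{\xi}}H\cdot H\rho_l.$$
Since $l$ is the coequaliser of $(\rho_r H,\,H\rho_l)$, there is a unique natural transformation $\widetilde q\colon H\otimes_{\ul{\bH}^{\ol{\xi}}}H\to H$ with $\widetilde q\cdot l=\rho_l\cdot q^{\ol{\xi}}H$, and this equation is precisely the commutativity of the trapezoid in (\ref{qtilde}); uniqueness is part of the universal property of the coequaliser.

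For the $\ul{\bH}$-linearity, I would first note that $\rho_l\cdot q^{\ol{\xi}}H$ is itself a morphism of right $\ul{\bH}$-modules $(HH,Hm)\to(H,m)$: naturality of $q^{\ol{\xi}}$ gives $q^{\ol{\xi}}H\cdot Hm=H^{\ol{\xi}}m\cdot q^{\ol{\xi}}HH$, and since $(H,\rho_l,m)$ is an $(\ul{\bH}^{\ol{\xi}},\ul{\bH})$-bimodule (Section \ref{canonical module}(iv)) we have $\rho_l\cdot H^{\ol{\xi}}m=m\cdot\rho_l H$, whence $\rho_l\cdot q^{\ol{\xi}}H\cdot Hm=m\cdot(\rho_l\cdot q^{\ol{\xi}}H)H$. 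Now by Proposition \ref{action} the coequaliser $l$ of (\ref{def_coeq}) is at the same time a coequaliser in the category of right $\ul{\bH}$-modules, with $HH$ carrying the action $Hm$, $HH^{\ol{\xi}}H$ the action $HH^{\ol{\xi}}m$, and both $\rho_r H$ and $H\rho_l$ being $\ul{\bH}$-linear; hence the unique factorisation of the $\ul{\bH}$-linear map $\rho_l\cdot q^{\ol{\xi}}H$ through $l$ is $\ul{\bH}$-linear, and it coincides with $\widetilde q$ since both are characterised by composition with $l$ giving $\rho_l\cdot q^{\ol{\xi}}H$.

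I do not expect a real obstacle: the argument is essentially bookkeeping with the two module structures on $H$ induced by the monad morphism $\iota^{\ol{\xi}}\colon\ul{\bH}^{\ol{\xi}}\to\ul{\bH}$ and by $m$. The only point that deserves care is identifying, in the last step, the arrow $\widetilde q$ obtained from the plain coequaliser in $[\A,\A]$ with the one obtained in the module category — but Proposition \ref{action} ensures that these two coequaliser diagrams share the underlying arrow $l$, so their universal factorisations agree.
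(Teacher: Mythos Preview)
Your proof is correct and follows essentially the same route as the paper: existence of $\widetilde q$ via the coequaliser property of $l$ together with the two commuting squares on the left of (\ref{qtilde}), and then $\ul{\bH}$-linearity of $\widetilde q$ from the $\ul{\bH}$-linearity of $\rho_l\cdot q^{\ol{\xi}}H=m\cdot\iota^{\ol{\xi}}H\cdot q^{\ol{\xi}}H$. The only cosmetic difference is in the last step: the paper observes that $q^{\ol{\xi}}H$, $\iota^{\ol{\xi}}H$, $m$ are right $\ul{\bH}$-module morphisms (Section \ref{canonical module}) and then argues directly that, since $\widetilde q\cdot l$ is $\ul{\bH}$-linear and $l$, $lH$ are epimorphisms, so is $\widetilde q$; you instead invoke Proposition \ref{action} to lift the coequaliser to the module category and read off linearity from the universal property there --- these are equivalent formulations of the same argument.
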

\begin{proof} According to Section \ref{canonical module}, the morphisms
$q^{\ol{\xi}}H, \iota^{\ol{\xi}} H$ and $m$ are
morphisms of right $\ul{\bH}$-modules. Then  the composite
$m \cdot \iota^{\ol{\xi}} \cdot q^{\ol{\xi}}=\rho_l \cdot q^{\ol{\xi}}$
(and hence also $\widetilde{q} \cdot l$) is a morphism of right $\ul{\bH}$-modules,
implying -- since $l$ and $lH$ are both epimorphisms of right $\ul{\bH}$-modules --
that $\widetilde{q}$ is also a morphism of right $\ul{\bH}$-modules.
\end{proof}

\begin{proposition}\label{convolution} Suppose
$\gamma:H \!\otimes_{\ul{\bH}^{\,\ol{\xi}}} \! H \to \ol{G}$ in (\ref{gamma}) to be
an epimorphism. If the morphisms $f,g: H \to H$ are such that $f*1=g*1$, then $f*\xi=g*\xi$.
\end{proposition}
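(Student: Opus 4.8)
The plan is to use the defining property $\gamma\cdot l=\ol{p}\cdot\sigma$ of the canonical map in diagram (\ref{gamma}), together with the fact that $l$ is a coequaliser (hence an epimorphism) and the hypothesis that $\gamma$ is an epimorphism, in order to reduce the claimed identity $f*\xi=g*\xi$ to the hypothesis $f*1=g*1$.

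Since $l$ and $\gamma$ are both epi, so is the composite $\ol{p}\cdot\sigma=\gamma\cdot l\colon HH\to\ol{G}$. The core of the argument is then the auxiliary identity $m\cdot fH\cdot\ol{i}=m\cdot gH\cdot\ol{i}\colon\ol{G}\to H$, where $\ol{i}\colon\ol{G}\to HH$ is the split monomorphism from (\ref{olG}) with $\ol{i}\cdot\ol{p}=\kappa$. To prove it I would compute, for an arbitrary $f\colon H\to H$,
$$m\cdot fH\cdot\ol{i}\cdot(\ol{p}\cdot\sigma)=m\cdot fH\cdot\kappa\cdot\sigma=m\cdot fH\cdot\sigma=m\cdot fH\cdot Hm\cdot\delta H,$$
using $\ol{i}\cdot\ol{p}=\kappa$, the relation $\kappa\cdot\sigma=\sigma$ from (\ref{c-diagrams}) (which holds because $(\ul{\bH},\ol{\bH},\omega)$ is a compatible weak entwining by Proposition \ref{entw-str}), and $\sigma=Hm\cdot\delta H$ from (\ref{fusion}). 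Commuting $fH$ past $Hm$ by naturality of $f$, reassociating $m\cdot Hm=m\cdot mH$, and using functoriality of the whiskering $(-)H$, the last term equals $m\cdot(m\cdot fH\cdot\delta)H=m\cdot(f*1)H$. Thus $m\cdot fH\cdot\ol{i}\cdot\ol{p}\cdot\sigma$ depends on $f$ only through $f*1$; by the hypothesis $f*1=g*1$ the expressions for $f$ and $g$ coincide, and since $\ol{p}\cdot\sigma$ is epi we obtain $m\cdot fH\cdot\ol{i}=m\cdot gH\cdot\ol{i}$.

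It then remains to rewrite $f*\xi$ so that $m\cdot fH\cdot\ol{i}$ appears as a factor. By definition of the convolution product, $f*\xi=m\cdot fH\cdot H\xi\cdot\delta$; by (\ref{ent})(ii), (\ref{c-diagrams}) and (\ref{olG}) one has $H\xi\cdot\delta=\omega\cdot eH=\kappa\cdot He=\ol{i}\cdot\ol{p}\cdot He$, so $f*\xi=(m\cdot fH\cdot\ol{i})\cdot\ol{p}\cdot He$, and likewise $g*\xi=(m\cdot gH\cdot\ol{i})\cdot\ol{p}\cdot He$. These agree by the auxiliary identity, which gives $f*\xi=g*\xi$.

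The only step requiring care is the bookkeeping in the chain $m\cdot fH\cdot\sigma=\dots=m\cdot(f*1)H$: one must correctly commute $fH$ through $Hm$, reassociate $m$, and then recognise $m\cdot fH\cdot\delta$ as $f*1$. Everything else is a direct substitution of identities already recorded in the paper, so I do not expect a genuine obstacle.
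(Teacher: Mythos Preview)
Your argument is correct and reaches the same key identity $m\cdot fH\cdot \ol{i}=m\cdot gH\cdot \ol{i}$ as the paper, and the final step rewriting $f*\xi$ via $\kappa\cdot He=\omega\cdot eH=H\xi\cdot\delta$ is identical to the paper's.

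The route to that key identity, however, is genuinely different and in fact more elementary than the paper's. The paper first observes that $m\cdot fH\cdot\sigma$ and $m\cdot gH\cdot\sigma$ are morphisms of right $\ul{\bH}$-modules which agree after precomposition with $He$ (since $\sigma\cdot He=\delta$), and then invokes an external result (the right version of \cite[Lemma 3.2]{MW-Bim}) to conclude they agree outright; only then does it factor through $\ol{i}\cdot\gamma\cdot l$ and cancel the epimorphism. You bypass the module-theoretic lemma entirely by the direct computation
\[
m\cdot fH\cdot\sigma \;=\; m\cdot fH\cdot Hm\cdot\delta H \;=\; m\cdot Hm\cdot fHH\cdot\delta H \;=\; m\cdot mH\cdot fHH\cdot\delta H \;=\; m\cdot(f*1)H,
\]
using only naturality of $f$ and associativity of $m$. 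This makes your proof self-contained within the paper's own identities. What the paper's approach buys is a conceptual explanation (module morphisms determined by their restriction along the unit), whereas your approach buys independence from the external reference and a shorter chain of reasoning.
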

\begin{proof} If $f,g: H \to H$ are morphisms such that $f*1=g*1$, then
$$m \cdot fH \cdot \delta=m \cdot gH \cdot \delta$$
and since $\sigma \cdot He=\delta$ (e.g. \cite[(5.2)]{MW-Bim}), we have
$$m \cdot fH \cdot \sigma \cdot He=m \cdot gH \cdot \sigma \cdot He.$$
According to Section \ref{canonical module},
$fH$ and $gH$ can be seen as morphisms of the right $\ul{\bH}$-module
$(HH, Hm)$ to itself, while $m$ is a morphism from the right $\ul{\bH}$-module $(HH, Hm)$ to
the $\ul{\bH}$-module $(H, m)$. Moreover,  $\sigma$ is also a morphism of right
$\ul{\bH}$-modules (e.g. \cite[Section 5.1]{MW-Bim}). Thus the composites
$m \cdot fH \cdot \sigma$
and $m \cdot gH \cdot \sigma$ both are morphisms of right $\ul{\bH}$-modules. It
then follows from the right version of \cite[Lemma 3.2]{MW-Bim} that
$$m \cdot fH \cdot \sigma=m \cdot gH \cdot \sigma.$$
Next, since
$\sigma=\kappa \cdot \sigma=\ol{i} \cdot \ol{p} \cdot \sigma$ and
$\ol{p} \cdot \sigma=\gamma \cdot l$ (by (\ref{fund})), we have
$$m \cdot fH \cdot \ol{i} \cdot \gamma \cdot l=m \cdot gH \cdot \ol{i} \cdot \gamma \cdot l,$$
and   $l$ and $\gamma$ being epimorphisms  we get
$m \cdot fH \cdot \ol{i}=m \cdot gH \cdot \ol{i}$,
 thus
$$m \cdot fH \cdot \kappa=m \cdot gH \cdot \kappa.$$
Recalling that $\kappa \cdot He=\omega \cdot eH$ and
$\omega \cdot eH=H\xi \cdot \delta$
(see (\ref{c-diagrams}), (\ref{ent})(ii)), we get
$$
m \cdot fH \cdot \kappa \cdot He   = m \cdot fH \cdot \omega \cdot eH
  = m \cdot fH \cdot H\xi \cdot \delta
  = f * \xi,
$$
and similarly, one derives $m \cdot gH \cdot \kappa \cdot He=g * \xi.$
Thus, $f * \xi=g * \xi$.
\end{proof}

Since $\kappa$ is (clearly) a morphism of right $\ul{\bH}$-modules,
  Proposition  \ref{idem} yields

\begin{proposition} \label{morphism} The composite
$\ol{G}H \xr{\overline{i} H}HHH \xr{Hm}HH \xr{\overline{p}}\ol{G}$
makes $\ol{G}$ into a right $\ul{\bH}$-module such that
$\ol{p}: HH \to \ol{G}$ and $\ol{i}: \ol{G} \to HH$ both are morphisms
of right $\ul{\bH}$-modules.
\end{proposition}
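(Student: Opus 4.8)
The plan is to derive Proposition \ref{morphism} by splitting an idempotent inside an Eilenberg--Moore category, the only external input being Proposition \ref{idem}. First I would record that the functor $HH:\A\to\A$ carries its canonical right $\ul{\bH}$-module structure $(HH,Hm)$ obtained from Section \ref{canonical module} by taking there the functor $N=H$; equivalently, $(HH,Hm)$ is an object of the Eilenberg--Moore category $[\A,\A]_{[\ul{\bH},\A]}$ of the monad $[\ul{\bH},\A]$ on the endofunctor category $[\A,\A]$. Since $\A$ is Cauchy complete by our standing assumption, so is $[\A,\A]$, because idempotent natural transformations split componentwise; hence Proposition \ref{idem}, applied to the monad $[\ul{\bH},\A]$, shows that the forgetful functor $[\A,\A]_{[\ul{\bH},\A]}\to[\A,\A]$ preserves and creates splittings of idempotents, with the lifted action given by the formula dual to the one displayed there.

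Second, I would check that the idempotent $\kappa$ of (\ref{olG}) is an idempotent \emph{in} the category of right $\ul{\bH}$-modules, i.e.\ that $\kappa:HH\to HH$ is a morphism of right $\ul{\bH}$-modules $(HH,Hm)\to(HH,Hm)$. That $\kappa\cdot\kappa=\kappa$ is already recorded in (\ref{c-diagrams}); compatibility with the action, $\kappa\cdot Hm=Hm\cdot\kappa H$, is a short chase: unfolding $\kappa=Hm\cdot\omega H\cdot eHH$, one slides the unit $e$ past $Hm$ by naturality, moves $\omega H$ past the multiplication acting on the disjoint copies of $H$ by functoriality of composition, and collapses $Hm\cdot HHm=Hm\cdot HmH$ by associativity of $m$. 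This is exactly the parenthetical ``clearly'' in the sentence preceding the statement, and it is the only computation involved.

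Finally, I would invoke the creation part of Proposition \ref{idem} for the idempotent $\kappa$ on $(HH,Hm)$ and the chosen splitting $HH\xr{\ol{p}}\ol{G}\xr{\ol{i}}HH$ from (\ref{olG}). Dualising the explicit formula stated there in the comonad case, the lifted right $\ul{\bH}$-action on $\ol{G}$ is $\ol{p}\cdot Hm\cdot\ol{i}H$, i.e.\ the composite $\ol{G}H\xr{\ol{i}H}HHH\xr{Hm}HH\xr{\ol{p}}\ol{G}$ appearing in the statement, and both $\ol{p}$ and $\ol{i}$ automatically become morphisms of right $\ul{\bH}$-modules with respect to it; this is the assertion. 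I do not expect a genuine obstacle: once $(HH,Hm)$ has been identified and $\kappa$ seen to be a module endomorphism, everything is an application of Proposition \ref{idem}; the only point requiring care is the whiskering bookkeeping in the identity $\kappa\cdot Hm=Hm\cdot\kappa H$ and in reading off the lifted action from the dual of Proposition \ref{idem}.
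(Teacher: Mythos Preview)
Your proposal is correct and follows exactly the paper's approach: the paper's one-line proof reads ``Since $\kappa$ is (clearly) a morphism of right $\ul{\bH}$-modules, Proposition \ref{idem} yields'', and you have simply unpacked both clauses of that sentence. The only difference is that you spell out the verification $\kappa\cdot Hm=Hm\cdot\kappa H$ and the invocation of Proposition \ref{idem} in the Eilenberg--Moore category $[\A,\A]_{[\ul{\bH},\A]}$, whereas the paper leaves these implicit.
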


\begin{corollary} \label{corollary.1}$\gamma:H \!\otimes_{\ul{\bH}^{\,\ol{\xi}}} \! H \to \ol{G}$
is a morphism of right $\ul{\bH}$-modules.
\end{corollary}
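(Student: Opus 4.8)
The plan is to upgrade the defining coequaliser (\ref{def_coeq}) of $H \!\otimes_{\ul{\bH}^{\ol{\xi}}} \! H$ from $[\A,\A]$ to the category of right $\ul{\bH}$-modules and then to invoke its universal property there. First I would record that Proposition \ref{action}, applied with $\bT=\ul{\bH}^{\ol{\xi}}$ and $\bS=\ul{\bH}$ (taking $M=H$ to be the right $\ul{\bH}^{\ol{\xi}}$-module given by $\rho_r$ and $N=H$ the $(\ul{\bH}^{\ol{\xi}},\ul{\bH})$-bimodule $(H,\rho_l,m)$), tells us that (\ref{def_coeq}) is in fact a coequaliser \emph{in the category of right $\ul{\bH}$-modules}, where $HH^{\ol{\xi}}H$ and $HH$ carry the canonical right $\ul{\bH}$-actions $HH^{\ol{\xi}}m$ and $Hm$ of Section \ref{canonical module}, the morphism $l$ is a morphism of right $\ul{\bH}$-modules, and both legs $\rho_r H$ and $H\rho_l$ are morphisms of right $\ul{\bH}$-modules, since the right action sits on the final $H$-factor, which neither map disturbs. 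This is the same structure already exploited in Proposition \ref{righ.1}.

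Next I would check that the morphism $\ol p\cdot\sigma\colon HH\to\ol G$ through which $\gamma$ is defined in (\ref{gamma}) is itself a morphism of right $\ul{\bH}$-modules: the natural transformation $\sigma=Hm\cdot\delta H\colon(HH,Hm)\to(HH,Hm)$ is one by \cite[Section 5.1]{MW-Bim} (as already used in the proof of Proposition \ref{convolution}), and $\ol p\colon(HH,Hm)\to\ol G$ is one by Proposition \ref{morphism}, so their composite is too.

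Finally, since $\ol p\cdot\sigma$ coequalises the pair $(\rho_r H,H\rho_l)$ by Proposition \ref{gamma-coeq}, the universal property of (\ref{def_coeq}) in the category of right $\ul{\bH}$-modules produces a unique right $\ul{\bH}$-module morphism $\ol\gamma\colon H \!\otimes_{\ul{\bH}^{\ol{\xi}}} \! H\to\ol G$ with $\ol\gamma\cdot l=\ol p\cdot\sigma$. Forgetting the module structures, $\ol\gamma$ is a factorisation of $\ol p\cdot\sigma$ through the epimorphism $l$ in $[\A,\A]$, hence $\ol\gamma=\gamma$ by the defining property (\ref{gamma}) of $\gamma$; therefore $\gamma$ is a morphism of right $\ul{\bH}$-modules.

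There is essentially no genuine obstacle here — the whole argument is the routine fact that a map induced on a coequaliser in a module category out of a module map is again a module map, entirely parallel to Proposition \ref{righ.1}. The only points that need a little care are invoking Proposition \ref{action} in exactly the form that makes (\ref{def_coeq}) a coequaliser of right $\ul{\bH}$-modules, and recording that the parallel pair $(\rho_r H,H\rho_l)$ consists of right $\ul{\bH}$-module morphisms — both immediate from the placement of the right $\ul{\bH}$-action on the last $H$-factor.
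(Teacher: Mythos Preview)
Your proposal is correct and takes essentially the same approach as the paper: both rely on $\sigma$ and $\ol{p}$ being right $\ul{\bH}$-module morphisms (via \cite[5.1]{MW-Bim} and Proposition \ref{morphism}) and on Proposition \ref{action} for the module structure on the tensor product. The paper's proof is simply the terse epimorphism-cancellation version of your argument --- it notes that $l$ and $lH$ are epimorphisms and concludes directly, rather than phrasing things via the universal property of the coequaliser in the module category.
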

\begin{proof} Since, in  Diagram (\ref{fund}),  the morphisms $\sigma$, $\ol{p}$ and $l$
are all morphisms of right $\ul{\bH}$-modules (see \cite[5.1]{MW-Bim} and Proposition \ref{morphism}),
and since $l$ and $lH$ are both epimorphisms, it follows that $\gamma$ is also a morphism of
right $\ul{\bH}$-modules.
\end{proof}

\section{Weak braided Hopf monads}\label{t-Hopf}

In this section, we define a weak antipode for weak braided bimonads
$\bH=(\ul H,\ol H,\tau)$  and formulate various forms of a Fundamental Theorem.
The definition corresponds to that in \cite{BNS}, \cite{AVR} and in other papers on
(generalisations of) weak Hopf algebras.
 For the notations we refer to the preceding section.

\begin{thm}\label{def-S}{\bf Definition.}  \em
Given a weak  braided bimonad $\bH$,
 a natural transformation $S:H \to H$ is
called an \emph{weak antipode}, provided
$$ 1*S=\xi,\quad S * 1=\ol{\xi}, \quad S*1*S=S.$$
Since  $ \xi* 1=1=1*\oxi $ (see  (\ref{c-diagrams}), (\ref{ent-equal})), we also get $1*S*1=1$.

A weak braided bimonad $\bH$ with a weak antipode is called   a {\em weak braided Hopf monad} or
a {\em weak $\tau$-Hopf monad}.
\end{thm}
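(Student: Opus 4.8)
The only thing asserted in Definition~\ref{def-S} beyond naming conventions is the closing remark: a weak antipode $S$ automatically satisfies $1*S*1=1$. The plan is to obtain this as a purely formal consequence of associativity of the convolution product $*$ on $\mathbf{Nat}(H,H)$ together with the two identities $\xi*1=1$ and $1*\oxi=1$ already recorded earlier in the paper.

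First I would collect the ingredients. By Proposition~\ref{entw-str}, the triple $(\ul{\bH},\ol{\bH},\omega)$ is a \emph{compatible} weak monad comonad entwining, so \eqref{compatible} holds and hence, by \eqref{c-diagrams}, the natural transformation $\xi$ of \ref{more-nat} satisfies $\xi*1=1$. Dually, $(\ol{\bH},\ul{\bH},\oom)$ is a compatible weak comonad monad entwining, so \eqref{compatible-2} holds and \eqref{ent-equal} gives $1*\oxi=1$. Recall also from Section~\ref{w-entw} that $(\mathbf{Nat}(H,H),*)$ is a monoid with unit $e\cdot\ve$; in particular $*$ is associative.

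The computation is then immediate. Using the first weak-antipode axiom $1*S=\xi$ and associativity,
$$1*S*1=(1*S)*1=\xi*1=1,$$
and, equivalently, using the second axiom $S*1=\oxi$,
$$1*S*1=1*(S*1)=1*\oxi=1.$$
The third axiom $S*1*S=S$ is not needed here; it is the genuinely ``weak'' replacement for invertibility of $S$ and is used elsewhere.

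There is essentially no obstacle. The only point requiring a moment's care is the bookkeeping that the $\xi$ and $\oxi$ appearing in the axioms of Definition~\ref{def-S} are precisely the natural transformations constructed in \ref{more-nat}, for which \eqref{c-diagrams} and \eqref{ent-equal} apply; given Proposition~\ref{entw-str} this identification is transparent, and nothing further is involved.
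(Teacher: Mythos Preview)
Your argument is correct and is exactly the paper's own: the claim $1*S*1=1$ is obtained immediately from associativity of $*$ together with $1*S=\xi$ and $\xi*1=1$ (or dually $S*1=\oxi$ and $1*\oxi=1$), with the latter identities supplied by \eqref{c-diagrams} and \eqref{ent-equal} via Proposition~\ref{entw-str}. Your careful bookkeeping identifying the relevant $\xi,\oxi$ as those of Section~\ref{more-nat} is the only elaboration beyond the paper's one-line remark.
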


\begin{proposition}\label{right.ad.antipode} Let $\bH=(\ul{\bH},\ol{\bH},\tau)$
be a weak braided Hopf monad on $\A$. Then the functor $K_\omega:\A \to \Aa$
admits a right adjoint $\ol{K}:\Aa \to \A$.
\end{proposition}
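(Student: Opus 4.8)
The plan is to use Proposition \ref{pair}, which reduces the existence of a right adjoint for $K_\omega$ to showing that, for every mixed bimodule $(a,h,\theta)\in\Aa$, the pair of morphisms
$$\xymatrix{a \ar@/^2pc/@{->}[rrr]^{\theta} \ar[r]_{e_a} & H(a) \ar[r]_{\delta_a} & HH(a) \ar[r]_{H(h)}& H(a)}$$
admits an equaliser in $\A$. The key idea is that the weak antipode $S$ allows one to split this pair, i.e.\ to exhibit a morphism $H(a)\to a$ witnessing the pair as part of a split equaliser (in the sense of \ref{contractible}); since split equalisers are absolute, they exist in any category, and in particular the required equaliser exists. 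Concretely, I expect the splitting morphism to be built from $\theta$, the action $h$, and $S$: something like the composite $s_a:H(a)\xr{H\theta}HH(a)\xr{HS\,\cdot\,(\text{something})}\cdots\xr{h}a$, designed so that $s_a\cdot(H(h)\cdot\delta_a\cdot e_a)=1_a$ and $s_a\cdot\theta=s_a\cdot(H(h)\cdot\delta_a\cdot e_a)\cdot(\text{the shared leg})$ — the usual data for a split fork.

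First I would recall that, because $\bH$ is a weak braided bimonad, $(\ul\bH,\ol\bH,\omega)$ is a compatible weak monad–comonad entwining (Proposition \ref{entw-str}), so Proposition \ref{pair} applies and the top leg $H(h)\cdot\delta_a\cdot e_a$ is well-defined; moreover by Proposition \ref{equaliser} this top leg, in the universal case $a=H(b)$, factors through the split equaliser $H^{\ol\xi}\hookrightarrow H$. Next I would write down the candidate splitting $s_a$ using $S$ and the defining identities $1*S=\xi$, $S*1=\ol\xi$, $1*S*1=1$ of Definition \ref{def-S}, together with the identities $\omega\cdot eH=H\xi\cdot\delta$, $\ve H\cdot\omega=H\xi\cdot\delta$ from \ref{w-entw}, the module/comodule compatibility in the definition of $\Aa$ (\ref{mix-bim}), and the convolution identities for $\xi,\ol\xi,\chi,\ol\chi$ from Proposition \ref{AVR}. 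Then I would verify the two split-fork equations by a diagram chase: one of them should be essentially $1*S*1=1$ evaluated against the module structure, and the other should use $S*1=\ol\xi$ together with the comodule axiom for $\theta$ and the entwining condition linking $h$, $\theta$, and $\omega$.

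The main obstacle will be getting the combinatorics of the weak (non-unital) setting right: unlike the strong Hopf case, $1*S$ is only the idempotent $\xi$ rather than $e\cdot\ve$, so the "contraction" does not collapse cleanly to an identity and one must carry the idempotents $\xi$, $\ol\xi$ through the computation, using precisely the absorption relations $\xi*1=1$, $1*\ol\xi=1$ and the mixed relations of \ref{AVR}(6) at the right moments. I would organise the verification as two labelled commutative diagrams, citing \ref{AVR}, \ref{def-S}, \ref{w-entw}, \ref{mix-bim} and naturality for each region, and then conclude by invoking \ref{contractible} (split forks are absolute, hence the equaliser exists) and Proposition \ref{pair} to obtain the right adjoint $\ol K:\Aa\to\A$.
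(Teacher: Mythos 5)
Your overall strategy coincides with the paper's: reduce via Proposition \ref{pair} to the existence of an equaliser of the pair $(\theta,\ H(h)\cdot\delta_a\cdot e_a)$ for each $(a,h,\theta)\in\Aa$, and use the weak antipode to exhibit a contraction of that pair. The contraction the paper uses is simply $d=h\cdot S_a:H(a)\to a$ (your candidate $s_a$, routed through $H(\theta)$, is vaguer than necessary). One of the two identities is then easy: $d\cdot H(h)\cdot\delta_a\cdot e_a=1_a$ follows from naturality of $S$, the module law $h\cdot H(h)=h\cdot m_a$, the antipode identity $S*1=\ol{\xi}$, and $\ol{\xi}\cdot e=e$ (Proposition \ref{AVR}(1)); the other identity, $H(h)\cdot\delta_a\cdot e_a\cdot d\cdot\theta=\theta\cdot d\cdot\theta$, is the nontrivial one, which the paper imports from the proof of Proposition 3.5(ii) of \cite{AVR} rather than reproving. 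So you have located the right identities, even if the diagram chases are only sketched.

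The genuine flaw is in your concluding step: ``since split equalisers are absolute, they exist in any category, and in particular the required equaliser exists.'' Absoluteness says that a split equaliser, once it exists, is preserved by every functor; it does not create one. What the antipode gives you is only that the pair is cosplit by $d$ in the sense of \ref{contractible}; in the weak setting the resulting contraction $d\cdot\theta=h\cdot S_a\cdot\theta$ is merely an idempotent on $a$ (not the identity, precisely because $1*S=\xi\neq e\cdot\ve$, the obstacle you yourself flag), and the equaliser object is the retract $\ol{a}$ obtained by splitting this idempotent, not $a$ itself. Producing $\ol{a}$ uses the standing hypothesis of Section \ref{t-bim} that $\A$ is Cauchy complete; in a general category a cosplit pair need not admit an equaliser at all. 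So your plan must end differently: show the pair is cosplit by $h\cdot S_a$, observe that $h\cdot S_a\cdot\theta$ is idempotent, split it by Cauchy completeness, and check that the splitting $\ol{a}\to a$ is the (split, hence absolute) equaliser --- which is exactly how the paper concludes, with $\ol{K}(a,h,\theta)=\ol{a}$.
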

\begin{proof} Suppose that $\bH$ has an antipode $S$. By Proposition \ref{pair}, we have to show that
for any $(a,h,\theta) \in \Aa$, the pair (\ref{pair.d}) has an equaliser. We claim that the pair is
cosplit by the composite $d:H(a) \xr{S_a} H(a) \xr{h} a $. Indeed,
  in the proof of \cite[Proposition 3.5 (ii)]{AVR} it is shown that
$$H(h) \cdot \delta_a \cdot e_a \cdot d \cdot \theta =\theta \cdot d \cdot \theta.$$
 It remains to prove that $$d \cdot H(h) \cdot \delta_a \cdot e_a=1.$$
For this, consider the diagram
$$ \xymatrix  @R=.4in @C=.5in{
a \ar[rrdd]_-{e_a} \ar[r]^-{e_a} &  H(a) \ar[rdd]^(.40){\ol{\xi}_a} \ar[r]^-{\delta_a} &
HH(a) \ar[d]^{S_{H(a)}}\ar[r]^-{H(h)} & H(a) \ar[d]^{S_a}\\
& &HH(a) \ar[r]^-{H(h)} \ar[d]^{m_a} & H(a) \ar[d]^{h}\\
&& H(a) \ar[r]_-{h} & a,}
$$ in which
\begin{itemize}
  \item the left triangle commutes by Proposition \ref{AVR} (1);
  \item the right triangle commutes because $S$ is an antipode;
  \item the top square commutes by naturality of $S$;
  \item the bottom square commutes since $(a,h)\in \A_{\ul{\bH}}$.
\end{itemize}
So the whole diagram commutes and since $h \cdot e_a =1$,
the outer paths show  that the desired equality holds.

Thus the composite $ d\cdot \theta: a \xr{\theta} H(a) \xr{S_a} H(a) \xr{h} a$ is an idempotent
and if $a \xr{q_a} \ol{a} \xr{\iota_a} a$ is a splitting of this idempotent, then
the diagram
\begin{equation*}\label{split.eq}
\xymatrix{\ol{a} \ar[r]^{\iota_a} & a \ar@/^2pc/@{->}[rrr]^{\theta}
\ar[r]_{e_a} & H(a) \ar[r]_{\delta_a} & HH(a) \ar[r]_{H(h)}& H(a)}
\end{equation*}
is a (split) equaliser in $\A$. Thus, $\ol{K}$ exists and for any
$(a,h,\theta) \in \Aa$, $\ol{K}(a,h,\theta)=\ol{a}$.
\end{proof}

Dual to Proposition \ref{right.ad.antipode}, we observe:

\begin{proposition}\label{right.ad.antipode1} Let $\bH=(\ul{\bH},\ol{\bH},\tau)$
be a weak braided Hopf
monad on $\A$. Then the functor $K_\omega:\A \to \Aa$
admits a left adjoint $\ul{K}:\Aa \to \A$ that takes $(a,h,\theta) \in \Aa$ to
the object $\ol{a}$ which splits the idempotent $a \xr{\theta} H(a) \xr{S_a} H(a) \xr{h} a$.
Moreover, the diagram
%\begin{equation}\label{split.coeq}
$$
\xymatrix{H(a) \ar@/^2pc/@{->}[rrr]^-{h}
\ar[r]_-{H(\theta)} & HH(a) \ar[r]_-{m_a} & H(a) \ar[r]_-{\ve_a}& a \ar[r]_-{q_a}&\ol{a}}
%\end{equation}
$$
is a (split) coequaliser.
\end{proposition}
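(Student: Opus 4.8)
The plan is to mirror the proof of Proposition \ref{right.ad.antipode}, interchanging equalisers with coequalisers and the r\^ole of $\xi$ with that of $\ol{\xi}$. By Proposition \ref{pair1} it suffices to show that for every $(a,h,\theta)\in\Aa$ the pair of morphisms (\ref{pair.d1}), namely $h$ and $\ve_a\cdot m_a\cdot H(\theta)$ from $H(a)$ to $a$, admits a coequaliser in $\A$; once this is established, Proposition \ref{pair1} produces the left adjoint $\ul{K}$, and by the construction of the left adjoint underlying that proposition (the dual of Theorem \ref{Dubuc1}) the object $\ul{K}(a,h,\theta)$ is exactly this coequaliser. So the whole statement reduces to exhibiting that coequaliser.

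I would show the pair is \emph{split} in the sense of \ref{contractible}, a splitting morphism being $s:=S_a\cdot\theta\colon a\to H(a)$; I label the maps so that $\partial_0:=\ve_a\cdot m_a\cdot H(\theta)$ and $\partial_1:=h$. First I would check $\partial_0\cdot s=1_a$. By naturality of $S$ one has $H(\theta)\cdot S_a=S_{H(a)}\cdot H(\theta)$, and $H(\theta)\cdot\theta=\delta_a\cdot\theta$ since $(a,\theta)\in\A^{\ol\bH}$, so
$$\partial_0\cdot s=\ve_a\cdot m_a\cdot H(\theta)\cdot S_a\cdot\theta=\ve_a\cdot m_a\cdot S_{H(a)}\cdot\delta_a\cdot\theta=\ve_a\cdot\ol{\xi}_a\cdot\theta,$$
the last step using the antipode law $S*1=\ol{\xi}$ in the form $m\cdot SH\cdot\delta=\ol{\xi}$ (Definition \ref{def-S}). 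Since $\ve\cdot\ol{\xi}=\ve$ by Proposition \ref{AVR}(1) and $\ve_a\cdot\theta=1_a$, this gives $\partial_0\cdot s=1_a$.

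The remaining condition $\partial_1\cdot s\cdot\partial_0=\partial_1\cdot s\cdot\partial_1$ reads
$$h\cdot S_a\cdot\theta\cdot\ve_a\cdot m_a\cdot H(\theta)=h\cdot S_a\cdot\theta\cdot h,$$
and this is where the real work lies. It is the precise counterpart, for $\ol\bH$-comodules, of the identity used in the proof of Proposition \ref{right.ad.antipode} (there read off from the proof of \cite[Proposition 3.5(ii)]{AVR}); I would obtain it in the same manner from the symmetric half of that result, by a diagram chase using only naturality, the $\ul\bH$-module and $\ol\bH$-comodule axioms of $(a,h,\theta)$, the compatibility (\ref{compatible}) (Proposition \ref{entw-str}) and the identities collected in Proposition \ref{AVR}. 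This verification is the one genuinely computational point; everything around it is formal.

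Granting it, $\partial_1\cdot s=h\cdot S_a\cdot\theta\colon a\to a$ is idempotent, and since $\A$ is Cauchy complete it splits as $a\xr{q_a}\ol{a}\xr{\iota_a}a$ with $\iota_a\cdot q_a=h\cdot S_a\cdot\theta$ and $q_a\cdot\iota_a=1_{\ol a}$. By \ref{contractible} the diagram $H(a)\rightrightarrows a\xr{q_a}\ol a$ (with the two maps $h$ and $\ve_a\cdot m_a\cdot H(\theta)$) is then a split coequaliser. In particular the pair (\ref{pair.d1}) has a coequaliser for every object of $\Aa$, so Proposition \ref{pair1} yields the left adjoint $\ul{K}$; and $\ul{K}(a,h,\theta)$ is the coequaliser just produced, namely $\ol a$. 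This simultaneously identifies the asserted coequaliser diagram.
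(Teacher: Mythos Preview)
Your proposal is correct and follows exactly the approach the paper intends: the paper's own proof consists of the single remark ``Dual to Proposition \ref{right.ad.antipode}'', and your argument is a faithful unpacking of that duality. Your choice of splitting morphism $s=S_a\cdot\theta$ is the right dual of $d=h\cdot S_a$, your verification of $\partial_0\cdot s=1$ mirrors the paper's verification of $d\cdot H(h)\cdot\delta_a\cdot e_a=1$, and the remaining identity $h\cdot S_a\cdot\theta\cdot\ve_a\cdot m_a\cdot H(\theta)=h\cdot S_a\cdot\theta\cdot h$ is precisely the dual of the identity the paper imports from \cite[Proposition 3.5(ii)]{AVR}; it is indeed supplied by the symmetric half of that result.
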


\begin{proposition}\label{split} Let $\bH=(\ul{\bH},\ol{\bH},\tau)$
be a weak braided Hopf monad on a Cauchy complete category $\A$.
Then, for any $(a,h,\theta)\in \Aa$, the diagram
$$\xymatrix{a \ar[r]^{\theta\quad} \ar@{=}[d]& H(a)\ar[d]^{H(\beta_a)}\\
a & H(a) \ar[l]^{h\quad}\,,}
$$
with the idempotent $\beta_a:a \xr{\theta}H(a) \xr{S_a}H(a) \xr{h}a$ (see  proof of Proposition
\ref{right.ad.antipode}) is commutative.
\end{proposition}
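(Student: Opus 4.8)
The diagram asserts exactly the identity $h\cdot H(\beta_a)\cdot \theta = 1_a$, where $\beta_a = h\cdot S_a\cdot \theta$. The plan is to prove this by a direct computation using only the module and comodule axioms of $(a,h,\theta)\in\Aa$ (as in \ref{mix-bim}), together with the convolution identity $1*S=\xi$ from Definition \ref{def-S} and the weak entwining relation $\omega\cdot eH=H\xi\cdot \delta$; in particular neither the Cauchy completeness of $\A$ nor the splitting construction of Proposition \ref{right.ad.antipode} will be needed.

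First I would reduce the claim to showing $h\cdot \xi_a\cdot \theta = 1_a$. Applying $H$ to $\beta_a=h\cdot S_a\cdot \theta$ and then using, in turn, the coassociativity $H(\theta)\cdot \theta=\delta_a\cdot \theta$ of the $\ol{\bH}$-comodule $(a,\theta)$, the associativity $h\cdot H(h)=h\cdot m_a$ of the $\ul{\bH}$-module $(a,h)$, and the fact that $m_a\cdot H(S_a)\cdot \delta_a=(1*S)_a=\xi_a$, one rewrites
\[
h\cdot H(\beta_a)\cdot \theta = h\cdot H(h)\cdot H(S_a)\cdot H(\theta)\cdot \theta = h\cdot m_a\cdot H(S_a)\cdot \delta_a\cdot \theta = h\cdot \xi_a\cdot \theta.
\]

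For the remaining equality I would exploit the mixed bimodule compatibility $\theta\cdot h = H(h)\cdot \omega_a\cdot H(\theta)$ from \ref{mix-bim}. Precomposing it with the unit $e_a\colon a\to H(a)$ and using the unitality $h\cdot e_a=1_a$ on the left and the naturality $H(\theta)\cdot e_a=e_{H(a)}\cdot \theta$ of $e$ on the right yields $\theta = H(h)\cdot \omega_a\cdot e_{H(a)}\cdot \theta$. Since $(\ul{\bH},\ol{\bH},\omega)$ is a compatible weak entwining by Proposition \ref{entw-str}, we have $\omega\cdot eH=H\xi\cdot \delta$ by (\ref{ent})(ii), hence $\omega_a\cdot e_{H(a)}=H(\xi_a)\cdot \delta_a$; combined with $\delta_a\cdot \theta=H(\theta)\cdot \theta$ this gives $\theta = H(h\cdot \xi_a\cdot \theta)\cdot \theta$. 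Writing $\nu:=h\cdot \xi_a\cdot \theta\colon a\to a$, this reads $\theta=H(\nu)\cdot \theta$; postcomposing with $\ve_a\colon H(a)\to a$, using the naturality $\ve_a\cdot H(\nu)=\nu\cdot \ve_a$ of $\ve$ and the counitality $\ve_a\cdot \theta=1_a$, we conclude $1_a=\ve_a\cdot\theta=\nu\cdot \ve_a\cdot \theta=\nu$, as desired.

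The computation is essentially routine once the right order of moves is fixed; the only step requiring a little care is the passage from the mixed bimodule square to the identity $\theta=H(\nu)\cdot \theta$ — specifically, recognising that precomposing with $e_a$ and invoking exactly the entwining axiom $\omega\cdot eH=H\xi\cdot \delta$ turns $\omega_a$ into $H(\xi_a)\cdot \delta_a$, after which coassociativity folds everything into $H(\nu)$ and counitality finishes the job. Everything else is bookkeeping with the (co)unit and (co)associativity laws of $(a,h,\theta)$.
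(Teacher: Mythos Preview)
Your proof is correct and follows essentially the same strategy as the paper: both first reduce $h\cdot H(\beta_a)\cdot\theta$ to $h\cdot\xi_a\cdot\theta$ via the (co)module laws and $1*S=\xi$, and then use the mixed bimodule axiom together with the (co)unit to finish.

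The only difference is in the execution of the second step. You invoke the weak entwining relation $\omega\cdot eH=H\xi\cdot\delta$ to produce the auxiliary identity $\theta=H(\nu)\cdot\theta$ and then cancel with $\ve_a$. The paper proceeds more directly: it writes $\xi_a=\ve_{H(a)}\cdot\omega_a\cdot e_{H(a)}$ by definition, moves $e$ past $\theta$ and $\ve$ past $h$ by naturality to obtain $\ve_a\cdot H(h)\cdot\omega_a\cdot H(\theta)\cdot e_a$, and then applies the bimodule square $H(h)\cdot\omega_a\cdot H(\theta)=\theta\cdot h$ in one stroke to get $\ve_a\cdot\theta\cdot h\cdot e_a=1$. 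Your route is a step or two longer but uses the same ingredients; neither argument needs Cauchy completeness, as you correctly note.
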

\begin{proof} We compute
\begin{align*}
  h \cdot H(\beta_a) \cdot \theta & =h \cdot H(h) \cdot H(S_a) \cdot H(\theta)  \cdot \theta\\
 &= h \cdot m_a \cdot H(S_a) \cdot \delta  \cdot \theta\\
 & =h \cdot \ve_{H(a)} \cdot \omega_a \cdot e_{H(a)} \cdot \theta\\
 & =\ve_a \cdot H(h) \cdot \omega_a \cdot H(\theta) \cdot e_a\\
 & =\ve_a \cdot \theta \cdot h  \cdot e_a=1.
\end{align*}
The second and sixth equations hold since $(a,h)\in \A_{\ul{\bH}}$
and $(a, \theta)\in \A^{\ol{\bH}}$, the third one holds by the definition of an antipode,
the forth one
by naturality of $e$ and $\ve$, and the fifth one by the fact that $(a,h,\theta)\in \Aa$.
\end{proof}

\begin{proposition}\label{counit.1} Let $\bH=(\ul{\bH},\ol{\bH},\tau)$ be a
weak braided Hopf monad on a Cauchy complete category $\A$. Then
\begin{zlist}
\item $K_\omega :\A \to \Aa$ admits both a left and a right adjoint
$\ol{K}, \ul{K}:\Aa \to \A$;
\item the unit $\ul{\eta}: 1 \to K\ul{K}$ of
$\ul{K} \dashv K$ is a split monomorphism, while the counit $\ol{\ve}: K\ol{K} \to 1$ of
 $K \dashv \ol{K}$ is a split epimorphism.
\end{zlist}
\end{proposition}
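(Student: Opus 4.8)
The plan is to derive (1) directly from Propositions \ref{right.ad.antipode} and \ref{right.ad.antipode1}, and then to obtain all of (2) from the single identity $\ol{\ve}\cdot\ul{\eta}=1$, which makes sense precisely because the left and the right adjoint of $K_\omega$ turn out to be the \emph{same} functor. Indeed, Proposition \ref{right.ad.antipode1} supplies a left adjoint $\ul{K}\dashv K_\omega$ and Proposition \ref{right.ad.antipode} a right adjoint $K_\omega\dashv\ol{K}$, and in both propositions the value at $(a,h,\theta)\in\Aa$ is the same object $\ol{a}$, namely the splitting of the idempotent $\beta_a:a\xr{\theta}H(a)\xr{S_a}H(a)\xr{h}a$ (which is natural in $(a,h,\theta)$ by naturality of $S$ and since morphisms in $\Aa$ respect $h$ and $\theta$). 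Hence $\ul{K}=\ol{K}$ as functors $\Aa\to\A$; fix the splitting $a\xr{q_a}\ol{a}\xr{\iota_a}a$, so that $\iota_a q_a=\beta_a$ and $q_a\iota_a=1$. This already gives (1).

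For (2) I would first pin down the underlying $\A$-morphisms of the unit $\ul{\eta}:1_{\Aa}\to K_\omega\ul{K}$ of $\ul{K}\dashv K_\omega$ and of the counit $\ol{\ve}:K_\omega\ol{K}\to1_{\Aa}$ of $K_\omega\dashv\ol{K}$. Transporting the general counit description (\ref{counit}) through the isomorphism $\Phi$ of Theorem \ref{comparison} and reading it off the split equaliser exhibited in the proof of Proposition \ref{right.ad.antipode}, one gets that $\ol{\ve}_{(a,h,\theta)}:K_\omega(\ol{a})\to(a,h,\theta)$ has underlying morphism $h\cdot H(\iota_a):H(\ol{a})\to a$. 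Dually, transporting the general unit description (\ref{unit}) through $\Phi'$ (Theorem \ref{comparison1}) and using the split coequaliser of Proposition \ref{right.ad.antipode1} — together with the fact that the unit of $U^{\ol{\bH}}\dashv\phi^{\ol{\bH}}$ at $(a,\theta)$ is the coaction $\theta$ itself — one gets that $\ul{\eta}_{(a,h,\theta)}:(a,h,\theta)\to K_\omega(\ol{a})$ has underlying morphism $H(q_a)\cdot\theta:a\to H(\ol{a})$. (Alternatively, one may bypass these formulas: a direct check shows that $H(q_a)\cdot\theta$ defines an $\Aa$-morphism $(a,h,\theta)\to K_\omega(\ol{a})$ — the comodule law is naturality of $\delta$ plus the comodule axiom, the module law uses the coequaliser property of $q_a$ together with Proposition \ref{split} — and that it is a section of $\ol{\ve}$; by the triangle identities it is then forced to be $\ul{\eta}$.)

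Now, since $\ul{K}=\ol{K}$, the composite $\ol{\ve}\cdot\ul{\eta}\colon 1_{\Aa}\to K_\omega\ul{K}=K_\omega\ol{K}\to 1_{\Aa}$ is defined, and evaluated at $(a,h,\theta)$ its underlying morphism is
\[
h\cdot H(\iota_a)\cdot H(q_a)\cdot\theta \;=\; h\cdot H(\iota_a q_a)\cdot\theta \;=\; h\cdot H(\beta_a)\cdot\theta \;=\; 1_a ,
\]
the last equality being exactly Proposition \ref{split}. As the forgetful functor $\Aa\to\A$ is faithful, it follows that $\ol{\ve}\cdot\ul{\eta}=1$. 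Consequently $\ul{\eta}$ is a split monomorphism with retraction $\ol{\ve}$, and $\ol{\ve}$ is a split epimorphism with section $\ul{\eta}$, which is statement (2).

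The step I expect to take the most care is the identification of those two underlying morphisms: one must carry the abstract unit/counit formulas of Sections \ref{adj-comon}--\ref{rght.adj} through the category isomorphisms $\Phi,\Phi'$ of Theorems \ref{comparison} and \ref{comparison1} and then recognise the resulting components inside the concrete split (co)equaliser diagrams of Propositions \ref{right.ad.antipode} and \ref{right.ad.antipode1}. Once $\ol{\ve}_{(a,h,\theta)}=h\cdot H(\iota_a)$ and $\ul{\eta}_{(a,h,\theta)}=H(q_a)\cdot\theta$ are in place, the remainder — in particular the computation $h\cdot H(\beta_a)\cdot\theta=1_a$ — is already recorded in the results cited above.
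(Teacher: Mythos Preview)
Your proposal is correct and follows essentially the same route as the paper: both deduce (1) from Propositions \ref{right.ad.antipode} and \ref{right.ad.antipode1}, identify the explicit components $\ol{\ve}_{(a,h,\theta)}=h\cdot H(\iota_a)$ and $\ul{\eta}_{(a,h,\theta)}=H(q_a)\cdot\theta$ via the general formulas (\ref{unit}) and (\ref{counit.0}), and then conclude by computing $\ol{\ve}\cdot\ul{\eta}=h\cdot H(\beta_a)\cdot\theta=1$ using Proposition \ref{split}. The paper is terser about the derivation of those component formulas (simply citing (\ref{unit}) and (\ref{counit.0})), whereas you spell out that this requires transporting through $\Phi$ and $\Phi'$; otherwise the arguments coincide.
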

\begin{proof} According to Propositions \ref{right.ad.antipode} and \ref{right.ad.antipode1},
$K_\omega :\A \to \Aa$ admits both left and right adjoints $\ul{K}, \ol{K}$
and they can be constructed as follows.
For any object $(a,h,\theta)\in \Aa$,   $\beta_a: a \xr{\theta}H(a) \xr{S_a}H(a) \xr{h}a$
is idempotent (see proof of Proposition \ref{right.ad.antipode}) with a splitting
  $a \xr{q_a}\ol{a}\xr{\iota_a} a$ and
   $$\ul{K}(a,h,\theta)=\ol{K}(a,h,\theta)=\ol{a}.$$
   Moreover,
$$\xymatrix{H(a) \ar@/^1.8pc/@{->}[rrr]^-{h}
\ar[r]_-{H(\theta)} & HH(a) \ar[r]_-{m_a} & H(a) \ar[r]_-{\ve_a}& a \ar[r]_-{q_a}&\ol{a}}
$$ is the defining coequaliser diagram for $\ul{K}(a,h,\theta)$, while
$$\xymatrix{\ol{a} \ar[r]^{\iota_a} & a \ar@/^1.8pc/@{->}[rrr]^{\theta}
\ar[r]_{e_a} & H(a) \ar[r]_{\delta_a} & HH(a) \ar[r]_{H(h)}& H(a)}
$$
is the defining equaliser diagram for $\ol{K}(a,h,\theta)$.

It is easy to verify directly, using (\ref{unit}) and (\ref{counit.0}), that
% \ref{Dubuc} that
$$\ul{\eta}\,_{(a,\,h,\,\theta)}=H(q_a) \cdot \theta \,\,\, \text{ and } \,\,\,
\ol{\ve}_{(a,\,h,\,\theta)}=h \cdot H(\iota_a).$$

Now, in view of Proposition \ref{split}, we compute
$$\ol{\ve}_{(a,\,h,\,\theta)}\cdot \ul{\eta}\,_{(a,\,h,\,\theta)}
= h \cdot H(\iota_a) \cdot H(q_a) \cdot \theta=
h \cdot H(\beta_a) \cdot \theta=1.$$
Thus $\ol{\ve}_{((a,\,h),\,\theta)}$ is a split epimorphism, while
$\ul{\eta}\,_{((a,\,h),\,\theta)}$ is a split monomorphism.
\end{proof}
\smallskip

We are now ready to state and prove our main result. It subsumes the original version
of the Fundamental Theorem proved for
weak Hopf algebras over fields in \cite[Theorem 3.9]{BNS} as well as various generalisations,
 for example,
for algebras over commutative rings in \cite[Theorem 5.12]{W-weak},  \cite[Theorem 36.16]{BW-Cor},
for Hopf algebroids in \cite[Theorem 4.14]{Bo-HA},
and for weak braided Hopf algebras on monoidal categories \cite[Proposition 3.6]{AVR}.

\begin{thm} \label{main} {\bf Fundamental Theorem.}
Let $\bH=(\ul{\bH},\ol{\bH},\tau)$ be a weak braided bimonad on a
Cauchy complete category $\A$, and $\bT$ and $\bG$ the monad and comonad induced on
$\A^{\ol{\bH}}$  and on $\A_{\ul{\bH}}$, respectively.
Then the following are equivalent:
  \begin{blist}
  \item  $\bH$ is a weak braided Hopf monad;
  \item  the functor  $K_\omega :\A \to \Aa$ admits both left and right adjoints,
         and the right adjoint is monadic;
  \item  the functor $K_\omega :\A \to \Aa$ admits both left and right adjoints,
         and the left adjoint is comonadic;
  \item  the induced natural transformation
  $\gamma:H \!\otimes_{\ul{\bH}^{\,\ol{\xi}}} \!H \to \ol{G}$ is an isomorphism;
  \item % the cotensor product $H\ot^{\ol{\bH}\,_{\ol{\xi}}}\,H$ exists and
  the induced natural transformation
  $\gamma\,':\ol{T} \to H \!\ot^{\ol{\bH}\,_{\ol{\xi}}}\!H$ is an isomorphism.
\end{blist}
\smallskip

Moreover, if the (equivalent) conditions above hold, then there is an equivalence of categories
$\A_{{\ul\bH}^{\,\oxi}} \simeq \Aa$.
\end{thm}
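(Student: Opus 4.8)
The plan is to prove the equivalence of (a)--(e) by a cycle of implications, and then deduce the final statement about the equivalence $\A_{\ul\bH^{\,\oxi}} \simeq \Aa$ as a corollary of the chain of constructions already assembled.

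First I would establish (a) $\Rightarrow$ (b) and (a) $\Rightarrow$ (c). Assume $\bH$ has a weak antipode. By Propositions \ref{right.ad.antipode} and \ref{right.ad.antipode1}, $K_\omega$ has both a left adjoint $\ul{K}$ and a right adjoint $\ol{K}$, so the adjoint-existence part of (b) and (c) is immediate. For the monadicity of $\ol{K}$ in (b): by Beck's Theorem \ref{Beck}(2), it suffices that $\ol{K}$ be conservative and that it preserve and create the relevant split coequalisers. Here the key input is Proposition \ref{counit.1}, which tells us the counit $\ol{\ve}$ is a split epimorphism; combined with the explicit equaliser description of $\ol{K}(a,h,\theta)=\ol a$ from the proof of Proposition \ref{right.ad.antipode} (the equaliser is \emph{split}, hence absolute), conservativity and the Beck condition follow by a diagram chase using that split (co)equalisers are absolute. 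Dually, (a) $\Rightarrow$ (c) uses that $\ul\eta$ is a split monomorphism and the dual of Beck's theorem for comonadicity.

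Next I would close the cycle. For (b) $\Rightarrow$ (d): with $\ol{K}$ monadic and (by Proposition \ref{gen.monad}) the induced monad equal to $\ul\bH^{\oxi}$, the comparison functor $K_\bP$ of diagram (\ref{Diag1}) is an equivalence; applying Theorem \ref{fun.th.} (or rather its dual incarnation via $\Phi$) to the comonad $\bG$ and using that $U^\bG$ reflects isomorphisms, the comonad morphism $t_K$ — which by (\ref{composition}), Proposition \ref{can.epi}, and the commuting square (\ref{fund}) factors through $\gamma$ composed with $\mathrm{can}$-type maps — must be an isomorphism, forcing $\gamma$ to be one. The symmetric argument gives (c) $\Rightarrow$ (e), using Proposition \ref{gen.comonad} and the square (\ref{gamma'}). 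Then (d) $\Rightarrow$ (a) and (e) $\Rightarrow$ (a): if $\gamma$ is an isomorphism then $\gamma$ is in particular an epimorphism, so Proposition \ref{convolution} applies; combined with the identities $\xi*1=1$, $1*\oxi=1$ and the Frobenius-separable structure of $H^\oxi$ (Proposition \ref{sep.frobenius2}), one constructs the antipode $S$ by the standard formula $S = (\oxi * 1 * \xi$-type convolution$)$, i.e. $S:=$ the composite obtained from inverting $\gamma$ to split off a "translation map" $H\to H\ot_{\ul\bH^\oxi}H$ and pairing with $\ol p$; checking $1*S=\xi$, $S*1=\oxi$, $S*1*S=S$ is then a convolution-algebra computation exactly as in \cite[Section 3]{AVR}. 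This closes (a) $\Leftrightarrow$ (b) $\Leftrightarrow$ (c) $\Leftrightarrow$ (d) $\Leftrightarrow$ (e).

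For the final statement: assuming any of (a)--(e), by Proposition \ref{gen.monad} the monad on $\A$ generated by $K_\omega \dashv \ol{K}$ is precisely $\ul\bH^{\,\oxi}$, so the Eilenberg--Moore comparison functor lands in $\A_{\ul\bH^{\,\oxi}}$; since $\ol K$ is monadic by (b), this comparison functor is an equivalence $\Aa \xrightarrow{\ \simeq\ } \A_{\ul\bH^{\,\oxi}}$, which is the claim. The main obstacle I anticipate is the bookkeeping in (b)/(c) $\Rightarrow$ (d)/(e): one must carefully identify the several comonad morphisms ($t_K$, $t_{\phi_\bP}$, $t_{L_\bP}$) and the tensor/cotensor comparison maps so that "$t_K$ iso" genuinely translates into "$\gamma$ iso", which requires combining (\ref{composition}), Proposition \ref{can.epi}, and the commuting squares (\ref{fund})--(\ref{gamma'}) correctly; the convolution computations establishing the antipode, while routine, also require care since $\nabla\neq 1$ in general and one must invoke the $\tau'$-identities (6),(7) of Definition \ref{def-bialg} rather than naive braiding arguments.
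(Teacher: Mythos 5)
Your overall architecture (adjoints from the antipode, monadicity of $\ol{K}$ from the split-epi counit, $\gamma$ iso from monadicity, antipode from $\gamma^{-1}$, symmetry for (c),(e), and the final equivalence from Proposition \ref{gen.monad}) is the paper's, but your step (b)$\Rightarrow$(d) contains a genuine error. You claim that the comonad morphism $t_K$ is an isomorphism and that this forces $\gamma$ to be one. In the weak setting $t_K$ is \emph{not} an isomorphism: by Theorem \ref{fun.th.}, "$t_K$ iso" together with comonadicity of $\phi_{\ul{\bH}}$ would make $K$ itself an equivalence $\A\simeq\Aa$, i.e.\ the strong (non-weak) Fundamental Theorem, which is exactly what fails here (the whole point is that one only gets $\A_{\ul{\bH}^{\,\oxi}}\simeq\Aa$). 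What the paper proves instead is that monadicity of $\ol{K}$ makes the comparison functor $K_{\ul{\bH}^{\,\oxi}}$, hence its left adjoint $L_{\ul{\bH}^{\,\oxi}}$, an equivalence; Theorem \ref{fun.th.} is then applied to the triangle $U^\bG L_{\ul{\bH}^{\,\oxi}}=(\iota^{\oxi})_!$, giving that $t_{L_{\ul{\bH}^{\,\oxi}}}$ is an isomorphism, and one identifies $\gamma=U_{\ul{\bH}}\,t_{L_{\ul{\bH}^{\,\oxi}}}\,\phi_{\ul{\bH}}$ by combining (\ref{composition}), Proposition \ref{can.epi}, the square (\ref{fund}) and the fact that $l$ is an epimorphism. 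In the factorisation $t_K=t_{L_{\ul{\bH}^{\,\oxi}}}\cdot t_{\phi_{\bP}}$ the morphism $t_{\phi_{\bP}}=q\iota^*$ is only a (split) epimorphism, so even granting your premise, $U_{\ul{\bH}}t_K\phi_{\ul{\bH}}=\gamma\cdot l$ being iso would only make $\gamma$ a split epi, not mono. Note also that this identification of $H\ot_{\ul{\bH}^{\,\oxi}}H$ and of $l$ requires the defining coequaliser of $(\iota^{\oxi})_!$ to be \emph{absolute}, which is where Frobenius separability of $\ul{\bH}^{\,\oxi}$ (Propositions \ref{sep.frobenius2}, \ref{sep.frobenius}) and Remark \ref{rightiso.3}(2) enter — in (b)$\Rightarrow$(d), not only in (d)$\Rightarrow$(a). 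The same correction applies symmetrically to your (c)$\Rightarrow$(e).

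A smaller point: for monadicity in (a)$\Rightarrow$(b) the paper does not verify Beck's conditions by hand; it uses that the counit of $K_\omega\dashv\ol{K}$ is a pointwise split epimorphism (Proposition \ref{counit.1}), that $\Aa$ is Cauchy complete (Proposition \ref{idem}), and invokes the dual of \cite[Proposition 3.16]{Me}. Your assertion that conservativity and the Beck condition "follow by a diagram chase" is not automatic — a pointwise split epi counit alone does not give conservativity, since the splittings $\ul{\eta}_{(a,h,\theta)}$ are not a priori natural as a splitting of $\ol{\ve}$ — so if you avoid citing that result you must actually reproduce its proof. Your (d)$\Rightarrow$(a) sketch (antipode $S=\widetilde{q}\cdot\gamma^{-1}\cdot\ol{p}\cdot He$, verified via Proposition \ref{convolution} and the convolution identities) and your derivation of the final equivalence $\A_{\ul{\bH}^{\,\oxi}}\simeq\Aa$ from Proposition \ref{gen.monad} and monadicity of $\ol{K}$ do match the paper.
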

\begin{proof} (a) $\Rightarrow$ (b). If $\bH$ is a weak braided Hopf monad, then,
by Propositions \ref{right.ad.antipode} and \ref{right.ad.antipode1}, the functor
$K_\omega: \A \to \Aa$ admits a
right adjoint $\ol{K}$  and a left adjoint $\ul{K}$. So it remains to prove that $\ol{K}$
is monadic. By Proposition \ref{counit.1},
the counit of the adjunction $K \dashv \ol{K}$ is a split epimorphism.
Moreover, since $\A$ is assumed to be Cauchy complete, so is $\Aa$ by Proposition \ref{idem}.
Applying now the dual of \cite[Proposition 3.16]{Me} gives that $\ol{K}$ is monadic.

(b) $\Rightarrow$ (d).
$\ul{\bH}^{\ol{\xi}}$ is separable Frobenius by Proposition \ref{sep.frobenius2}, and hence, by
Proposition \ref{sep.frobenius}, the change-of-base functor
$(\iota^{\ol{\xi}})_! : \A_{\ul{\bH}^{\ol{\xi}}} \to \A_\bH$ exists.
It then follows from Proposition \ref{left.adjoint} that
the comparison functor
$K_{\ul{\bH}^{\ol{\xi}}}:(\A_{\ul{\bH}})^{\bG} \to \A_{\ul{\bH}^{\ol{\xi}}}$ admits a
left adjoint $L_{\ul{\bH}^{\ol{\xi}}}: \A_{\ul{\bH}^{\ol{\xi}}} \to (\A_{\ul{\bH}})^{\bG} $
 such that $U^\bG L_{\ul{\bH}^{\ol{\xi}}}=(\iota^{\ol{\xi}})_!$.
The situation is illustrated by the   diagram
\begin{equation}\label{the situation}
\xymatrix @C=.5in @R=.4in{
\A \ar@/_1pc/@{->}[rr]_{\phi_{\ul{\bH}^{\ol{\xi}}}}\ar@/_4pc/@{->}[rrrr]_(.6)
{K}\ar[rrddd]_{\phi_{\ul{\bH}}} & &\A_{\ul{\bH}^{\ol{\xi}}} \ar[ddd]_{(\iota^{\ol{\xi}})_!}
\ar@/_1pc/@{->}[rr]_{L_{\ul{\bH}^{\ol{\xi}}}}\ar[ll]_{U_{\ul{\bH}^{\ol{\xi}}}}& &
(\A_{\ul{\bH}})^{\bG} \ar@/_3pc/@{->}
[llll]_{\ol{K}}  \ar[llddd]^{U^{\bG}}\ar[ll]_{K_{\ul{\bH}^{\ol{\xi}}}}\\\\&&&&\\& &
 \A_{\ul{\bH}} & . &}
\end{equation}

Next, from the proof of Proposition \ref{sep.frobenius} we know that the defining coequaliser
of $(\iota^{\ol{\xi}})_!$,
\begin{equation}\label{base.1}
\xymatrix{ %@!=2.9pc{
\phi_{\ul{\bH}} H^{\ol{\xi}} U\!_{\ul{\bH}^{\ol{\xi}}}=
\phi_{\ul{\bH}} U_{\ul{\bH}^{\ol{\xi}}} \phi_{\ul{\bH}^{\ol{\xi}}} U\!_{\ul{\bH}^{\ol{\xi}}}
 \ar@{->}@<0.5ex>[rr]^-{\phi_{\ul{\bH}}\,U\!_{\ul{\bH}^{\ol{\xi}}}
\,\,\ve\!\!_{{\ul{\bH}^{\ol{\xi}}}} }
\ar@{->}@<-0.5ex>[rr]_-{\varrho U\!_{\ul{\bH}^{\ol{\xi}}}}&&
\phi_{\ul{\bH}} U\!_{\ul{\bH}^{\ol{\xi}}}\ar[r]^q & (\iota^{\ol{\xi}})_!\,,}
\end{equation}
 where $\rho$ is the composite $\phi_{\ul{\bH}} H^{\ol{\xi}} \xr{\phi_{\ul{\bH}}\,\iota^{\ol{\xi}}} \phi_{\ul{\bH}} H=\phi_{\ul{\bH}} U_{\ul{\bH}}
 \phi_{\ul{\bH}}\xr{\ve_{\ul{\bH}} \,\phi_{\ul{\bH}}} \phi_{\ul{\bH}}$,
is absolute (i.e., is preserved by any functor).
It then follows from Remark \ref{rightiso.3} (2) that the tensor product
$H \!\otimes_{\ul{\bH}^{\ol{\xi}}} \!H$
is just the functor $U_{\ul{\bH}}(\iota^{\ol{\xi}})_! (\iota^{\ol{\xi}})^* \phi_{\ul{\bH}}$.
Moreover, $l=U_{\ul{\bH}} \, q (\iota^{\ol{\xi}})^* \phi_{\ul{\bH}}$.

Since $U_{\ul{\bH}\,^{\ol{\xi}}}\,(\iota^{\ol{\xi}})^*=U_{\ul{\bH}}$, it follows
from Proposition \ref{can.epi}
and Equation (\ref{composition}) that the diagram
$$
\xymatrix @R=.3in @C=.5in{ \phi_{\ul{\bH}}U_{\ul{\bH}\,^{\ol{\xi}}}\,(\iota^{\ol{\xi}})^*=
\phi_{\ul{\bH}}U_{\ul{\bH}}
\ar[rd]_{t_K}\ar[r]^-{q(\iota^{\ol{\xi}})^*}&
(\iota^{\ol{\xi}})_!(\iota^{\ol{\xi}})^*\ar[d]^{t_{L\!_{\ul{\bH}\,
^{\ol{\xi}}}} }\\&  \widetilde{G}}$$ commutes, and since $U_{\ul{\bH}}\widetilde{G}=G$,
 $G\phi_{\ul{\bH}}=\overline{G}$,
and (\ref{base.1}) is absolute, the diagram
$$
\xymatrix @R=.3in @C=.6in{HH \ar[rd]_{U_{\ul{\bH}}\,t_K \phi_{\ul{\bH}}}\ar[r]^{l} &
 H \!\otimes_{\ul{\bH}^{\ol{\xi}}}
\! H  \ar[d]^{U_{\ul{\bH}}\, t_{L\!_{\ul{\bH}\,^{\ol{\xi}}}}\,\, \phi_{\ul{\bH}}}\\  & \ol{G}}
$$ commutes.
Since $l$ is an epimorphism and   (\ref{fund}) is also commutative, it follows that
$\gamma=U_{\ul{\bH}} \,t_{L\!_{\ul{\bH}\,^{\ol{\xi}}}}\, \phi_{\ul{\bH}}.$

Now, since $\ol{K}$ is assumed to be monadic, the comparison functor
$K_{\ul{\bH}^{\ol{\xi}}} : \Ab \to \A_{\ul{\bH}^{\ol{\xi}}}$ is
an equivalence, and hence its left adjoint
$L_{\ul{\bH}^{\ol{\xi}}} : \A_{\ul{\bH}^{\ol{\xi}}} \to \Ab $ is also an equivalence.
Applying Theorem  \ref{fun.th.} to the right commutative triangle in Diagram (\ref{the situation})
 gives that
$t_{L_{\ul{\bH}^{\ol{\xi}}}}$ is an isomorphism. Quite obviously,
$\gamma=U_{\ul{\bH}} t_{L_{\ul{\bH}^{\ol{\xi}}}} \phi_{\ul{\bH}}$ is then also an isomorphism.

(d) $\Rightarrow$ (a). Suppose that
%the tensor product $H \!\otimes_{\ul{\bH}^{\ol{\xi}}} \!H$ exists and
the natural transformation $\gamma:H \!\otimes_{\ul{\bH}^{\ol{\xi}}} \!H \to \ol{G}$ is an
isomorphism. Then we claim that the composite
$$S: H \xr{He} HH \xr{\ol{p}}\ol{G} \xr{\gamma^{-1}} H\!\otimes_{\ul{\bH}^{\ol{\xi}}}
\!H \xr{\widetilde{q}} H$$
is an antipode for $\bH$. Note first that by Propositions \ref{righ.1}, \ref{morphism}, and
Corollary \ref{corollary.1}, the composite
$$\widetilde{q_1}:HH \xr{\ol{p}}\ol{G} \xr{\gamma^{-1}}H\!\otimes_{\ul{\bH}^{\ol{\xi}}}
\!H \xr{\widetilde{q}}H $$ is a morphism of right $\ul{\bH}$-modules.
To show that
$S*1=\ol{\xi}$, consider the diagram
$$
\xymatrix @C=0.4in @R=0.4in{H \ar[r]^-{\delta} \ar[d]_{He} &
HH \ar@{=}[dr]\ar[r]^-{HeH}&HHH \ar[d]^{Hm} \ar[r]^-{\ol{p}H} &
\ol{G}H \ar[r]^-{\gamma^{-1}H}& (H\!\otimes_{\ul{\bH}^{\ol{\xi}}}H)H \ar[r]^-{\widetilde{q}H}&
HH \ar[d]^m\\
HH \ar@{}[rr]^{(1)} \ar@/_1.8pc/@{->}[rrrr]_{l}& & HH \ar[r]_-{\ol{p}} &
\ol{G} \ar[r]^-{\gamma^{-1}}& H\!\otimes_{\ul{\bH}^{\ol{\xi}}}H \ar[r]_-{\widetilde{q}}& H}
$$
in which
\begin{itemize}
  \item the rectangle commutes since the $\widetilde{q_1}$ is a morphism of
        right $\ul{\bH}$-modules;
  \item the triangle commutes since $e$ is the unit of the monad $\ul\bH$;
  \item part (1) commutes since $\ol{p}\,\cdot \delta =\gamma \cdot l \cdot He $
        by (\ref{fund0}), and therefore
  $\gamma^{-1} \cdot \,\ol{p}\, \cdot \delta = l \cdot He $.
\end{itemize}
So the whole diagram commutes and we have $S*1=\widetilde{q} \cdot l \cdot He .$

Commutativity of the trapezoid in (\ref{qtilde}) shows
  $\widetilde{q} \cdot l=m \cdot \ol{\xi}H$, and since $\ol{\xi}H \cdot He=He \cdot \ol{\xi}$
  by naturality of composition, we get
$$
 S*1 =\widetilde{q} \cdot l \cdot He
  =m \cdot \ol{\xi}H \cdot He
 = m \cdot He \cdot \ol{\xi}
   =  \ol{\xi},
$$
and from (\ref{ent-equal}) we conclude $1*S*1= 1*\oxi = 1$.

In the diagram
$$
\xymatrix @C=0.6in @R=0.4in{H \ar[r]^-{\delta} \ar[dd]_{He} &
HH \ar[d]^{H\xi}\ar[r]^-{HeH}&HHH \ar[d]^{HH\xi}
\ar[r]^-{\widetilde{q_1}H}& HH \ar[d]^{H\xi}\\
\ar@{}[r]|{(1)} & HH \ar@{=}[dr]\ar[r]^-{HeH}&HHH \ar[d]^{Hm} \ar[r]^-{\widetilde{q_1}H}&
HH \ar[d]^m\\
HH \ar[rr]_\kappa & & HH \ar[r]_-{\widetilde{q_1}}& H}
$$
\begin{itemize}
 \item part (1) commutes (see (\ref{c-diagrams}) and (\ref{ent})(ii),
  \item the top squares commute by naturality of composition,
  \item the bottom square commutes since $\widetilde{q_1}$ is a morphism of right $\ul{\bH}$-modules, and
  \item the triangle commutes since $e$ is the unit of the monad $\ul{\bH}$.
\end{itemize}
Now, commutativity of the diagram and  $\ol{p}\cdot \kappa=\ol{p}\cdot \ol{i} \cdot \ol{p}=\ol{p}$
 give
\begin{align*}
 S*\xi & =m \cdot H\xi \cdot \widetilde{q_1}H \cdot HeH \cdot\delta
 =\widetilde{q_1} \cdot \kappa \cdot He \\
 & = \widetilde{q} \cdot \gamma^{-1}\cdot \ol{p}\cdot \kappa \cdot He
 = \widetilde{q} \cdot \gamma^{-1}\cdot \ol{p}\cdot He
 =S.
\end{align*}

Recalling $1*\oxi=1=\xi* 1$ (from (\ref{ent-equal}), (\ref{c-diagrams})) yields
$$1*S*1 = 1*\oxi = 1 = \xi * 1$$
and applying Lemma \ref{convolution} shows
  $1*S=1*S*\xi=  \xi*\xi =\xi$ and eventually
  $$S*1*S=S*\xi=S , \quad  \ol{\xi}*S=S*1*S=S.$$
Thus, $S$ is an antipode and hence $\bH$ is a weak braided Hopf monad, as asserted.
\smallskip

The implications (a)$\Rightarrow$(c)$\Rightarrow$(e)$\Rightarrow$(a)  hold by symmetry and
 the last assertion follows from the proof of the implication (b)$\Rightarrow$(d).
\end{proof}

\begin{remark} \em When a weak braided bimonad
$\bH$ has an antipode $S$, it can be shown that the composite
$$ \ol{G}\xr{\ol{i}} HH \xr{\delta H} HHH \xr{HSH} HHH\xr{Hm} HH\xr{l} H
 \!\otimes_{\ul{\bH}^{\ol{\xi}}} \!H$$
is the two-sided inverse of $\gamma:H \!\otimes_{\ul{\bH}^{\ol{\xi}}} \!H \to \ol{G}$,
 while the composite
$$H\!\ot^{\ol{\bH}\,_{\ol{\xi}}}\!H \xr{\text{can}}HH \xr{H\delta}HHH
\xr{HSH}HHH \xr{mH}HH \xr{\ol{p}\,'}\ol{T}$$
is the two-sided inverse of $\gamma':\ol{T} \to H\!\ot^{\ol{\bH}\,_{\ol{\xi}}}\!H$.
\end{remark}

\begin{theorem} \label{main.1} Let $\bH=(\ul{\bH},\ol{\bH},\tau)$ be a
weak braided bimonad on a
Cauchy complete category $\A$, and $\bT$ and $\bG$ the monad and comonad it induces
on $\A^{\ol{\bH}}$  and  $\A_{\ul{\bH}}$, respectively.
\begin{zlist}
  \item  If $H$ preserves the existing coequalisers, the following are equivalent:
  \begin{blist}
  \item   $\bH$ is a weak braided Hopf monad;
  \item   the functor $K_\omega :\A \to \Aa$ admits a monadic right adjoint;
  \item   % the tensor product $H \!\otimes_{\ul{\bH}^{\,\ol{\xi}}} \!H$ exists and
  the induced natural transformation
  $\gamma:H \!\otimes_{\ul{\bH}^{\,\ol{\xi}}} \!H \to \ol{G}$ is an isomorphism.
\end{blist}
  \item  If $H$ preserves the existing equalisers, the following are equivalent:
  \begin{blist}
  \item   $\bH$ is a weak braided Hopf monad;
  \item   the functor $K_\omega :\A \to \Aa$ admits a comonadic left adjoint;
  \item  % the cotensor product $H\ot^{\ol{\bH}\,_{\ol{\xi}}}\,H$ exists and
  the induced natural transformation
  $\gamma':\ol{T} \to H\!\ot^{\ol{\bH}\,_{\ol{\xi}}}\!H$ is an isomorphism.
\end{blist}
\end{zlist}
\smallskip

Moreover, if the (equivalent) conditions above hold, then there is an equivalence of categories
$\A_{\ul{\bH}^{\ol{\,\xi}}} \simeq \Aa$.
\end{theorem}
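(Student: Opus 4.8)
The plan is to deduce Theorem~\ref{main.1} from the Fundamental Theorem~\ref{main}; the only genuinely new feature is that condition~(b) here asks $K_\omega$ only for a \emph{monadic right adjoint} rather than for both adjoints, and the extra hypothesis on $H$ is there precisely to recover a left adjoint. For Part~(1) I would first dispose of the easy implications. The implications (a)$\Rightarrow$(b) and (a)$\Rightarrow$(c) are already contained in Theorem~\ref{main}: a weak braided Hopf monad satisfies condition~(b) of that theorem (hence $K_\omega$ has a monadic right adjoint) and its condition~(d), which is exactly~(c) here. For (c)$\Rightarrow$(a) I would note that (c) is literally condition~(d) of Theorem~\ref{main}, and the proof of (d)$\Rightarrow$(a) given there --- which builds the antipode $S=\widetilde{q}\circ\gamma^{-1}\circ\ol{p}\circ He$ --- uses only that $\A$ is Cauchy complete, so it applies verbatim; the hypothesis on $H$ is not needed here. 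Everything therefore reduces to (b)$\Rightarrow$(a).

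For (b)$\Rightarrow$(a) I would argue as follows. Assuming $K_\omega$ has a monadic right adjoint $\ol{K}$, Proposition~\ref{gen.monad} identifies the monad it generates on $\A$ with $\ul{\bH}^{\ol{\xi}}$ (with $\iota^{\ol{\xi}}$ the associated monad morphism), and Proposition~\ref{sep.frobenius2} tells us this monad is separable Frobenius. Monadicity of $\ol{K}$ makes the comparison functor $K_{\ul{\bH}^{\ol{\xi}}}\colon\Aa\to\A_{\ul{\bH}^{\ol{\xi}}}$ an equivalence carrying $K_\omega$ to the free functor $\phi_{\ul{\bH}^{\ol{\xi}}}$. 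The key step is then to show $K_\omega$ also has a \emph{left} adjoint; by Proposition~\ref{pair1} this means the pair~(\ref{pair.d1}) must have a coequaliser in $\A$ for every $(a,h,\theta)\in\Aa$. For objects in the image of $K_\omega$ this coequaliser is the split one from Corollary~\ref{corollary} (cf.\ Proposition~\ref{coequaliser}); for a general mixed bimodule I would write it, via $\Aa\simeq\A_{\ul{\bH}^{\ol{\xi}}}$, as the value of that construction on the canonical presentation of the corresponding $\ul{\bH}^{\ol{\xi}}$-module by free modules, and then use separability of $\ul{\bH}^{\ol{\xi}}$ together with the hypothesis that $H$ preserves the existing coequalisers (and that $\A$ is Cauchy complete) to see that the coequaliser exists. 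Once $K_\omega$ has both a left adjoint and a monadic right adjoint, condition~(b) of Theorem~\ref{main} is in force, so $\bH$ is a weak braided Hopf monad.

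Part~(2) I would prove dually, interchanging the monad $\bT$ on $\A^{\ol{\bH}}$ with the comonad $\bG$ on $\A_{\ul{\bH}}$, equalisers with coequalisers, the free functor $\phi_{\ul{\bH}^{\ol{\xi}}}$ with the cofree functor $\phi^{\ol{\bH}\,_{\ol{\xi}}}$, and $\gamma$ with $\gamma'$; there the hypothesis that $H$ preserves the existing equalisers takes over the role of ``$H$ preserves the existing coequalisers'', and Propositions~\ref{gen.comonad} and~\ref{equaliser} replace Propositions~\ref{gen.monad} and~\ref{coequaliser}. The closing assertion $\A_{\ul{\bH}^{\ol{\xi}}}\simeq\Aa$ I would read off just as in Theorem~\ref{main}: under~(b), $\ol{K}$ is monadic and, by Proposition~\ref{gen.monad}, the monad it generates is $\ul{\bH}^{\ol{\xi}}$, so its Eilenberg--Moore comparison functor is the desired equivalence.

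The step I expect to be the main obstacle is exactly (b)$\Rightarrow$(a): manufacturing a left adjoint for $K_\omega$ out of a monadic right adjoint. Verifying the coequaliser criterion of Proposition~\ref{pair1} forces one to keep careful track of the split (absolute) nature of the coequalisers produced by the canonical presentation of $\ul{\bH}^{\ol{\xi}}$-modules, and this is where the assumption on $H$ is genuinely used. (Alternatively one can sidestep Proposition~\ref{pair1} by re-running the chain (b)$\Rightarrow$(d)$\Rightarrow$(a) of Theorem~\ref{main}, which uses only a monadic right adjoint and the separable Frobenius property of $\ul{\bH}^{\ol{\xi}}$ --- in that route the hypothesis on $H$ is not invoked, but the argument via Proposition~\ref{pair1} is the more self-contained one.)
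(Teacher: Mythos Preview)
The paper does not take your main route; it proves (b)$\Rightarrow$(c) directly (your ``alternative''), and then lets (c)$\Rightarrow$(a) and (a)$\Rightarrow$(b) come from Theorem~\ref{main}. Concretely: assuming $\ol K$ monadic, the comparison $K_{\ul\bH^{\ol\xi}}$ is an equivalence, so it has a left adjoint $L_{\ul\bH^{\ol\xi}}$, and Proposition~\ref{left.adjoint} gives $U^\bG L_{\ul\bH^{\ol\xi}}=(\iota^{\ol\xi})_!$. The paper then identifies $\gamma$ with $U_{\ul\bH}\,t_{L_{\ul\bH^{\ol\xi}}}\,\phi_{\ul\bH}$ exactly as in the proof of (b)$\Rightarrow$(d) of Theorem~\ref{main}, and concludes via Theorem~\ref{fun.th.}. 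Contrary to your parenthetical, the hypothesis that $H$ preserve existing coequalisers \emph{is} invoked here: it is used to ensure that $U_{\ul\bH}$ preserves the defining coequaliser~(\ref{base.1}) of $(\iota^{\ol\xi})_!$, which is what Remark~\ref{rightiso.3}(2) needs to identify $H\!\otimes_{\ul\bH^{\ol\xi}}\!H$ with $U_{\ul\bH}(\iota^{\ol\xi})_!(\iota^{\ol\xi})^*\phi_{\ul\bH}$. You are right, however, that one could instead re-invoke the separable Frobenius property of $\ul\bH^{\ol\xi}$ (Propositions~\ref{sep.frobenius2} and~\ref{sep.frobenius}) to see that this coequaliser is split and hence absolute---exactly as in Theorem~\ref{main}'s proof---and then the hypothesis on $H$ would be unnecessary for this implication. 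The paper simply does not argue that way in Theorem~\ref{main.1}.

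Your primary route, manufacturing a left adjoint for $K_\omega$ from Proposition~\ref{pair1}, is genuinely different and is not made rigorous in your sketch. The passage ``write a general mixed bimodule, via $\Aa\simeq\A_{\ul\bH^{\ol\xi}}$, as the value of that construction on the canonical presentation by free modules'' hides the real work: you would need to know that the pair~(\ref{pair.d1}) depends functorially on $(a,h,\theta)$ in a way that sends the (split) canonical presentation in $\A_{\ul\bH^{\ol\xi}}$ to a diagram of pairs whose coequalisers can be assembled, and that the hypothesis on $H$ suffices to push the split coequalisers for free objects (Proposition~\ref{coequaliser}) through that presentation. This can probably be made to work using separability of $\ul\bH^{\ol\xi}$ (so that every $\ul\bH^{\ol\xi}$-module is a retract of a free one in $\A_{\ul\bH^{\ol\xi}}$, hence every object of $\Aa$ a retract of some $K_\omega(b)$, whence~(\ref{pair.d1}) becomes a retract of a split pair and therefore admits a coequaliser in Cauchy complete $\A$), but none of that is in your write-up. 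Since the paper's (b)$\Rightarrow$(c) route avoids the issue entirely, it is the cleaner choice.
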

\begin{proof} By symmetry, it suffices to prove (1). Then, in the light of Proposition \ref{main},
we need only to show that (b) implies (c). So suppose the functor $K_\omega :\A \to \Aa$ to admit
a monadic right adjoint $\ol{K}$. Then the comparison functor $K_{\ul{\bH}^{\ol{\xi}}} : \Ab \to \A_{\ul{\bH}^{\ol{\xi}}}$ is
an equivalence and hence has a left adjoint left inverse
$L_{\ul{\bH}^{\ol{\xi}}} : \A_{\ul{\bH}^{\ol{\xi}}} \to \Ab $. It
then follows from Proposition  \ref{left.adjoint} that
 $U^\bG L_{\ul{\bH}^{\ol{\xi}}}=(\iota^{\ol{\xi}})_!$.

Since $H$ is assumed to preserve the existing coequalisers, the forgetful functor
$U_\bT : \A_\bT \to \A$
also preserves the existing coequalisers (e.g. \cite[Proposition 4.3.2]{FB}).
Since colimits in functor
categories are calculated componentwise, this implies that the image of the
coequaliser (\ref{base.1})
under the functor
$[\A_{\ul{\bH}^{\,\ol{\xi}}},U_\bH]:[\A_{\ul{\bH}^{\,\ol{\xi}}},\A_\bH]\to
[\A_{\ul{\bH}^{\,\ol{\xi}}},\A] $ is again a coequaliser.
Then
 $\gamma=U_{\ul{\bH}}\, t_{L_{\ul{\bH}^{\ol{\xi}}}} \,\phi_{\ul{\bH}}$
by exactly the same argument used in the proof of the implication (b)$\Rightarrow$(d) of
Proposition \ref{main}.

Now, since $L_{\ul{\bH}^{\ol{\xi}}}$ is an equivalence of categories and
$U^\bG L_{\ul{\bH}^{\ol{\xi}}}=(\iota^{\ol{\xi}})_!$,
it follows by Theorem  \ref{fun.th.} that $t_{L_{\ul{\bH}^{\ol{\xi}}}}$
(and hence also $\gamma$) is an isomorphism.
Thus (b) implies (c), as required.
\end{proof}

\smallskip

{\bf Acknowledgments.}
The first author gratefully acknowledges the support by the
Shota Rustaveli National Science Foundation Grants DI/18/5-113/13 and FR/189/5-113/14.

\end{document}